\documentclass[11pt, makeidx]{amsart}
\usepackage{amssymb,amsfonts}

\usepackage{xcolor}

\input{styleart.sty}

\begin{document}

\begin{abstract}
We prove that a random group, in Gromov's density model with $d<1/16$, satisfies a universal sentence $\sigma$ (in the language of groups) if and only if $\sigma$ is true in a nonabelian free group.
\end{abstract}

\title{First-order sentences in random groups I: universal sentences}
\author{O. Kharlampovich, R. Sklinos}
\thanks{\\
\noindent O. Kharlampovich (CUNY)  supported by the grant No. 422503 from the Simons Foundation.\\
R. Sklinos (CAS) is supported by the grant No. 12350610234 from NSFC}

\maketitle
\section{Introduction}
The idea of random groups draws its origins in Gromov's seminal paper \cite{GroHyp} introducing hyperbolic groups. Gromov, in order to emphasize the importance of the newly defined class of groups, claimed, in a definite statistical sense, that hyperbolicity is a typical property for finitely presented groups. In other words, almost every finitely presented group is hyperbolic. In the same paper he proved his claim for {\em the few-relator model} in a strong sense, by proving that for this model small cancellation $C'(\lambda)$, for any cancellation parameter $0<\lambda<1$, is a typical property.  On the other hand, exhibiting the difference between small cancellation and hyperbolicity he proposed a slightly different model, called {\em the few-relator with various lengths model} and claimed, without a proof, that only hyperbolicity is typical for this model. Ol'shanski \cite{Olsh} and independently Champetier \cite{Cha95} (for two relators) confirmed the claim. In the meantime, Gromov proposed a different model, which seemed to be more flexible and interesting, called {\em the density model} and proved that hyperbolicity is still a typical property for this model \cite[Chapter 9]{Gromov}. A complete proof was later given by Ollivier \cite{Olli2}. The paper \cite{Olli} is an excellent introduction to the subject. In Section \ref{RandomModel} we give definitions for the various models discussed above. In this paper we will be exclusively concerned with the density model.   

On a different line of thought, Tarski in 1946 asked whether  the first-order theories of non-abelian free groups of different ranks coinside. This question was answered in the positive only more than fifty years later, in 2006, by Sela \cite{SelFree} and Kharlampovich-Myasnikov \cite{KMFree}. We will follow the tradition in the subject and call this first-order theory, {\em the theory of the free group}. 

These two lines of thought intersect in the following conjecture of J. Knight loosely stated here (for the precise conjecture see section \ref{RandomModel}). 

\begin{conjIntro}
Let $\sigma$ be a first-order sentence in the language of groups. Then $\sigma$ is true in a nonabelian free group if and only if it is almost surely true in ``a random group".
\end{conjIntro}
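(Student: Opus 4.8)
The plan is to upgrade the universal (equivalently, existential) equivalence of a random group with a nonabelian free group --- which is the content of the present paper at density $d<1/16$ --- to full elementary equivalence, at least at sufficiently small density. Write $F$ for a fixed nonabelian free group and $G$ for a random group at the relevant density. Since $\mathrm{Th}(F)$ is complete (Sela \cite{SelFree}, Kharlampovich--Myasnikov \cite{KMFree}), for every sentence $\sigma$ exactly one of $\sigma,\neg\sigma$ lies in $\mathrm{Th}(F)$, so the conjecture amounts to showing that almost surely $G\models\mathrm{Th}(F)$; as there are only countably many sentences, this is the same as: almost surely $G\equiv F$. I would then exploit the quantifier reduction for the theory of the free group: every formula is equivalent, modulo $\mathrm{Th}(F)$, to a Boolean combination of $\forall\exists$-formulas (Sela; Kharlampovich--Myasnikov). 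Since Boolean combinations and the passage to negations are automatic for an elementary-equivalence statement, everything reduces to transferring from $F$ to $G$ the truth of (i) existential sentences, (ii) universal sentences, and (iii) $\forall\exists$-sentences, uniformly enough in the relator length $\ell$ that the relevant ``bad events'' have probabilities tending to $0$.

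Steps (i) and (ii) are dual, and (ii) is precisely what this paper proves at $d<1/16$: $G$ satisfies exactly the universal consequences of $\mathrm{Th}(F)$. The geometric heart is that the quotient map $F_m\twoheadrightarrow G$ is injective on balls of radius linear in $\ell$, and that minimal van Kampen diagrams over $G$ with a controlled number of $2$-cells obey a Greendlinger-type inequality; hence a solution in $G$ of a bounded finite system of equations and inequations is ``read off'' such a diagram and lifts to a solution of the same system in $F$, while conversely a solution in $F$ survives under $F_m\twoheadrightarrow G$. This furnishes the base of the induction.

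For step (iii) I would first treat a \emph{positive} $\forall\exists$-sentence $\forall\bar x\,\exists\bar y\,\bigl(W(\bar x,\bar y)=1\bigr)$ in $\mathrm{Th}(F)$. By Merzlyakov's theorem there is a formal solution $\bar y=\bar p(\bar x)$ with $W(\bar x,\bar p(\bar x))\equiv 1$ an identity of $F(\bar x)$; specializing $\bar x$ to an arbitrary tuple of $G$ and $\bar y$ to $\bar p(\bar x)$ (interpreting any constants through $F_m\twoheadrightarrow G$) verifies the sentence in $G$, because a group identity holds in every group. For a general $\forall\exists$-sentence $\forall\bar x\,\exists\bar y\,\varphi(\bar x,\bar y)$ one must produce, for each $\bar x$, a witness $\bar y$ satisfying also the inequations of $\varphi$. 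Mirroring the free-group template, I would analyze the set of ``counterexample'' tuples via a Makanin--Razborov / Rips--Sela shortening argument carried out over the hyperbolic group $G$, obtaining finitely many limit quotients $G\ast\langle\bar x\rangle\twoheadrightarrow L$; by steps (i)--(ii) each such $L$, being finitely generated and universally equivalent to $G$ hence to $F$, is an ordinary limit group, so one may feed it to the implicit-function-theorem machinery of Kharlampovich--Myasnikov (formal solutions over NTQ systems, or Sela's formal solutions over $\omega$-residually free towers) to supply the required witnesses away from a locus of strictly smaller complexity, on which one recurses.

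The main obstacle is exactly this last transfer: one cannot simply cite the structure of $\mathrm{Hom}(H,F)$, since $G$ is \emph{not} free, and must instead carry the relevant portion of Sela's and Kharlampovich--Myasnikov's theory --- Makanin--Razborov diagrams, the shortening argument, test sequences, and the implicit function theorem --- through \emph{relative to the random group}. Wherever the classical proof invokes freeness, the substitute must be the small-density geometry of $G$ (injectivity on large balls together with the Greendlinger condition for minimal diagrams with few $2$-cells), and all estimates must be kept uniform in $\ell$ so that, as more relators of growing length are imposed, the probability of any fixed sentence failing still tends to $0$ (and pushing $d$ up toward $1/2$ would demand a further refinement of this geometric input). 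In short, the model theory is meant to be a transcription of the proof of the decidability of $\mathrm{Th}(F)$, and the genuinely new difficulty is supplying, at every inductive step and uniformly in $\ell$, a geometric replacement for the freeness of $F$.
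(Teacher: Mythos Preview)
The displayed statement is a \emph{conjecture}; the paper does not prove it (only the universal fragment, Theorem~1) and explicitly restates it as open at the end. There is therefore no paper proof to compare your attempt against.

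More importantly, your reformulation contains a decisive error. You write that the conjecture ``amounts to showing that almost surely $G\models\mathrm{Th}(F)$; as there are only countably many sentences, this is the same as: almost surely $G\equiv F$.'' This conflates the sentence-by-sentence assertion
\[
\text{for each fixed }\sigma:\quad F\models\sigma\ \Longleftrightarrow\ \lim_{\ell\to\infty}\mathbb{P}(\Gamma_\ell\models\sigma)=1
\]
with the global assertion $\lim_{\ell\to\infty}\mathbb{P}(\Gamma_\ell\equiv F)=1$. The conjecture is the former; the latter is \emph{false}. By Proposition~\ref{NotLimit} the random group at any density $0<d\le 1/2$ is almost surely \emph{not} a limit group, hence almost surely fails some universal axiom of $\mathrm{Th}(F)$, hence is almost surely not elementarily equivalent to $F$. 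Countability of the language does nothing here: the rates of convergence are not uniform across sentences, and for each $\ell$ the particular failing axiom depends on the random draw of relators. The introduction of the paper highlights exactly this point.

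The error propagates through the rest of your outline. The Sela/Kharlampovich--Myasnikov reduction to Boolean combinations of $\forall\exists$-formulas holds only \emph{modulo} $\mathrm{Th}(F)$: to transport $\sigma$ to $G$ via its $\forall\exists$-equivalent $\tau$ you would need $G\models(\sigma\leftrightarrow\tau)$, which presupposes what is to be proved. Likewise your claim that a limit quotient $L$ obtained by shortening over $G$ is ``universally equivalent to $G$ hence to $F$'' and therefore an ordinary limit group fails, since $G$ is almost surely not universally equivalent to $F$. Any viable attack on the full conjecture must, as the paper does in the universal case, fix a single sentence $\sigma$ and produce estimates uniform in $\ell$ for that particular $\sigma$; it cannot proceed by exhibiting $G$ as an actual model of $\mathrm{Th}(F)$.
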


In this paper we confirm the conjecture in the case where $\sigma$ is universal and the density of the randomness model is $d<1/16$.  Our main theorem is:

\begin{thmIntro}
Let $\Gamma$ be ``a random group" of some fixed density $d<1/16$. Let $\sigma$ be a universal sentence in the language of groups. Then $\Gamma$ almost surely satisfies $\sigma$ if and only if a nonabelian free group $\mathbb{F}$ satisfies $\sigma$. 
\end{thmIntro}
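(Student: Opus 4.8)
The plan is to prove the two directions of the equivalence separately; all the difficulty lies in one of them.

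\emph{The direction $\mathbb{F}\not\models\sigma\ \Rightarrow\ \Gamma\not\models\sigma$ almost surely.} This is the soft direction. Putting $\neg\sigma$ in prenex form writes it as an existential sentence $\exists\bar{x}\,\delta(\bar{x})$ with $\delta$ quantifier-free. Since all nonabelian free groups are elementarily equivalent (the result of Sela and of Kharlampovich--Myasnikov quoted in the introduction), $F_2\models\exists\bar{x}\,\delta(\bar{x})$; fix a witness $\bar{c}\in F_2^{\,n}$. A random group of density $d<1/2$ is almost surely non-elementary word-hyperbolic, hence almost surely contains a copy of $F_2$ (ping-pong on two independent loxodromic elements). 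Fix such an embedding $\iota\colon F_2\hookrightarrow\Gamma$. Because $\iota$ is injective, every atomic subformula $w(\bar{x})=1$ of $\delta$ has the same truth value at $\bar{c}$ in $F_2$ as at $\iota(\bar{c})$ in $\Gamma$; hence $\Gamma\models\delta(\iota(\bar{c}))$, so $\Gamma\models\neg\sigma$. As this holds whenever $\Gamma$ contains $F_2$, it holds almost surely.

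\emph{The direction $\mathbb{F}\models\sigma\ \Rightarrow\ \Gamma\models\sigma$ almost surely --- the main content.} Put $\neg\phi$ in disjunctive normal form; then $\neg\sigma$ becomes a finite disjunction of existential sentences $\exists\bar{x}\,\psi_p(\bar{x})$ with each $\psi_p=\bigwedge_q\bigl(u_{pq}(\bar{x})=1\bigr)\wedge\bigwedge_r\bigl(v_{pr}(\bar{x})\neq 1\bigr)$ a conjunction of literals, and it suffices to show, for each fixed $p$, that almost surely the system $\psi_p$ has no solution in $\Gamma$, given that it has none in $\mathbb{F}$; drop the subscript. The first step disposes of \emph{short} solutions: a random group of density $d$ almost surely satisfies the linear isoperimetric inequality $|\partial D|\ge(1-2d-\varepsilon)\,\ell\,|D|$ for reduced van Kampen diagrams, so the kernel of $F(S)\twoheadrightarrow\Gamma$ contains no nontrivial word of length $<(1-2d-\varepsilon)\,\ell$. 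Hence, if $\bar{g}$ solves $\psi$ in $\Gamma$ with each $g_i$ of word length at most a fixed $M$, then for all large $\ell$ the words $u_q(\bar{g})$ are too short to be nontrivial in the kernel, so $u_q(\bar{g})=1$ already in $F(S)=\mathbb{F}$, while the inequations $v_r(\bar{g})\neq 1$ persist upward from the quotient; thus $\bar{g}$ solves $\psi$ in $\mathbb{F}$, a contradiction. Therefore, almost surely, the minimal length of a solution grows with $\ell$ if a solution exists at all.

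It remains to exclude \emph{long} solutions, and this is where $d<1/16$ enters: at this density the relators almost surely form a $C'(1/8)$ presentation (the random $C'(2d+\varepsilon)$ phenomenon). Given a putative solution $\bar{g}$ of minimal length, pick geodesic representatives $w_i$ and minimal van Kampen diagrams $D_q$ for $u_q(\bar{g})=1$; by the isoperimetric inequality, $|D_q|\le|u_q|\cdot\bigl(\max_i\lvert w_i\rvert\bigr)/((1-2d-\varepsilon)\ell)$. The plan is: (i) use minimality together with the $C'(1/8)$ structure to show that only boundedly many relators can occur in the $D_q$ (a long, heavily relator-using solution admits a Dehn-type shortening, since the $w_i$ are geodesic the relators straddle junctions between variable-blocks, and the generic overlaps along small pieces can be traded for shorter readings of the tuple); (ii) passing to the subpresentation using only those finitely many, generic, relators gives a classical $C'(1/8)$ group $\Gamma_0\twoheadrightarrow\Gamma$ in which $\bar{g}$ still solves $\psi$; (iii) analyze the van Kampen diagrams over these boundedly many generic relators to realize the combinatorial pattern of $\bar{g}$ and the $D_q$ over a free group --- equivalently, build a finitely generated group $H$, assembled as a graph of free groups along the pieces of the diagrams, together with a solution of $\psi$ in $H$ and an embedding $H\hookrightarrow\mathbb{F}$ --- whence $\mathbb{F}\models\exists\bar{x}\,\psi$, contradicting $\mathbb{F}\models\sigma$. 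Genericity of the relators is used twice: to guarantee $C'(1/8)$, and to control the limited ways in which finitely many relators can interact with the fixed words $u_q$; the bound $d<1/16$ is precisely what makes this diagram analysis go through, and a first-moment estimate over the (bounded amount of) combinatorial data of such configurations shows the bad event has probability tending to $0$.

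The main obstacle is this last step --- ruling out long solutions --- which requires a fine understanding of van Kampen diagrams over generic relators into which a solution of a fixed system has been substituted, and in particular of which relations among long elements of $\Gamma$ are forced by freeness versus genuinely imposed by the relators. Everything else (the free-subgroup argument, the short-solution lifting, and the Boolean reductions) is routine.
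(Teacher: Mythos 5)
Your easy direction and your overall skeleton (bound the complexity of a minimal bad solution, then make a counting/first-moment estimate over random relators) match the paper's strategy, but the two steps that carry all the weight are asserted rather than proved, and as stated they would not go through. First, your step (i) --- that minimality of the solution plus $C'(1/8)$ forces only boundedly many relators to occur, via a ``Dehn-type shortening'' --- is exactly the hard content of the paper (its Theorem on short solutions and its corollary: the shortest non-free, or non-free-like, solution of $V(\bar x)=1$ in $\Gamma_\ell$ has length at most $K\ell$). A Dehn-algorithm style shortening of the geodesic words $w_i$ does not obviously produce another \emph{solution} of the same system; one must shorten the homomorphism $G_V\to\Gamma_\ell$ by precomposing with (modular) automorphisms, and proving this is possible requires the limit action on a real tree, trivial tripod stabilizers, abelian arc stabilizers, superstability, the Rips machine, an abelian JSJ decomposition of the (possibly non--finitely presented) limit group and its approximation by finitely presented groups --- all carried out in a setting where the hyperbolicity constants of the $\Gamma_\ell$ grow with $\ell$, which is why the paper develops the digon/semidigon analysis of $C'(1/8)$ Cayley graphs. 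Nothing in your sketch substitutes for this. Relatedly, your ``short solutions'' step only treats solutions of length bounded by a constant independent of $\ell$; the relevant regime, which your dichotomy skips, is solutions of length comparable to $\ell$ (up to $K\ell$), and these cannot be lifted by the isoperimetric inequality alone.

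Second, your step (iii) cannot be deterministic: the random group is \emph{not} a limit group, so for suitable systems (depending on $\ell$) genuine non-free solutions do exist; realizing ``the combinatorial pattern of $\bar g$ over a free group'' and embedding it into $\mathbb F$ would prove too much. The conclusion can only be probabilistic for the fixed system $V(\bar x)=1$, and the missing mechanism is the quantitative one: the paper converts the solution into a parametric (triangular) system, extracts a \emph{decoration} of the relators filling the bounded family of van Kampen diagrams, and shows that at least half of the $n\ell$ letters of those relators are thereby determined, giving probability at most $(2n-1)^{-n\ell(1/2-d)}$ per abstract decorated diagram, against only polynomially many (in $\ell$) such diagrams with boundedly many faces. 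Your ``first-moment estimate over the bounded combinatorial data'' presupposes exactly this kind of letter-counting, which you neither state nor prove; without it (or an equivalent), genericity of the relators gives you nothing beyond $C'(1/8)$, and the argument has a genuine gap at both of its load-bearing points.
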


Our theorem says that a property described by a universal first-order formula is typical for finitely presented groups (in the density model for $d<1/16$) if and only if it holds in a non-abelian free group. 

In the statements of the conjecture and the theorem we abuse notation by referring to ``a random group" as formally it is not a group, but rather an asymptotic behaviour of a collection of groups. Still it is convenient to use this terminology as it simplifies statements. Again, precise statements will be given in Section \ref{RandomModel}. 

It is interesting that a random group is not a limit group (see Proposition \ref{NotLimit}), equivalently it does not satisfy the universal part of the first-order theory of nonabelian free groups. On the other hand, our main result shows that, for $d<1/16$, each universal sentence which is true in the free group  with overwhelming probability (w.o.p.) is true in a random group.

One direction of our main result is easy to check. A random group of fixed density $d<1/2$ does not satisfy a universal sentence which is not in the axioms of the theory of the free group. Indeed, a random group contains a nonabelian free group \cite{OW}, hence it satisfies the existential theory of the free group. 

Finally, we give, as a result of independent interest not interfering with the main proof, a meaningful axiomatization of the universal part of the first-order theory of the free group.

The paper is structured as follows. In the first five sections we develop all background material needed and prove some preliminary results. In section \ref{Logic} we present some basic definitions of first-order logic and give an axiomatization of the universal part of the first-order theory of nonabelian free groups. In section \ref{RandomModel} we record the various models of randomness and state results relevant towards our goal. We also observe, using well-known results, that a random group (in the density model) is not a limit group. In section \ref{VKampen} we give a brief account of van Kampen diagrams that are one of our main tools in proving the main result. In section \ref{universal} we use first-order logic to translate the original question into group theoretic terms.  

In section \ref{Short} we use geometric tools, in particular a version of the shortening argument, in order to prove a local to global result. This section is the most technically challenging part of the paper. Finally in the last section we  calculate the relevant probabilities and bring everything together to show the main theorem of the paper.

\section{First-order Logic \& Axioms for the Universal Theory of the Free Group}\label{Logic}
In order to make the paper self-contained we include some background in first-order logic. We focus on groups although everything can be developed more generally for other natural (or not) mathematical structures. 

The first-order language of groups $\mathcal{L}$ consists of $\{\cdot, ^{-1}, 1\}$, that will be interpreted as group multiplication, inverse, and neutral element. Together with the logic symbols $\land, \lor, \neg, \Rightarrow$, equality $=$, parentheses $(,)$, quantifiers $\exists, \forall$ and an infinite supply of variables are the building blocks of first-order formulas. One can recursively define what is a well-formed formula on these symbols, but we will just give some examples instead. 

\begin{example}[First-order formulas]
Let $G$ be a group seen as an $\mathcal{L}$-structure (i.e. by interpreting the symbols of the language in the natural way). Then 
\begin{itemize}
    \item the formula $\phi(x):= \exists y (x= y\cdot y)$ defines the ``squares" in $G$;
    \item the formula $\phi(x):=\forall y (x\cdot y\cdot x^{-1}\cdot y^{-1}= 1)$ defines the central elements in $G$.
\end{itemize}
When all variables are bounded by a quantifier we talk about a first-order sentence. Hence 
\begin{itemize}
    \item the sentence $\sigma:=\forall x\forall y (x\cdot y\cdot x^{-1}\cdot y^{-1}= 1)$, if true in $G$ it says that it is abelian;
    \item the sentence $\sigma:=\exists y \forall x (y\not= 1 \land [x,y]= 1)$, where $[x,y]$ stands for the commutator of $x,y$, if true in $G$ it says that it has a non-trivial central element.
\end{itemize}
\end{example}

More generally a formula $\phi(\bar{x})$, i.e. a first-order formula whose free variables are amongst $\bar{x}$, in the language of groups is logically equivalent to a formula of the form $\ \ \forall\bar{z}\exists\bar{y}\ldots$ $$\bigwedge_{i=1,\ldots,n}(w_{i1}(\bar{x}, \bar{y}, \bar{z}, \ldots)= 1\lor \ldots \lor 
w_{ik_i}(\bar{x}, \bar{y}, \bar{z}, \ldots)= 1\lor v_{i1}(\bar{x}, \bar{y}, \bar{z}, \ldots)\not= 1\lor\ldots \lor v_{im_i}(\bar{x}, \bar{y}, \bar{z}, \ldots)\not= 1),$$
an alternation of $\forall$ and $\exists$ quantifiers followed by a finite conjunctions of disjunctions of equalities and inequalities of words in the given bounded and free variables.

\begin{definition}
A group $G$ is fully residually free if for any finite set of elements $S\subset G$, there exists a morphism, $h_S:G\rightarrow \mathbb{F}$, to some free group $\mathbb{F}$ that is injective on $S$. A finitely generated fully residually free group is also called a limit group.
\end{definition}

\begin{definition}
A group $G$ is called locally fully residually free if every finitely generated subgroup of $G$ is fully residually free.
\end{definition}

An example of a group which is not fully residually free but locally fully residually free is $(\mathbb{Q},+)$. The additive group of rationals is divisible but every finitely generated subgroup of it is cyclic.

We denote by $T_{fg}^{\forall}$ the set of universal sentences satisfied by a non-abelian free group.

\begin{proposition}\label{LimitGroups}\cite{Rem}
A finitely generated group $G$ is a model of $T_{fg}^{\forall}$ if and only if it is fully residually free.
\end{proposition}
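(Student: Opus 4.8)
This is the standard identification of finitely generated fully residually free groups (the finitely generated limit groups) with the finitely generated models of the universal theory of the free group, and I would prove the two implications separately; the implication ``models of $T_{fg}^{\forall}$ are fully residually free'' is the substantive one.

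Suppose first that $G$ is fully residually free, and let $\sigma$ be a universal sentence lying in the theory of a nonabelian free group. Putting the quantifier-free part of $\sigma$ in conjunctive normal form, $\sigma$ is equivalent to a finite conjunction of sentences of the shape $\forall\bar{x}\,\big(\bigvee_i w_i(\bar{x})=1\ \vee\ \bigvee_j v_j(\bar{x})\neq 1\big)$, each of which is again true in the free group, so it suffices to verify one such clause in $G$. Fix a tuple $\bar{g}$ from $G$. If some $v_j(\bar{g})\neq 1$ we are done; otherwise all $v_j(\bar{g})=1$, and if moreover all $w_i(\bar{g})\neq 1$, apply full residual freeness to the finite set $S=\{\,w_i(\bar{g})\,\}\subseteq G\setminus\{1\}$ to get a homomorphism $h$ from $G$ into a free group which is injective on $S$; postcomposing with an embedding of that free group into $\mathbb{F}=\mathbb{F}_2$, we may take the target to be $\mathbb{F}$. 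Then $w_i(h\bar{g})\neq 1$ and $v_j(h\bar{g})=h(v_j(\bar{g}))=1$ for all $i,j$, so the clause fails at $h\bar{g}$ in $\mathbb{F}$ --- a contradiction. Hence the clause holds at every $\bar{g}$, i.e. $G\models\sigma$. (Equivalently, one may feed an exhausting chain of finite subsets of $G$ into the definition to embed $G$ into an ultrapower $\mathbb{F}^{\mathcal{U}}$, and then use that $\mathbb{F}^{\mathcal{U}}$ models the theory of the free group and that universal sentences pass to subgroups.)

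For the forward implication, fix a finite $S\subseteq G$; replacing $S$ by the finite set $T=\{\,ss'^{-1}:s,s'\in S,\ s\neq s'\,\}\subseteq G\setminus\{1\}$, it suffices to produce a homomorphism $h\colon G\to\mathbb{F}$ with $h(t)\neq 1$ for all $t\in T$. Choose generators $\bar{g}=(g_1,\dots,g_n)$ of $G$, let $\pi\colon F_n\to G$ be the quotient map, and pick $w_t\in F_n$ with $\pi(w_t)=t$ for $t\in T$. Since free groups are equationally Noetherian (Guba), the coefficient-free system $\{\,r(\bar{x})=1:r\in\ker\pi\,\}$ has the same solution set in $\mathbb{F}$ as some finite subsystem $\{\,r_1(\bar{x})=1,\dots,r_m(\bar{x})=1\,\}$ with $r_1,\dots,r_m\in\ker\pi$. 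The finite system $\{\,r_1=1,\dots,r_m=1\,\}\cup\{\,w_t\neq 1:t\in T\,\}$ then has a solution $\bar{a}\in\mathbb{F}^n$: otherwise $\forall\bar{x}\,\big(\bigvee_i r_i(\bar{x})\neq 1\ \vee\ \bigvee_t w_t(\bar{x})=1\big)$ is a universal sentence true in $\mathbb{F}$, hence in $T_{fg}^{\forall}$, hence in $G$, while it fails at $\bar{g}$ (since $r_i(\bar{g})=1$ because $r_i\in\ker\pi$, and $w_t(\bar{g})=t\neq 1$). By the choice of the $r_i$, the solution $\bar{a}$ satisfies $r(\bar{a})=1$ for every $r\in\ker\pi$, so $g_j\mapsto a_j$ induces a homomorphism $h\colon G\to\mathbb{F}$ with $h(t)=w_t(\bar{a})\neq 1$ for all $t\in T$, as required.

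The only genuine obstacle is handling a finitely generated $G$ whose relation kernel $\ker\pi$ need not be normally finitely generated: were $G$ finitely presented one would only have to solve finitely many relations together with the inequations, which $G\models T_{fg}^{\forall}$ supplies directly, but for general finitely generated $G$ one first needs to cut the infinitely many relations down to finitely many, which is exactly the role of the equational Noetherianity of free groups in the argument above (equivalently, one can realize the displayed system of equations and inequations in an ultrapower $\mathbb{F}^{\mathcal{U}}$ and then descend, which uses the same input). The remaining steps are routine manipulations of first-order syntax; note in particular that the converse implication applies equally to abelian groups such as $\mathbb{Z}^n$, consistently with their being fully residually free.
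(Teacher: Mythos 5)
Your proof is correct, but it takes a genuinely different route from the paper's. For the direction ``fully residually free $\Rightarrow G\models T_{fg}^{\forall}$'' the paper embeds $G$ into an ultrapower $\mathbb{F}^{*}$ via morphisms injective on larger and larger balls of the Cayley graph and then uses that universal sentences pass to subgroups; your clause-by-clause verification using a single separating morphism is an equivalent, more syntactic argument (you note the ultrapower variant yourself). One small point: with the paper's definition of fully residually free (``injective on $S$''), you should apply it to $S\cup\{1\}$ rather than to $\{w_i(\bar g)\}$ alone, since injectivity on a set of nontrivial elements does not by itself force their images to be nontrivial. For the converse the paper again passes through an ultrapower: it embeds $G$ into $\mathbb{F}^{*}$ and extracts coordinate maps $g_j\mapsto g_j(i)$ that kill none of the given elements; as written this glosses over why such a coordinate map is a well-defined homomorphism when $G$ is not finitely presented (all, not just finitely many, relations of $G$ must hold at the chosen coordinate), which is exactly the issue you isolate and settle with the equational Noetherianity of free groups, replacing the ultrapower by a finite subsystem of $\ker\pi$ together with a universal sentence involving inequations. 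So your argument is more self-contained at precisely the delicate step, while the paper's ultrapower argument is shorter and consistent with the toolkit it uses elsewhere (equational Noetherianity is invoked explicitly only in the proof of Proposition \ref{UniversalAxioms}); both establish the same statement.
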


\begin{corollary}
A group $G$ is a model of $T_{fg}^{\forall}$ if and only if $G$ is locally fully residually free.
\end{corollary}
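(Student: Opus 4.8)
The plan is to bootstrap from the finitely generated case, Proposition~\ref{LimitGroups}, using the elementary observation that membership in a \emph{universal} theory is detected on finitely generated subgroups. Recall that every sentence of $T_{fg}^{\forall}$ is logically equivalent to one of the shape $\forall\bar{x}\,\psi(\bar{x})$ with $\psi$ quantifier-free, and that a quantifier-free $\mathcal{L}$-formula is a Boolean combination of word equations; hence its truth at a tuple $\bar{g}$ depends only on the subgroup $\langle\bar{g}\rangle$ that $\bar g$ generates (atomic $\mathcal{L}$-formulas are absolute between a group and its subgroups).

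For the forward direction I would argue: if $G\models T_{fg}^{\forall}$ and $H\leq G$ is finitely generated, then any $\sigma=\forall\bar{x}\,\psi(\bar{x})$ in $T_{fg}^{\forall}$ holds of every tuple of $G$, in particular of every tuple from $H$; by absoluteness of quantifier-free formulas this gives $H\models\sigma$. Thus $H\models T_{fg}^{\forall}$, so $H$ is fully residually free by Proposition~\ref{LimitGroups}. Since $H$ was an arbitrary finitely generated subgroup, $G$ is locally fully residually free. For the converse I would take the contrapositive: if $G\not\models T_{fg}^{\forall}$, choose $\sigma=\forall\bar{x}\,\psi(\bar{x})\in T_{fg}^{\forall}$ and a tuple $\bar{g}$ from $G$ with $G\not\models\psi(\bar{g})$, and set $H=\langle\bar{g}\rangle$. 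Then $H\not\models\psi(\bar{g})$, hence $H\not\models\sigma$, so the finitely generated group $H$ is not a model of $T_{fg}^{\forall}$ and therefore, by Proposition~\ref{LimitGroups}, not fully residually free; consequently $G$ is not locally fully residually free.

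There is no genuine obstacle here: all the content sits in Proposition~\ref{LimitGroups}, and this corollary is just the routine fact that the class of models of a universal theory is closed under subgroups and under directed unions, so that $G\models T_{fg}^{\forall}$ if and only if every finitely generated subgroup of $G$ does. The only point requiring (minimal) care is to phrase things via quantifier-free $\psi$ so that the witnessing tuple lives in, and keeps its behaviour in, the finitely generated subgroup it generates.
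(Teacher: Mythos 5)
Your argument is correct and is essentially the paper's own proof: both directions reduce to Proposition \ref{LimitGroups} applied to the finitely generated subgroup generated by a witnessing tuple, using that quantifier-free formulas (hence universal sentences) transfer between a group and its subgroups. The only difference is cosmetic — you phrase one direction as a contrapositive where the paper argues by contradiction.
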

\begin{proof}
Suppose $G$ is locally fully residually free and $\sigma:=\forall\bar{x}Q(\bar{x})$, where $Q(\bar{x})$ is quantifier free, be a universal sentence in $T_{fg}^{\forall}$. Assume for the sake of contradiction that $G\models \lnot \sigma$. Then there exists $\bar{a}$ in $G$ such that $G\models \lnot Q(\bar{a})$. We set  $A:=\langle\bar{a}\rangle$, be the subgroup of $G$ generated by $\bar{a}$, and observe that since $A$ is a finitely generated subgroup of $G$ it is  fully residually free, hence it is a model of $T_{fg}^{\forall}$ and consequently it realizes $\sigma$. On the other hand, since $A$ is a subgroup of $G$ we get $A\models \lnot Q(\bar{a})$ and $A\models \lnot \sigma$, a contradiction.

Suppose that $G\models T_{fg}^{\forall}$. Since any subgroup of $G$ is a model of $T_{fg}^{\forall}$ as well,  if  $L$ is a finitely generated subgroup of $G$, we get, by Proposition \ref{LimitGroups}, that $L$ is fully residually free. 

\end{proof}

The following remark is a direct consequence of the definition of fully residually freeness.

\begin{remark}\label{MRStep1}
Let $G_{\Sigma}:=\langle \bar{x} \ | \ \Sigma(\bar{x})\rangle$ be a finitely generated group, which is not fully residually free. Then, there exist finitely many nontrivial elements of $G_{\Sigma}$  $w_1(\bar{x}),\ldots, w_m(\bar{x})$ such that for any morphism $h:G_{\Sigma}\rightarrow \mathbb{F}$, the morphism $h$ kills one of the elements, i.e.  $h(w_i(\bar{x}))=1$ for some $i\leq m$. In the following proposition we will use the family of all such possible witnesses of the failure of fully residually freeness. 
\end{remark}

\begin{proposition}\label{UniversalAxioms}
The following list of axioms defines  the class of locally fully residually free groups.

\begin{enumerate}
    \item Axioms for groups.
    \item For every finite system of equations $\Sigma(\bar{x})=1$ such that $G_{\Sigma}$ is not a fully residually free group, the list of sentences $\forall\bar{x}(\Sigma(\bar{x})=1\rightarrow w_1(\bar{x})=1\lor\ldots\lor w_m(\bar{x})=1)$, where one sentence is added for each set of words that witness the failure of fully residually freeness given by Remark \ref{MRStep1}.
\end{enumerate}
\end{proposition}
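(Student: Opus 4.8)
The plan is to show that the two conditions -- being a model of the listed axioms, and being locally fully residually free -- coincide, using the Corollary just proved (which identifies locally fully residually free groups with models of $T_{fg}^{\forall}$). Since the axioms in item (2) are universal sentences, it suffices to check (a) every locally fully residually free group satisfies each axiom in the list, and (b) every group satisfying the list is locally fully residually free, equivalently (by the Corollary) is a model of $T_{fg}^{\forall}$.

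For direction (a), I would argue that each sentence $\sigma_\Sigma := \forall\bar{x}(\Sigma(\bar{x})=1\rightarrow w_1(\bar{x})=1\lor\ldots\lor w_m(\bar{x})=1)$ is a consequence of the theory of the free group, hence lies in $T_{fg}^{\forall}$, so by the Corollary it holds in every locally fully residually free group. To see $\mathbb{F}\models\sigma_\Sigma$: suppose $\bar{a}$ in $\mathbb{F}$ satisfies $\Sigma(\bar{a})=1$. Then $\bar{a}\mapsto\bar{a}$ extends to a morphism $h\colon G_\Sigma\to\mathbb{F}$. Since $G_\Sigma$ is not a limit group, it is not fully residually free (by Proposition \ref{LimitGroups}, finitely generated here), so Remark \ref{MRStep1} gives nontrivial $w_1(\bar{x}),\ldots,w_m(\bar{x})$ in $G_\Sigma$ such that every morphism $G_\Sigma\to\mathbb{F}$ kills some $w_i$; in particular $h(w_i(\bar{x})) = w_i(\bar{a}) = 1$ for some $i\leq m$. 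Thus $\mathbb{F}\models\sigma_\Sigma$, and since these are universal sentences true in $\mathbb{F}$, they belong to $T_{fg}^{\forall}$.

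For direction (b), suppose $G$ satisfies all the axioms; I must show each finitely generated subgroup is fully residually free. Equivalently, by the Corollary, it is enough to show $G\models T_{fg}^{\forall}$, so let $\tau\in T_{fg}^{\forall}$; I want $G\models\tau$. The key step is a normal-form reduction: any universal sentence, after passing to prenex form and distributing, is a finite conjunction of sentences of the shape $\forall\bar{x}\bigl(\bigwedge_j u_j(\bar{x})=1 \rightarrow \bigvee_\ell v_\ell(\bar{x})\neq 1\bigr)$ -- here I use that over a group, an atomic formula $w\approx 1$ and its negation are the only atoms up to the group operations, and a clause is a disjunction of literals, which can be rearranged so that the positive-literal hypotheses form a single system of equations. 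So it suffices to treat $\tau = \forall\bar{x}\bigl(\Sigma(\bar{x})=1 \rightarrow v_1(\bar{x})\neq 1\lor\ldots\lor v_k(\bar{x})\neq 1\bigr)$, a true sentence in $\mathbb{F}$. If $G_\Sigma$ is a limit group, then $G_\Sigma$ is fully residually free, hence models $T_{fg}^{\forall}$, hence $G_\Sigma\models\tau$; but $\Sigma(\bar{x})=1$ holds on the generators of $G_\Sigma$, so some $v_\ell(\bar{x})\neq 1$ in $G_\Sigma$, and then no quotient of $G_\Sigma$ -- in particular no group with elements $\bar{a}$ satisfying $\Sigma(\bar{a})=1$... this needs care: instead, I argue that if $\bar{a}$ in $G$ satisfies $\Sigma(\bar{a})=1$ and some morphism realizes this, and $G_\Sigma$ is a limit group so $\mathbb{F}^{*}$-embeds, then already in $\mathbb{F}$ we can find $v_\ell(\bar{a})\neq 1$; combined with the axiom when $G_\Sigma$ is not a limit group, one derives the needed disjunct. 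If $G_\Sigma$ is not a limit group, axiom (2) for $\Sigma$ gives $w_1(\bar{x})=1\lor\ldots\lor w_m(\bar{x})=1$, and since $\mathbb{F}\models\tau$ and the $w_i$ are exactly the obstructions, a comparison of the two clauses forces $\{w_i\}\subseteq$ the set of consequences making some $v_\ell\neq 1$ fail -- so $\tau$ follows from axiom (2) together with the group axioms.

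The main obstacle is direction (b): matching an arbitrary universal consequence $\tau$ of the free group against the specific axioms indexed by non-limit-group presentations. The subtlety is that $\tau$'s hypothesis system $\Sigma$ may well define a limit group $G_\Sigma$, in which case axiom (2) says nothing about $\Sigma$; one must instead exploit that a limit group satisfies $T_{fg}^{\forall}$ and push the conclusion down. Handling this cleanly -- ensuring the conjunction of the stated axioms logically entails every element of $T_{fg}^{\forall}$, not just those with non-limit-group hypotheses -- is where the real work lies, and I expect to invoke Proposition \ref{LimitGroups} and Remark \ref{MRStep1} repeatedly to bridge between "limit group" and "fully residually free" at the finitely generated level.
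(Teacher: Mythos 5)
Your direction (a) is essentially the paper's: the axioms are universal sentences true in $\mathbb{F}$ (verified via Remark \ref{MRStep1}), hence belong to $T_{fg}^{\forall}$ and so hold in every locally fully residually free group. The genuine gap is in direction (b). You attempt to show that the axioms logically entail every sentence $\tau$ of $T_{fg}^{\forall}$ by matching $\tau$ against the axiom indexed by its hypothesis system $\Sigma$; but, as you yourself flag, this breaks when $G_{\Sigma}$ is a limit group, and the repair you sketch does not work: a tuple $\bar a$ in $G$ with $\Sigma(\bar a)=1$ only gives a morphism $G_{\Sigma}\to\langle\bar a\rangle$, so $\langle\bar a\rangle$ is a quotient of $G_{\Sigma}$ possibly satisfying many more relations, and the fact that $G_{\Sigma}$ embeds in an ultrapower of $\mathbb{F}$ says nothing about which $v_\ell(\bar a)$ are nontrivial in $G$. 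Likewise, when $G_{\Sigma}$ is not a limit group, the words $w_i$ supplied by Remark \ref{MRStep1} bear no stated relation to the conclusion words of $\tau$, so ``a comparison of the two clauses'' does not yield $\tau$. (Your normal form is also off: a universal sentence reduces to finite conjunctions of $\forall\bar x(V(\bar x)=1\to w_1(\bar x)=1\lor\dots\lor w_k(\bar x)=1)$, with equations, not disequations, in the conclusion.)

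The missing idea is to argue semantically by contraposition on the group, as the paper does, rather than syntactically sentence-by-sentence. Suppose $L$ satisfies the axioms but is not locally fully residually free (and is a group). Then $L$ contains a finitely generated subgroup $\langle\bar a\rangle\cong G_{\Sigma}$, where $\Sigma$ is the full (possibly infinite) set of relations of $\bar a$, which is not fully residually free; Remark \ref{MRStep1} gives nontrivial witnesses $w_1,\dots,w_m$ killed by every morphism to $\mathbb{F}$. Since the axiom schema ranges only over finite systems, one needs the equational Noetherianity of $\mathbb{F}$ to replace $\Sigma$ by a finite subsystem $\Sigma_0$ equivalent to it over $\mathbb{F}$; then $G_{\Sigma_0}$ is still not fully residually free with the same witnesses, so the sentence for $\Sigma_0$ is one of the axioms, and $\bar a$ violates it in $L$: $\Sigma_0(\bar a)=1$ holds, yet $w_i(\bar a)\neq 1$ for every $i$ because the $w_i$ are nontrivial in $\langle\bar a\rangle\le L$. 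Your proposal never isolates this tuple-in-$L$ argument and never addresses the finiteness issue (equational Noetherianity), which is exactly where the content of the proposition lies.
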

\begin{proof}
If $L$ is locally fully residually free, then it satisfies the universal theory of the free group and consequently it satisfies the given sentences (which are part of the universal theory). 

If $L$ is not a locally fully residually free group, then either it is not a group or it has a finitely generated subgroup $G_{\Sigma}$ 
which is not fully residually free. Let $w_1(\bar{x}), \ldots, w_m(\bar{x})$ be a finite list of non trivial words that witness it. In principle $\Sigma$ might be infinite, but by the equational Noetherianity of $\mathbb{F}$ (see \cite{Guba}) there exists $\Sigma_0\subset\Sigma$ which is finite and equivalent to $\Sigma$ over $\mathbb{F}$. 

It is easy to see that $G_{\Sigma_0}$ is not fully residually free (witnessed by the same nontrivial $w_i$'s). Indeed, we first notice that $w_1,\ldots, w_m$ are not trivial in $G_{\Sigma_0}$. If $w_i$, for some $i\leq m$, was trivial in $G_{\Sigma_0}$, then it would be trivial in $G_\Sigma$, since $\Sigma_0\subseteq \Sigma$. Secondly, by the equivalence of $\Sigma_0$ with $\Sigma$, the values a morphism, $h:G_{\Sigma_0}\rightarrow \mathbb{F}$, gives to the tuple $\bar x$ can be used to define a morphism $h':G_{\Sigma}\rightarrow \mathbb{F}$ and such a morphism kills one of the $w_i$'s, hence $h$ kills one of the $w_i$'s. Consequently, the sentence $\forall\bar{x}(\Sigma_0(\bar{x})=1\rightarrow  w_1(\bar{x})=1\lor w_2(\bar{x})=1\lor\ldots\lor w_m(\bar{x})=1)$ is part of the axioms. But then $G_{\Sigma}$ does not satisfy it, since the identity morphism cannot kill any nontrivial element. Therefore, the group $L$ does not satisfy the sentence.

\end{proof}

\begin{lemma}
The class of locally fully residually free groups is not finitely axiomatizable.
\end{lemma}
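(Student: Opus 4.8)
The plan is to show that the class of locally fully residually free groups is closed under ultraproducts but not under elementary equivalence — or, more directly and elementarily, to exhibit a single group that is not locally fully residually free but is an ultraproduct (equivalently, an elementary limit) of groups that are. By the Keisler--Shelah theorem, or just by a compactness argument, a finitely axiomatizable class is closed under ultraproducts \emph{and} has a finitely axiomatizable complement; producing a group in the complement which is a limit of groups in the class therefore contradicts finite axiomatizability. Since by Proposition~\ref{UniversalAxioms} the class is axiomatized by a recursive set of $\forall$-sentences, it suffices to find, for each finite subset of those axioms, a group satisfying that subset but not lying in the class, and then take an ultraproduct.

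\emph{Concretely}, the natural witness is $(\mathbb{Q},+)$, already flagged in the excerpt: it is locally fully residually free (every finitely generated subgroup is cyclic, hence free abelian of rank $\le 1$, hence residually free), but it is \emph{not} fully residually free, as it is divisible and nonabelian-free-like behaviour fails — more precisely, no nontrivial morphism $\mathbb{Q}\to\mathbb{F}$ exists that is injective on, say, $\{1, 1/2, 1/3, \dots, 1/n\}$ for all $n$ simultaneously, because such a morphism would force $h(1)$ to be infinitely divisible in $\mathbb{F}$. Wait — but $(\mathbb{Q},+)$ \emph{is} in the class (it \emph{is} locally fully residually free). So $(\mathbb{Q},+)$ is the wrong direction: it shows the class of fully residually free groups is not the same as the local version, not that the local class is not finitely axiomatizable. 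So instead I would build a group \emph{outside} the class as a limit of groups \emph{inside} it.

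\emph{The construction} I would use: take a sequence of groups $G_n$ each of which is fully residually free (e.g. free groups, or $\mathbb{Z}^2$ variants, or limit groups with ever-larger ``complexity'') chosen so that $G_n$ satisfies the first $n$ axioms from Proposition~\ref{UniversalAxioms} but fails the $(n{+}1)$-st. For this to work I need, for each finite system $\Sigma(\bar x)$ with $G_\Sigma$ not a limit group, a limit group that nonetheless ``approximately'' satisfies $\Sigma$ in the relevant sense — but that is exactly prevented by the axiom. Cleaner: let $\sigma_k$ enumerate the axioms of type (2); for each $N$, the conjunction $\sigma_1\wedge\cdots\wedge\sigma_N$ is \emph{not} equivalent to the full theory (else the class would be finitely axiomatizable and we would be done trivially by the other half), so there is a group $H_N$ with $H_N\models\sigma_1\wedge\cdots\wedge\sigma_N$ but $H_N\not\models\sigma_{m(N)}$ for some $m(N)>N$. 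Take a nonprincipal ultraproduct $H^\ast=\prod H_N/\mathcal U$: for each fixed $k$, the set $\{N : H_N\models\sigma_k\}$ is cofinite hence in $\mathcal U$, so $H^\ast\models\sigma_k$ for all $k$, thus $H^\ast$ is locally fully residually free; but if the class were defined by finitely many sentences $\tau_1,\dots,\tau_r$, each implied by the $\sigma_k$'s, then $\{N: H_N\models\tau_1\wedge\cdots\wedge\tau_r\}\in\mathcal U$, so $H^\ast\models\tau_1\wedge\cdots\wedge\tau_r$ and hence $H^\ast$ would be in the class — fine, that's consistent. I need $H^\ast$ \emph{not} in the class, so I must instead arrange that the $H_N$ eventually leave \emph{every} proposed finite axiomatization; the standard trick is: suppose $\tau=\tau_1\wedge\cdots\wedge\tau_r$ axiomatized the class; since $\tau$ follows from $\{\sigma_k\}$, by compactness $\tau$ follows from some finite $\sigma_1\wedge\cdots\wedge\sigma_N$; pick $H_N\models\sigma_1\wedge\cdots\wedge\sigma_N$ with $H_N\not\models\sigma_{m}$ for some larger $m$ — then $H_N\models\tau$, so $H_N$ is in the class, so $H_N\models\sigma_m$, contradiction. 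So the real content is: \emph{for every $N$, the axiom system is not exhausted by $\sigma_1\wedge\cdots\wedge\sigma_N$}, i.e. there is genuinely a group realizing $\sigma_1\wedge\cdots\wedge\sigma_N\wedge\neg\sigma_m$ for some $m$.

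\emph{Producing that group} is the main obstacle. I would do it by taking, for each $m$, a finitely generated group $G_{\Sigma^{(m)}}$ that is not a limit group and witnesses $\neg\sigma_m$ via the identity map (as in the proof of Proposition~\ref{UniversalAxioms}), then replacing it by a quotient or a carefully chosen subgroup that restores satisfaction of all the \emph{earlier} axioms while still violating $\sigma_m$; alternatively, one shows directly that there is an infinite strictly descending chain of axioms by exhibiting limit groups $L_m$ (e.g. the groups $\langle a,b \mid [a,b^m]=1\rangle$-type examples, or surface groups of growing genus, or free products realizing more and more of the $w_i$-patterns) with $L_m\models\sigma_k$ for $k\le N(m)$ where $N(m)\to\infty$ but $L_m\not\models\sigma_{N(m)+1}$. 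I expect the cleanest route is: cite that there exist finitely presented non-limit groups $G_m$ each requiring a ``new'' axiom not implied by finitely many others — equivalently, that no finite subset of the $\sigma_k$ axiomatizes the universal theory $T_{fg}^{\forall}$ — which itself follows from the fact that $T_{fg}^{\forall}$ (equivalently, the class of limit groups / its universal closure) is \emph{not} finitely axiomatizable, a known consequence of the existence of an infinite Krull-type dimension / unboundedly complicated limit groups. Then the ultraproduct argument above packages it: $\prod_m G_m/\mathcal U$ satisfies every $\sigma_k$ (each holds cofinitely often) hence is locally fully residually free, while for any finite candidate axiomatization $\tau$ we derive a contradiction as in the previous paragraph. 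The delicate point to get right is ensuring the $G_m$ really do satisfy \emph{all but finitely many} $\sigma_k$ as $m$ grows — this is where one must invoke (or prove) that the ``bad'' systems $\Sigma$ can be linearly ordered by complexity so that fixing the first $N$ constraints is achievable by a genuine group, and I would lean on the structure theory of limit groups (Makanin--Razborov diagrams, equational Noetherianity already used in the excerpt) to organize this enumeration.
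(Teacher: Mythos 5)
There is a genuine gap, and the opening plan is in fact set up in an impossible direction. By the corollary to Proposition \ref{LimitGroups} (and Proposition \ref{UniversalAxioms}) the class of locally fully residually free groups is elementary---it is exactly the class of models of $T_{fg}^{\forall}$---so by {\L}o\'s's theorem it \emph{is} closed under ultraproducts and under elementary equivalence; hence there is no group outside the class that is an ultraproduct of groups inside it, which is what your first paragraph proposes to exhibit. The correct target is the \emph{complement}: one must produce groups \emph{not} in the class whose ultraproduct \emph{is} in the class, for then a single defining sentence $\tau$ would make the complement elementary (via $\lnot\tau$) and closed under ultraproducts, a contradiction. You do eventually pivot to a correctly shaped compactness argument (models of arbitrarily large finite fragments of the axioms lying outside the class), but the essential step---actually constructing such witnesses---is never carried out. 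At the crucial point you propose to cite ``the known fact that $T_{fg}^{\forall}$ is not finitely axiomatizable''; since the models of $T_{fg}^{\forall}$ are precisely the locally fully residually free groups, that fact \emph{is} the lemma being proved, so the argument as written is circular. The concrete candidates you float do not help: surface groups and free products of limit groups are themselves limit groups, hence satisfy every axiom and witness nothing, and the $\langle a,b\mid [a,b^m]=1\rangle$-type groups are never shown to satisfy the earlier axioms while failing a later one.

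The paper supplies the missing witnesses very cheaply, on the complement side. Take the finite cyclic groups $\mathbb{Z}_{p_i}$ for distinct primes $p_i$: they have torsion, and residually free groups are torsion-free, so each lies outside the class; yet a nonprincipal ultraproduct $\prod_{i}\mathbb{Z}_{p_i}/\mathcal{U}$ is torsion-free abelian, so all its finitely generated subgroups are free abelian and the ultraproduct is locally fully residually free. Thus the complement is not closed under ultraproducts, and no single sentence can axiomatize the class. If you prefer your compactness phrasing, the same example furnishes the finite-fragment witnesses you were missing: for any finite $T_0\subseteq T_{fg}^{\forall}$, {\L}o\'s applied to the ultraproduct shows that $\mathcal{U}$-almost every $\mathbb{Z}_{p_i}$ satisfies $T_0$, while each $\mathbb{Z}_{p_i}$ fails the universal sentence $\forall x\,(x^{p_i}=1\rightarrow x=1)\in T_{fg}^{\forall}$; but note that this verification itself runs through the ultraproduct, so the direct ultraproduct argument is both shorter and self-contained.
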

\begin{proof}
We will show that the class of non locally fully residually free groups is not closed under ultraproducts. Indeed, consider the finite cyclic group $\mathbb{Z}_{p_i}$ where $p_i$ is a prime and $Z^*:=\Pi_{i\in\mathbb{N}}\mathbb{Z}_{p_i}/\mathcal{U}$ be an ultraproduct with respect to a non-principal ultrafilter $\mathcal{U}$. Then, since $Z^*$ is abelian and torsion free, it is locally fully residually free.    
\end{proof}

\section{Models of randomness and background results}\label{RandomModel}
In this paper we will use Gromov's density model of randomness, but for completeness we also describe the few-relator models. In this section after stating the appropriate definitions, we present some results concerning random groups of various models. In addition, we record that random groups are not limit groups.  

\subsection{The models}

\begin{definition}[Few-relator model]\label{FR}
We fix $n\geq 2$, $k\geq 1.$ Let $\mathcal{G}_{k,\ell}$ be the following set of group presentations:
$$\{\langle e_1, \ldots, e_n \ | \ r_1, \ldots, r_k\rangle \ : \ r_i \ \textrm{reduced} \ \& \ |r_i|\leq \ell \}.$$

We say that property $P$, which is a property of presentations, happens almost surely in the few-relator model if the ratio of presentations in $\mathcal{G}_{k,\ell}$ which have property $P$ over $|\mathcal{G}_{k,\ell}|$ goes to $1$ as $\ell$ goes to infinity.
\end{definition}

As mentioned in the introduction is not hard to check that in this model any small cancellation property $C'(\lambda)$, for $0<\lambda<1$, is almost surely true. Recall that a group has $C'(\lambda)$ if no two defining relators (up to taking inverses) share a subword whose length is greater than $\lambda$ times the minimum length of the relators. We remark here that J. Knight's conjecture was posed for the few-relator model and particularly for $k=1$, but we find more natural and interesting the case of Gromov's  density model which we explain in the sequel.  A few relator model with $k=1$ corresponds to the  density model with $d=0$, hence J. Knight's conjecture falls into the scope of our work as a special case. We properly restate it here. 
 
 \begin{conjecture}[J. Knight]
 Let $n\geq 2$. Let $\mathcal{G}_{1,\ell}$ be as in Definition \ref{FR}. Then a first-order sentence in the language of groups is almost surely true in this randomness model if and only if it is true in a non-abelian free group.
 \end{conjecture}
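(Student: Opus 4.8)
I would begin by recording that the conjecture's universal fragment is already settled by this paper: the few--relator model with $k=1$ is the density model at $d=0<1/16$, so the Main Theorem gives it immediately, and the reverse implication is the easy observation recalled above --- a random group contains a nonabelian free group, hence satisfies the existential theory of $\mathbb{F}$. The point, then, is to handle sentences with genuine quantifier alternation. The first move is to exploit that, by the solution of the Tarski problem \cite{SelFree, KMFree}, the theory $\mathrm{Th}(\mathbb{F})$ of a nonabelian free group is complete. Consequently it suffices to prove the \emph{single} implication ``$\mathbb{F}\models\sigma\ \Rightarrow\ \sigma$ holds almost surely in $\mathcal{G}_{1,\ell}$'': if this holds for all $\sigma$, the converse is automatic, since for any presentation exactly one of $\sigma,\lnot\sigma$ is true in the resulting group, so were $\sigma$ almost surely true while $\mathbb{F}\models\lnot\sigma$, then $\lnot\sigma$ would also be almost surely true, a contradiction. (This also yields the $0$--$1$ law: the probability of satisfying $\sigma$ converges to $1$ if $\mathbb{F}\models\sigma$ and to $0$ otherwise.)

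Next I would cut down the quantifier complexity of $\sigma$. By the structure theory of definable sets in free groups that underlies \cite{SelFree, KMFree}, modulo $\mathrm{Th}(\mathbb{F})$ every sentence is a Boolean combination of $\forall\exists$--sentences, so it is enough to prove that a true $\forall\exists$--sentence, and dually a true $\exists\forall$--sentence, holds almost surely. For $\forall\bar{x}\,\exists\bar{y}\,\phi(\bar{x},\bar{y})$ true in $\mathbb{F}$, Merzlyakov--type formal--solution theorems (part of the same circle of ideas) produce, after possibly enlarging $\mathbb{F}$ and absorbing finitely many constants as extra universally quantified variables, finitely many tuples of words $\bar{w}_1,\dots,\bar{w}_r$ in these variables so that the \emph{universal} sentence $\tau:=\forall\bar{x}\,\forall\bar{z}\,\big(\theta(\bar{z})\to\bigvee_{j\le r}\phi(\bar{x},\bar{w}_j(\bar{x},\bar{z}))\big)$ lies in $T^{\forall}_{fg}$, where $\theta$ excises a proper subvariety of the auxiliary parameters. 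The Main Theorem, at density $0<1/16$, makes $\tau$ hold almost surely; and whenever a group $\Gamma$ satisfies $\tau$, then for any $\bar{a}$ in $\Gamma$ one chooses $\bar{z}$ in $\Gamma$ satisfying $\theta$ (possible since $\Gamma$ contains a nonabelian free group) and reads off a witness $\bar{b}=\bar{w}_j(\bar{a},\bar{z})$ for some $j$. Hence $\Gamma$ almost surely satisfies $\forall\bar{x}\,\exists\bar{y}\,\phi$.

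The remaining case, a true $\exists\forall$--sentence $\exists\bar{x}\,\forall\bar{y}\,\psi(\bar{x},\bar{y})$, is where I expect the real work, and it is presumably why this paper confines itself to the universal fragment. The existential witness is cheap: Sela's analysis lets one take a witness $\bar{a}_0$ realized by words in the free generators of $\mathbb{F}$ forming a geometrically rigid, quasiconvex configuration, whose image $\bar{a}_0^{\Gamma}$ in $\Gamma$ is then at hand. But one must establish the \emph{universal} statement $\forall\bar{y}\,\psi(\bar{a}_0^{\Gamma},\bar{y})$ over all of $\Gamma$ with the \emph{non--generic} parameters $\bar{a}_0^{\Gamma}$ --- a relativized version of the Main Theorem, whose universal part does not automatically descend from a constant--free sentence of $T^{\forall}_{fg}$. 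The plan is to rerun the local--to--global/shortening argument of Section \ref{Short} in the presence of these constants, together with the probabilistic estimates of the final section, to show that a uniformly random relator of length $\ell$ almost surely does not disrupt the word combinatorics of the configuration $\bar{a}_0$ and of the formal solutions and test sequences attached to it. Controlling random relators against Sela--style formal solutions carrying non--generic parameters is, I expect, the main obstacle; the rest is assembly of existing machinery.
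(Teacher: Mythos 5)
You should first note that the statement you were asked to prove is stated in the paper only as a \emph{conjecture} (J.\ Knight's conjecture): the paper does not prove it, and its Main Theorem settles only the universal fragment (the few-relator model with $k=1$ being the density model at $d=0<1/16$). Your proposal likewise does not prove it, and you say so yourself: the $\exists\forall$ case is left as ``where I expect the real work,'' with only a plan to rerun the shortening argument and the probabilistic estimates in the presence of non-generic constants. A proof that openly defers its hardest case is a research programme, not a proof, so at best you have re-derived the universal fragment, which is exactly what the paper establishes.

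Beyond that, two intermediate steps are genuinely gapped. First, the reduction ``modulo $\mathrm{Th}(\mathbb{F})$ every sentence is a Boolean combination of $\forall\exists$-sentences'' is an equivalence valid in models of $\mathrm{Th}(\mathbb{F})$; the random group is not such a model (it is not even a limit group, Proposition \ref{NotLimit}, so it already fails $T^{\forall}_{fg}$), hence the equivalence cannot be transferred to $\Gamma_\ell$. Even if you proved that $\Gamma_\ell$ almost surely agrees with $\mathbb{F}$ on all $\forall\exists$- and $\exists\forall$-sentences, this would not determine the truth value in $\Gamma_\ell$ of an arbitrary sentence $\sigma$, because the Boolean combination representing $\sigma$ need not be equivalent to $\sigma$ inside $\Gamma_\ell$. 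Second, the Merzlyakov step is overstated: formal solutions valid for \emph{all} values of the universal variables exist only for positive sentences. For a general $\forall\exists$-sentence (with inequalities) the formal solutions produced by the Sela/Kharlampovich--Myasnikov machinery are valid only on test sequences/generic values, the exceptional values form proper varieties on which one must recurse through a finite iterative procedure, and the outcome is not a single universal sentence $\forall\bar{x}\forall\bar{z}(\theta(\bar{z})\to\bigvee_j\phi(\bar{x},\bar{w}_j))$ lying in $T^{\forall}_{fg}$ with a quantifier-free $\theta$ depending only on auxiliary parameters; so the appeal to the Main Theorem at that point does not go through as written. What survives of your argument is the observation that completeness of $\mathrm{Th}(\mathbb{F})$ reduces the conjecture to one implication, and the universal case via the Main Theorem --- the rest (the $\forall\exists$ case done correctly, and the $\exists\forall$ case with non-generic constants) is precisely the open content of the conjecture, which the authors defer to future work.
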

 
 


 
Finally we present the density model. 

\begin{definition}[Gromov's density model]
We fix $n\geq 2$. Let $\mathbb{F}_n:=\langle e_1, \ldots, e_n\rangle$ be a free group of rank $n$. Let $S_{\ell}$ be the set of reduced words on $e_1, \ldots, e_n$ of length $\ell$. 

Let $0\leq d\leq 1$. Then a random set of relators of density $d$ at length $\ell$ is a subset of $S_{\ell}$ that consists of  $(2n-1)^{d\ell}$-many elements picked randomly (uniformly and independently) among all elements of $S_{\ell}$. 

A group $G:=\langle e_1,\ldots, e_n \ | \ \mathcal{R} \ \rangle$ is called random of density $d$ at length $\ell$ if $R$ is a random set of relators of density $d$ at length $\ell$. 
\end{definition}

A random group of density $d$ almost surely satisfies some property of presentations $P$, if the probability of occurrence of $P$ tends to $1$ as $\ell$ goes to infinity. Sometimes abusing language we may say a random group satisfies a property instead of almost surely satisfies the property. 

From this point on we will exclusively use the density model. Hence any statements about random groups concern this model. We list some well known properties of random groups. 

\begin{proposition}[Gromov \cite{Gromov}, Ollivier \cite{Olli}]
Let $a>0$ and $d<a/2$. Then a random group of density $d$ satisfies the small cancellation property $C'(a)$. 
%
%
\end{proposition}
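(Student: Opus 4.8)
The plan is a first--moment (union bound) argument of the kind standard for the density model. Write $k := \lfloor (2n-1)^{d\ell}\rfloor$ for the number of relators and $L := \lfloor a\ell\rfloor + 1$, and model the random relators $w_1,\dots,w_k$ as independent uniform elements of $S_\ell$ (the precise model, a uniformly random subset of $S_\ell$ of that size, is handled identically). Since here every relator has length exactly $\ell$, the definition of $C'(a)$ recalled above says precisely that $C'(a)$ \emph{fails} iff some reduced word $u$ with $|u|\geq L$ occurs, read cyclically, in two genuinely distinct ways among $w_1^{\pm1},\dots,w_k^{\pm1}$ --- equivalently, as a common prefix of two distinct elements of the symmetrised closure of $\{w_1,\dots,w_k\}$.

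First I would introduce the random variable $X$ counting ``bad configurations'': a configuration is a tuple consisting of indices $i,j\in\{1,\dots,k\}$, signs $\varepsilon,\delta\in\{\pm1\}$, and cyclic offsets $p,q\in\{0,\dots,\ell-1\}$ such that the length-$L$ cyclic subword of $w_i^{\varepsilon}$ starting at position $p$ equals, as a reduced word, that of $w_j^{\delta}$ starting at position $q$, and such that these two occurrences are genuinely distinct in the symmetrised set (so that they witness a piece, not a trivial self--prefix). There are at most $4k^2\ell^2$ configurations, so it suffices to bound the probability that a single fixed configuration is realised and then sum.

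Next I would estimate that probability. If $i\neq j$, I would condition on $w_i$: this fixes the target word $u$ of length $L$, and the number of reduced words of length $\ell$ that contain a prescribed reduced word of length $L$ at a prescribed cyclic position is at most $C_n(2n-1)^{\ell-L}$, the constant $C_n=C_n(n)$ absorbing the reducedness constraints at the two junctions; hence the probability is $\leq C_n(2n-1)^{-L}$. If $i=j$, the configuration imposes a system of $L$ letter--identifications inside the single random word $w_i$: when the two length-$L$ windows are disjoint this again cuts the number of admissible $w_i$ by a factor $(2n-1)^{L}$, and when they overlap the forced periodicity constrains $w_i$ at least as much, so in every case the probability is again $\leq C_n(2n-1)^{-L}$. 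Summing,
\[
\mathbb{E}[X]\ \leq\ 4C_n\,k^2\ell^2(2n-1)^{-L}\ \leq\ 4C_n\,\ell^2\,(2n-1)^{(2d-a)\ell}.
\]
Because $d<a/2$ forces $2d-a<0$, the exponential factor dominates the polynomial one, so $\mathbb{E}[X]\to 0$ as $\ell\to\infty$; by Markov's inequality $\mathbb{P}[X\geq 1]\leq\mathbb{E}[X]\to 0$, and on $\{X=0\}$ there is no piece of length $\geq L$, i.e.\ $C'(a)$ holds. This would establish the proposition.

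The argument carries no conceptual obstacle --- it is the standard first--moment computation --- and the exponent $2d-a$ makes transparent why the threshold is exactly $d<a/2$, coming from the dominant case $i\neq j$ where $k^2\approx(2n-1)^{2d\ell}$ pairs of relators are tested against a collision probability $\approx(2n-1)^{-a\ell}$. The one genuinely delicate point is the self--overlap case $i=j$, in particular the mixed--sign variant where $u$ is matched against a subword of $w_i^{-1}$, i.e.\ against a reversed--and--inverted copy of a subword of $w_i$: there one must check that overlapping windows never raise the probability above the disjoint case, and keep all the reduced--word boundary corrections inside a single constant $C_n$ independent of $\ell$.
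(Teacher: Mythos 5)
Your argument is correct: this proposition is quoted in the paper as background (cited to Gromov and to Ollivier's survey) with no proof given, and your first-moment/union-bound computation is essentially the standard proof from those references, with the exponent $2d-a$ giving exactly the $d<a/2$ threshold. The only point worth noting is that in the mixed-sign self-overlap case the overlapping configurations are in fact impossible (the symmetry forces either a letter equal to its own inverse or a cancellation between adjacent letters, contradicting reducedness), which is why your bound of $C_n(2n-1)^{-L}$ indeed holds there as well.
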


In particular, a random group of density $1/16$ satisfies the small cancellation property $C'(1/8)$. This latter class of groups has some properties that will prove very useful towards our goal. We note in passing  that every group that satisfies the small cancellation property $C'(1/6)$ is hyperbolic. On the other hand, we record for completeness the following (harder to prove) theorem about the hyperbolicity of random groups.    

\begin{theorem}[Gromov \cite{Gromov}, Ollivier \cite{Olli2}]
Let $G$ be a random group of density $d$. 
\begin{itemize}
    \item If $d<1/2$. Then $G$ is infinite torsion-free hyperbolic.
    \item If $d>1/2$. Then $G$ is either trivial or $\mathbb{Z}_2$.
\end{itemize}
\end{theorem}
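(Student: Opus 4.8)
The two density regimes are handled by completely different means: for $d<1/2$ one proves that $G$ satisfies a \emph{linear isoperimetric inequality}, and for $d>1/2$ one shows $G$ collapses for pigeonhole reasons. For $d<1/2$, recall that a finitely presented group is hyperbolic as soon as it satisfies a subquadratic isoperimetric inequality \cite{GroHyp}, and for this it suffices to find a constant $C>0$ with $|\partial D|\geq C\,|D|$ for every reduced van Kampen diagram $D$ over the presentation, where $|D|$ is the number of faces and $|\partial D|$ the boundary length. So fix $\varepsilon>0$ with $d<(1-\varepsilon)/2$; I would prove that almost surely every reduced van Kampen diagram $D$ satisfies $|\partial D|\geq(1-2d-\varepsilon)\,\ell\,|D|$. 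Since $1-2d-\varepsilon>0$ this is a linear isoperimetric inequality and yields hyperbolicity; torsion-freeness and infiniteness then follow by standard supplementary arguments (a nontrivial torsion element would force a reduced spherical diagram filling some $w^{k}$ with $w\neq 1$, $k\geq 2$, incompatible with the inequality once one knows, as holds almost surely, that a random relator is not a proper power and that distinct random relators are pairwise non-conjugate; the same inputs show $G$ is nonelementary, hence infinite).

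To establish the inequality I would run a union bound over van Kampen diagrams. A diagram is specified by a combinatorial shape — a planar, simply-connected complex whose $m$ faces are $\ell$-gons — together with, for each face, a choice of one of the $R=(2n-1)^{d\ell}$ relators, an orientation and a basepoint. List the faces $f_{1},\dots,f_{m}$ in a connected order, so that for $i\geq 2$ the face $f_{i}$ meets $f_{1}\cup\dots\cup f_{i-1}$ in a union of arcs of total length $s_{i}$; since each interior edge is counted exactly once, $\sum_{i\geq 2}s_{i}=\tfrac12\bigl(\ell m-|\partial D|\bigr)$ up to an $O(m)$ error. Revealing the random relators face by face: when we reveal the label of $f_{i}$ there are at most $R$ choices for the relator and, given the earlier faces, the chance its label agrees with the already-prescribed letters on the shared arcs is at most $(2n-1)^{-s_{i}+O(1)}$ (a fresh uniform word has that probability of matching a prescribed subword of length $s_{i}<\ell$, and a repeated relator only decreases the probability). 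Multiplying,
\[
\Pr[\,D\text{ fillable}\,]\ \leq\ (2\ell)^{m}R^{m}(2n-1)^{-\sum s_{i}+O(m)}\ =\ (2\ell)^{m}(2n-1)^{\,d\ell m-(\ell m-|\partial D|)/2+O(m)} .
\]
For a \emph{bad} diagram, i.e.\ one with $|\partial D|<(1-2d-\varepsilon)\ell m$, the exponent of $(2n-1)$ is at most $-\tfrac{\varepsilon}{2}\ell m+O(m)$, so $D$ is fillable with probability at most $(2n-1)^{-\varepsilon\ell m/3}$ once $\ell$ is large. Finally one sums over shapes: the number of combinatorial shapes with $m$ faces of perimeter $\ell$ is at most $(C\ell)^{Cm}$ for an absolute constant $C$ (build the diagram face by face; a new $\ell$-gon can be glued onto the current boundary in a number of ways polynomial in $\ell$ and in the current size), so
\[
\Pr[\exists\text{ bad diagram}]\ \leq\ \sum_{m\geq 1}\big[\,(C\ell)^{C}\,(2\ell)\,(2n-1)^{-\varepsilon\ell/3}\,\big]^{m},
\]
and the bracketed quantity (a polynomial in $\ell$ against an exponentially small factor) tends to $0$ as $\ell\to\infty$, so the whole sum does too. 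If one is uncomfortable with the shape count, it may be replaced by the crude bound $K^{\ell m}$ together with the local-to-global principle for isoperimetric inequalities, which reduces the claim to diagrams whose number of faces is bounded in terms of $\ell$.

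For $d>1/2$, now $R=(2n-1)^{d\ell}$ is much larger than $(2n-1)^{\lceil \ell/2\rceil}$, the number of reduced words of length about $\ell/2$. A birthday-paradox argument shows that almost surely there are many pairs of chosen relators sharing a common subword of length close to $\ell/2$ in matching positions; cancelling such a pair produces a relation of length $o(\ell)$. Feeding these short relations back in and iterating forces every generator to become trivial — or, when a parity obstruction coming from the common relator length $\ell$ is active, of order dividing $2$ — so the quotient is $\{1\}$ or $\mathbb{Z}_{2}$.

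The serious part is the $d<1/2$ isoperimetric inequality: making the union bound genuinely converge, i.e.\ controlling the number of combinatorial diagram shapes sharply enough (or setting up the local-to-global reduction), and treating with care the cases of repeated relators and of reduced but geometrically degenerate diagrams in the face-by-face revealing argument. The $d>1/2$ half is comparatively soft. See \cite{Gromov}, \cite{Olli2}, and the exposition in \cite{Olli}.
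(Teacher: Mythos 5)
First, note that the paper does not prove this theorem at all: it is recorded purely for completeness with a citation to Gromov and Ollivier (and the only ingredient of it the paper actually uses later is Ollivier's isoperimetric inequality $\partial D> f\ell(1-2d-\epsilon)$, quoted as an external theorem before Proposition \ref{FacesBound}). So the comparison is with the known proofs rather than with an argument in the paper. For $d<1/2$ your plan is essentially Ollivier's route: a linear isoperimetric inequality for reduced diagrams proved by a union bound over combinatorial shapes (what the paper calls abstract van Kampen diagrams, counted polynomially in $\ell$ per face exactly as in Lemmas \ref{planar} and \ref{aDVK}), plus the local-to-global principle. The one point you wave through, namely ``a repeated relator only decreases the probability,'' is precisely the crux of Ollivier's proof: when several faces bear the same relator the face-by-face revealing argument loses its independence, and the correct estimate is phrased in terms of the number $n\leq f$ of \emph{distinct} relators (compare the paper's own Proposition \ref{A}, where the bound is $(2m-1)^{-n\ell(1/2-d)}$), with a separate treatment of self-overlaps of a single random word. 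Flagging this as ``to be treated with care'' is fine for a plan, but as written it is an assertion, not an argument.

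The $d>1/2$ half, by contrast, contains a genuine error. Two relators sharing a common subword of length close to $\ell/2$ in matching positions do \emph{not} yield a relation of length $o(\ell)$: cancelling $r_1r_2^{-1}$ along a shared piece of length $L$ leaves a word of length $2(\ell-L)$, which for $L\approx\ell/2$ is of length $\approx\ell$, so nothing collapses. Moreover the comparison you invoke, $R=(2n-1)^{d\ell}\gg(2n-1)^{\ell/2}$, is the birthday threshold for collisions among \emph{whole} relators, not for half-length subwords, and an exact collision gives no relation at all. The correct argument pushes the birthday computation to prefixes of length $\ell-1$: the expected number of pairs of relators agreeing except in their last letter is about $(2n-1)^{2d\ell-(\ell-1)}\to\infty$ precisely when $d>1/2$, and a second-moment argument then gives, almost surely, many relations of length $2$ identifying generators (and their inverses) with one another; iterating over all pairs of letters, and observing that the common relator length $\ell$ leaves at most a sign homomorphism alive, yields that the quotient is trivial or $\mathbb{Z}_2$. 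As written, your ``feeding back short relations'' step has no short relations to feed.
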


 In the next subsection we record an easy corollary of already known results about random groups.

\subsection{A random group is not a limit group}\label{Sectionlimit}

In this subsection we prove that a random group is not almost surely a limit group. We actually give two proofs that will be easy consequences of the following results.
 
\begin{fact}[Kozma-Lubotzky \cite{LK}]\label{FieldReprGromov}
Let $0<d<1$ and $G$ be a random group at density $d$. Then, for any fixed $k$, there is no nontrivial degree $k$ representation of $G$ over any field.
\end{fact}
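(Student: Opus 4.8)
The plan is to reduce the statement to a question about finite quotients of $G$ that are linear in a bounded dimension, and then exploit the fact that a random group of density $d>0$ is defined by $(2n-1)^{d\ell}\to\infty$ relators. Since $\mathrm{GL}_k$ is a group scheme over $\mathbb Z$ and extension of scalars preserves nontriviality, a nontrivial degree-$k$ representation of $G=\langle e_1,\dots,e_n\mid\mathcal R\rangle$ over \emph{any} field produces one over $\overline{\mathbb Q}$ or over $\overline{\mathbb F}_p$ for some prime $p$ (restrict to the finitely generated subfield generated by the matrix entries of the images of $e_1,\dots,e_n$ and specialise; in characteristic $0$ this is the Lefschetz principle). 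I would also first record that a random group of density $d>0$ has trivial abelianisation almost surely -- there are far more than $n$ relators, and the exponent-sum vectors of uniformly random reduced words generate $\mathbb Z^{\,n}$ almost surely -- so $G$ has no nontrivial homomorphism onto an abelian group, no representation with abelian image, and every nontrivial finite quotient of $G$ is perfect.

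Fix a hypothetical nontrivial $\rho\colon G\to\mathrm{GL}_k(K)$ ($K$ algebraically closed) with image $\Gamma:=\rho(G)$, a nontrivial, non-abelian, finitely generated linear group of degree $k$. Suppose first that $\Gamma$ has no finite simple subquotient of unbounded order -- by Jordan's theorem this is automatic in characteristic $0$, and it covers in particular every virtually solvable $\Gamma$. Using Jordan's theorem, the Jordan--Larsen--Pink structure theorem and Platonov's theorem one then finds a solvable normal subgroup $K_0\trianglelefteq\Gamma$ such that $\Gamma/K_0$ lies in a fixed finite list of perfect groups, so $G$ surjects onto one of finitely many nontrivial perfect groups $Q$ of order bounded in terms of $k$. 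For each such $Q$ and each surjection $\psi\colon\mathbb F_n\to Q$, the probability that a random reduced word of length $\ell$ lies in $\ker\psi$ equals the return probability of the non-backtracking random walk on $Q$ driven by $\psi(e_1),\dots,\psi(e_n)$, which tends to $1/|Q|\le 1/6$; by independence of the relators $\Pr[\mathcal R\subseteq\ker\psi]\le (1/4)^{N}$ for $\ell$ large, and summing over the finitely many pairs $(Q,\psi)$ this case has probability $\to 0$.

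The remaining case -- $\Gamma$ has a finite simple subquotient of unbounded order, necessarily of Lie rank at most $k$ and, by Jordan's theorem, only in characteristic $p$ -- is the heart of the matter. Via Malcev's theorem ($\Gamma\le\mathrm{GL}_k(R)$ for a finitely generated integral domain $R$) and strong approximation, this forces $G$ to surject onto arbitrarily large finite simple groups $Q$ of Lie rank at most $k$; since each such $Q$ is perfect and nontrivial, one must show that almost surely $G$ has no such quotient of order exceeding some slowly growing $M(\ell)$. This reduces to the bound $\Pr[\mathcal R\subseteq\ker\psi]\le p_\psi(\ell)^{N}$, where $p_\psi(\ell)$ is the return probability of the non-backtracking walk on $Q$, which one estimates using the polylogarithmic-diameter theorem for finite simple groups of bounded Lie rank and/or Gowers quasirandomness, and then sums over $Q$ using that, by CFSG, there are at most two simple groups of each order, so the sum converges and is overwhelmed by $N=(2n-1)^{d\ell}$. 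The step I expect to be the main obstacle is exactly this one: making the estimate on $p_\psi(\ell)$ uniform over all generating tuples and all simple $Q$ of bounded rank, and separately ruling out pathologically large quotients (of order far beyond $e^{\ell}$); this is the technical content of \cite{LK}. The underlying point is that imposing that many genuinely random relations is incompatible with the boundedness -- of dimension, Lie rank, and diameter -- inherent in degree-$k$ linearity.
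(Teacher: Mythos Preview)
The paper does not prove this statement; it is recorded as a fact cited from \cite{LK} and used only as a black box in Proposition~\ref{NotLimit}, so there is no in-paper argument to compare against. Your outline is broadly in the spirit of the Kozma--Lubotzky approach, and you are candid that the decisive estimate --- uniform control of return probabilities for non-backtracking walks on finite simple groups of bounded Lie rank, summed over all such groups --- is exactly the technical content supplied by \cite{LK}; in that sense your proposal is a faithful roadmap rather than an independent proof.

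There is, however, one concrete error in your case analysis. The claim that ``by Jordan's theorem this is automatic in characteristic $0$'' is false: Jordan's theorem bounds finite \emph{subgroups} of $GL_k(\mathbb{C})$, not finite quotients of an infinite linear $\Gamma$. For instance $\Gamma=SL_2(\mathbb{Z})\subset GL_2(\mathbb{C})$ surjects onto $PSL_2(\mathbb{F}_p)$ for every prime $p\ge 5$, giving finite simple quotients of unbounded order while sitting in characteristic~$0$. The dichotomy you want is not ``characteristic $0$ versus positive characteristic'' but rather: pass (via the finitely generated ring of matrix entries and reduction modulo a maximal ideal) to a nontrivial perfect image inside some $GL_k(\mathbb{F}_q)$; its simple composition factors are, by Larsen--Pink-type structure theory, either of order bounded in terms of $k$ or of Lie type of rank bounded in terms of $k$; the top composition factor is then a genuine \emph{quotient} of $G$, and one runs the union bound over that family. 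Relatedly, throughout you should track finite \emph{quotients} of $\Gamma$ rather than arbitrary subquotients, since it is surjections from $G$ that feed the probabilistic estimate.
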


We say that a group has {\em property (FA)} if whenever it acts on a simplicial tree (in the sense of Bass-Serre), then it fixes a point.

\begin{fact}[Dahmani-Guirardel-Przytycki \cite{DGP11}]\label{FA}
Let $0<d<1$. A random group at density $d$ has almost surely property (FA).
\end{fact}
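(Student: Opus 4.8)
The plan is to derive property (FA) from Serre's structural criterion: a finitely generated group has property (FA) if and only if it admits no epimorphism onto $\Z$ and no decomposition as a nontrivial amalgamated free product $A\ast_C B$ (an HNN extension always surjects onto $\Z$, so it is covered by the first condition, and the ``ascending union'' clause of Serre's criterion is automatic for finitely generated groups). A random group is generated by $e_1,\dots,e_n$, hence finitely generated; for $1/2<d<1$ it is almost surely trivial or $\Z_2$, for which (FA) is immediate, so we may assume $0<d<1/2$, where $G=\langle e_1,\dots,e_n\mid\mathcal R\rangle$ is almost surely infinite, torsion-free and hyperbolic. It then suffices to prove that almost surely $G$ has (i) finite abelianization and (ii) no nontrivial amalgam splitting.

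For (i), note $G^{\mathrm{ab}}\cong\Z^n/M$, where $M\leq\Z^n$ is generated by the exponent-sum vectors of the relators of $\mathcal R$. Since $d>0$ the number $(2n-1)^{d\ell}$ of relators tends to infinity with $\ell$, and a standard estimate shows that the exponent-sum vectors of that many independent uniformly random reduced words of length $\ell$ almost surely span a finite-index subgroup of $\Z^n$: already a bounded number of them do so with probability bounded away from zero, and independence upgrades this to ``almost surely''. Hence $G^{\mathrm{ab}}$ is almost surely finite and $G$ has no $\Z$-quotient. (This step genuinely uses $d>0$; at $d=0$, in the one-relator model, $G$ has infinite abelianization and fails (FA).)

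The real content is (ii), where I would follow the geometric argument of Dahmani-Guirardel-Przytycki. Suppose $G=A\ast_C B$ nontrivially, with the associated action on the Bass-Serre tree. For $d<1/2$ the random group satisfies Ollivier's strong isoperimetric control: reduced van Kampen diagrams over $\mathcal R$ are ``thin'', obeying a linear isoperimetric inequality with a combinatorial Gauss-Bonnet bound governed by $d$. This translates into a strong local connectedness property of the Cayley $2$-complex of $G$ --- annular neighbourhoods of a vertex, truncated at the scale $\ell$ of the relators, possess no ``combinatorial cut set'' of bounded complexity. A nontrivial amalgam decomposition, through the tree action, would produce precisely such a bounded-complexity wall (a conjugate of the edge group $C$) separating a suitable ball or sphere in the Cayley complex. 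One then organizes all potential splittings into finitely many combinatorial patterns of bounded size --- a bounded guess for generators of the edge group together with the local picture of the Bass-Serre tree near the base vertex --- bounds, for a fixed pattern, the probability that the random $\mathcal R$ is compatible with it, and applies a union bound; these probabilities decay because a random relator must, with overwhelming probability, ``cross the wall'' and destroy the separation. Hence almost surely $G$ has no nontrivial amalgam decomposition, and combining (i) and (ii) with Serre's criterion gives property (FA).

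The main obstacle is step (ii). Small cancellation alone does not suffice: any one-relator group on at least two generators has infinite abelianization --- hence fails (FA) --- even though its relator can be chosen to make it $C'(\lambda)$ for every $\lambda$; likewise a $C'(1/6)$ presentation with relators supported on a proper subset of the generators yields a nontrivial free product. So one must exploit the genuinely random, equidistributed nature of the relators together with the fine structure of random van Kampen diagrams, which is exactly the analysis of \cite{DGP11}. The delicate point is to bound the a priori unbounded complexity of a hypothetical splitting and reduce it to a bounded combinatorial pattern on which a union bound is effective.
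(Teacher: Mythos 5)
The statement you are proving is not proved in the paper at all: it is quoted as a \emph{Fact} with attribution to \cite{DGP11}, and the paper uses it as a black box (its only role is in the second proof of Proposition \ref{NotLimit}). So the relevant question is whether your proposal actually supplies the missing proof, and it does not. Your step (i) is fine in outline (finite abelianization from the exponent-sum vectors of $(2n-1)^{d\ell}$ independent random relators, which rules out $\mathbb{Z}$-quotients and hence HNN splittings via Serre's criterion, with the cases $1/2<d<1$ handled by triviality; note you silently omit $d=1/2$). But step (ii), which you yourself call ``the real content'', is not an argument: it is a paraphrase of what \cite{DGP11} is supposed to do, and the paraphrase elides exactly the hard point. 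A hypothetical splitting $G=A\ast_C B$ of a random group comes with no a priori bound on the edge group $C$ or on the local structure of the Bass--Serre tree: $C$ need not be cyclic, need not be generated by short elements, and need not be visible at the scale $\ell$ of the relators. Consequently there is no finite list of ``combinatorial patterns of bounded size'' to union-bound over, and the assertion that a splitting ``would produce a bounded-complexity wall'' separating a ball in the Cayley complex is precisely the statement that needs proof. Reducing an arbitrary tree action to a bounded local configuration (via elliptic generators, accessibility-type arguments, and isoperimetric control of diagrams) is the substance of Dahmani--Guirardel--Przytycki's paper, not a routine consequence of Ollivier's isoperimetric inequality.

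In short: as written, your proposal proves the easy reduction (Serre's criterion plus finite abelianization) and then defers the genuine theorem back to the reference it was meant to establish, so it has a real gap at step (ii). If the intent is merely to use the result, the honest formulation is the paper's own: cite \cite{DGP11} and do not present a proof sketch that appears to, but does not, bound the complexity of hypothetical splittings.
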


\begin{proposition}\label{NotLimit}

If $0<d\leq 1/2$, then  a random group is not a limit group.
\end{proposition}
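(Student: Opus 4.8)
The plan is to give two short and essentially independent proofs, each deriving a contradiction from the assumption that a random group $G$ of density $0<d\le 1/2$ is almost surely a limit group. The common preliminary remark is that, in this range of densities, $G$ is almost surely infinite, and in particular nontrivial: for $d<1/2$ this is the theorem of Gromov and Ollivier recorded above, and it is precisely the role of the hypothesis on $d$ to ensure that $G$ does not collapse to the trivial group. So assume for contradiction that $G$ is a limit group; in particular $G$ is finitely generated and fully residually free.

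First proof, via Fact \ref{FieldReprGromov}. I would first observe that every nontrivial finitely generated limit group admits a faithful two-dimensional linear representation over a field. Indeed, by Proposition \ref{LimitGroups} such a group embeds into an ultrapower $\mathbb{F}^{\ast}=\prod_{\mathcal U}\mathbb{F}$ of a free group $\mathbb{F}$, which we may take of countable rank since $G$ is countable. A countable free group embeds into $F_2$, and $F_2\hookrightarrow SL_2(\mathbb{Z})$ by a ping-pong argument, so passing to ultraproducts (with the same ultrafilter) gives $\mathbb{F}^{\ast}\hookrightarrow \prod_{\mathcal U}SL_2(\mathbb{Z})=SL_2(\mathbb{Z}^{\ast})$, where $\mathbb{Z}^{\ast}=\prod_{\mathcal U}\mathbb{Z}$ is an ultrapower of the ring $\mathbb{Z}$ and hence an integral domain. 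Composing and passing to its fraction field $K$ yields $G\hookrightarrow SL_2(K)$; as $G\ne 1$ this is a nontrivial degree-$2$ representation of $G$ over a field, contradicting Fact \ref{FieldReprGromov} with $k=2$.

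Second proof, via Fact \ref{FA}. Here it is enough to observe that a nontrivial finitely generated limit group $G$ surjects onto $\mathbb{Z}$: choosing $1\ne g\in G$ and a morphism $h\colon G\to\mathbb{F}$ with $h(g)\ne 1$, the image $h(G)$ is a nontrivial finitely generated free group, hence surjects onto $\mathbb{Z}$, so the composite $G\twoheadrightarrow h(G)\twoheadrightarrow\mathbb{Z}$ is onto. Through this surjection $G$ acts on the real line --- a simplicial tree with vertex set $\mathbb{Z}$ --- by integer translations, with no global fixed point; thus $G$ does not have property (FA), contradicting Fact \ref{FA}.

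Either contradiction proves the proposition. There is no serious obstacle here: the weight is carried by Facts \ref{FieldReprGromov} and \ref{FA}. The points deserving attention --- and the nearest thing to a hard step --- are the verifications of the two auxiliary facts about limit groups (two-dimensional linearity for the first argument, surjectivity onto $\mathbb{Z}$ for the second), both of which follow quickly from the embedding of finitely generated limit groups into ultrapowers of free groups in Proposition \ref{LimitGroups} together with standard properties of free groups, and the confirmation that a random group is genuinely nontrivial for every density $0<d\le 1/2$, which is exactly where the hypothesis on $d$ enters.
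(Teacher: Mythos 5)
Your proposal is correct and follows the paper's own two-proof strategy almost verbatim: the first argument is exactly the paper's Sanov/ultrapower/Kozma--Lubotzky (Fact \ref{FieldReprGromov}) proof, and the second derives the same contradiction with Fact \ref{FA}. The only divergence is that where the paper cites Sela's theorem that a limit group acts nontrivially on a simplicial tree, you produce the action elementarily from residual freeness via a surjection onto $\mathbb{Z}$ acting by translations on a line, which is a valid and more self-contained substitute.
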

\begin{proof} By Sanov's result \cite{sanov},    $\mathbb{F}_2$ embeds into $SL_2({\mathbb Z})$ and consequently into $SL_2({\mathbb R})$. Since a limit group is universally free, it embeds into an ultrapower of $\mathbb F_2$, which in turn embeds into an ultrapower of $SL_2({\mathbb R})$. The latter is isomorphic to $SL_2({\mathbb R}^*)$, where $\mathbb R^*$ is an ultraproduct of $\mathbb R$. Hence, a limit group admits a nontrivial representation over $\mathbb R^*$. By Fact \ref{FieldReprGromov} a random group of any density cannot be a limit group.  
\end{proof}
\noindent{\em Second proof.} Assume, for a contradiction, that a random group (at any density $d$) is a limit group. By \cite[Theorem 3.2]{Sel1} any limit group admits a non-trivial action on a simplicial tree, contradicting Fact \ref{FA}.\qed
\ \\ \\
For the few-relator model this was proved in \cite{Ho}.

\section{Van Kampen diagrams}\label{VKampen}
Van Kampen diagrams, introduced by E. van Kampen in 1933 \cite{VanKampen}, although neglected for more than thirty years,  were rediscovered by R. C Lyndon and since then have been used as the principal tool for studying the word problem in a finitely presented group. A word $w$ is equal to the identity in the group $G=\langle X\ |\ R\rangle$ if and only if
it is a product of conjugates of relators $w=\Pi_{i=1}^Mg_ir^{\epsilon_i}g_i^{-1}$, where $r\in R$ and $\epsilon_i$ is either $1$ or $-1$.

Van Kampen diagrams can be considered as finite, planar, connected and simply connected $2$-complexes bearing the following extra information: 
\begin{itemize}
\item there exists a base vertex ($0$-cell) that lies in the boundary of the unbounded region of the complement of the $1$-skeleton of the complex in $\mathbb{R}^2$; 
\item each edge ($1$-cell) is oriented and labeled by a generator;
\item the boundary of each face ($2$-cell) corresponds to a word in $R$ (up to a cyclic permutation and/or inversion). It is obtained by reading the labels of the edges starting from a vertex and traversing the boundary in one of the two directions. When an edge is traversed in the direction of its orientation then the label is given the exponent $1$, otherwise it is given the exponent $-1$.  
\end{itemize}

The boundary word of a van Kampen diagram is the word on the boundary of the unbounded region of the complement of the $1$-skeleton in $\mathbb{R}^2$, which is read starting from the base vertex. 

An important  van Kampen lemma states that the boundary word of
a van Kampen diagram is equal to the identity in the presentation and a freely reduced word that is equal to the identity in the presentation is a boundary label of some van Kampen diagram. 

We will call a van Kampen diagram {\em non-filamentous} if every $0$- and $1$-cell is on the boundary of some $2$-cell (see figure \ref{AbPouch}). 

\begin{figure}[ht!]
\centering
\includegraphics[width=0.6\textwidth]{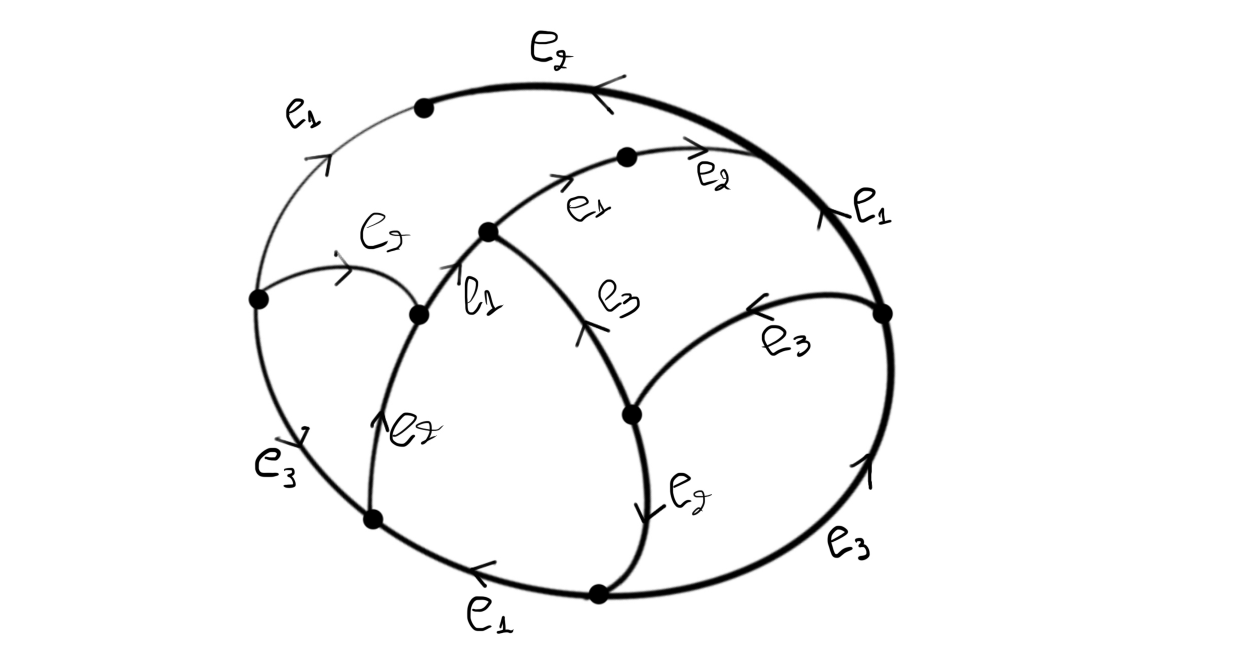}
\caption{Non-filamentous van Kampen Diagrams}
\label{AbPouch}
\end{figure}

One can obtain every van Kampen diagram by connecting non-filamentous components by paths that we call bridges (see figure \ref{AbPouch2}).

We will abbreviate a van Kampen diagram by dVK and a family of van Kampen diagrams by DVK. By an abstract van Kampen diagram, adVK, we mean a dVK but with no relators or generators attached on the $2$-cells and $1$-cells respectively. Still an adVK contains a starting point, orientation and a number on each $2$-cell (the number is for identifying $2$-cells that will bear the same relator if we wish to produce a dVK with the abstract data). Similarly we denote families of adVKs by aDVK. In \cite{Olli} an aDVK is called a decorated abstract van Kampen diagram.

\begin{figure}[ht!]
\centering
\includegraphics[width=0.8\textwidth]{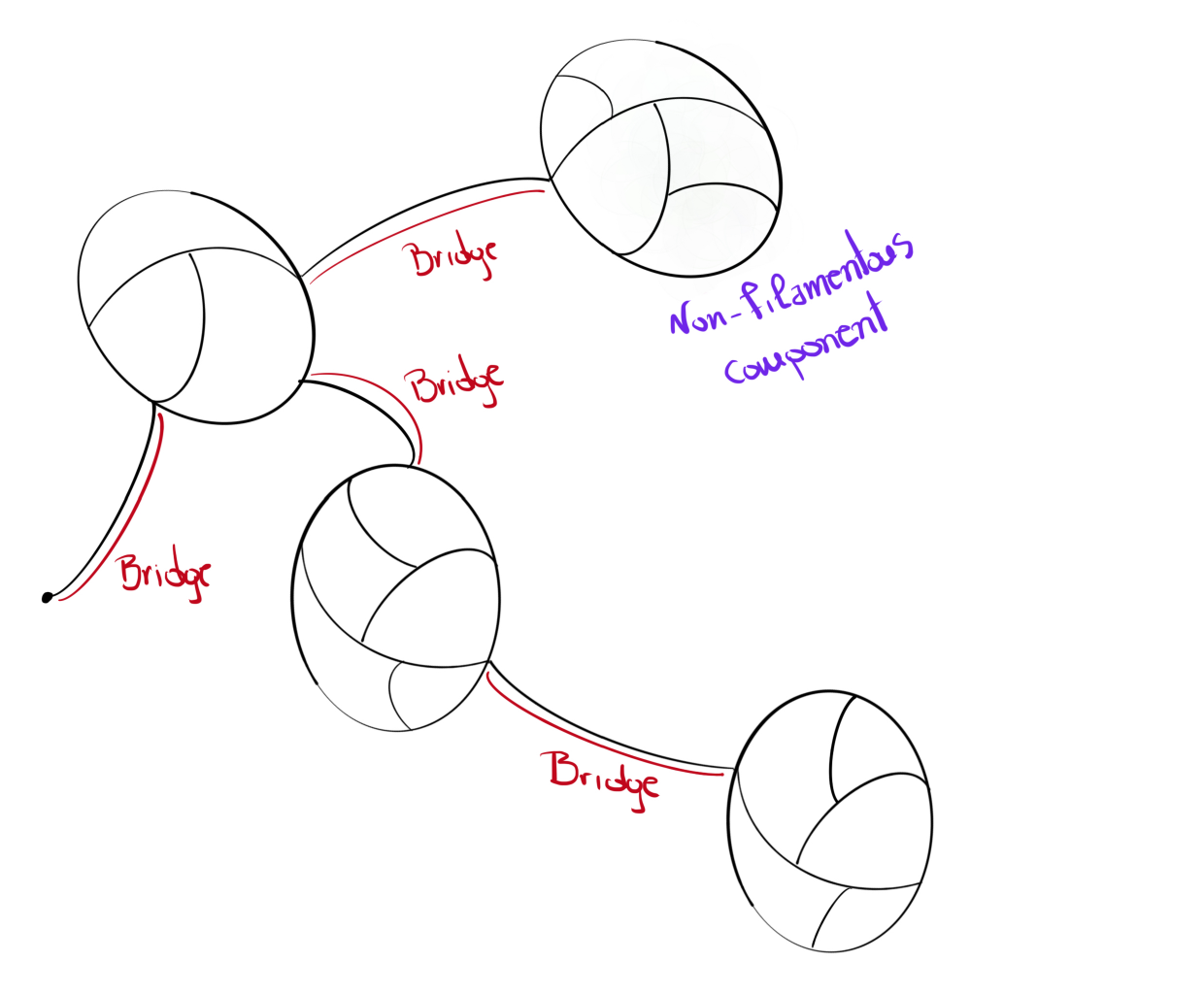}
\caption{A filamentous topological van Kampen Diagram}
\label{AbPouch2}
\end{figure}


\section{Reducing universal sentences to  equations in random groups}\label{universal}

We first note that every universal sentence in the language of groups can be put in the following form: $\forall\bar{x}\bigwedge(w_1(\bar{x})=1\lor w_2(\bar{x})=1\lor w_k(\bar{x})=1\lor v_1(\bar{x})\neq 1\lor v_2(\bar{x})\neq 1 \lor\ldots\lor v_m(\bar{x})\neq 1)$. Since the universal quantifier distributes over conjunctions we get that each such sentence is logically equivalent to finitely many  conjunctions of sentences of the form $\forall\bar{x}(w_1(\bar{x})=1\lor w_2(\bar{x})=1\lor w_k(\bar{x})=1\lor v_1(\bar{x})\neq 1\lor v_2(\bar{x})\neq 1 \lor\ldots\lor v_m(\bar{x})\neq 1)$. Hence, in order to prove that each universal sentence of the theory of the free group is almost surely true in a random group (for some fixed density), it is enough to prove that each sentence of the form $\forall\bar{x}(w_1(\bar{x})=1\lor w_2(\bar{x})=1\lor w_k(\bar{x})=1\lor v_1(\bar{x})\neq 1\lor v_2(\bar{x})\neq 1 \lor\ldots\lor v_m(\bar{x})\neq 1)$ holds in such random group. 

We fix $n\geq 2$, the free group $\mathbb{F}_n=\langle e_1,\ldots ,e_n\rangle$ of rank $n$, and a random group $\Gamma_\ell:=\langle e_1, \ldots, e_n \ | \ \mathcal{R}\rangle$ of density $d$ at length $\ell$ with the canonical epimorphism $\pi:\mathbb{F}_n\rightarrow \Gamma_\ell$.  
We consider a universal sentence $\sigma$ in $T_{fg}^{\forall}$ of the form $$\forall\bar{x}(w_1(\bar{x})=1\lor w_2(\bar{x})=1\lor w_k(\bar{x})=1\lor v_1(\bar{x})\neq 1\lor v_2(\bar{x})\neq 1 \lor\ldots\lor v_m(\bar{x})\neq 1).$$ 
If we denote by $V(\bar x)=1$ the system of equations $v_1(\bar{x})=1,\ldots ,v_m(\bar{x})=1,$ then this is equivalent to 
$$\forall\bar{x}(V(\bar{x})=1\rightarrow w_1(\bar{x})=1\lor\ldots\lor w_k (\bar{x})=1).$$
In order to prove that a random group (of density $d$) has the property described by $\sigma$, we need to prove that the probability that $\sigma$ is true in $\Gamma_\ell$ goes to $1$ as $\ell$ goes to infinity.
Equivalently, we need to prove that the probability that $\Gamma_\ell$ satisfies $\lnot\sigma$ goes to $0$ as $\ell$ goes to infinity. Now,  $\Gamma_\ell$ satisfies $\lnot\sigma$, while $\mathbb{F}_n$ satisfies $\sigma$, only if there exists a tuple $\bar{b}$ of elements  in $\Gamma_\ell$ such that $v_1(\bar{b})=1\land\ldots\land v_m(\bar{b})=1$, while for any pre-image $\bar{c}$ of $\bar{b}$ via $\pi$  we have $\vee _{i=1}^m v_i(\bar{c})\neq 1$ in $\mathbb{F}_n$.  Indeed, if $\Gamma_\ell$ satisfies $\lnot\sigma$, then there exists $\bar{b}$ in $\Gamma_\ell$ such that $w_1(\bar{b})\neq 1\land\ldots\land w_k(\bar{b})\neq1\land v_1(\bar{b})=1\land\ldots\land v_m(\bar{b})=1$. Now, since $\mathbb{F}_n$ satisfies $\sigma$ we must have, for any pre-image, say $\bar{c}$, of $\bar{b}$, that $w_1(\bar{c})=1\lor w_2(\bar{c})=1\lor w_k(\bar{c})=1\lor v_1(\bar{c})\neq 1\lor v_2(\bar{c})\neq 1 \lor\ldots\lor v_m(\bar{c})\neq 1$. But if some $w_i(\bar{c})$ is trivial in $\mathbb{F}_n$, then $w_i(\bar{b})$ is necessarily trivial in $\Gamma_\ell$. Thus, we must have that for some $i\leq m$, $v_i(\bar{c})\neq 1$ in $\mathbb{F}_n$.    

Our main goal is to show that for a fixed density $d<1/16$ and any fixed system of equations $V(\bar{x})=1$, the probability of the existence of some tuple $\bar{b}$ of elements in $\Gamma_\ell$ such that $V(\bar{b})=1$
 while for any pre-image $\bar{c}\in\pi^{-1}(\bar{b})$, $\bar c$ is not a solution of $V(\bar{x})= 1$ in $\mathbb{F}_n$  goes to $0$ as $\ell$ goes to infinity. We recall that the important fact about choosing $d<1/16$ is that a random group of such density almost surely satisfies the small cancellation property $C'(1/8)$. 


 
\section{ Bounding the length of short solutions}\label{Short}

In this section we will prove that if $V(\bar{x})=1$ is a (finite) system of equations and $(\Gamma_i)_{i<\omega}$ is a sequence of random groups of density $d<1/16$ and such that for each $i<\omega$, $\Gamma_i$ is at length $i$, then there exists a linear bound on the length of a ``shortest" (non-free) solution of $V(\bar{x})=1$ in $\Gamma_i$ with respect to $i$ (see Section \ref{6.6} for the notion of shortest and non-free).  This in turn is used In Section \ref{loc-gl} to bound the number of faces of a family of van Kampen diagrams that correspond to such solution. In particular, this allows us to claim that if a ``bad" solution exists, then there exists one such solution with boundedly many faces. Hence, it can be thought of as a local to global result. This together with the bound on faces help us to calculate the probability of the existence of ``bad" solutions and show that it goes to $0$ as the varying parameter $\ell$ goes to infinity.

We will use real trees and the shortening argument. There is a standard set of ideas on how to obtain a limit action on a real tree, given a sequence of actions of a finitely generated group $G$ on a sequence of hyperbolic spaces for which the corresponding hyperbolic constants tend to $0$. In many cases, under different assumptions, one can prove that the limit action can be analyzed using, what is called, the Rips machine  and in addition the limit actions can be ``shortened" by a family of automorphisms called modular automorphisms and are encoded in the JSJ decomposition of the ``limit group" when such decomposition exists. All these techniques are known under the general term ``the shortening argument". For results of this kind we refer the reader to \cite{ReWe}.

We will show that in the case of random groups with $d<1/16$ we can use some sort of a shortening argument. 
In this case some extra difficulty arises because our spaces (before rescaling) do not have bounded hyperbolicity constants. Hence standard arguments to prove that, in the limit action, tripod stabilizers are trivial or stabilizers of arcs are abelian fall through. There is a remedy for that once one observes that in small cancellation groups, for $C'(1/8)$, the geodesics between points in their Cayley graphs are much more ``rigid" than in a general hyperbolic group. We will exploit these properties  to prove that indeed in our setting one can use Rips machine and the shortening argument in order to obtain the desired result.

 \subsection{Actions on real trees}\label{ActionsRtrees}
 In this section we explain how a sequence of morphisms, $(h_i)_{i<\omega}:G\rightarrow \Gamma_i$, from a finitely generated group $G$ to small cancellation groups $\Gamma_i$, satisfying certain properties, induces an action of $G$ on a real tree that can be moreover decomposed using Rips machine. We will closely follow \cite{ReWe}.

We fix a finitely generated group $G$ and we consider the set of non-trivial equivariant pseudometrics $d:G\times G\to \R^{\geq 0}$, denoted by $\mathcal{ED}(G)$. 
We equip $\mathcal{ED}(G)$ with the compact-open topology (where $G$ is given the discrete topology). Note 
that convergence in this topology is given by: 
$$(d_i)_{i<\omega}\to d\ \ \textrm{if and only if} \ \ d_i(1,g)\to d(1,g)\ \ (\textrm{in $\R$}) \ \ \textrm{for all}\ \ g\in G.$$


We also note that any based $G$-space $(X,*)$ (i.e. a metric space with a distinguished point equipped with an action of 
$G$ by isometries) gives rise to an equivariant pseudometric on $G$ as follows: $d(g,h)=d_X(g\cdot *,h\cdot *)$. The pseudo-metric is $\delta$-hyperbolic if $d_X$ is $\delta$-hyperbolic.  
We say that a sequence of $G$-spaces $(X_i,*_i)_{i<\omega}$ converges to a $G$-space $(X,*)$, if the 
corresponding pseudometrics induced by $(X_i,*_i)$ converge to the pseudometric induced by $(X,*)$ in $\mathcal{ED}(G)$. 

\begin{lemma}\cite[Lemma 2.6]{ReWe}\label{ConvSpaces}
Let $G$ be a finitely generated group. Let $(X_i, *_i)$, for $i<\omega$, be a sequence of based $\delta_i$-hyperbolic $G$-spaces. Suppose the sequence of the corresponding induced pseudometrics $(d_i)_{i<\omega}$ converge in $\mathcal{ED}(G)$ to a pseudometric $d_{\infty}$ and $\delta_i$ goes to $0$ as $i$ goes to $\infty$. 

Then, there exists a based real $G$-tree $(T,*)$ such that $T$ is spanned by the orbit of the base point $*$ under the action of $G$ and $d_{\infty}$ is the corresponding pseudometric.
\end{lemma}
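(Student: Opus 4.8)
# Proof Proposal for Lemma \ref{ConvSpaces}

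The plan is to construct the limiting real tree $(T,*)$ directly from the limiting pseudometric $d_\infty$ via the standard metric-space-to-tree construction, and then verify the Gromov $0$-hyperbolicity of $d_\infty$ using the hypothesis $\delta_i \to 0$. First I would recall that a pseudometric space $(Y,d)$ is $0$-hyperbolic (in the four-point sense) if and only if its metric quotient embeds isometrically into an $\R$-tree; moreover there is a canonical such tree, namely the one generated by the image of $Y$. So the real work is to produce $d_\infty$ as a genuine pseudometric on $G$ that is $0$-hyperbolic, equivariant, and nontrivial, and then take $T$ to be the $\R$-tree spanned by the orbit $G\cdot *$ where $*$ is the image of $1\in G$.

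The key steps, in order, are as follows. (1) Observe that $d_\infty$ is already given to us as an element of $\mathcal{ED}(G)$, so it is an equivariant pseudometric on $G$; nontriviality is part of the definition of $\mathcal{ED}(G)$ (or follows from the convergence hypothesis once one rescales, but here it is assumed). (2) Check the $0$-hyperbolic inequality for $d_\infty$: fix any four elements $g_1,g_2,g_3,g_4 \in G$. For each $i$, the pseudometric $d_i$ comes from a $\delta_i$-hyperbolic space, so the four-point condition holds with defect $\delta_i$, i.e. the largest two of the three pairwise sums $d_i(g_1,g_2)+d_i(g_3,g_4)$, $d_i(g_1,g_3)+d_i(g_2,g_4)$, $d_i(g_1,g_4)+d_i(g_2,g_3)$ differ by at most $2\delta_i$ (up to the usual normalization constant). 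Passing to the limit $i\to\infty$, using pointwise convergence $d_i(1,g)\to d_\infty(1,g)$ and equivariance to control all six pairwise distances, and using $\delta_i\to 0$, the defect vanishes, so $d_\infty$ satisfies the $0$-hyperbolic four-point condition. (3) Apply the standard structure theorem: a $0$-hyperbolic pseudometric space isometrically embeds, after taking the metric quotient, into an $\R$-tree, and there is a smallest such tree containing the image. Let $T$ be the $\R$-tree spanned by the image of $G$ under $g\mapsto$ (class of $g$); set $* $ to be the class of $1$. Then $G$ acts on $T$ by isometries via left multiplication (this is well-defined and isometric precisely because $d_\infty$ is left-equivariant), $T$ is spanned by $G\cdot *$ by construction, and the pseudometric on $G$ induced by this based action is exactly $d_\infty(g,h) = d_T(g\cdot *, h\cdot *)$.

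The step I expect to be the main (though still routine) obstacle is step (2): carefully writing the $\delta$-hyperbolic four-point inequality for the pseudometrics $d_i$ on all of $G\times G$, not just based at $1$, and making sure the convergence hypothesis — which is only stated as $d_i(1,g)\to d_\infty(1,g)$ — together with equivariance actually delivers convergence $d_i(g,h)\to d_\infty(g,h)$ for all pairs (it does: $d_i(g,h) = d_i(1, g^{-1}h)$ by equivariance). One must also be slightly careful about which normalization of hyperbolicity is in force (Gromov four-point vs.\ thin triangles) and absorb the universal constant; since we only need the defect to go to $0$, any of the standard equivalent definitions works. The remaining ingredient, the passage from a $0$-hyperbolic space to its spanning $\R$-tree, is classical and can be cited (e.g.\ from \cite{ReWe} or standard references on $\R$-trees), so no further argument is needed there.
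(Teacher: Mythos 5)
Your proposal is correct: the paper itself gives no proof of this lemma (it is quoted verbatim from \cite{ReWe}), and your argument --- pass the four-point condition with defect $\delta_i$ through the pointwise limit using equivariance ($d_i(g,h)=d_i(1,g^{-1}h)$), conclude $0$-hyperbolicity of $d_\infty$, then embed the metric quotient into an $\R$-tree and take the subtree spanned by the orbit $G\cdot *$ --- is exactly the standard argument used in the cited source. The only points worth writing out carefully in a full version are the ones you already flag: the normalization of the four-point inequality, and the (routine) fact that the isometric $G$-action on the image set extends uniquely to the spanned subtree.
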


The next result is an easy corollary of Lemma \ref{ConvSpaces} (the proof follows the proof of  \cite[Corollary 2.8]{ReWe}). We remark that this is in principle the point of divergence from the standard arguments that usually assume a bounded hyperbolicity constant for the sequence of the based $G$-spaces $(X_i,*_i)$. 

\begin{remark}
Although it is not needed for all arguments that follow, we will assume for convenience that all hyperbolic spaces are geodesic. 
\end{remark}

We define {\em the length of the action} of a based $G$-space $(X,*)$ to be the sum of the translations of $*$ by a fixed generating set $S_G$, i.e. $\Sigma_{s\in S_G}d_X(*,s\cdot *)=\Sigma_{s\in S_G}d(1,s)$. 

\begin{corollary}\label{LimitActionDominating}
Let $G$ be a finitely generated group with $S_G$ a finite generating set, and $(X_i, *_i)$, for $i<\omega$, a sequence of based $\delta_i$-hyperbolic $G$-spaces. Suppose the lengths $\ell_i:=\Sigma_{s\in S_G}d_i(1,s)$, for $i<\omega$, of the corresponding pseudometrics dominate in the limit the hyperbolicity constants, i.e. $\frac{\delta_i}{\ell_i}\rightarrow 0$. 

Then there exists a based real $G$-tree $(T,*)$ such that the sequence of the rescaled pseudometrics $d_i/\ell_i$ has a subsequence converging in $\mathcal{ED}$ to the pseudometric that corresponds to $(T,*)$.

\end{corollary}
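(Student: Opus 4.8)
The plan is to normalize the given spaces so that Lemma \ref{ConvSpaces} applies after passing to a subsequence. First I would rescale: replace each $(X_i,*_i)$ by the based $G$-space $(X_i',*_i)$ whose metric is $\tfrac{1}{\ell_i}d_{X_i}$ (we may assume $\ell_i>0$ for all $i$, since $\delta_i/\ell_i$ is defined). Then $X_i'$ is $(\delta_i/\ell_i)$-hyperbolic, its induced equivariant pseudometric is $d_i':=d_i/\ell_i$, and by hypothesis $\delta_i/\ell_i\to 0$. By construction $\sum_{s\in S_G}d_i'(1,s)=1$ for all $i$; in particular each $d_i'$ is non-trivial.

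The next step is an Arzel\`a--Ascoli-type compactness argument for the family $\{d_i'\}_{i<\omega}$ in the compact-open topology on $\mathcal{ED}(G)$. Fix $g\in G$ and write it as a word of minimal length $|g|_{S_G}$ in $S_G\cup S_G^{-1}$; using equivariance (so that $d_i'(1,s^{-1})=d_i'(s,1)=d_i'(1,s)$) and the triangle inequality, $d_i'(1,g)\le |g|_{S_G}\cdot\max_{s\in S_G}d_i'(1,s)\le |g|_{S_G}$, the last inequality because each $d_i'(1,s)$ is a nonnegative summand of $\sum_{s\in S_G}d_i'(1,s)=1$. Hence for each fixed $g$ the real sequence $(d_i'(1,g))_{i<\omega}$ lies in the compact interval $[0,|g|_{S_G}]$. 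Since $G$ is countable, a diagonal argument yields a subsequence $(i_n)_{n<\omega}$ along which $d_{i_n}'(1,g)$ converges for every $g\in G$; define $d_\infty$ by $d_\infty(g,h):=\lim_n d_{i_n}'(1,g^{-1}h)$. The pseudometric axioms and equivariance are preserved under pointwise limits, and $\sum_{s\in S_G}d_\infty(1,s)=\lim_n\sum_{s\in S_G}d_{i_n}'(1,s)=1$ (a finite sum commutes with the limit), so $d_\infty$ is non-trivial; thus $d_{i_n}'\to d_\infty$ in $\mathcal{ED}(G)$.

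Finally I would feed the subsequence $(X_{i_n}',*_{i_n})$ into Lemma \ref{ConvSpaces}: these are based $(\delta_{i_n}/\ell_{i_n})$-hyperbolic $G$-spaces, their induced pseudometrics converge in $\mathcal{ED}(G)$ to $d_\infty$, and $\delta_{i_n}/\ell_{i_n}\to 0$. The lemma then produces a based real $G$-tree $(T,*)$, spanned by the orbit $G\cdot *$, whose associated pseudometric is $d_\infty$. Since $d_{i_n}/\ell_{i_n}=d_{i_n}'$, the sequence of rescaled pseudometrics $d_i/\ell_i$ has the subsequence $(d_{i_n}/\ell_{i_n})_{n<\omega}$ converging to the pseudometric of $(T,*)$, as required.

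The one point that is not purely formal — and the reason the normalizing factor is $\ell_i$ rather than $\delta_i$ — is guaranteeing that the limit $d_\infty$ is non-trivial, i.e. that the limit tree $(T,*)$ does not collapse to a point. This is exactly what the normalization $\sum_{s\in S_G}d_i'(1,s)=1$ secures: it pins down the ``size'' of the rescaled actions and survives passage to the limit. Everything else is the standard compactness reasoning, carried out as in the proof of \cite[Corollary 2.8]{ReWe}.
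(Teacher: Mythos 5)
Your proposal is correct and follows essentially the same route as the paper: rescale by $\ell_i$, use the bound $d_i(1,s)/\ell_i\leq 1$ on generators to extract a convergent subsequence in $\mathcal{ED}(G)$, note the rescaled spaces are $\delta_i/\ell_i$-hyperbolic with $\delta_i/\ell_i\to 0$, and apply Lemma \ref{ConvSpaces}. You merely spell out the diagonal-extraction and non-triviality details that the paper leaves implicit.
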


\begin{proof}
For each $i<\omega$ and $s\in S_G$ we have, by the definition of $\ell_i$,  $\frac{d_i(1, s)}{l_i}\leq 1$. Hence, the sequence $(\frac{d_i}{l_i})_{i<\omega}$ has a convergent subsequence. In addition, the pseudometric $\frac{d_i}{l_i}$ is $\frac{\delta_i}{\ell_i}$-hyperbolic and it goes to $0$ as $i$ goes to $\infty$. Hence, by Lemma \ref{ConvSpaces}, we can conclude.
\end{proof}

If one chooses the base points carefully the limit action can be guaranteed to be minimal.  

\begin{theorem}{\cite[Theorem 2.11]{ReWe}}\label{CenLoc}
Assume the hypothesis of Lemma \ref{ConvSpaces}. If the base points in $(X_i,*_i)$, for $i<\omega$, are chosen so that $\ell_i\leq\ell_i'$, for any other base point $*_i'$, then the obtained limit action is minimal. 
\end{theorem}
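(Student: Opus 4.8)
The statement to establish is Theorem 2.11 of \cite{ReWe} transported to our setting: under the hypotheses of Lemma \ref{ConvSpaces}, if each base point $*_i$ is chosen to minimize the length $\ell_i = \sum_{s\in S_G} d_i(1,s)$ among all possible base points in $X_i$, then the limit real $G$-tree $(T,*)$ is minimal, i.e.\ it contains no proper $G$-invariant subtree. The plan is to argue by contradiction at the level of the tree, and then push the contradiction back down to the approximating spaces $(X_i,*_i)$ where minimality of $\ell_i$ can be invoked.

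\textbf{Step 1: Reduce minimality to a statement about the base point.} Recall from Lemma \ref{ConvSpaces} that $T$ is spanned by the $G$-orbit of $*$, so any $G$-invariant subtree $T'\subsetneq T$ must miss $*$ (if $*\in T'$ then $T'\supseteq G\cdot *$, and $T'$ closed and convex forces $T'=T$). Thus $*\notin T'$. Let $\pi\colon T\to T'$ be the nearest-point projection onto the (nonempty, closed, convex) subtree $T'$, which is $G$-equivariant, and set $*' := \pi(*)\in T'$. For every $s\in S_G$, since $s\cdot *' = \pi(s\cdot *)$ and projection onto a convex subtree of a real tree does not increase distances, I would show moreover that $d_T(*', s\cdot *') \le d_T(*, s\cdot *)$, with the inequality \emph{strict} for at least one $s$: indeed, if it were an equality for all $s\in S_G$, then the segment $[*,*']$ together with its translates would have to lie so that the Lipschitz projection loses nothing, and a short convexity argument in the tree (the initial segment of $[*, s\cdot *]$ of length $d_T(*,*')$ runs toward $T'$ for every $s$) forces $*\in T'$, a contradiction. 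Hence $\sum_{s\in S_G} d_T(*', s\cdot *') < \sum_{s\in S_G} d_T(*, s\cdot *) =: \ell_\infty$.

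\textbf{Step 2: Approximate the better base point $*'$ inside the spaces $X_i$.} Since $T$ is spanned by $G\cdot *$, the point $*'$ lies in the closure of $G\cdot *$; so for any $\varepsilon>0$ there is $g\in G$ with $d_T(g\cdot *, *') < \varepsilon$. Fix such a $g$ realizing a gap: by Step 1, choose $\varepsilon$ small enough that $\sum_{s} d_T(g\cdot *, s g\cdot *) = \sum_s d_T(g\cdot *', sg\cdot *') + O(\varepsilon)$ is still strictly below $\ell_\infty$, say by a definite amount $\eta>0$. Now use the convergence $d_i/1 \to d_\infty$ in $\mathcal{ED}(G)$ (pointwise on $G\times G$, and $S_G$ is finite) together with equivariance: $d_i(1, s) $ and $d_i(g, sg) = d_i(1, g^{-1}sg)$ both converge, for each of the finitely many relevant group elements, to the corresponding $d_\infty$-values. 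Therefore for all large $i$,
\[
\sum_{s\in S_G} d_i\big(g, s\cdot g\big) \;=\; \sum_{s\in S_G} d_{X_i}\big(g\cdot *_i,\, s\cdot(g\cdot *_i)\big) \;<\; \sum_{s\in S_G} d_i(1,s) \;=\; \ell_i .
\]
But $g\cdot *_i$ is a legitimate base point in $X_i$, so this contradicts the minimality assumption $\ell_i \le \ell_i'$ for every alternative base point $*_i'$. This contradiction shows no such $T'$ exists, i.e.\ the limit action is minimal.

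\textbf{Main obstacle.} The only genuinely non-formal point is the \emph{strictness} in Step 1 — the claim that the nearest-point projection $*\mapsto *'$ strictly decreases the total translation length unless $*$ already lies in $T'$. The inequality ``$\le$'' is the standard fact that projecting onto a convex set in a $\mathrm{CAT}(0)$ (here, $0$-hyperbolic) space is $1$-Lipschitz and commutes with the isometric $G$-action; the strictness requires observing that if $d_T(*,*') = \delta > 0$, then for each generator $s$ the geodesic $[*, s\cdot *]$ and $[*, s^{-1}\cdot *]$ both begin by travelling the full segment $[*,*']$ toward $T'$ (since $s\cdot *, s^{-1}\cdot *\in T'$ and $T$ is a tree), and a center-of-mass / tripod computation then yields $d_T(*,s\cdot *) = \delta + d_T(*', s\cdot *') \ge \delta + d_T(*',s\cdot *')$ with the extra $\delta$ appearing once we sum over the (nonempty) generating set — giving a gap of at least $2\delta$, comfortably strict. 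Everything else is an exercise in transferring an inequality across the $\mathcal{ED}(G)$-convergence, exactly as in \cite[Theorem 2.11]{ReWe}; the point of this remark is simply that the argument uses \emph{only} the convergence of pseudometrics and not any boundedness of the hyperbolicity constants $\delta_i$, so it applies verbatim in our unbounded-$\delta_i$ situation.
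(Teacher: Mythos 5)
Your overall strategy is the intended one (the paper does not reprove this statement; it cites \cite[Theorem 2.11]{ReWe} and notes that the proof goes through approximating sequences): suppose there is a proper invariant subtree $T'$, project the base point $*$ to $*'\in T'$, show the total displacement of $*'$ under $S_G$ is strictly smaller than that of $*$, transfer this strict inequality to $X_i$ for large $i$, and contradict the minimal choice of $*_i$. Your Step 1 is essentially correct (the strictness does hold: since $T$ is spanned by $G\cdot *$ and $T'$ is invariant, at least one geodesic $[*,s\cdot *]$ must cross $T'$, giving a gap of $2\,d_T(*,T')$), even if your justification of it is loose.

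The genuine gap is in Step 2, in the sentence ``Since $T$ is spanned by $G\cdot *$, the point $*'$ lies in the closure of $G\cdot *$.'' This is false: ``spanned by the orbit'' means $T$ is the subtree generated by $G\cdot *$ (a union of segments between orbit points), not that the orbit is dense. For a simplicial-type limit action the orbit is discrete, and the projection $*'$ can be an interior point of a segment at definite distance from every orbit point; moreover the gap cannot be repaired by settling for a nearby orbit point, since all orbit points may have the same displacement sum as $*$ (e.g.\ $T$ the Cayley tree of a free group with an equivariant ``hair'' of length $\delta$ attached at each vertex and $*$ a hair tip: the Cayley tree is a proper invariant subtree, yet every orbit point has identical, non-optimal displacement). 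So your contradiction never gets off the ground as written. The correct device is exactly the one the paper flags: by the approximating-sequence lemma (quoted as \cite[Lemma 2.10]{ReWe} right after the statement), \emph{every} point of $T$, in particular $*'$, admits an approximating sequence $(y_i)$ with $y_i\in X_i$; properties (3) and (4) of that lemma give $d_{X_i}(y_i,s\cdot y_i)\to d_T(*',s\cdot *')$ for each $s\in S_G$, while $\ell_i=\sum_s d_i(1,s)\to \sum_s d_T(*,s\cdot *)$ by convergence in $\mathcal{ED}(G)$. Using $y_i$ as the alternative base point $*_i'$ then yields $\ell_i'<\ell_i$ for all large $i$, the desired contradiction. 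With that substitution your argument becomes the standard proof; note also that your closing remark is then accurate, since the approximating-sequence lemma is available here despite the unbounded $\delta_i$ (only $\delta_i\to 0$ after rescaling is used).
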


This can be proved using the notion of approximating sequences, which is a useful concept for the sequel. 

\begin{definition}
Let $(X_i,*_i)$ be a sequence of based geodesic  $G$-spaces. Let the sequence of the induced pseudometrics $(d_i)_{i<\omega}$ converge to a pseudometric $d$ induced by the based $G$-space $(X,*)$. We say that $x$ in $X$ is approximated by the sequence $(x_i)_{i<\omega}$ with $x_i$ in  $(X_i)$ for each $i<\omega$ if: 
$$lim_{i\rightarrow\infty}(d_{X_i}(x_i, g\cdot *_i)=d_X(x,g\cdot *) \ \ \ \textrm{for each $g$ in $G$}.$$ 
\end{definition}

It is clear that the base point $*$ of the limit space $(X,*)$ can be approximated by the base points $(*_i)$ of the limiting sequence $(X_i,*_i)_{i<\omega}$. As a matter of fact under the hypotheses of Lemma \ref{ConvSpaces} every point in the limit real tree admits an approximating sequence.

\begin{lemma}{\cite[Lemma 2.10]{ReWe}}
Assume the hypothesis of Lemma \ref{ConvSpaces}. Then for every $x,y$ in the limit real $G$-tree $(T,*)$ the following hold:
\begin{itemize}
    \item[(1)] $x$ has an approximating sequence.
    \item[(2)] if $(x_i)_{i<\omega}, (x_i')_{i<\omega}$ are approximating sequences for $x$, then $lim_{i<\omega}d_{X_i}(x_i,x_i')=0$.
    \item[(3)] if $(x_i)_{i<\omega}$ is an approximating sequence for $x$, then $(g\cdot x_i)_{i<\omega}$ is an approximating sequence for $g\cdot x$.
    \item[(4)] if $(x_i)_{i<\omega}, (y_i)_{i<\omega}$ are approximating sequences for $x, y$ respectively, then $lim_id_{X_i}(x_i,y_i)=d_X(x,y)$.
\end{itemize}

\end{lemma}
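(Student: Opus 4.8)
The plan is to push every statement back to the orbit $\{g\cdot *_i : g\in G\}$ of the base point, where the required convergence of distances is built into the hypotheses, and then to transport it along geodesics using that the spaces are $\delta_i$-hyperbolic with $\delta_i\to 0$ (so that finite subsets lie $O(\delta_i)$-close to finite trees and triangles are $\delta_i$-thin). Item (3) should be dispatched at once and needs nothing: if $(x_i)_{i<\omega}$ approximates $x$, then since $G$ acts by isometries on each $X_i$ and on $X$ we have, for every $h\in G$,
$d_{X_i}(g\cdot x_i, h\cdot *_i)=d_{X_i}(x_i, g^{-1}h\cdot *_i)\to d_X(x, g^{-1}h\cdot *)=d_X(g\cdot x, h\cdot *)$,
which is exactly the assertion that $(g\cdot x_i)_{i<\omega}$ approximates $g\cdot x$.

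For (1), recall from Lemma \ref{ConvSpaces} that $T$ is spanned by $G\cdot *$; in an $\R$-tree the union of the segments $[g\cdot *, h\cdot *]$ over $g,h\in G$ is already closed under passing to subsegments, so every $z\in T$ lies on some $[g\cdot *, h\cdot *]$. Set $L:=d_X(g\cdot *, h\cdot *)$ and $t:=d_X(g\cdot *, z)$; for each $i$ let $L_i:=d_{X_i}(g\cdot *_i, h\cdot *_i)$ (which tends to $L$, being equal to $d_i(1,g^{-1}h)$ with $d_i\to d_{\infty}$ in $\mathcal{ED}(G)$) and pick $z_i$ on a geodesic $[g\cdot *_i, h\cdot *_i]$ with $d_{X_i}(g\cdot *_i, z_i)=(t/L)L_i$ (and $z_i:=g\cdot *_i$ if $L=0$). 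To see $(z_i)_{i<\omega}$ approximates $z$, fix $a\in G$ and apply $\delta_i$-thinness to the triangle on $g\cdot *_i, h\cdot *_i, a\cdot *_i$: the distance $d_{X_i}(z_i, a\cdot *_i)$ differs by $O(\delta_i)$ from the corresponding quantity in the comparison tripod, which is determined by $L_i$, $d_{X_i}(g\cdot *_i, a\cdot *_i)$, $d_{X_i}(h\cdot *_i, a\cdot *_i)$ and the parameter $t/L$; since these three distances converge to the tree values $L$, $d_X(g\cdot *, a\cdot *)$, $d_X(h\cdot *, a\cdot *)$ and $\delta_i\to 0$, the limit equals the value of the same expression computed in $T$, namely $d_X(z, a\cdot *)$.

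Item (4) is proved the same way, now with two marked points: given approximating sequences $(x_i)$, $(y_i)$ for $x,y$, realise $x$ on $[g\cdot *, h\cdot *]$ and $y$ on $[g'\cdot *, h'\cdot *]$. All pairwise distances among the at most four orbit points $g\cdot *_i, h\cdot *_i, g'\cdot *_i, h'\cdot *_i$ converge to their values in $T$ (each is some $d_i(1,u)\to d_{\infty}(1,u)$), and $d_{X_i}(x_i, g\cdot *_i)$, $d_{X_i}(h\cdot *_i, x_i)$, etc., converge, pinning down the position of $x_i$ on $[g\cdot *_i, h\cdot *_i]$ up to $o(1)$, and similarly for $y_i$. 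Since this finite configuration is $O(\delta_i)$-approximated by a finite tree, $d_{X_i}(x_i, y_i)$ converges to the distance between the two corresponding points in the limiting tree, which is $d_X(x,y)$. Finally (2) is the special case of (4) with $y=x$ and $(y_i)_{i<\omega}=(x'_i)_{i<\omega}$: the hypothesis says $(x'_i)$ also approximates $x$, so (4) gives $\lim_i d_{X_i}(x_i, x'_i)=d_X(x,x)=0$.

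The step I expect to cause the most trouble is keeping the hyperbolicity errors honestly uniform: one must phrase the ``$n$ points of a $\delta$-hyperbolic space lie $O(\delta)$-close to a finite tree'' estimate with an additive constant depending only on $\delta_i$ and $n$, not on the ($i$-dependent) points, so that it genuinely vanishes as $\delta_i\to 0$; and one must be slightly careful if $T$ is the metric completion of the span rather than the span itself, in which case (1) and (4) are first proved for points actually lying on orbit segments and then extended to limit points by a diagonal argument using the already-established (2). Everything else is soft bookkeeping.
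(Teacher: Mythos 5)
Your proof is correct and is essentially the intended argument: the paper does not prove this lemma but imports it from \cite{ReWe} (Lemma 2.10), where the proof runs exactly along your lines — every point of $T$ lies on a segment $[g\cdot *, h\cdot *]$ between orbit points, one approximates it by the point at the corresponding parameter on a geodesic $[g\cdot *_i, h\cdot *_i]$ in $X_i$, and $\delta_i$-thinness together with finite-tree approximation of finitely many points (with constants depending only on $\delta_i$ and the number of points) passes all distances to the limit as $\delta_i\to 0$, with (3) by equivariance and (2) as the degenerate case of (4). The only step to phrase slightly more carefully is in (4), where $x_i$ need not lie on $[g\cdot *_i, h\cdot *_i]$; but since $d_{X_i}(x_i,g\cdot *_i)+d_{X_i}(x_i,h\cdot *_i)-d_{X_i}(g\cdot *_i,h\cdot *_i)\to 0$, the Gromov-product estimate places $x_i$ within $o(1)+O(\delta_i)$ of that geodesic, which your finite-tree comparison then absorbs.
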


We will obtain actions on hyperbolic spaces through group morphisms as follows: 
a morphism $h:G\to H$ where $H$ is a hyperbolic group induces an action of $G$ on  
$X_H$ (the geometric realization of the Cayley graph of $H$) in the obvious way, thus making $X_H$, a hyperbolic space, a $G$-space. 

For a finitely generated group $G$, endowed with a fixed generating set $S_G$, and $g\in G$ we denote by $\abs{g}_{S_G}$ the word length of $g$ with respect to $S_G$, i.e. the length of the shortest word in $S_G$ representing $g$. When there is no ambiguity we will omit the subscript $S_G$ and denote the length of an element simply by $\abs{g}$.

\begin{definition}\label{LengthMorph}
Let $G:=\langle g_1, g_2, \ldots, g_m\rangle$ be a finitely generated group. Let $h:G\rightarrow \Gamma$ be a morphism to a finitely generated group $\Gamma$. Then the length of $h$, denoted $\abs{h}$, is the sum $\abs{h(g_1)}+\abs{h(g_2)}+\ldots+\abs{h(g_m)}$.
\end{definition}

\begin{remark}\label{CareBase}
Let $G,\Gamma$ be finitely generated groups and $h:G\rightarrow \Gamma$ a morphism. If we choose the base point of the (geometric realization) of the Cayley graph of $\Gamma$ to be the identity element, then the length of the corresponding pseudometric coincides with the length of $h$.  
\end{remark}



\subsection{Diagrams in groups satisfying small cancellation properties} 

In this subsection we collect some results particular to groups satisfying the small cancellation property $C'(1/8)$. We will call such a group an eighth group.  We fix an eighth group $\Gamma$ and a finite generating set $\Sigma_{\Gamma}$. We consider its Cayley graph $Cay(\Gamma, \Sigma_\Gamma)$ with respect to $\Sigma_\Gamma$ and we endowed it with the word metric, i.e. $d_{Cay(\Gamma, \Sigma_\Gamma)}(g,h):=d(g,h)$ is the length of the shortest word (in $\Sigma_{\Gamma}$) representing $g^{-1}h$. Our aim is to analyze geodesics, i.e. shortest paths, between two and three points in the above metric space. There can be many geodesic segments between two points. When two geodesic segments between two points only meet in these points, we call their union a {\em digon}. Likewise when geodesic segments between three distinct points only meet at these points, we call their union a {\em simple triangle}. It is obvious that any geodesic triangle (not necessarily simple) consists of at most one simple triangle and several digons all connected by geodesic segments (see figure \ref{GeodesicT}). If we remove the simple triangle (or the midpoint in the degenerate case) from a geodesic triangle, then we end up with three connected components, we call them (after adding the missing points) the {\em legs} of the geodesic triangle. The length of a leg is the distance from the endpoint it contains to the simple triangle (or the midpoint in the degenerate case).  Finally, by the above discussion, it is apparent that in order to classify the geodesic triangles we need to classify digons and simple triangles.  

\begin{figure}[ht!]
\centering
\includegraphics[width=1.\textwidth]{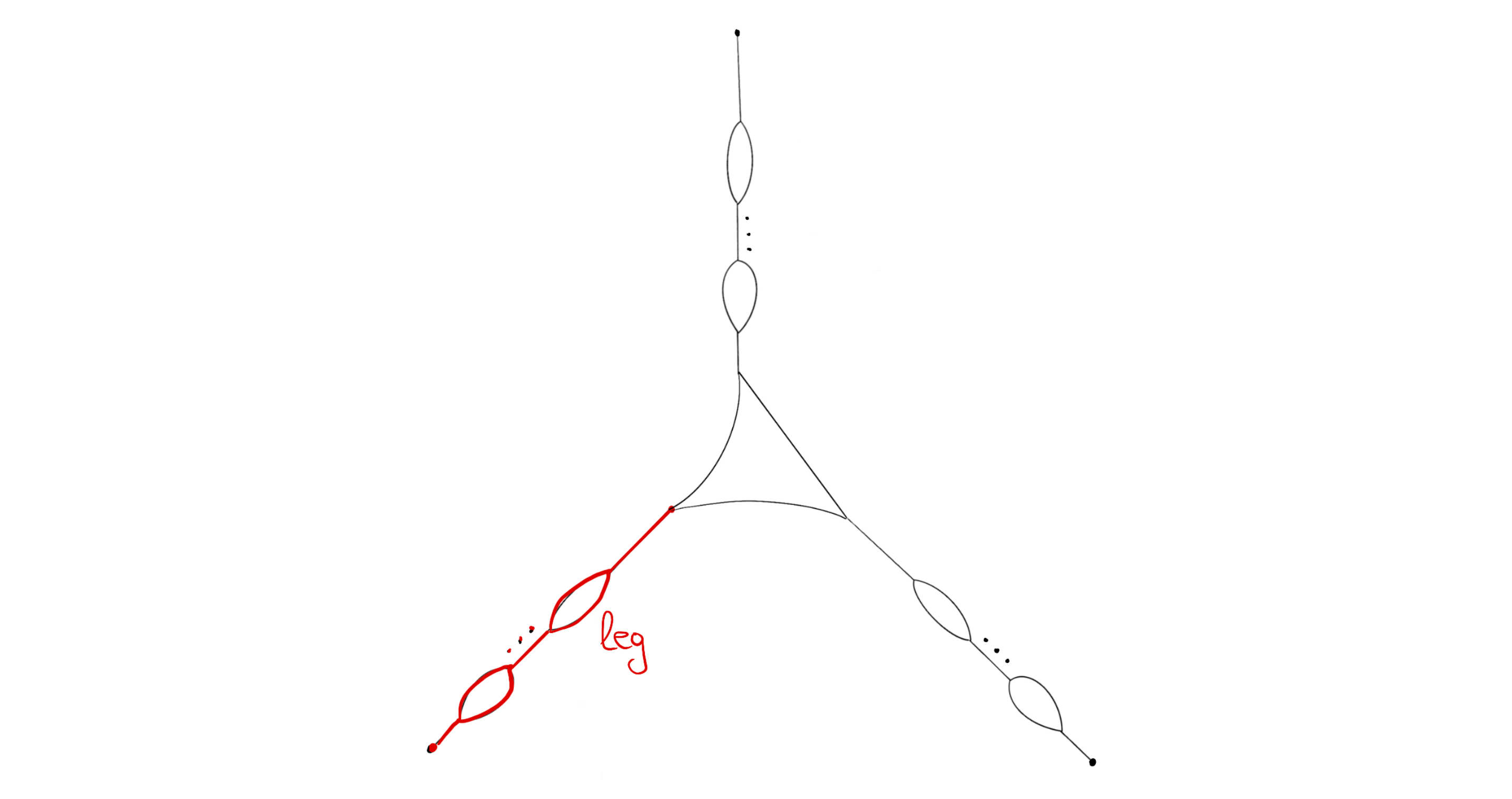}
\caption{A geodesic triangle in a Cayley graph.}
\label{GeodesicT}
\end{figure}

 Digons and triangles in small cancellation groups have been classified in \cite[Section 3.4]{St}. In contrast to arbitrary tessellations of a disk (see figure \ref{Tessellation}), digons in small cancellation groups can only take form $I_1$ (see figure \ref{strebel}). 
 
 \begin{figure}[ht!]
\centering
\includegraphics[width=.5\textwidth]{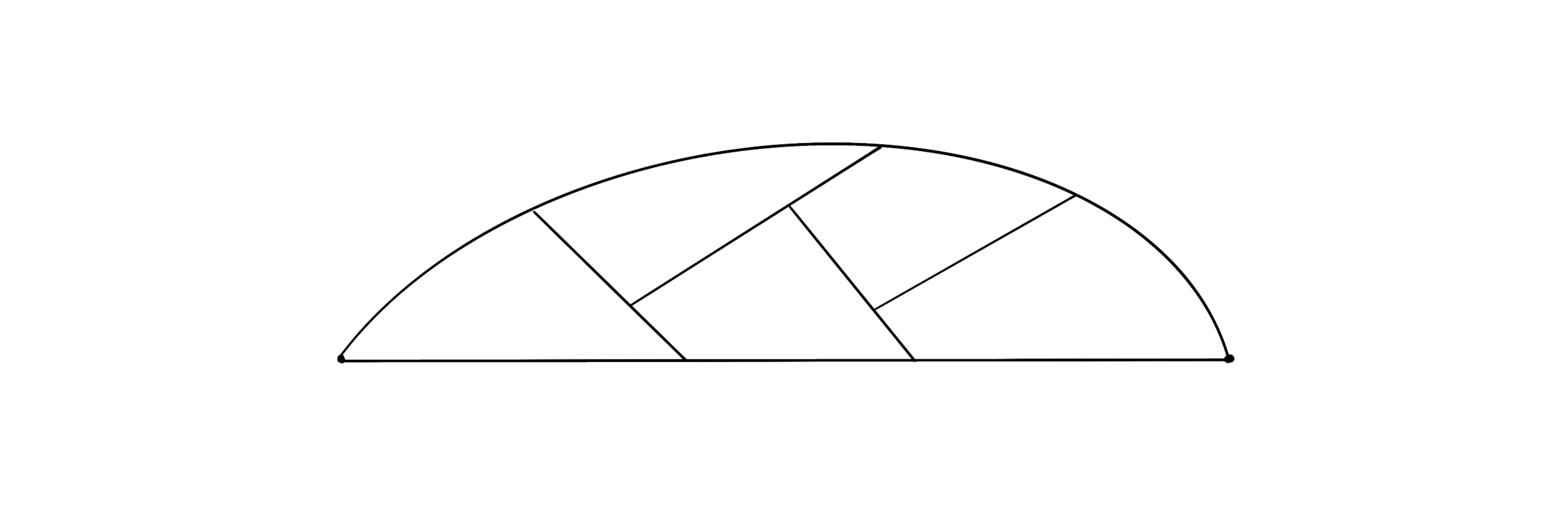}
\caption{An arbitrary tessellation of a digon.}
\label{Tessellation}
\end{figure}
 
 There are more possibilities for simple triangles. A simple triangle has one of the following forms: $I_2, I_3, II, III, IV$ and $V$ (see figure \ref{strebel}). 

\begin{figure}[ht!]
\centering
\includegraphics[width=1.\textwidth]{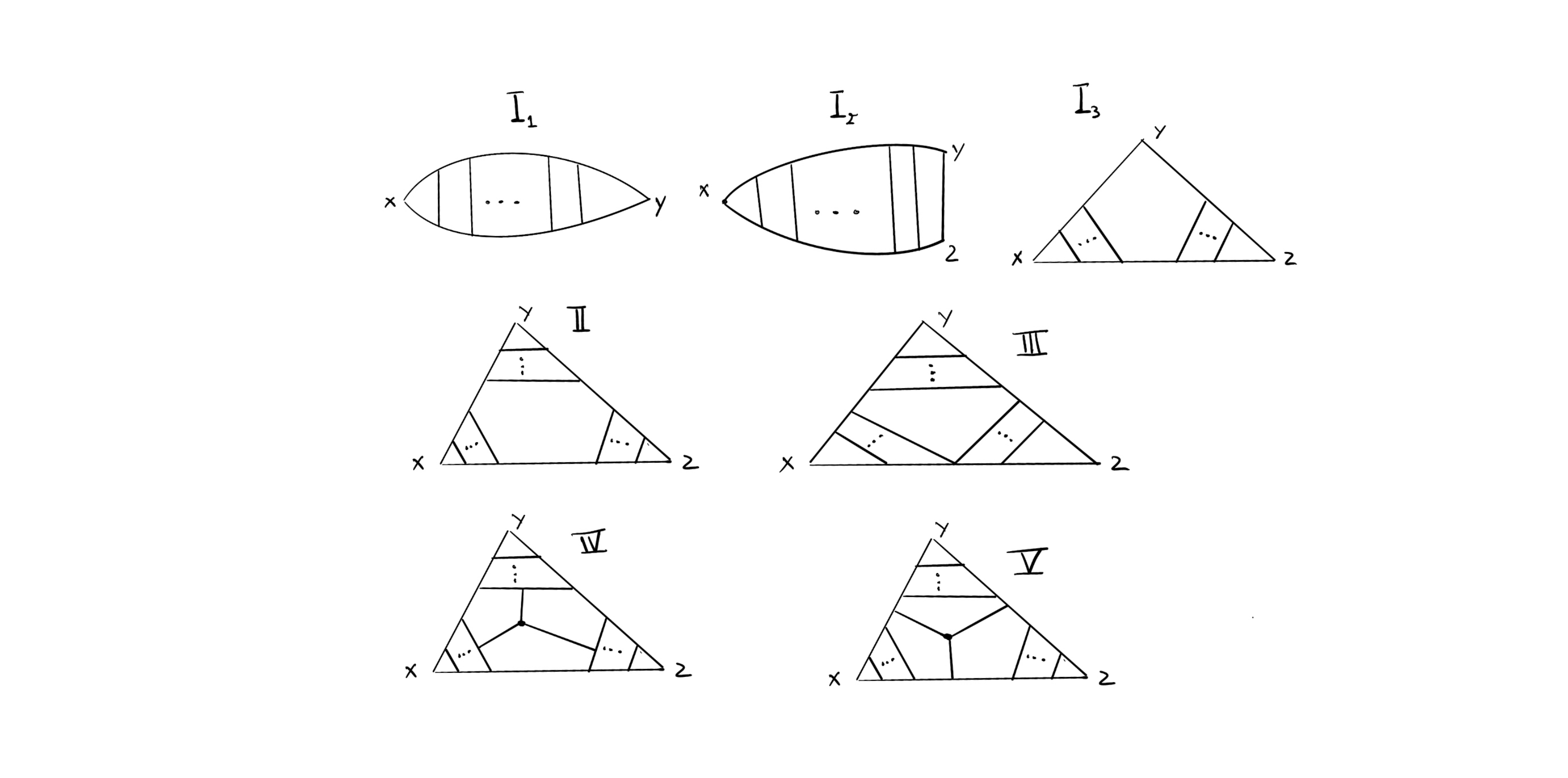}
\caption{Simple geodesic digon and triangles}
\label{strebel}
\end{figure}

 We now focus to {\em eighth groups} that appear as random groups of density $d<1/16$ at length $\ell$. In particular all defining relators have fixed length $\ell$ and groups are torsion free. We first introduce some terminology.
 
 If $\mathfrak{D}$ is the class of digons of $\Gamma$ we define two functions $up, low:\mathfrak{D}\rightarrow Cay(\Gamma, \Sigma_\Gamma)$ such that for any digon $D$, $up(D)$ is the ``upper" geodesic segment and $low(D)$ is the ``lower" geodesic segment of the digon. The ``upper" and ``lower" terms do not have any geometric meaning they are only meant to distinguish between the two geodesic segments forming the digon. Also note that the length of $up(D)$ equals the length of $low(D)$ which is the distance between the two endpoints of the digon. A {\em side of a digon} is inadvertently either $up(D)$ or $low(D)$. 
 
 When a simple triangle of type $I_2$, determined by $x, y, z$ (as in Figure \ref{strebel} type $I_2$), is such that $d(y,z)$ is less than $\lambda\ell$ we call it a {\em semidigon}. We call the point $x$, where the two geodesic segments meet, the {\em summit of the semidigon}. We extend our functions $up$ and $low$ to semidigons in a way that for any semidigon $sD$,  $up(sD)=[x,y]$ and $low(sD)=[x,z]$. In contrast to the digon case, the lengths of $up(sD)$ and $low(sD)$ may differ. A side of a semidigon is inadvertently either $up(sD)$ or $low(sD)$.   
 
 A {\em cell} in $Cay(\Gamma, \Sigma_\Gamma)$ is a simple cycle bearing a defining relator.  Note that this is not formally a ($2$)-cell, i.e. a closed disk up to a homeomorphim, but since such cycles appear in Van Kampen diagrams as boundaries of $2$-cells, we will abuse notation and for convenience we will call them cells. By the  small cancellation property and the fact that Cayley graphs are ``folded", i.e. we cannot have two distinct edges emanating from the same vertex bearing the same generators we observe that if two cells intersect in a segment of length greater or equal to $\lambda\ell$, then they are equal. We will exploit the latter fact together with our understanding of digons and semidigons in $\Gamma$. 
 
 \begin{lemma}\label{LongSegments}
 Let $R=[u,c,\bar{v}, d]$ be the boundary of a cell which is part of a digon or a semidigon in $\Gamma$ (see figure \ref{Sides}). Then the lengths of $u$ and $v$ are strictly greater than $\ell/4$. 
 \end{lemma}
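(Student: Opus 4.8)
The plan is to read off the bound from three facts about the cell $R$: its perimeter is exactly $\ell$, since $R$ bears a defining relator of the random group; the two ``side'' arcs $c$ and $d$ of $\partial R$ are pieces (or degenerate), hence each has length strictly less than $\lambda\ell=\ell/8$ by the small cancellation hypothesis $C'(1/8)$; and $u,v$ are geodesic segments whose endpoints are joined by $c$ and $d$, so the lengths $|u|$ and $|v|$ differ by at most $|c|+|d|$. Feeding the first and third facts into the identity $|u|+|c|+|v|+|d|=\ell$ then forces both $|u|$ and $|v|$ to exceed $\ell/4$.

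First I would pin down the geometry, i.e. identify which arcs of $\partial R$ are $c$ and $d$. Since $\Gamma$ is an eighth group, a digon has Strebel type $I_1$ and a semidigon type $I_2$, and from the shape of these diagrams one sees that $u=\partial R\cap up(D)$ and $v=\partial R\cap low(D)$ (and likewise for a semidigon $sD$ with $up(sD),low(sD)$), so $u$ and $v$ are geodesic segments, while each of the two remaining arcs $c,d$ of $\partial R$ is either a sub-arc shared with a neighbouring cell of the diagram, a sub-arc of the short base geodesic $[y,z]$ of a semidigon, or a single vertex. In the first case $c$ (resp. $d$) is a piece, so $C'(1/8)$ together with $|r|=\ell$ gives $|c|<\ell/8$ --- equivalently, two cells meeting along a segment of length $\ge\lambda\ell$ coincide, which a reduced diagram forbids; in the second case $|c|\le|[y,z]|<\lambda\ell=\ell/8$; in the third $|c|=0$. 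In every case $|c|,|d|<\ell/8$, hence $|c|+|d|<\ell/4$.

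Then the estimate is immediate, and it uses nothing about $\Gamma$ beyond Step 1. Write $\partial R$ as the based loop $p_0\xrightarrow{\,u\,}p_1\xrightarrow{\,c\,}p_2\xrightarrow{\,\bar v\,}p_3\xrightarrow{\,d\,}p_0$. Since $u$ and $v$ are geodesic, $d(p_0,p_1)=|u|$ and $d(p_2,p_3)=|v|$, while $d(p_1,p_2)\le|c|$ and $d(p_3,p_0)\le|d|$; applying the triangle inequality around the loop in both directions yields $|u|\le|v|+|c|+|d|$ and $|v|\le|u|+|c|+|d|$. Combining with $|u|+|c|+|v|+|d|=\ell$,
\[
2|u|=\bigl(|u|+|v|\bigr)+\bigl(|u|-|v|\bigr)\ \ge\ \bigl(\ell-|c|-|d|\bigr)-\bigl(|c|+|d|\bigr)=\ell-2(|c|+|d|)\ >\ \ell-\tfrac{\ell}{2}=\tfrac{\ell}{2},
\]
so $|u|>\ell/4$, and by the symmetric computation $|v|>\ell/4$. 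The only genuinely non-routine step is the first one: one must invoke Strebel's classification of geodesic digons and simple triangles in $C'(1/8)$-groups to be certain that the two arcs of $\partial R$ complementary to $u$ and $v$ really are pieces (or lie on the short base of a semidigon, or are trivial). Everything after that is the displayed two-line estimate, and the value $\ell/4$ in the statement appears precisely as the limiting case in which $|c|$ and $|d|$ both approach $\lambda\ell=\ell/8$ --- which is exactly why the hypothesis $C'(1/8)$, rather than merely $C'(1/6)$, is what is being used here.
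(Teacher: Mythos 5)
Your proof is correct and follows essentially the same route as the paper's: bound $|c|,|d|<\ell/8$ by the $C'(1/8)$ condition, use the geodesic property of the sides to compare $|u|$ and $|v|$ via the triangle inequality, and combine with $|u|+|c|+|v|+|d|=\ell$ to get $|u|,|v|>\ell/4$. The extra care you take in justifying that the arcs $c,d$ are pieces (or the short base, or trivial) is just a more explicit version of what the paper dispatches with ``as noted above.''
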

 
\begin{figure}[ht!]
\centering
\includegraphics[width=.8\textwidth]{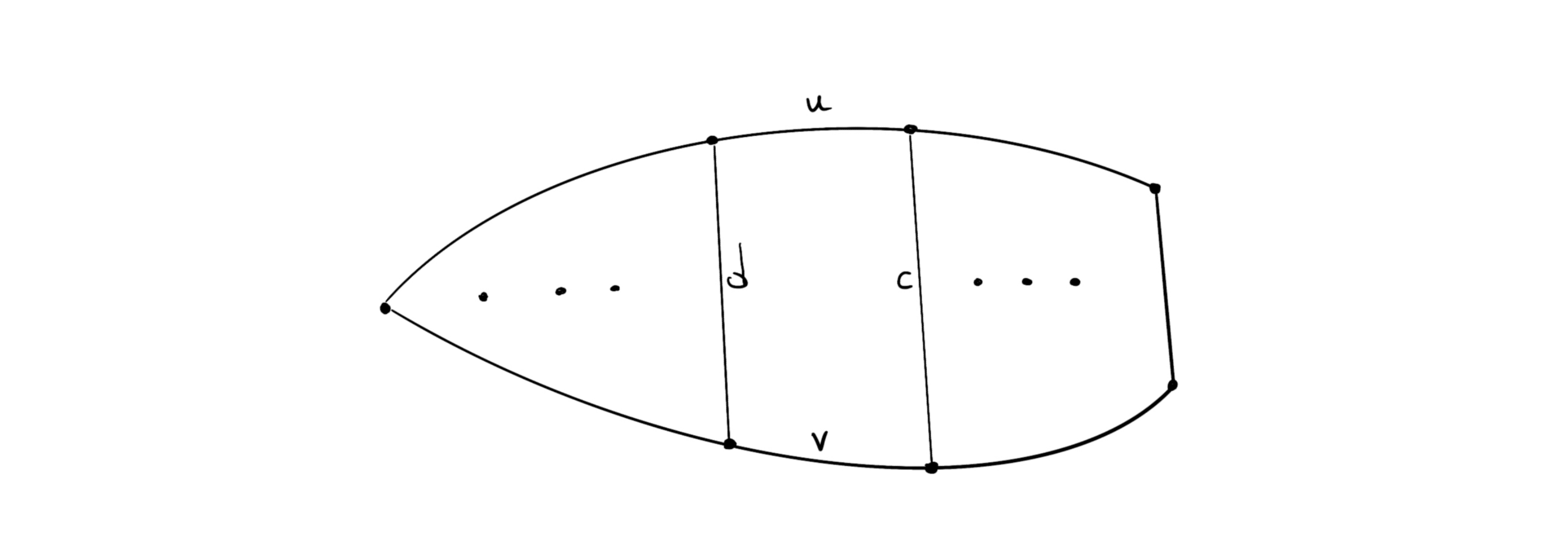}
\caption{A cell in a semidigon.}
\label{Sides}
\end{figure}

\begin{proof}
As noted above the lengths of $d$ and $c$ are strictly less than $1/8\ell$. Hence $\abs{v}\leq \abs{u} +2/8\ell$, by the fact that $u$ is a geodesic segment (as part of a geodesic segment). Now since every defining relator has length $\ell$, we have $\abs{u}+\abs{v}+\abs{c}+\abs{d}=\ell$, thus $\abs{u}+\abs{v}+2/8\ell> \ell$. Combining the two inequalities we get $\abs{u}>\frac{(1-4/8)\ell}{2}=\ell/4$ and similarly $\abs{v}>\ell/4$. 
\end{proof}

\begin{lemma}\label{short_geod_tr}
 If a geodesic triangle has one side of length less than $\ell /8$, then it has two legs of length less than $\ell /8$  and either it does not contain a simple triangle (degenerate case) or, if it does, this simple triangle is a semidigon. 
\end{lemma}
\begin{proof}
Let $x,y,z$ be the endpoints of the geodesic triangle. If $|[y,z]|<\ell/8$, then apparently the leg that contains $y$ and the leg that contains $z$ have lengths less than $\ell/8$. We then assume that the geodesic triangle contains a (non degenerate) simple triangle and we take cases according to the classification of simple triangles by Strebel (see figure \ref{strebel}). If it is of type $I_2$, then, since $|[y,z]|<\ell/8$, it is a semidigon and we can conclude. In any other case, since one side of the simple triangle is contained in $[y,z]$, it has length less than $\ell/8$. Therefore, by the main argument in Lemma \ref{LongSegments}, this side cannot have any division points. In particular, types $I_3, II$ and $III$ also degenerate to semidigons. A simple triangle of type $IV$ cannot occur since the ``legs" of the tripod in the center are short, i.e. $<\ell/8$, hence two of them together with the subsegment of $[y,z]$ that consists of the side of the simple triangle cannot define a cell. Finally, a simple triangle of type $V$ cannot occur as well, in this case there exists a side of the simple triangle which will not be a geodesic since the path that goes through the tripod in the center consists of short segments, i.e. $<\ell/8$, and it will be shorter.          
\end{proof}

Using Lemma \ref{LongSegments} we can prove that a side of a digon determines uniquely the other side of the digon  hence the digon. We call the non-trivial segments of length less than $1/8\ell$ whose endpoints lie on a digon (respectively semidigon) {\em divisors} and their endpoints {\em division points}. 

\begin{corollary}\label{DigonsFixed}
Let $low(D)$ be a side of a digon $D$ in $\Gamma$. Then $up(D)$ is determined uniquely by $low(D)$  (and vice versa). 
\end{corollary}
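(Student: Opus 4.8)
The plan is to argue by contradiction: suppose two distinct digons $D_1, D_2$ share the side $\gamma = low(D_1) = low(D_2)$, so that $up(D_1) \ne up(D_2)$ as paths, yet both have the same pair of endpoints (namely those of $\gamma$). I would first recall that a digon in an eighth group can only take Strebel form $I_1$ (as stated in the discussion preceding Lemma \ref{LongSegments}), so each $up(D_j)$ together with $\gamma$ bounds a van Kampen diagram which is a chain of cells glued along the $I_1$ pattern. Consider the van Kampen diagram $E$ obtained by gluing $up(D_1)$ and $up(D_2)$ along their common endpoints; since the two upper paths differ, $E$ contains at least one cell, and along the shared side $\gamma$ we see cells of $D_1$ and cells of $D_2$ meeting.

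The key step is a length count on the cells adjacent to $\gamma$. Take a cell $R$ of $D_1$ (say) whose boundary $[u,c,\bar v,d]$ is read off so that $u$ (or $v$) is the portion lying along $\gamma$, with $c,d$ the ``short'' divisor segments of length $<\ell/8$ coming from the $I_1$ structure; by Lemma \ref{LongSegments} this portion has length $> \ell/4$. The same picture applies to a cell $R'$ of $D_2$ meeting $\gamma$, and because both $u$-sides run along the \emph{same} geodesic segment $\gamma$, the cells $R$ and $R'$ overlap along a common subsegment of $\gamma$. As $\gamma$ has some total length $L = |\gamma|$, and each of $D_1, D_2$ partitions $\gamma$ into pieces each of length $> \ell/4$ interspersed with divisors of length $< \ell/8$, a counting argument forces a cell of $D_1$ and a cell of $D_2$ to share a subsegment of $\gamma$ of length at least (something like) $\ell/4 - 2\cdot \ell/8 = 0$ — so I need to be a little more careful and instead argue that some cell of $D_1$ and some cell of $D_2$ share a subsegment of length $\ge \lambda\ell = \ell/8$ along $\gamma$ (this is where the precise interleaving of long sides and short divisors on the two sides of $\gamma$ must be examined, using that consecutive long sides on one side force the other side's division points to be close). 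Once two cells share a subsegment of length $\ge \lambda\ell$, the small cancellation property together with the foldedness of the Cayley graph (as noted just before Lemma \ref{LongSegments}) forces them to be \emph{equal} cells.

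Propagating this equality: if the cell of $D_1$ at one end of $\gamma$ equals the cell of $D_2$ at that end, then their ``outer'' sides (the $v$-side, of length $> \ell/4$) coincide as well, which means $up(D_1)$ and $up(D_2)$ agree on an initial segment, and moreover the next division point is forced to coincide; an induction along $\gamma$ then shows $up(D_1) = up(D_2)$ cell by cell, contradicting $D_1 \ne D_2$. The symmetric statement (a side of $up$ determines $low$) is identical. I expect the main obstacle to be the bookkeeping in the interleaving argument — verifying that on a geodesic side of length $L$, the two different cell-decompositions coming from $D_1$ and $D_2$ cannot ``shear past'' each other without producing an overlap of length $\ge \lambda\ell$; this is precisely where the bound $\ell/4$ from Lemma \ref{LongSegments} (comfortably larger than $2\lambda\ell = \ell/4$... in fact exactly at the boundary, so the strictness in Lemma \ref{LongSegments} is essential) does the work, and the strict inequalities must be tracked carefully.
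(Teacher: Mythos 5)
Your overall plan---induct cell by cell along the shared side, using Lemma \ref{LongSegments}, the $C'(1/8)$ condition and foldedness of the Cayley graph to force matching relators and matching division points---is the same as the paper's. But the central step, producing two cells (one from each digon) sharing a subsegment of the common side of length at least $\lambda\ell$, is exactly where your write-up has a genuine gap, and the gap comes from not using the hypothesis that the two digons have the \emph{same} side, hence the same endpoints. In a digon of type $I_1$ the divisors $[A_i,B_i]$ run into the interior; along the common lower side the lower pieces $[x,A_1],[A_1,A_2],\ldots$ of the cells tile $[x,y]$, meeting only at division points. In particular the first cell of $D$ and the first cell of $D'$ are both anchored at the common endpoint $x$, so they already overlap along $[x,\min(A_1,A_1')]$, a segment of length $>\ell/4>\lambda\ell$ by Lemma \ref{LongSegments}: no interleaving or ``shearing'' analysis is needed, and this is precisely how the paper starts its induction at the end of the segment. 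Your substitute, a counting argument somewhere in the middle of $\gamma$, is left unresolved (``$\ell/4-2\cdot\ell/8=0$ \dots\ the precise interleaving \dots\ must be examined''); the arithmetic itself reflects a misreading of the geometry, since there are no short divisor segments separating consecutive cells along $\gamma$ (a correct pigeonhole---a lower piece of $D_1$ of length $>\ell/4$ either contains a lower piece of $D_2$, or meets at most two of them and so overlaps one in length $>\ell/8$---would work), but even a repaired version only yields a matched pair of cells \emph{somewhere} along $\gamma$, whereas your concluding paragraph launches the induction from ``the cell of $D_1$ at one end of $\gamma$ equals the cell of $D_2$ at that end,'' which your argument never establishes.

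A secondary point: once two corresponding cells are identified and the lower division points agree, equality of the \emph{upper} division points is not automatic from ``the outer sides coincide''; the paper derives $B_1=B_1'$ by looking at the second cells and observing that otherwise $[x,B_1']\cup[B_1',B_2']$ would contradict the geodesity of $up(D')$. Your sketch glosses over this. So the strategy is the right one, but as written the key step fails; replacing it by the anchoring observation at the shared endpoint $x$ (and adding the geodesity argument for the upper division points) turns your sketch into the paper's proof.
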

\begin{proof}
Fix a digon $D$ and the side $low(D)=[x,y]$, order the division points $A_1,A_2,\ldots A_n$ with $A_1$ the closest to $x$. Let $D'$ be a digon with the same lower side as $D$, but different upper side with division points $A'_1, \ldots, A'_m$. The first cells of $D$ and $D'$ meet in a segment longer than $1/4\ell$, since both segments $[x,A_1]$, $[x,A'_1]$ are, by Lemma \ref{LongSegments}, longer than that. In particular, by the small cancellation condition, they must bear the same defining relator. But then if $A_1$, $A'_1$ are distinct we get a vertex in the Cayley graph and two distinct edges emanating from it bearing the same generator (see figure \ref{DigonDetermined} lefthand side). Hence the  first divisors of the digons $D$ and $D'$ have some common initial segment. Suppose, for a contradiction, that the division points nearest to $x$ in $up(D)$ and $up(D')$, say $B_1$ and $B'_1$ respectively are distinct. We consider the second cell in both digons. As in the first cell the division points $A_2$ and $A'_2$ must be the same and the cells must bear the same defining relator, but then if $B_1\neq B'_1$ there would be a reduced path in the union of segments $[x,B'_1]\cup[B'_1,B'_2]$ and this contradicts that $up(D')$ is a geodesic segment (see figure \ref{DigonDetermined} righthand side). 

\begin{figure}[ht!]
\centering
\includegraphics[width=1.\textwidth]{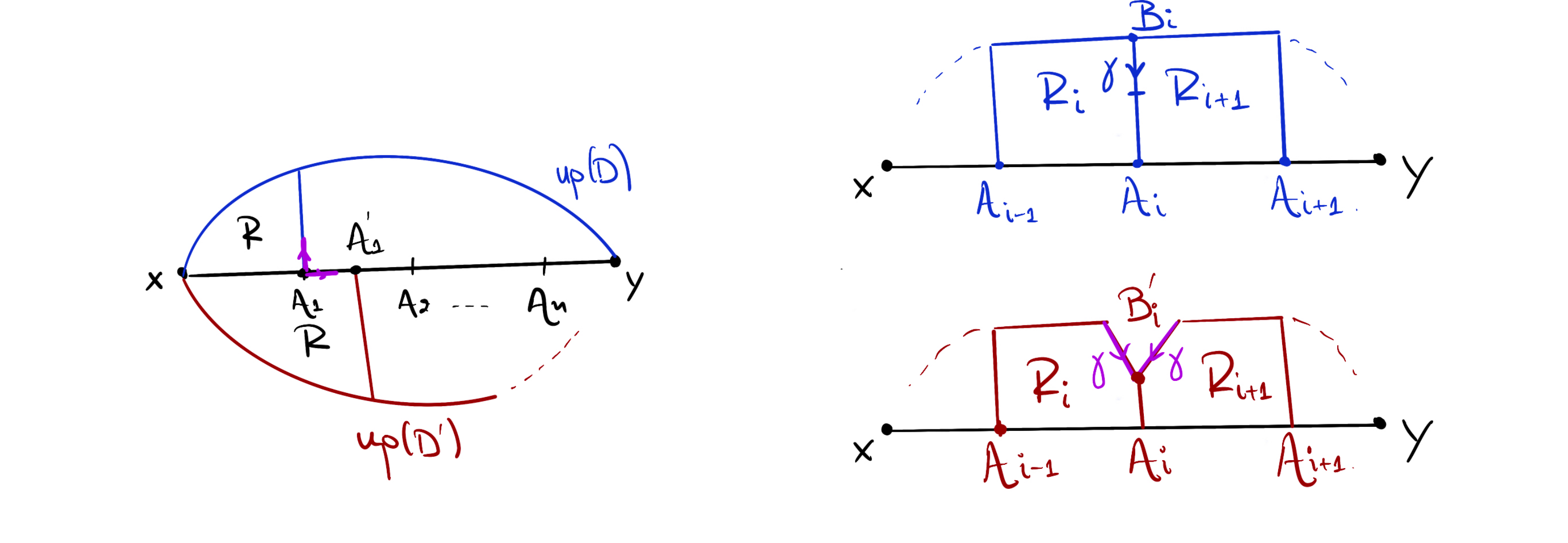}
\caption{A segment as a side of two distinct digons.}
\label{DigonDetermined}
\end{figure}

The above analysis can be carried out cell by cell and an inductive argument gives the result.

\end{proof}

Almost the same result is true for semidigons. Note that having defined the summit of a semidigon we can naturally order the cells of the semidigon according to their distance from the summit. In particular, we can talk about the {\em first cell}, i.e. the cell containing the summit, and the {\em last cell}, i.e. the cell containing the last division points. 

In the case of semidigons the only difference is that the division points of the final cells may differ by at most $1/8\ell$. 

\begin{corollary}\label{SemiDigonsAlmostFixed}
Let $low(sD):=[x,y]$ be a side of a semidigon $sD$ in $\Gamma$. Let $A_1, A_2,\ldots ,A_n$ be the division points on $low(D)$ ordered in a way that $A_n=y$. Let $B_1, B_2,\ldots ,B_n$ be the division points on $up(sD)$ ordered in a way that $B_1$ is closest to $x$. 

Then $up(sD)$ is almost determined by $low(sD)$, i.e. if $sD'$ is another semidigon with $low(sD')=[x,y]$, division points $A'_1, A'_2, \ldots, A'_m$ on $low(sD')=low(sD)$ and $B'_1, B'_2,\ldots ,B'_m$ on $up(sD')$, then $m=n$, $A_i=A'_i$, for $i\leq n$, and $B_i=B'_i$ for $i<n$, and all $n$ cells bear the same defining relators, the last division points $B_n$, $B'_n$ may differ by a distance strictly less than $1/8\ell$.  
\end{corollary}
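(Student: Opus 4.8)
The plan is to mimic, almost verbatim, the cell-by-cell inductive argument that proves Corollary \ref{DigonsFixed}, making the small modifications forced by the fact that a semidigon has a genuine ``summit'' cell at one end and a short (length $<\lambda\ell$) free side $[y,z]$ at the other. Fix the semidigon $sD$ with $low(sD)=[x,y]$ and summit $x$, and suppose $sD'$ is another semidigon with $low(sD')=[x,y]$, so in particular it has the same summit $x$ and the same short side (the segment joining $y$ to $z'$, of length $<\lambda\ell$, which need not be the same as $[y,z]$ but whose endpoint on the lower side is $y$). I would first treat the \emph{first cell}, the one containing the summit $x$. By Lemma \ref{LongSegments} the two lower-side pieces $[x,A_1]$ and $[x,A_1']$ of the first cells of $sD$ and $sD'$ both have length $>\ell/4$, so the first cells overlap along a segment of length $>\ell/4>\lambda\ell$; by the small cancellation hypothesis $C'(1/8)$ they bear the same defining relator, and then, exactly as in the digon proof, foldedness of the Cayley graph forces $A_1=A_1'$ and forces the two upper-side pieces to share an initial segment, hence $B_1=B_1'$ (provided the first cell is not the last cell). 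Then I would run the induction: assuming $A_i=A_i'$, $B_i=B_i'$ and that cells $1,\dots,i$ of $sD$ and $sD'$ coincide, the $(i{+}1)$-st cells again overlap in a segment of length $>\ell/4$ (the pieces $[A_i,A_{i+1}]$, $[A_i,A_{i+1}']$ are $>\ell/4$ by Lemma \ref{LongSegments}), so they bear the same relator, and foldedness together with the geodesity of $up(sD')$ forces $A_{i+1}=A_{i+1}'$ and $B_{i+1}=B_{i+1}'$, exactly as in the right-hand picture of Figure \ref{DigonDetermined}. In particular this already gives $m=n$, since the number of cells is determined by how the lower side is subdivided by the division points, which we have just shown is the same for both.

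The one place the argument genuinely differs from Corollary \ref{DigonsFixed}, and the step I expect to be the real (though mild) obstacle, is the \emph{last cell}. For digons the lower side ends exactly at a division point and the two sides have equal length, which is what let Strebel's classification pin everything down to type $I_1$; for a semidigon the last cell abuts the short free side $[y,z]$ (resp. $[y,z']$) of length $<\lambda\ell$, and the last division points $B_n$, $B_n'$ on the upper side sit at distance $<1/8\ell$ from the endpoint of that short side. So in the last cell I can still conclude, by the same overlap argument (the lower pieces $[A_{n-1},y]$ are $>\ell/4$ by Lemma \ref{LongSegments}, hence the cells share the same relator and $A_{n-1}=A_{n-1}'$, and all prior $B_i$ agree), that the two last cells bear the same defining relator; but I can only control $B_n$ up to the length of the missing ``$d$'' piece of that relator, which by the $C'(1/8)$ bound is $<1/8\ell$. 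Concretely: the last cell has boundary $[u,c,\bar v,d]$ with $c,d$ the short arcs; $u=[A_{n-1},y]$ is common to both, the relator is common, hence $\bar v$ (the upper side of the last cell) is the same \emph{word}, but its endpoint $B_n$ may be translated along that word depending on where the short arc $c$ of length $<1/8\ell$ attaches. This yields precisely the claimed $|B_n-B_n'|<1/8\ell$, and no better bound is available, which is why the statement says ``almost determined'' rather than ``determined''.

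To organize the write-up I would (i) state the induction hypothesis $H_i$: ``cells $1,\dots,i$ of $sD$ and $sD'$ carry the same relators, $A_j=A_j'$ for $j\le i$, and $B_j=B_j'$ for $j\le i$ (or $j<i$ if the $i$-th cell is the last one)''; (ii) verify $H_1$ using Lemma \ref{LongSegments} and foldedness, handling separately whether the first cell is also the last; (iii) prove $H_i\Rightarrow H_{i+1}$ for $i+1<n$ verbatim as in Corollary \ref{DigonsFixed}, invoking geodesity of $up(sD')$ to rule out $B_{i+1}\ne B_{i+1}'$; (iv) treat $i+1=n$ separately, getting $A_n=A_n'=y$, $B_{n-1}=B_{n-1}'$, equal relators on the last cell, and only $|B_n-B_n'|<1/8\ell$ from the short arc; and (v) conclude $m=n$ from the agreement of division points on the common lower side. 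The only subtlety worth flagging is the degenerate case where $sD$ (or $sD'$) has a single cell, i.e. $n=1$: then there is no intermediate induction and one argues directly in that one cell that $A_1=y$, the relators agree, and $|B_1-B_1'|<1/8\ell$, which is the base case of the statement. No new geometric input beyond Lemma \ref{LongSegments}, the $C'(1/8)$ overlap principle, foldedness, and geodesity of the upper sides is needed.
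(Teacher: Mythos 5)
Your proposal is correct and takes essentially the same route as the paper, whose proof simply repeats the argument of Corollary \ref{DigonsFixed} cell by cell up to the last cell and then notes that the last cells bear the same relator while $B_n$ and $B'_n$ both lie within $1/8\ell$ of $A_n=A'_m$ with one of them on the segment from $A_n$ to the other, giving $d(B_n,B'_n)<1/8\ell$. Your description of the last cell via the common boundary word $[u,c,\bar v,d]$, with the endpoint shifted only by where the short arc $c$ attaches, is just a rephrasing of that nesting observation.
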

\begin{proof}
The proof goes word for word with the proof of Corollary \ref{DigonsFixed} up to the last cell. In the last cell, by a small cancellation argument, the division points in $low(sD)$, $A_n$ and $A'_m$ are the same, hence we have $m=n$. Moreover, the last cells bear the same defining relator. We show that if the division points $B_n$, $B'_n$ differ then their distance is no greater than $1/8\ell$. But, by the definition of a semidigon $d(A_n, B_n)<1/8\ell$ and $d(A_n, B'_n)<1/8\ell$, moreover either $B_n$ is on the geodesic segment $[A_n, B'_n]$ or $B'_n$ is on the geodesic segment $[A_n,B_n]$. Thus, $d(B_n,B'_n)<1/8\ell$.      
\end{proof}

\begin{figure}[ht!]
\centering
\includegraphics[width=1.\textwidth]{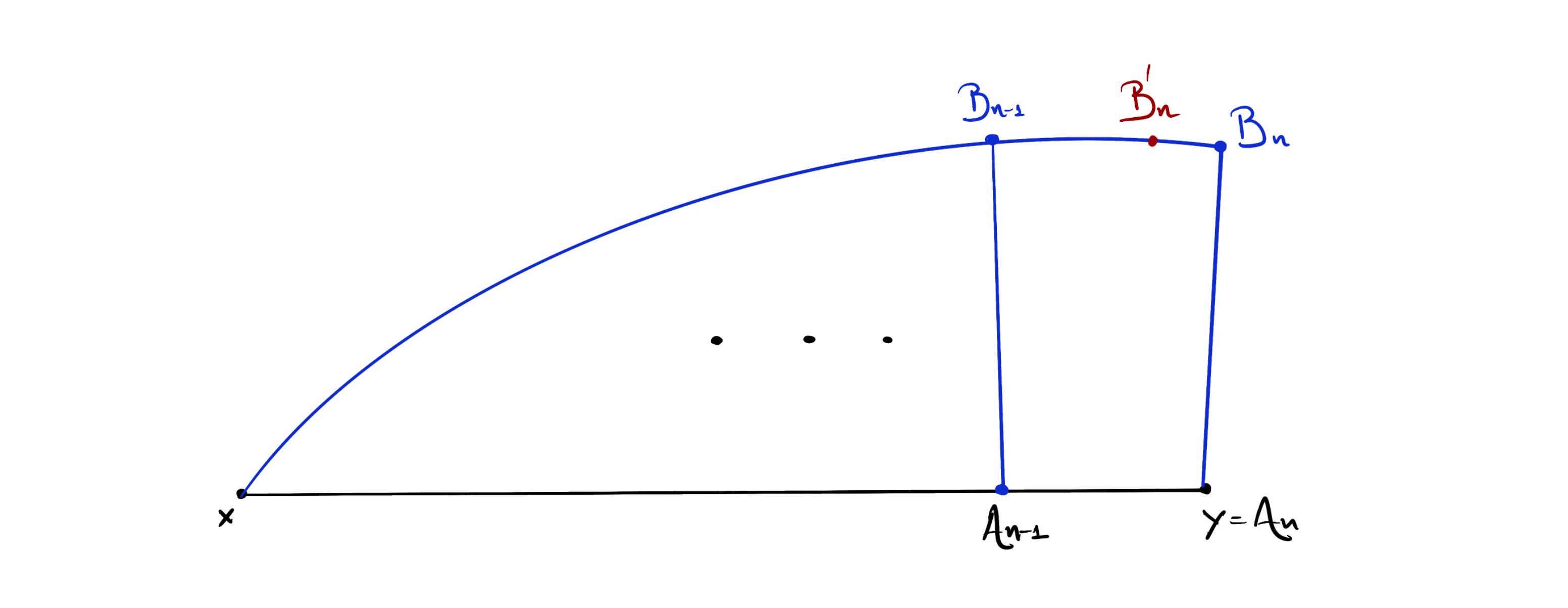}
\caption{The last division points of two semidigons with a common side.}
\label{SemidigonsDetermined}
\end{figure}

We observe that a semidigon can be turned into a digon for a suitable point in its last divisor. We will use next lemma for proving Lemma \ref{IntersectionSemiDigons}. 

\begin{lemma}\label{SemiToDigons}
Let $sD$ be a semidigon. Let $low(sD), up(sD)$ be its sides, $A_0, A_1,\ldots,A_n$ be the division points on $low(sD)$, together with the initial point $A_0$, and $B_1, B_2, \ldots, B_n$ be the division points on $up(sD)$. Then there exists a point on the divisor $[A_n, B_n]$, say $A_{n+1}$, such that $low(sD)\cup[A_n, A_{n+1}]=low(D)$ and $up(sD)\cup [B_n, A_{n+1}]=up(D)$ for a digon $D$. 
\end{lemma}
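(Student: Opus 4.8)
The plan is to show that a semidigon $sD$, which is by definition a simple triangle of type $I_2$ with one short side $[A_n,B_n]$ (the short side having length $< \lambda\ell = 1/8\ell$ since that side is a divisor by the setup), extends to a genuine digon once we slide along that short final divisor to a single well-chosen point. Concretely, let $x$ be the summit, $A_n = y$ the terminal point of $low(sD)$, and $B_n$ the terminal point of $up(sD)$; these two points are joined by a geodesic divisor $[A_n,B_n]$ of length $< 1/8\ell$. I want to produce a point $A_{n+1}$ on $[A_n,B_n]$ at which the two extended segments $low(sD)\cup[A_n,A_{n+1}]$ and $up(sD)\cup[B_n,A_{n+1}]$ become sides of a digon $D$ with endpoints $x$ and $A_{n+1}$.

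First I would observe that the only thing one needs to verify is (i) that the two extended paths are still \emph{geodesic} segments from $x$ to the common endpoint $A_{n+1}$, and (ii) that they meet only at $x$ and $A_{n+1}$ (so their union really is a digon and not some degenerate configuration). For (ii), disjointness away from the endpoints is inherited from the fact that $sD$ is a \emph{simple} triangle (its three sides meet only at the vertices $x$, $A_n$, $B_n$), plus the fact that the short divisor $[A_n,B_n]$ is itself embedded; appending initial/terminal sub-segments of $[A_n,B_n]$ cannot create new intersections because the divisor lies in the last cell only. For (i), the key point is a length bookkeeping: write $a := |low(sD)|$, $b := |up(sD)|$, and $t := |[A_n,B_n]| < 1/8\ell$. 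Splitting the divisor at a point $A_{n+1}$ with $|[A_n,A_{n+1}]| = s$ and $|[B_n,A_{n+1}]| = t - s$, the two extended paths have lengths $a + s$ and $b + (t-s)$. These are automatically paths; whether they are \emph{geodesics} is the content. The natural choice is to take $s$ so that $a + s = b + (t - s)$, i.e. $s = \tfrac{1}{2}(b - a + t)$, which lies in $[0,t]$ precisely because $|a - b| \le t$ (the triangle inequality applied to the three sides $a$, $b$, $t$ of the simple triangle). Then both extended paths have the common length $\tfrac{1}{2}(a+b+t)$, which is exactly $d(x,A_{n+1})$: indeed $d(x,A_{n+1}) \le a + s = \tfrac12(a+b+t)$ on one hand, and $d(x,A_{n+1}) \ge d(x,A_n) - d(A_n,A_{n+1}) $ type estimates together with the rigidity of geodesics near cells (Lemma~\ref{LongSegments} and Corollaries~\ref{DigonsFixed}, \ref{SemiDigonsAlmostFixed}) force no shortcut through the last cell, so the bound is tight.

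The main obstacle I expect is step (i): proving the extended paths are genuinely geodesic, i.e. that no shortcut exists. This is where one must use the small cancellation geometry rather than pure metric juggling. The worry is that $low(sD)$ followed by a chunk of the divisor might be shortenable by cutting a corner through the \emph{last cell} of the semidigon; ruling this out is exactly the kind of cell-by-cell argument used in the proof of Corollary~\ref{SemiDigonsAlmostFixed}, using that the last cell has its two ``long'' sides of length $> \ell/4$ (Lemma~\ref{LongSegments}) so that any competing path would have to re-enter that cell and violate foldedness or the $C'(1/8)$ condition. I would handle this by arguing that the extended path $low(sD)\cup[A_n,A_{n+1}]$ together with $up(sD)\cup[B_n,A_{n+1}]$ bounds a subdiagram which is again of type $I_2$ (now with \emph{both} non-summit sides being honest geodesics meeting at $A_{n+1}$), invoke Strebel's classification (Figure~\ref{strebel}) to see such a configuration is forced to be a digon of type $I_1$ — hence both sides are geodesic — and conclude. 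Since the statement says the lemma will not actually be used later, I would keep this last verification brief and refer to the classification and to the preceding corollaries rather than redo the induction.
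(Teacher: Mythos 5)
Your setup coincides with the paper's: you pick $A_{n+1}$ on the divisor $[A_n,B_n]$ so that $|low(sD)|+d(A_n,A_{n+1})=|up(sD)|+d(B_n,A_{n+1})$, and the triangle inequality guarantees such a point exists. But the heart of the lemma — that the two extended paths are actually geodesics, i.e.\ that there is no shortcut from $A_0$ to $A_{n+1}$ — is not established by your argument. The reverse-triangle-inequality remark ($d(A_0,A_{n+1})\geq d(A_0,A_n)-d(A_n,A_{n+1})$ ``type estimates'') is off from the needed bound by up to $2d(A_n,A_{n+1})$, which can be of order $\ell/8$, so it cannot give tightness. And your proposed fix is circular: Strebel's classification (digons of type $I_1$, simple triangles $I_2$--$V$) classifies configurations bounded by \emph{geodesic} segments, so you cannot apply it to the region bounded by the two extended paths while describing them as ``honest geodesics meeting at $A_{n+1}$'' — their geodesicity is exactly what is to be proved. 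As written, step (i) has no valid proof.

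The paper closes this gap differently, and you would need some version of its argument. First, it observes that it suffices to show that \emph{one} of the two extended paths is geodesic: by the choice of $A_{n+1}$ they have equal length, so if one realizes $d(A_0,A_{n+1})$ the other does too. Then it introduces an honest geodesic $[A_0,A_{n+1}]$ and applies Strebel's classification to the genuine geodesic triangle with vertices $A_0, B_n, A_{n+1}$ and sides $up(sD)$, $[B_n,A_{n+1}]$, $[A_0,A_{n+1}]$; since $[B_n,A_{n+1}]$ is short, any simple triangle occurring there must be of type $I_2$. If no simple triangle occurs, $up(sD)\cup[B_n,A_{n+1}]$ is already geodesic. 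Otherwise a cell-by-cell coincidence argument (in the spirit of Corollaries \ref{DigonsFixed} and \ref{SemiDigonsAlmostFixed}, using Lemma \ref{LongSegments} and the $C'(1/8)$ condition) shows the cells of that simple triangle coincide with the cells of $sD$, which forces $[A_0,A_{n+1}]$ to run along $low(sD)$, so $low(sD)\cup[A_n,A_{n+1}]$ is geodesic. Your length bookkeeping is correct and matches the paper, but without an argument of this comparison type the lemma is not proved.
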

\begin{proof}
First note that since $low(sD)$ is a geodesic we have $|up(sD)|+d(A_n, B_n)\geq |low(sD)|$, similarly because $up(sD)$ is a geodesic we have $|low(sD)|+d(B_n, A_n)\geq |up(sD)|$. Hence, $|low(sD)|, |up(sD)| \leq \frac{|low(sD)|+|up(sD)|+d(A_n,B_n)}{2}$ and consequently  there exists a point $A_{n+1}$ in the divisor $[A_n, B_n]$ such that $|low(sD)|+d(A_n, A_{n+1})=|up(sD)|+d(B_n, A_{n+1})$. Therefore, we only need to show that either  $up(sD)\cup [B_n, A_{n+1}]$ or $low(sD)\cup [A_n, A_{n+1}]$ is a geodesic.  

Let $[A_0, A_{n+1}]$ be a geodesic segment between $A_0$ and $A_{n+1}$ and consider the geodesic triangle determined by the vertices $A_0, B_n, A_{n+1}$ and the segments $up(sD)$, $[B_n, A_{n+1}]$ and $[A_0, A_{n+1}]$. If there is no simple triangle, we are done. Hence we may assume there is a simple triangle and in particular it must have $B_n$ and a point, say $C$, between $B_n$ and $A_n$ as two of its vertices (this is because $up(sD)$ and $[A_n,B_n]$ have only two points in common). In addition, as $[B_n,A_n]$ is short, i.e. less than $1/8\ell$, the simple triangle can only be of type $I_2$ with division points on $up(sD)$ and $[A_0, A_{n+1}]$ (see figure \ref{DigonFromSemi}).   

\begin{figure}[ht!]
\centering
\includegraphics[width=.8\textwidth]{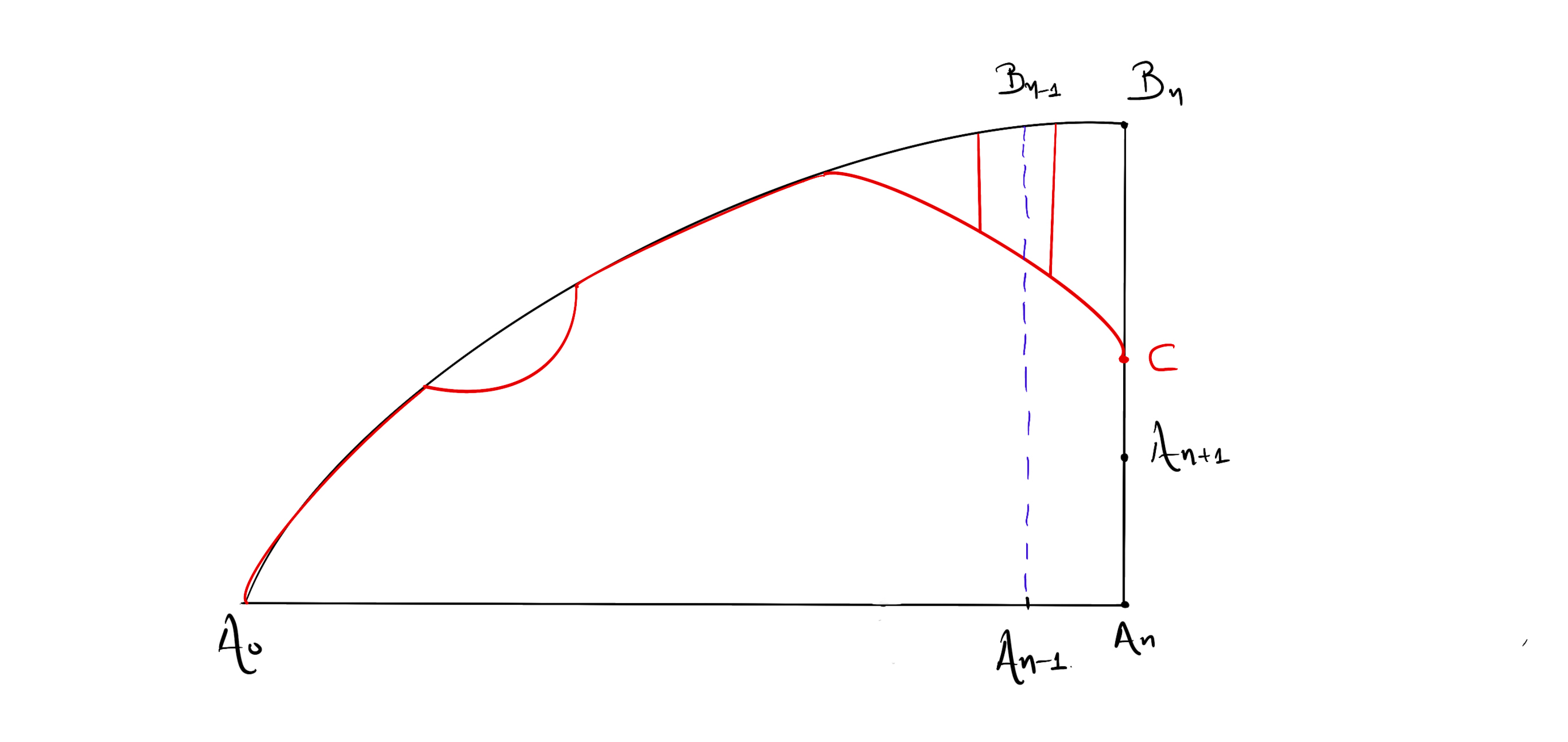}
\caption{The geodesic between $A_0$ and $A_{n+1}$.}
\label{DigonFromSemi}
\end{figure}

If the side of the simple triangle that lies on $up(sD)$ is short, then $up(sD)\cup[B_n, A_{n+1}]$ is obviously shorter than $[A_0, A_{n+1}]$. Hence, the above side is not short and in particular, the last cell of the original semidigon $sD$ and the last cell of the simple triangle of the geodesic triangle $\{A_0, A_{n+1}, B_n\}$ must coincide. An inductive argument shows that all the cells of the simple triangle coincide with cells of $sD$ up to the vertex of the triangle which is not $B_n$ and lies on $up(sD)$ (see figure \ref{DigonFromSemi2}). The latter implies, that  $low(D)\cup[A_n,A_{n+1}]$ is a geodesic as we wanted. 

\begin{figure}[ht!]
\centering
\includegraphics[width=.6\textwidth]{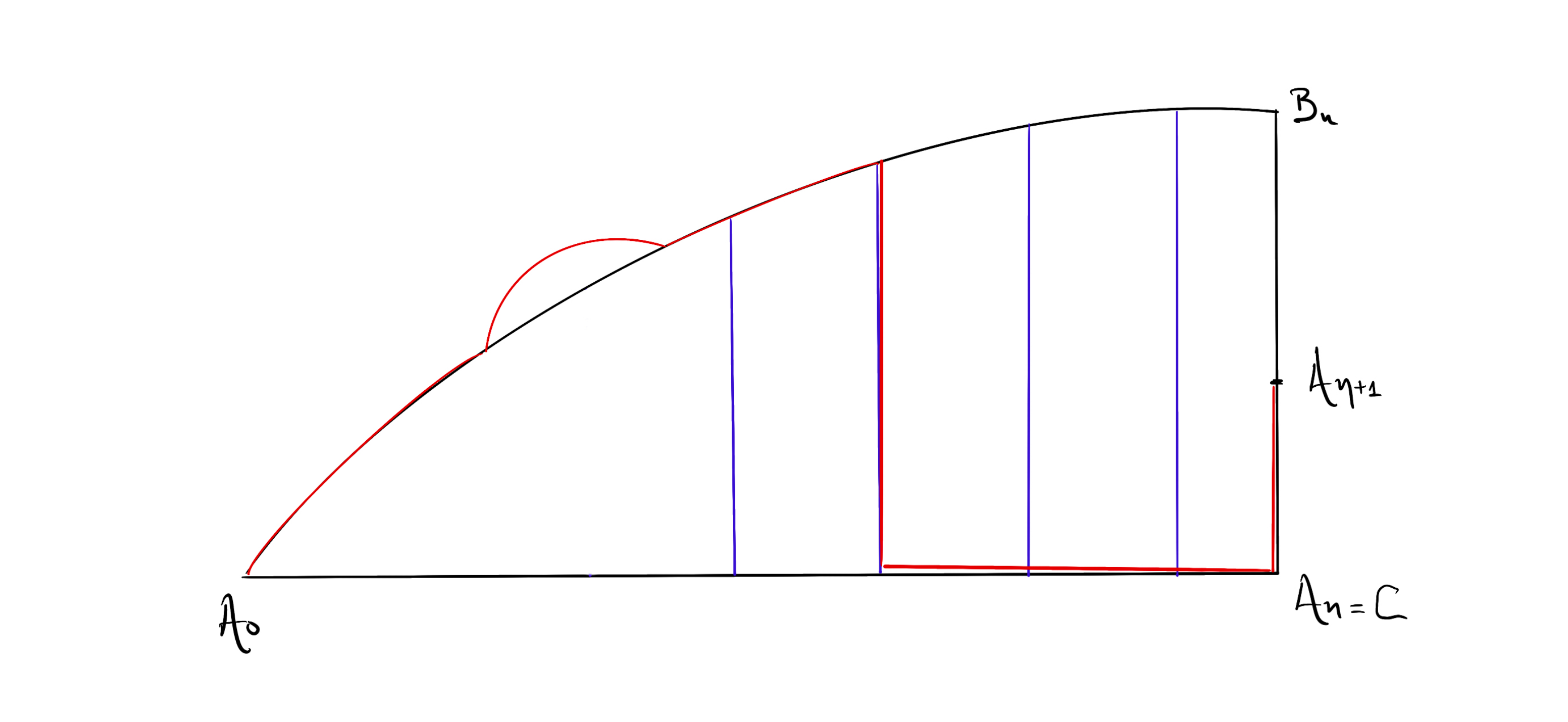}
\caption{The geodesic between $A_0$ and $A_{n+1}$.}
\label{DigonFromSemi2}
\end{figure}

\end{proof}

We now focus on how two geodesic segments between two points in $Cay(\Gamma, S_\Gamma)$ may intersect. We introduce some more terminology before doing so.

\begin{definition}[Subdigon]
Let $D$ be a digon. Let $low(D), up(D)$ be its sides with $A_0, A_1, \ldots,$  $A_{n+1}$ the division points on $low(D)$ together with the initial and terminal points $A_0$, $A_{n+1}$ and $B_1, \ldots, B_n$ the division points on $up(D)$. 

Then a subdigon $E$ of $D$ is a digon which is a subset of $D$ and moreover:
\begin{itemize}
    \item $A'_0=A_i$ for some $0\leq i\leq n$, $A'_j=A_{i+j}$ for $0<j\leq m+1$.   
    \item $B'_j=B_{i+j}$ for $1\leq i\leq m$.
    \item $A'_{m+1}=A_{m+1+i}$ or $B_{m+1+i}$.
\end{itemize}

Where $A'_0, A'_{m+1}$ are the initial and terminal points, $A'_i$, for $0<i\leq m$, are the division points on $low(E)$ and $B'_i$, for $1\leq i\leq m$, are the division points on $up(E)$ of the digon $E$. 
\end{definition}

One can similarly define the notion of a {\em sub-semidigon}, the only difference is that the last division point on $up(sE)$, where $sE$ is the sub-semidigon of the semidigon $sD$, can differ from the corresponding division point on $up(sD)$ but only by a distance no more than $1/8\ell$ .

\begin{lemma} \label{intersectionDigons} Suppose $D_1$, $D_2$ are digons for which $low(D_1)\cup low(D_2):=[x,y]$ is a geodesic segment and $low(D_1)\cap low(D_2)$ has length greater or equal to $1/8\ell$. Then $low(D_1)\cup low(D_2)=low(D)$ for some digon $D$ and $low(D_1)\cap low(D_2)=low(E)$ for some digon $E$.

Moreover, in the case $low(D_2)\subseteq low(D_1)$ the digon $D_2$ is a subdigon of $D_1$. 
\end{lemma}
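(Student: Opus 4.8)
The plan is to analyze the situation cell by cell, exactly in the style of the proofs of Corollaries \ref{DigonsFixed} and \ref{SemiDigonsAlmostFixed}, using Lemma \ref{LongSegments} as the fundamental input. The key observation is that along the common geodesic segment $[x,y]$, every cell of $D_1$ and every cell of $D_2$ contributes a subsegment of length strictly greater than $\ell/4$ to its ``lower'' side (by Lemma \ref{LongSegments}). Since the intersection $low(D_1)\cap low(D_2)$ has length at least $\ell/8$, it is long enough to force the first cells of $D_1$ and $D_2$ that meet this overlap to share a common subsegment of length $> \ell/8 > \lambda\ell$ (with $\lambda = 1/8$), so by the small cancellation condition $C'(1/8)$ together with the folding property of Cayley graphs, these cells bear the same defining relator and their division points on $low$ coincide.

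First I would set up notation: write $low(D_1)=[p_1,q_1]$, $low(D_2)=[p_2,q_2]$, both sub-arcs of the oriented geodesic $[x,y]$, so that the overlap $[p_2,q_1]$ (say) has length $\geq \ell/8$. I would then run the inductive argument: starting from the overlap and moving along $[x,y]$, show cell by cell that the cells of $D_1$ and of $D_2$ sitting over a given portion of $[x,y]$ coincide, that their division points on $low$ agree, and — by the same reduced-path argument as in Corollary \ref{DigonsFixed} (using that both $up(D_1)$ and $up(D_2)$ are geodesics) — that the corresponding division points on the upper sides agree as well. This propagates in both directions along $[x,y]$ as far as cells of both digons are present, and extends unilaterally (using only the cells of whichever digon survives) to the rest of $[x,y]$. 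The outcome is a single coherent stack of cells over $[x,y]=low(D_1)\cup low(D_2)$ whose upper side is a geodesic (each sub-arc is, and the division points match up consistently, so no backtracking is introduced, exactly as argued in Corollary \ref{DigonsFixed}); this stack is the required digon $D$. Restricting the same stack of cells to the overlap $low(D_1)\cap low(D_2)$ gives the digon $E$, and one checks directly from the definition of subdigon that when $low(D_2)\subseteq low(D_1)$ the cells, lower division points, and upper division points of $D_2$ are literally a contiguous sub-block of those of $D_1$, so $D_2$ is a subdigon of $D_1$.

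The main obstacle, and the point requiring care, is the very first matching step: one must be sure that the overlap of length $\geq \ell/8$ genuinely sits inside a single cell of $D_1$ and a single cell of $D_2$, or else spans a consistent boundary between cells, so that the small cancellation argument applies. This is where Lemma \ref{LongSegments} is essential — since each cell contributes more than $\ell/4$ to its lower side, an overlap of length $\geq \ell/8$ cannot ``straddle'' cells in an incompatible way, and one can always find within it a subsegment of length $> \ell/8$ lying on the boundary of one cell from each digon. A secondary subtlety is orientation: one must verify that $low(D_1)$ and $low(D_2)$ are traversed in the same direction along $[x,y]$ (which follows since both are sub-arcs of the single geodesic $[x,y]$), so that ``upper'' and ``lower'' sides are consistently assigned when the two digons are glued. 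Once these points are settled, the inductive propagation is the same routine argument already carried out in Corollaries \ref{DigonsFixed} and \ref{SemiDigonsAlmostFixed}, and I would not repeat it in full detail.
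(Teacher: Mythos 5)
Your overall scheme (identify cells of $D_1$ and $D_2$ along the overlap via the $C'(1/8)$ condition and Lemma \ref{LongSegments}, then propagate cell by cell) matches the skeleton of the paper's argument, and your handling of the ``straddling'' issue at the first matching step is essentially right. But there is a genuine gap at the central point: you never prove that the paths you assemble are geodesics, and that is what the definition of a digon requires of both sides. For the union, the candidate upper side of the new digon $D$ is a concatenation of (part of) $up(D_1)$, a piece of the boundary of the shared cell, and (part of) $up(D_2)$; a concatenation of geodesic pieces with ``no backtracking'' is merely a reduced path, and a reduced path in a small cancellation group need not be geodesic. Your appeal to Corollary \ref{DigonsFixed} does not fill this in, because that corollary addresses a different situation: there the two digons are assumed from the outset to have geodesic sides over the \emph{same} lower segment, and the content is uniqueness of the upper side; it contains no argument that a newly glued path realizes the distance. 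In the paper this geodesicity is exactly where the work lies: one derives the equality $d(A_1,B_n)=d(A_1,A_n)+d(A_n,B_n)$ from the digon relation for $D_2$ together with the fact that $[x,y]$ is geodesic (the displayed relations $(1)$ and $(2)$), and this equality is what certifies that the assembled upper path is a geodesic between the endpoints of $low(D_1)\cup low(D_2)$.

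The same gap recurs, in a sharper form, for the intersection. The digon $E$ over $low(D_1)\cap low(D_2)$ is \emph{not} obtained by ``restricting the stack'': its upper side (e.g.\ $[A_n,B_n]\cup[B_n,A_{n+1}]$, or the route through $B'_1$) is not a subsegment of the upper side of $D$, since that path does not return to the endpoint of the overlap on $[x,y]$. One must separately verify that this descending path is geodesic, which the paper does via the additional distance bookkeeping $(3)$--$(5)$, yielding $d(A_n,A_{n+1})=d(A_n,B'_1)+d(B'_1,A_{n+1})$. Without these two metric verifications you have produced a stack of cells bounded above by a reduced path, but not digons $D$ and $E$ in the sense of the definition, so the inductive propagation and the ``moreover'' clause rest on an unproved base case. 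To repair the proposal, replace the ``no backtracking'' remark by explicit length computations of the type $(1)$--$(5)$ in the paper's proof (or an equivalent argument showing the assembled paths realize the relevant distances), carried out at each step of the induction.
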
 

\begin{proof}
Let $A_1, \ldots, A_n$ be the division points of $low(D_1)$ to which we add $A_0$ and $A_{n+1}$ - the initial and terminal points of the segment $low(D_1)$. Likewise, $A'_0, A'_1, \ldots, A'_{m+1}$ be the division, initial and terminal points of $D_2$. Without loss of generality $x=A_0$, $y=A_{m+1}'$.  
Digon $D_1$ has $n+1$ cells. Firstly we clain that the first cell in $D_2$ coincides with some cell of $D_1$. This is easily seen by cases. If $A_0'\in [A_n,A_{n+1}]$ then, since $|low(D_1)\cap low(D_2)|\geq \ell/8$, the first cell of $D_2$ is the last cell of $D_1.$ If $A_0'\in[A_k,A_{k+1}]$ where $k<n$, then as $|[A_j,A_{j+1}]|\geq \ell/4$ for all $0\leq j\leq n$ we get $|[A_0',A_1']\cap[A_j,A_{j+1}]|\geq\ell/8$ for either $j=k$ or $j=k+1$. Consequently, the first cell of $D_2$ coincides with the $(j+1)$-st cell of $D_1$. 

Let the first cell of $D_2$ coincide with the $(j+1)$-st cell of $D_1$. We next claim that the $r$-th cell of $D_2$ corresponds to the $(j+r)$-th cell of $D_1,$ where $1\leq r$ and $j+r\leq n+1$. We prove the claim by induction. It is true when $r=1$, as shown in the previous paragraph. We next suppose the claim is true for $1\leq r$ and $j+r<n+1$. As $low(D_1)\cap low(D_2)$ is a geodesic we have that $A_{r}'=A_{j+r}$ and by Lemma \ref{LongSegments} the overlap $[A_r',A_{r+1}']\cap [A_{j+r},A_{j+r+1}]$ is of length at least $\ell /8$. Then by small cancellation the $r+1$ cell of $D_2$ coincides with the $j+r+1$ cell of $D_1$ and the claim is proved.

 Suppose first that $j\neq n$, then  $B_{j+1}\in up(D_2)$ (when $j=n-1$ $B_{j+1}$ should be considered to be $A_{n+1}$), for otherwise we get $B_{j+1}\in [A_1',B_1']$ or $B_{j+1}\in low (D_2)$. In both cases $[B_j,B_{j+1}]\cup [B_{j+1},B_{j+2}]$ is not a geodesic, contradiction. Therefore $B_{j+1}\in up(D_2)$ and we should have $B_{j+1}=B_1'$ because otherwise the contour of either $(j+1)$-st or $(j+2)$-nd cell of $D_1$ would not be reduced.

Now we will show that the union $[A_0,B_{j+1}]\cup [B_{j+1},A_{m+1}']$, where $[A_0,B_{j+1}]\in up(D_1)$ and 
$[B_{j+1},A_{m+1}']\in up(D_2),$ is geodesic, so that, in particular, the two geodesic segments $low(D_1)\cup low (D_2)$ and $[A_0,B_{j+1}]\cup [B_{j+1},A_{m+1}']$ define a digon (they clearly only intersect at the points $A_0$ and $A_{m+1}'$). Note that
$$d(x,y)=|[A_0,A_{0}']\cup [A_{0}',A_{m+1}']|=|[A_0,A_{0}']\cup [A_{0}',B_{j+1}]\cup [B_{j+1},A_{m+1}']|$$

 since $[A_{0}',B_{j+1}]\cup [B_{j+1},A_{m+1}']=up(D_2)$ is geodesic.
And

$$|[A_0,A_{0}']\cup [A_{0}',B_{j+1}]|\geq |[A_0,B_{j+1}]|.$$ Therefore $[A_0,B_{j+1}]\cup [B_{j+1},A_{m+1}']$ is geodesic.
We are done with the case $j\neq n.$

Suppose now that $j=n$, then  we have a situation in Fig 11. Note that by length considerations the division point $B'_1$ must be a point in the segment $[A_{n+1}, B_n]$ and of distance strictly less than $1/8\ell$ from $A_{n+1}$. In order to show that the union $low(D_1)\cup low(D_2)$ is $low(D)$ for some digon $D$ we need to show that the union of segments $[A_0, B'_1]\cup [B'_1, A'_{m+1}]$ is a geodesic segment. 

From the digon $D_2$ we get: 
$$|[A_0', B'_1]|+|[B'_1, A'_{m+1}]|= |[A_0', A_{n+1}]|+|[A_{n+1}, A'_{m+1}]|.\ \ \ \ (1)$$
Therefore, $|[A_0,A_{m+1}]|=|[A_0,A_0']|+|[A_0',B_1']|+|[B_1',A_{m+1}']|$.  The sum of the first two summands in the left side is not less than $|[A_0,B_1']|$ because $[A_0,B_1']$
 is geodesic. Therefore $|[A_0,A_{m+1}]|\geq |[A_0, B'_1]\cup [B'_1, A'_{m+1}]|$ and $[A_0, B'_1]\cup [B'_1, A'_{m+1}]$ is a geodesic segment. This proves that a union of $D_1$ and $D_2$ is a digon.


\begin{figure}[ht!]
\centering
\includegraphics[width=1.\textwidth]{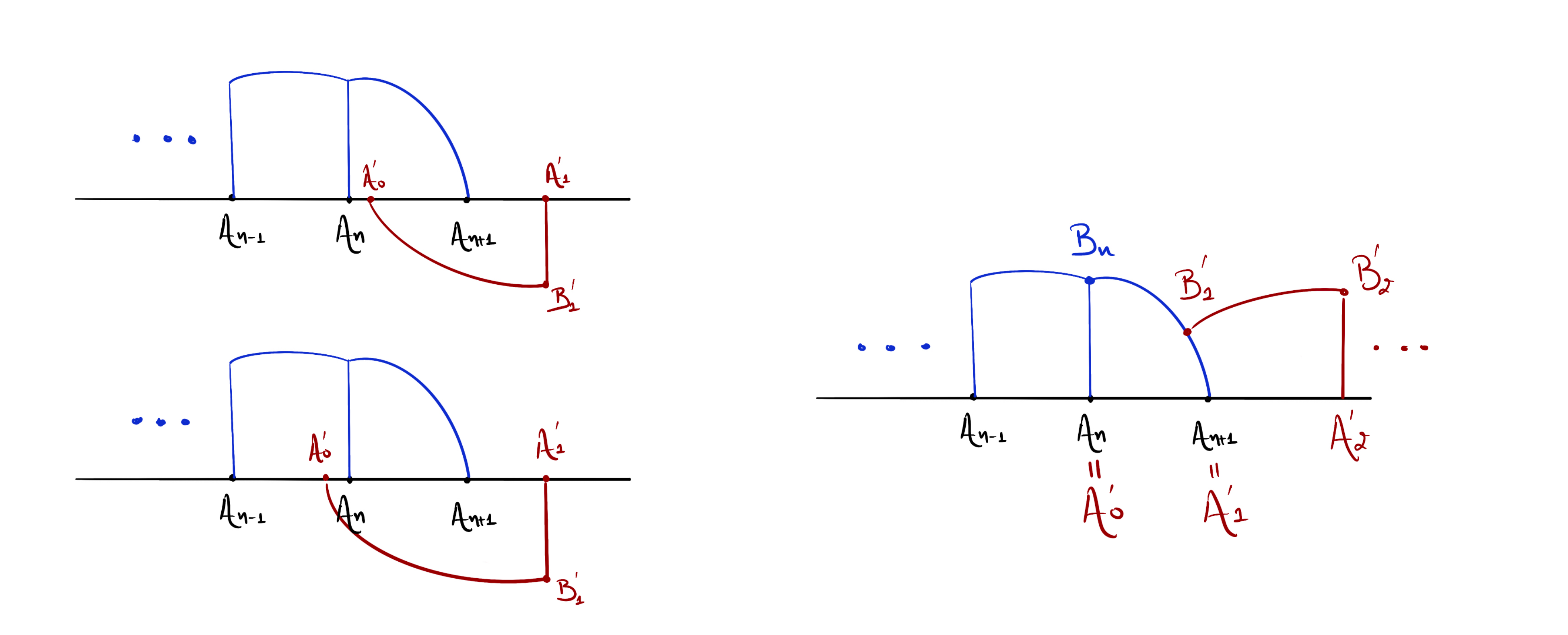}
\caption{The intersection at the final cell of $D_1$.}
\label{LastCells}
\end{figure}




In the case $j=n$ the intersection $low(D_1)\cap low(D_2)$ is $low(E)$ for the digon with a single cell and no divisors determined by the geodesic segments $[A_0',A_{n+1}]$ and $[A_0', B_n]\cup [B_n, A_{n+1}]$. In this case we need to show that $|[A_0',A_{n+1}]|=|[A_0',B'_1]|+|[B'_1,A_{n+1}]|$ (see Figure \ref{LastCells}). Indeed, from the digon $D_1$ we get: 
$$|[A_0, A_{n+1}]|=|[A_0, B_n]|+|[B_n ,A_{n+1}]|. \ \ \ (2)$$ 
Now from the fact we just proved, that the union of $low(D_1)\cup low(D_2)$ is a side of the particular digon $D$, and that $[B_n, A_{n+1}]$ is a geodesic segment that contains $B'_1$, we get:
$$|[A_0, A_{n+1}]|+|[A_{n+1},A'_{m+1}]|= |[A_0, B_n]|+ |[B_n, A_{n+1}]|- |[A_{n+1},B'_1]|+|[B'_1, A'_{m+1}]|. (3) $$

Combining $(2)$ and $(3)$ we get:

$$|[A_{n+1}, A'_{m+1}]|= |[B'_1, A'_{m+1}]|-|[A_{n+1},B'_1]|. \ \ \ (4)$$

Now from $(1)$ using $(4)$ we finally show:

$$ |[A_0',A_{n+1}]|= |[A_0',B'_1]| + |[B'_1, A_{n+1}]|.$$

 The statement about the intersection for the  case $j<n$ follows by similar argument.

\end{proof}

We extend the above lemma to semidigons. Before that we need to extend our vocabulary once more.

\begin{definition}[Orientation of semidigons]
Let $sD_1, sD_2$ be two semidigons such that $low(sD_1), low(sD_2)$ lie on a geodesic segment $[x,y]$. Assume $A_0, A'_0$ are the respective summits of $sD_1, sD_2$ and $A_n, A'_m$ are the respective final division points on $low(sD_1), low(sD_2)$. We say that the semidigons have the same orientation if their summits are closer than their endpoints to the same endpoint of $[x,y]$. Otherwise we say that the semidigons have opposite orientation (see figure \ref{SemiDigonsOrientation}).  
\end{definition}

\begin{figure}[ht!]
\centering
\includegraphics[width=1.\textwidth]{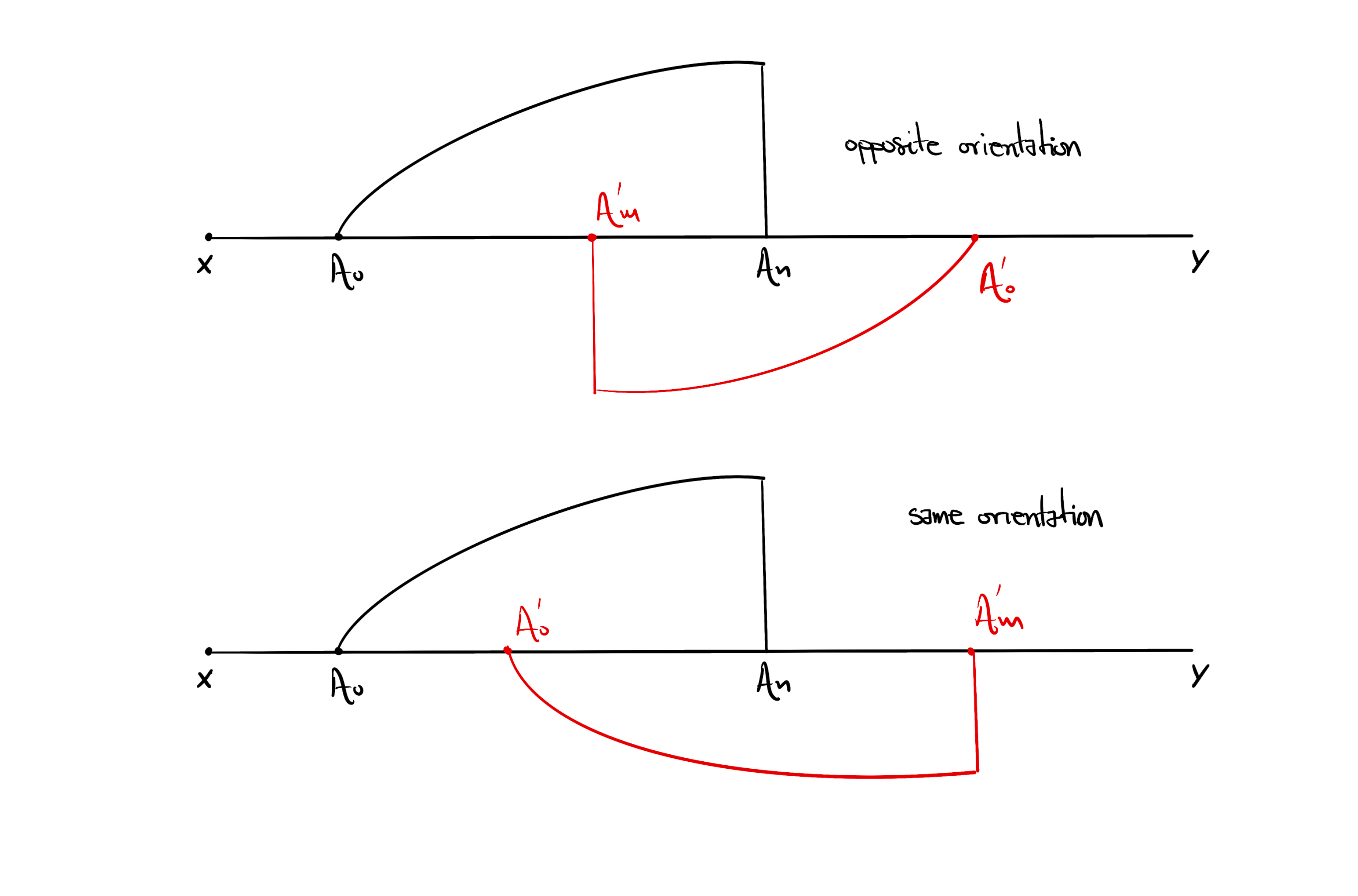}
\caption{Oriented semidigons.}
\label{SemiDigonsOrientation}
\end{figure}

\begin{definition}[Double semidigon]
A double semidigon is the union of four geodesic segments $[x,y], [z,w], [x,z]$ and $[y,w]$ that satisfy the following requirements: $[x,y]$ is disjoint from $[z,w]$ and $[x,z]$ is disjoint from $[y,w]$. The segment $[x,y]$ intersects $[x,z]$ only on $x$ and $[y,w]$ only on $y$. Symmetrically, the segment $[z,w]$ intersects $[x,z]$ only on $z$ and $[y,w]$ only on $w$. Finally, $[x,z], [y,w]$ are short, i.e. $<1/8\ell$, and the tessellation of $\{x,y,z,w\}$ is as in a simple digon (see figure \ref{DoubleSemiDigonTessellation}) 
\end{definition}

For any double semidigon $sDD$ we can define in a similar way to digons and semidigons its sides $low(sDD)$ and $up(sDD)$. 

\begin{figure}[ht!]
\centering
\includegraphics[width=.6\textwidth]{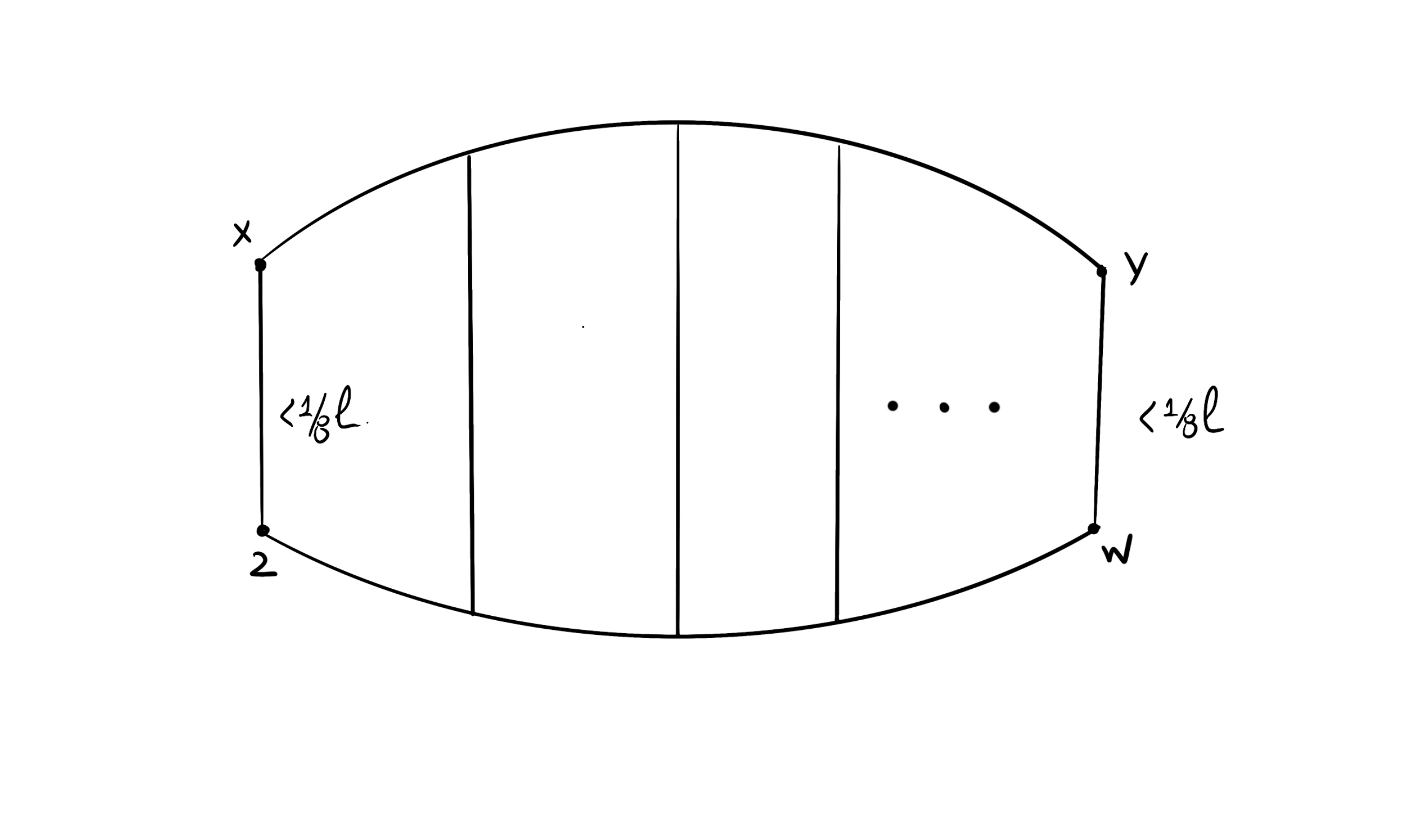}
\caption{A double semidigon.}
\label{DoubleSemiDigonTessellation}
\end{figure}

\begin{lemma}\label{IntersectionSemiDigons}
Suppose $sD_1, sD_2$ are semidigons for which $low(sD_1)\cup low(sD_2):=[x,y]$ is a geodesic segment and $low(sD_1)\cap low(sD_2)$ has length greater or equal to $1/8\ell$. 

If $sD_1, sD_2$ have the same orientation, then $low(sD_1)\cup low(sD_2)=low(sD)$ for some semidigon $sD$ and $low(sD_1)\cap low(sD_2)=low(sE)$ for some semidigon $sE$.

If $sD_1, sD_2$ have opposite orientation, then $low(sD_1)\cup low(sD_2)=low(sDD)$ for some double semidigon $sDD$ and $low(sD_1)\cap low(sD_2)=low(sEE)$ for some double semidigon $sEE$.

Finally, in the special case $low(sD_2)\subseteq low(sD_1)$, independent of orientation, the semidigon $sD_2$ is a sub-semidigon of $sD_1$. 
\end{lemma}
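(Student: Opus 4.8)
The plan is to reduce Lemma \ref{IntersectionSemiDigons} to Lemma \ref{intersectionDigons} and Corollary \ref{SemiDigonsAlmostFixed} by a careful cell-by-cell comparison of $sD_1$ and $sD_2$ along the common geodesic part of their lower sides, exactly as in the proof of Lemma \ref{intersectionDigons}, with the only genuinely new feature being the bookkeeping of the summits. First I would set $[p,q]:=low(sD_1)\cap low(sD_2)$, a subsegment of length $\geq 1/8\ell$ of the geodesic $[x,y]$, and enumerate the division points of $low(sD_1)$ and of $low(sD_2)$ along $[x,y]$. Since both $sD_i$ are semidigons of type $I_2$, each cell is part of a digon (Lemma \ref{LongSegments} applies: the two ``side'' segments $u,v$ of a cell in a (semi)digon have length $>\ell/4$), so wherever the lower sides overlap in a segment of length $\geq 1/8\ell$, the small cancellation property $C'(1/8)$ forces the cells of $sD_1$ and $sD_2$ sitting over that overlap to bear the same defining relators and to have matching division points, by the now-routine argument of Corollaries \ref{DigonsFixed} and \ref{SemiDigonsAlmostFixed} (two cells meeting in a segment $\geq 1/4\ell>\lambda\ell$ coincide, and folding of the Cayley graph forces the division points to agree, with the usual $<1/8\ell$ ambiguity only allowed at a final cell containing a summit).

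Next I would organize the argument by where the summits $A_0$ of $sD_1$ and $A_0'$ of $sD_2$ sit relative to $[p,q]$. If the semidigons have the same orientation, the summits lie on the same side; the one whose summit is further out, say $A_0$, has its upper side extend past the shared region, while over $[p,q]$ the two upper sides coincide cell-for-cell (up to a possible $<1/8\ell$ shift at the innermost cell containing $A_0'$). Concatenating $up(sD_1)$ with the short overlap-correction divisor and with the tail of $up(sD_2)$ then produces a single geodesic segment that, together with $[x,y]$, bounds a semidigon $sD$ with summit $A_0$, and the overlap $[p,q]$ with the common upper arc is a sub-semidigon $sE$; this is the ``same orientation'' conclusion, and the ``moreover'' clause $low(sD_2)\subseteq low(sD_1)$ is just the degenerate case where $[p,q]=low(sD_2)$, giving a genuine sub-semidigon in the sense defined above. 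If the orientations are opposite, the two summits $A_0$, $A_0'$ lie near the two ends of $[p,q]$ and the two upper arcs run ``in opposite directions''; over the common region the cells still coincide, and the four segments $low(sD_1)\cup low(sD_2)=[x,y]$, the common upper arc $[z,w]$, and the two short divisors $[A_0,z]$, $[A_0',w]$ (each $<1/8\ell$ since each is a summit-to-upper-side divisor, or the short side of the innermost cell) satisfy the disjointness and the ``tessellation as in a simple digon'' requirements — this is precisely the definition of a double semidigon $sDD$, while $[p,q]$ together with the common upper arc bounds an honest digon $E$ (no summit lies in its interior, so it has the rigidity of Lemma \ref{intersectionDigons}).

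The verification that the resulting configurations literally satisfy the defining conditions (disjointness of opposite sides, each short segment having length $<1/8\ell$, and the internal tessellation matching that of a simple digon in Strebel's classification, Figure \ref{strebel}) I would do with the same distance-chasing identities $d(A_i,B_j)=d(A_i,A_k)+d(A_k,B_j)$ used in the proof of Lemma \ref{intersectionDigons}, applied now at the two ends rather than one, using that $low(sD_1), low(sD_2)$ are geodesic and that the upper arcs are geodesic as parts of $up(sD_1), up(sD_2)$. I expect the main obstacle to be the opposite-orientation case: one must check that when the two summits both fall inside $[p,q]$, the two ``inner'' cells (each containing one summit) do not force a contradiction with geodesicity, and that the short divisors emanating from the summits are genuinely disjoint from the far sides and short enough — in other words, that the geometry cannot degenerate into something outside Strebel's list. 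This is handled by the same folding-plus-$C'(1/8)$ mechanism: any longer overlap of distinct cells would equate them, and any reduced backtrack in a claimed upper arc contradicts that it is a geodesic, so one is forced into the ``simple digon'' tessellation, i.e. Strebel type $I_1$, which is exactly what the double semidigon requires. The induction on the number of shared cells then closes the argument just as in Lemma \ref{intersectionDigons}.
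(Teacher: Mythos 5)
Your overall strategy---a cell-by-cell comparison over the common part of the lower sides, using $C'(1/8)$ together with the foldedness of the Cayley graph to identify the cells, and the distance-chasing identities of Lemma \ref{intersectionDigons} to certify that the assembled upper paths are geodesic---is exactly the route the paper takes (its proof consists of the single remark that the argument is carried out cell by cell as in Lemma \ref{intersectionDigons}), and your treatment of the same-orientation case and of the nested case $low(sD_2)\subseteq low(sD_1)$ is sound in outline.

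However, your assembly of the double semidigon in the opposite-orientation case is incorrect as written. You correctly locate the summits: they sit at the two ends of the overlap $[p,q]$, i.e.\ at the \emph{interior} endpoints of $low(sD_1)$ and $low(sD_2)$. But then the short divisors of $sD_1$ and $sD_2$ sit at the opposite, outer ends, namely at $x$ and at $y$: by definition the upper and lower sides of a semidigon meet at the summit, so there is no ``summit-to-upper-side divisor''---that phrase is self-contradictory. The four segments of the double semidigon $sDD$ must therefore be $[x,y]=low(sD_1)\cup low(sD_2)$, an upper side $[z,w]$ obtained by concatenating $up(sD_1)$ (starting at $z$, the last division point near $x$) with $up(sD_2)$ (ending at $w$ near $y$) across the region where the common cells force them to agree, and the two \emph{last divisors} $[x,z]$ and $[y,w]$. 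Your choice of short sides $[A_0,z]$, $[A_0',w]$, with $A_0,A_0'$ the summits (interior points of $[x,y]$), does not satisfy the definition of a double semidigon, which requires the short sides to emanate from the endpoints of $[x,y]$; moreover the object you describe is bounded below by $[p,q]$ rather than by the union of the lower sides, so it cannot witness the stated equality $low(sD_1)\cup low(sD_2)=low(sDD)$. The summits $A_0,A_0'$ instead become the two corners of the intersection digon $E$, whose upper side consists of the summit-adjacent initial pieces of $up(sD_1)$ and $up(sD_2)$ (so your parenthetical ``no summit lies in its interior'' misses that the summits are precisely the endpoints of $E$). The repair is a relabelling plus the same distance-chasing you already invoke, so the method survives, but as written the opposite-orientation case does not establish the conclusion of the lemma.
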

\begin{proof}  Suppose $sD_1, sD_2$ have the same orientation. We let $A_0, A_1, \ldots, A_n$ be the summit and the division points (in order of proximity from the summit) in $low(sD_1)$ and likewise $A'_0, A'_1,\ldots, A'_m$ for $sD_2$. We note in passing that $sD_1$ has $n$ cells and $sD_2$ has $m$ cells.

\begin{figure}[ht!]
\centering
\includegraphics[width=.7\textwidth]{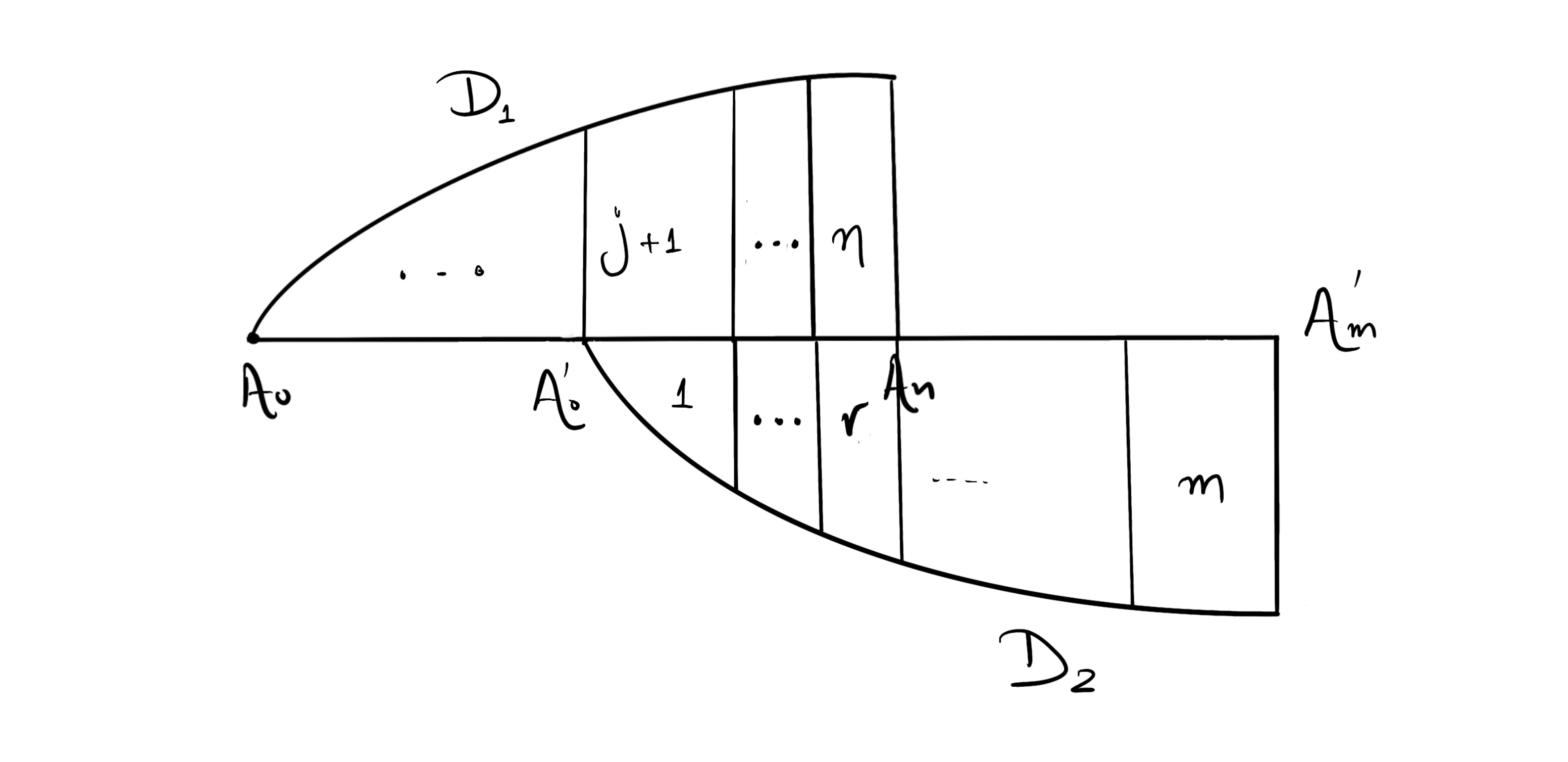}
\caption{Common cells for same orientation semidigons.}
\label{SemiDigonsSameOrientation}
\end{figure}

The same argument as in Lemma \ref{intersectionDigons}  shows  that the $r$-th cell of $sD_2$ corresponds to the $(j+r)$-th cell of $sD_1,$ where $1\leq r$ and $j+r\leq n$.  So the first cell of $sD_2$ coincides with the $(j+1)$-st cell of $sD_1$. In addition, if $sD_2$ has at least $n-j$ cells, then $A'_0=A_j, A'_1=A_{j+1},\ldots, A'_{m-1}=A_{j+m-1}$ and $B'_1=B_{j+1}, \ldots, B'_{m-1}=B_{j+m-1}$. The division points $A'_{n-j}$, $B'_{n-j}$ of $sD_2$ may differ from the corresponding last division points of $sD_1$, but they certainly belong to the segment $[A_n, B_n]$ (see figure \ref{SemiDigonsSameOrientation2}). There is no harm in moving $A_n$ to $A'_{n-j}$ and $B_n$ to $B'_{n-j}$, thus assuming that they coincide. 

\begin{figure}[ht!]
\centering
\includegraphics[width=.8\textwidth]{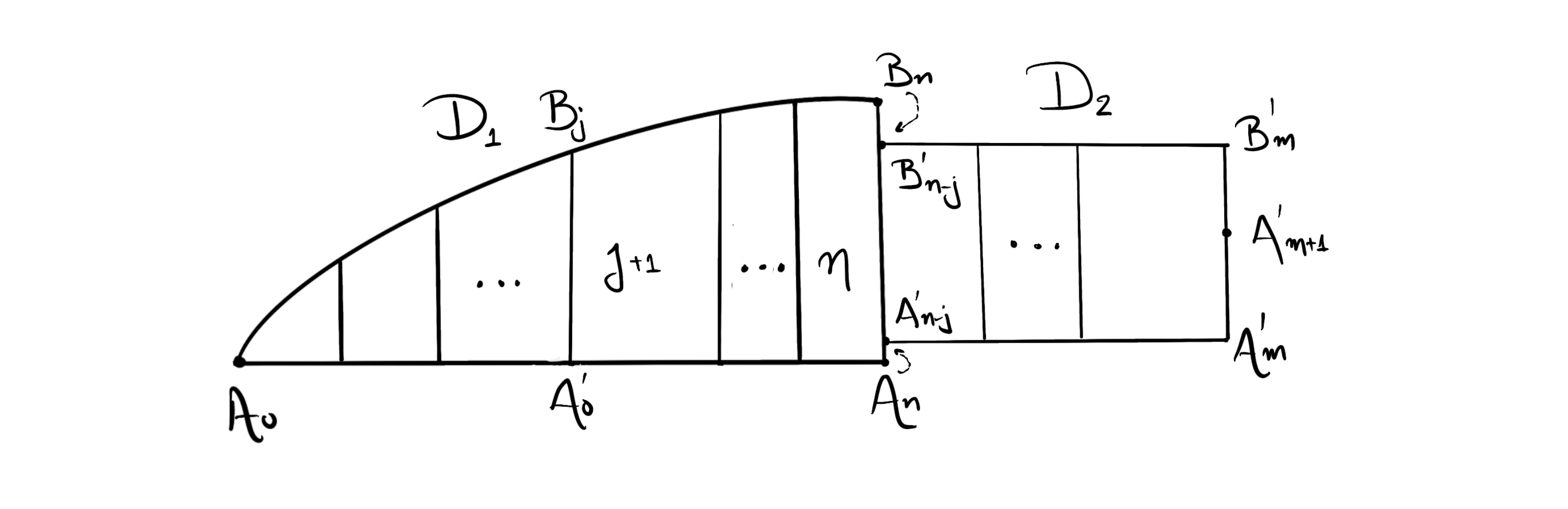}
\caption{Last division points of $sD_1$ may differ from $A'_{n-j}, B'_{n-j}$.}
\label{SemiDigonsSameOrientation2}
\end{figure}

Now let's consider a geodesic triangle  with sides $[A_0,A'_m]=low(sD_1)\cup low(sD_2)$, $[A'_{m},A'_{m+1}]$ (a subsegment of the last divisor of $sD_2$) and $[A_0,A'_{m+1}]$, where  $[A_0,A'_{m+1}]$ is some geodesic between $A_0$ and $A'_{m+1}$. The geodesic triangle consists of some digons and possibly a simple triangle, unless the geodesic $[A_0,A'_{m+1}]$ coincides with the union of the other two sides of the triangle. In the latter case, the union of the (obvious selection of) segments $[A_0,A'_0]\cup [A'_0,B_j]\cup [B_j,A_{m+1}]$ is also a geodesic (as the choice of $A'_{m+1}$ turns $sD_2$ into a digon) and since $[A_0,B_j]$ (the lies in the $up(sD_1)$) is a geodesic, we have that $[A_0,B_j]\cup[B_j,A_{m+1}]$ is a geodesic. In the former case, assuming there are digons and possibly a simple triangle, we may use Lemma \ref{short_geod_tr} and claim that the simple triangle is a semidigon (recall that $|[A'_{m},A'_{m+1}]|<\ell/8)$. Hence, every cell of the geodesic triangle is identified with some cell in either $sD_1$ or $sD_2$. In particular, the geodesic $[A_0,A'_{m+1}]$ passes through either $A'_0$ or $B_j$. In either case, we may see as before that $[A_0,B_j]\cup[B_j,A_{m+1}]$ is a geodesic. Consequently, in both cases, the union of segments 
 $[A_0,B_{j}]\cup[B_{j},B'_{m}]$ (that lies in $up(sD_1)\cup up(sD_2)$) is a geodesic. So $low(sD_1)\cup low(sD_2)=low(sD)$ for a semidigon $sD$ consisting of all cells of $sD_1$ and $sD_2$.  

%

%

When the last cells of $sD_1, sD_2$ coincide the last division points on $up(sD_1), up(sD_2)$ may differ but only by distance strictly less than $1/8\ell$. In this case  a semidigon $sD$ consists of all cells of $sD_1$. In the case $sD_2$ has less than $n-j$ cells, $sD=sD_1$. 

Other statements of the lemma can be proved similarly.

\end{proof}


Interestingly when two digons or semidigons intersect in a short segment, i.e. $<1/8\ell$, and the union of their lower (or symmetrically upper) sides is a geodesic segment, then this union cannot be part of the lower (respectively upper) digon or semidigon. This simple fact will help us prove that any two geodesics between two points can only intersect in a ``single layer" (see figure \ref{OneLayer}).

\begin{lemma}\label{ShortIntersection}
Let $D_1, D_2$ be digons for which $low(D_1)\cup low(D_2)=:[x,y]$ is a geodesic segment and $low(D_1)\cap low(D_2)$ has length strictly less than $1/8\ell$. Then there is no digon or semidigon a side of which contains the union $low(D_1)\cup low(D_2)$.   
\end{lemma}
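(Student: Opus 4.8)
The plan is to argue by contradiction, using the rigidity of geodesics in an eighth group together with Lemma \ref{LongSegments}. So suppose $D_1, D_2$ are digons as in the statement and suppose, for a contradiction, that there is a digon or semidigon $F$ one of whose sides contains $J := low(D_1) \cup low(D_2)$. Without loss of generality we take this side to be $low(F)$. Since the intersection $low(D_1) \cap low(D_2)$ is a nontrivial segment of length strictly less than $1/8\ell$, its endpoints are division points, and so it is a divisor of each of $D_1$ and $D_2$. Write $J = [x,y]$, and let $z$ be the endpoint of $low(D_1) \cap low(D_2)$ that is interior to $[x,y]$ (the other endpoint may or may not be interior). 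The key point is that $z$ is a division point of $low(D_1)$, of $low(D_2)$, but I claim it cannot be a division point of $low(F)$, and in fact no cell of $F$ can be compatible with the cell structure near $z$.

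The main step is a cell-by-cell comparison near $z$. On the $D_1$ side of $z$ there is a cell $R_1$ of $D_1$ abutting $z$, and on the $D_2$ side there is a cell $R_2$ of $D_2$ abutting $z$; these two cells meet $[x,y]$ in segments that, by Lemma \ref{LongSegments}, each have length strictly greater than $\ell/4$ (these are the ``$u,v$'' sides of Lemma \ref{LongSegments}). Now consider $F$, whose side $low(F)$ contains all of $J$, hence contains both of these long sub-segments of $[x,y]$ and the point $z$ between them. There is a cell $S$ of $F$ abutting $z$ on $low(F)$; one of its two long sides (again length $> \ell/4$ by Lemma \ref{LongSegments}) runs along $low(F)$ on one side of $z$, say the $D_1$ side. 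Then $S$ and $R_1$ share a sub-segment of $[x,y]$ of length $> \ell/4 > \lambda\ell = \ell/8$, so by the small cancellation property (and the fact that Cayley graphs are folded) they bear the same defining relator; an inductive argument, exactly as in the proofs of Corollary \ref{DigonsFixed} and Lemma \ref{intersectionDigons}, shows that $F$ and $D_1$ agree cell-by-cell along $low(D_1)$ up to $z$, so that $up(D_1)$ and the corresponding portion of $up(F)$ coincide past $z$. But the divisor at $z$ of $D_1$ emanates into $up(D_1)$, i.e. on the ``upper'' side of $[x,y]$, whereas the divisor at $z$ of $D_2$ emanates into $up(D_2)$ on the \emph{same} upper side; concretely, the segment $[z, B]$ (where $B$ is the division point of $up(D_1)$ corresponding to $z$) together with the portion of $up(D_1)$ beyond it would force a reduced path going ``backward'' along $low(D_2)$, contradicting that $low(D_2)$ (equivalently $low(F)$ beyond $z$) is a geodesic segment, since the divisors $[z,B_1^{D_1}]$ and $[z, B_1^{D_2}]$ both leave $[x,y]$ on the same side and a geodesic segment cannot re-enter a cell it has already crossed. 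This is the picture indicated in Figure \ref{OneLayer}: the two divisors at $z$ pointing to the same side is incompatible with $[x,y]$ extending as a geodesic inside a single digon or semidigon.

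I expect the main obstacle to be bookkeeping the orientations cleanly: one must be careful that ``the side of $F$ containing $J$'' could a priori meet $J$ in a way that is not straightforwardly ``along'' it, and one must handle the semidigon case (where the final cell's division points can differ by up to $1/8\ell$, per Corollary \ref{SemiDigonsAlmostFixed}) separately from the digon case. In the semidigon case the summit introduces an extra configuration to rule out, but since $low(D_1) \cap low(D_2)$ has length $< 1/8\ell$ while each cell of a digon contributes a side of length $> \ell/4$ to $[x,y]$, the short overlap cannot straddle a cell boundary of $F$ in more than one way, and the same folding/small-cancellation contradiction applies. The routine verification that the divisors at $z$ genuinely point to the same side (rather than opposite sides, in which case $z$ would be a legitimate division point of a digon) follows because $low(D_1), low(D_2)$ are on the same geodesic $[x,y]$ and each of $up(D_1), up(D_2)$ lies on the same (say, upper) side of that geodesic by the convention fixing $up$ versus $low$; I would spell this out using the tessellation types $I_1$ and $I_2$ of Figure \ref{strebel}, which leave no room for a ``two-layer'' overlap.
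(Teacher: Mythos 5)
Your overall plan (compare the cell structure of the putative digon/semidigon $F$ with the cells of $D_1,D_2$ at the overlap and derive a small-cancellation contradiction) is in the right spirit, but two steps as written are not correct. First, you misread the configuration at the overlap: the endpoints of $low(D_1)\cap low(D_2)$ are the \emph{terminal point} of $D_1$ and the \emph{initial point} of $D_2$ (points where $up$ and $low$ of the respective digon meet), not division points, and the overlap lies along $[x,y]$, so it is not a divisor of either digon. Consequently ``the divisor at $z$ of $D_1$'' and ``the divisor at $z$ of $D_2$'' do not exist, and your final contradiction --- that these two divisors leave $[x,y]$ on the same side --- is about objects that are not there; moreover the paper is explicit that ``upper''/``lower'' carry no geometric meaning in $Cay(\Gamma,\Sigma_\Gamma)$, so an argument of the form ``both divisors point to the same side, hence a geodesic would have to re-enter a cell'' is planarity intuition, not a proof. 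Second, the cell-by-cell matching cannot be started at the interior point $z$: the cell $S$ of $F$ whose lower side contains $z$ may meet $R_1$ along $[x,y]$ only in a short piece (if $z$ happens to lie near a division point of $F$ on the $D_1$ side), so you cannot assert that $S$ and $R_1$ share a segment of length $>\ell/4$. The inductions in Corollary \ref{DigonsFixed} and Lemma \ref{intersectionDigons} work because they start from a common endpoint of the two sides being compared, and that is what must be used here.

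The paper's proof repairs exactly these points and is shorter: since $low(D_1)\subseteq low(F)$ and $low(D_2)\subseteq low(F)$, and each of $low(D_1), low(D_2)$ is long (its single cells already contribute $>\ell/4$ by Lemma \ref{LongSegments}), the ``moreover'' parts of Lemma \ref{intersectionDigons} (resp.\ Lemma \ref{IntersectionSemiDigons} in the semidigon case) applied to the pairs $(F,D_1)$ and $(F,D_2)$ give that $D_1$ and $D_2$ are both subdigons of $F$. Hence the last cell $C_1$ of $D_1$ and the first cell $C_2$ of $D_2$ are cells of $F$; as intersecting cells of one digon they must either coincide or be consecutive cells sharing a divisor, and both options are incompatible with the hypothesis that $C_1\cap C_2$ is only a segment of length $<1/8\ell$ lying on $[x,y]$. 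If you replace your start-at-$z$ induction by a direct appeal to the subdigon statements and replace the ``same side'' argument by this incompatibility of cells of $F$, your proof becomes correct and essentially coincides with the paper's.
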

\begin{proof}
Suppose such a digon, say $D$, existed (the proof for the non-existence of a semidigon is identical). Note that, by Lemma \ref{intersectionDigons} (respectively Lemma \ref{IntersectionSemiDigons}), both $D_1, D_2$ are subdigons of $D$. In particular, the cell $C_1$ of $D_1$ and the cell $C_2$ of $D_2$, that intersect, must either be both identical to a cell of $D$ or each one identical to consecutive cells of $D$. Both cases are clearly impossible as these two cells, $C_1$ and $C_2$, intersect only in a short segment and only on $[x,y]$ (see figure \ref{IncompatibleCells}).  

\begin{figure}[ht!]
\centering
\includegraphics[width=1.\textwidth]{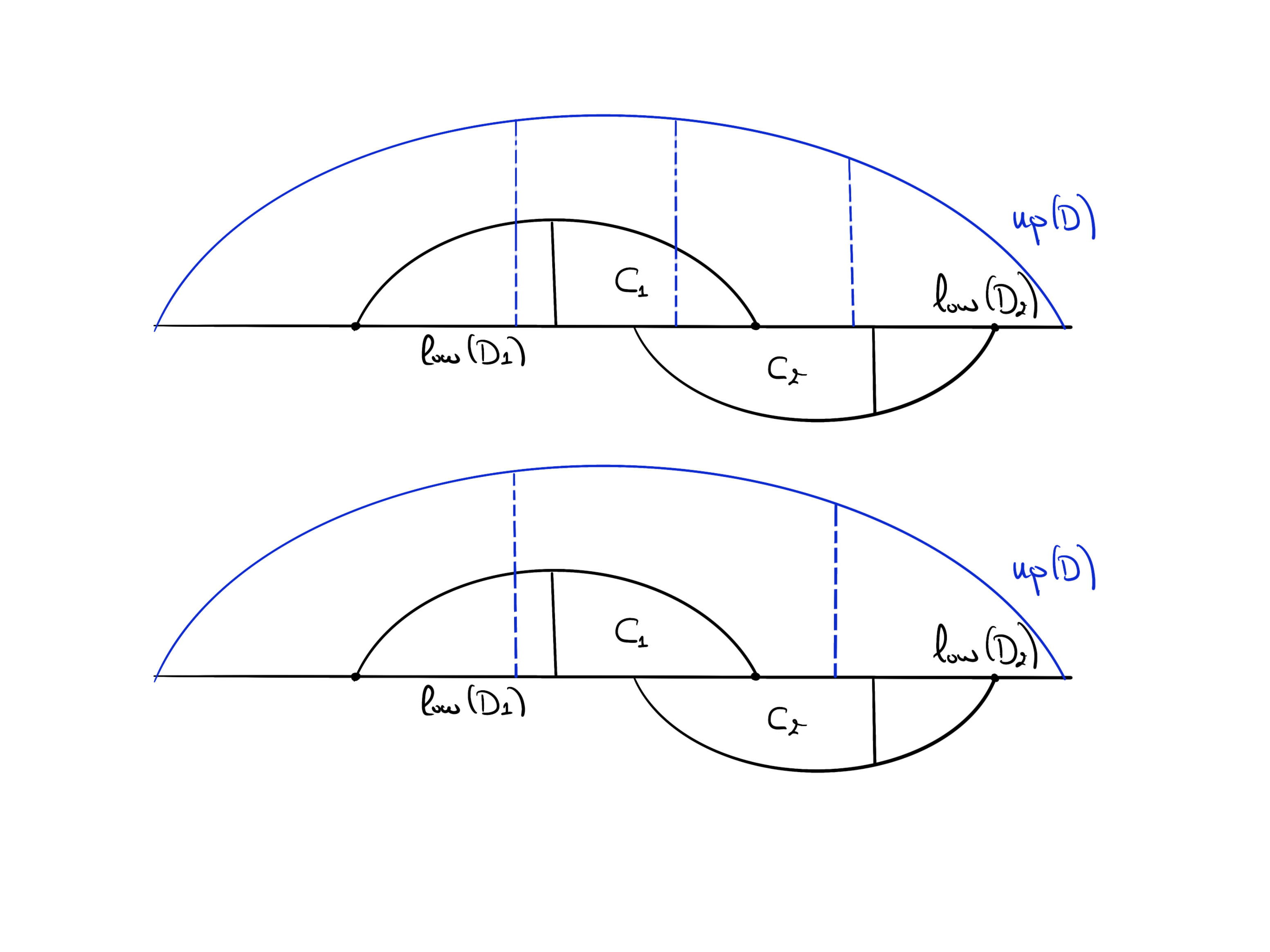}
\caption{Digons intersecting in a short segment.}
\label{IncompatibleCells}
\end{figure}

\end{proof}

\begin{proposition}[Single Layer Intersection]\label{SingleLayer}
Let $[x,y]$ be a geodesic segment in $Cay(\Gamma, \Sigma_{\Gamma})$. Then there exist finitely many digons, $D_1, D_2, \ldots, D_k$, with the following properties:
\begin{itemize}
    \item for each $i\leq k$, $low(D_i)\subseteq [x,y]$;
    \item for any two $i,j\leq k$, $low(D_i)\cap low(D_j)<1/8\ell$;
    \item if $J$ is a geodesic segment between $x$ and $y$, then it is contained in $[x,y]\cup\{D_1, D_2, \ldots,$ $D_k\}$.
\end{itemize}
\end{proposition}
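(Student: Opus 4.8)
The plan is to build the finite family of digons $D_1,\dots,D_k$ by considering \emph{all} geodesic segments between $x$ and $y$ simultaneously and extracting from them a ``maximal single layer''. First I would fix the geodesic $[x,y]$ once and for all as the ``lower spine''. Given any other geodesic $J$ between $x$ and $y$, standard small cancellation theory (the classification of digons recalled before Lemma~\ref{LongSegments}, together with the fact that Cayley graphs are folded) shows that $J\cup[x,y]$ decomposes into a disjoint union of digons hanging off $[x,y]$, connected by sub-segments of $[x,y]$ shared by $J$ and $[x,y]$. So every $J$ contributes a finite set of digons $D$ with $low(D)\subseteq[x,y]$. Let $\mathfrak{D}_{x,y}$ be the collection of all digons arising this way from all geodesics $J$ between $x$ and $y$. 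The goal is to show $\mathfrak{D}_{x,y}$ has only finitely many ``maximal'' members whose lower sides pairwise intersect in short segments, and that these suffice to cover every $J$.

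The key device is Lemma~\ref{intersectionDigons}: whenever two digons in $\mathfrak{D}_{x,y}$ have lower sides overlapping in a segment of length $\geq 1/8\ell$, their union is again a digon with lower side in $[x,y]$, i.e.\ again a member of $\mathfrak{D}_{x,y}$. I would use this to run a fusion process: repeatedly replace any two digons whose lower sides overlap by at least $1/8\ell$ with their union. Since every lower side is a subsegment of the finite segment $[x,y]$ and the fused digon has strictly larger lower side, this process terminates, producing finitely many digons $D_1,\dots,D_k$ whose lower sides pairwise overlap in segments of length $<1/8\ell$ (the second bullet). By construction each $low(D_i)\subseteq[x,y]$ (the first bullet). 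Some care is needed to argue the process is well-defined — one should fuse greedily, say always fusing a pair with maximal overlap, or equivalently take the $D_i$ to be the inclusion-maximal digons in $\mathfrak{D}_{x,y}$; that maximal digons pairwise overlap in short segments is exactly what maximality plus Lemma~\ref{intersectionDigons} gives, since a longer overlap would let us fuse further.

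For the third bullet, take an arbitrary geodesic $J$ between $x$ and $y$; I must show $J\subseteq[x,y]\cup\{D_1,\dots,D_k\}$. Decompose $J\cup[x,y]$ into digons $E_1,\dots,E_r$ as above. Each $E_j$ lies in $\mathfrak{D}_{x,y}$, hence (by the fusion/maximality construction) is contained in some $D_{i(j)}$: indeed either $E_j$ already overlaps some $D_i$ in a long segment — in which case Lemma~\ref{intersectionDigons}, in its ``moreover'' form, says $E_j$ is a \emph{subdigon} of $D_i$ and in particular $E_j\subseteq D_i$ — or $E_j$ is disjoint from $[x,y]$ except on a short segment, and then Lemma~\ref{ShortIntersection} forbids $E_j$ from being genuinely separate while still being covered: more precisely, if $E_j$ were not contained in any $D_i$, then since $E_j\in\mathfrak D_{x,y}$ and the $D_i$ are the maximal elements, $E_j$ would have to fuse with one of them, contradicting maximality of that $D_i$. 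Hence $J=\bigcup_j E_j\subseteq\bigcup_i D_i\cup[x,y]$, as desired.

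The main obstacle I expect is the bookkeeping in the covering step: making rigorous the claim ``every digon arising from some geodesic $J$ is contained in one of the maximal digons $D_i$.'' The subtlety is that two digons of $\mathfrak{D}_{x,y}$ might overlap in a short segment yet neither be a subdigon of the other, and one must check that this does \emph{not} produce a new ``layer'' — this is precisely the content of Lemma~\ref{ShortIntersection}, which guarantees that the union of two short-overlapping lower sides is never itself a side of any digon or semidigon, so no further fusion is possible and the collection of maximal digons is genuinely ``single layer.'' I would also need to be slightly careful that a geodesic $J$ could in principle pass through several of the $D_i$ in succession (the $D_i$'s lower sides can overlap in short segments and even abut), but this is harmless: the conclusion only asserts $J$ lies in the \emph{union} $[x,y]\cup\{D_1,\dots,D_k\}$, not in a single $D_i$. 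Finiteness of $k$ follows from termination of the fusion process, which in turn follows from the length of $[x,y]$ being finite and each fusion step strictly increasing the total length of lower sides consumed.
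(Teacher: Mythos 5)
Your proposal is correct and is essentially the paper's own argument: the paper likewise decomposes the union of each geodesic with $[x,y]$ into digons with lower sides on $[x,y]$, merges any two whose lower sides overlap by at least $1/8\ell$ via Lemma~\ref{intersectionDigons}, and invokes Lemma~\ref{ShortIntersection} to rule out a second layer when overlaps are short. The only difference is organizational — the paper runs an induction over the finitely many geodesics between $x$ and $y$, whereas you perform the same fusion globally via inclusion-maximal digons — which does not change the substance of the proof.
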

\begin{proof}
We first observe that there can only be finitely many geodesic segments between $x,y$. Let $J_0,J_1, \ldots, J_m$ be a list of them with $J_0=[x,y]$. The union $J_0\cup J_i$ is equal to the union of $[x,y]$ with finitely many digons $D_{i1}, \ldots, D_{im_i}$, whose lower sides are contained in $[x,y]$. Now, consider the digons obtained in this way by the segments $J_1, J_2$. For any two such digons, if their lower sides intersect by more than $1/8\ell$, then, by Lemma \ref{intersectionDigons}, their union is a lower side of a digon and we replace them with this digon (note that by the same lemma both are subdigons of this digon). If their lower sides interesect in strictly less than $1/8\ell$, then, by Lemma \ref{ShortIntersection}, no other lower side of a digon obtained by a third segment can contain the union of their lower sides. After ``merging" the digons that interesect in more than $1/8\ell$, we obtain finitely many digons whose union with $[x,y]$ contains $J_0, J_1, J_2$. Now, assume we have obtained a set of  digons whose union with $[x,y]$ contains $J_0, \ldots, J_i$, and we consider the digons obtained by $J_{i+1}$. As in the basis of the induction, for each such digon we can either ``merge" it with some existing ones or if it intersects by a segment less than $1/8\ell$ with one or two existing digons then the union of their lower sides cannot be part of some other digon (see figure \ref{OneLayer}). This finishes the proof.   
\end{proof}

\begin{remark}\label{OrderSingleLayer}
Note that in the situation of the above proposition we can order the digons from the one closest to $x$ to the one furthest to $x$ and only two consecutive digons in this order can interesect and at most at a short segment, i.e. $<1/8\ell$. 
\end{remark}

\begin{figure}[ht!]
\centering
\includegraphics[width=1.\textwidth]{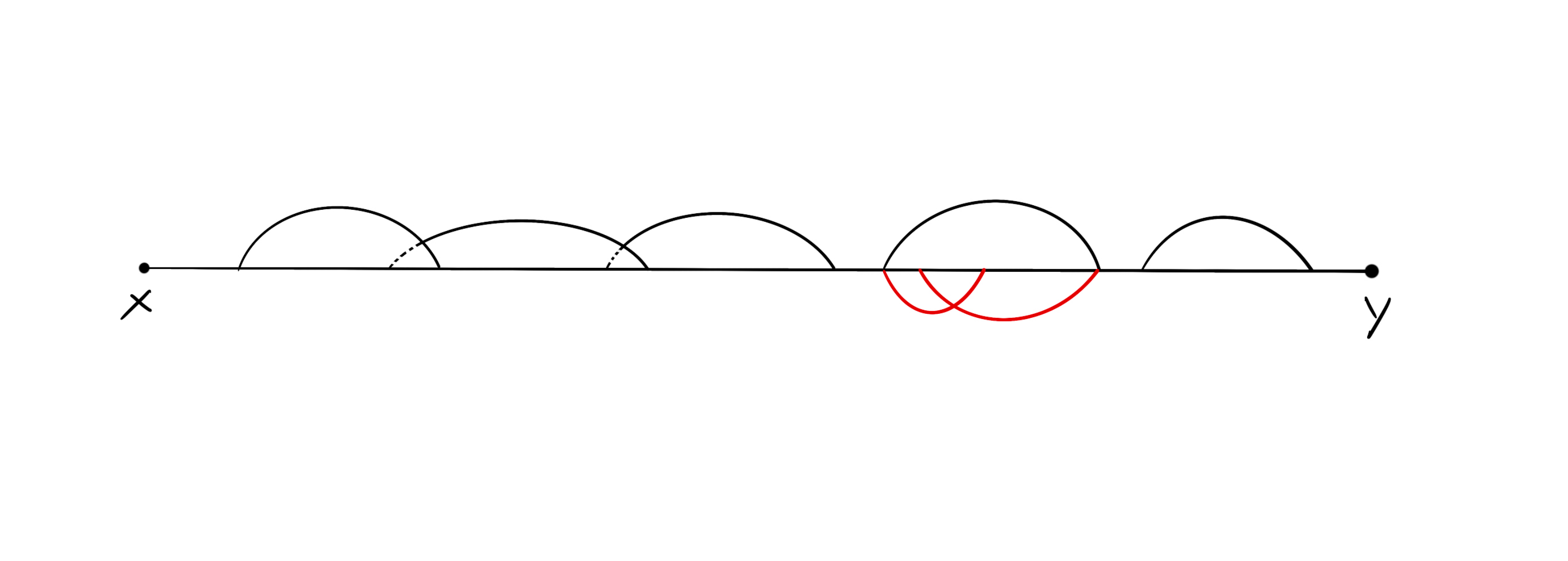}
\caption{Geodesics connecting two points.}
\label{OneLayer}
\end{figure}

Lemma \ref{SingleLayer} is a preliminary result whose generalization will be needed when dealing with geodesic triangles in a limiting sequence approximating a tripod in the limit tree. In the more general case we need to prove a ``single layer" result for the middle part of geodesics that might have different endpoints than the fixed geodesic. Nevertheless, these endpoints are not arbitrary but stay close to the original endpoints relative to the length of the fixed geodesic.     

We next prove that points minimizing distances are rigidly defined in this situation.  

\begin{lemma}\label{MinDistance}
Let $a,b,c$ be three points in $Cay(\Gamma, \Sigma_{\Gamma})$. Fix a geodesic $[a,b]$ between $a$ and $b$. Then there exist at most two points in $[a,b]$ that minimize the distance to $c$. 
\end{lemma}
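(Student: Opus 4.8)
The plan is to argue by contradiction: suppose three distinct points $p_1, p_2, p_3$ on the fixed geodesic $[a,b]$ all realize the minimal distance $r := d(c,[a,b])$ to $c$, listed in the order they appear along $[a,b]$ from $a$ to $b$. The idea is that each $p_i$ furnishes a geodesic $[c,p_i]$ of length $r$, and the concatenation of two of these geodesics with the subsegment of $[a,b]$ between the corresponding $p_i$'s forms a geodesic bigon (or at least a digon in the sense of the subsection) — or more carefully, a simple triangle with apex near $c$. First I would note that $[c,p_i] \cup [p_i, p_{i+1}] \cup [p_{i+1}, c]$ is a geodesic triangle, and since $d(c,p_i) = d(c,p_{i+1}) = r$ is minimal along $[a,b]$, no point strictly between $p_i$ and $p_{i+1}$ on $[a,b]$ can be within $r$ of $c$; this forces the triangle to be non-degenerate on the $[a,b]$ side. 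By the classification of simple geodesic triangles in an eighth group (Figure \ref{strebel}, types $I_2,I_3,II,III,IV,V$) together with the digon classification, such a configuration has a very restricted shape.

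The key step is to extract a contradiction from having \emph{three} such minimizing points rather than two. With $p_1, p_2, p_3$ in order, consider the two consecutive triangles $T_1$ on $\{c, p_1, p_2\}$ and $T_2$ on $\{c, p_2, p_3\}$. The segment $[c,p_2]$ is shared (after fixing one geodesic representative), and the union of the two $[a,b]$-sides $[p_1,p_2] \cup [p_2,p_3] = [p_1,p_3]$ is a single geodesic segment. I would then apply the digon/semidigon machinery developed above — in particular the single-layer intersection behavior (Proposition \ref{SingleLayer}) and Lemma \ref{intersectionDigons} — to the cells lying against $[p_1,p_3]$ coming from the two triangles, to show that the cell structure forced near $p_2$ is incompatible: either $p_2$ would fail to be a local minimum of distance to $c$ (one could cut a corner through a short divisor to get strictly closer, contradicting minimality of $r$), or the two triangles would have to share cells in a way that violates the small cancellation condition (two distinct cells meeting in a segment of length $\geq \ell/8$, hence $\geq \lambda\ell$, must coincide). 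Concretely, Lemma \ref{LongSegments} guarantees that the cells of a digon or semidigon meet $[a,b]$ in segments longer than $\ell/4$, so two genuinely distinct digons straddling $p_2$ from opposite sides cannot both be present without forcing $d(c, p) < r$ for some interior point $p$.

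The main obstacle I anticipate is the bookkeeping around the \emph{apex} of the triangles: the point $c$ is a single fixed point, not a variable summit, so when I glue $T_1$ and $T_2$ along $[c,p_2]$ I must be careful that the glued object is still a legitimate digon/semidigon/double-semidigon in the precise sense of the definitions above, rather than something with filamentous pieces or with the apex in the ``wrong'' position. A clean way around this is to pass to the geodesic triangle $\{c, p_1, p_3\}$ directly: it decomposes (as observed in the text) into one simple triangle plus digons connected by segments; the simple triangle, by the Strebel classification and the fact that all three sides are geodesics of controlled length, must be of a type in which the side along $[a,b]$ carries at most one ``turning point,'' and any realizing $p_i$ must be a division point of that configuration. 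Since there is at most one such turning point, there can be at most two candidate division points on $[p_1,p_3] \subseteq [a,b]$ achieving the minimum, contradicting the existence of three. I would expect the only real work to be checking that types $III, IV, V$ do not secretly allow three minimizing points on the base side, which is a finite case-check against Figure \ref{strebel} using Lemma \ref{LongSegments} to rule out the short-divisor configurations.
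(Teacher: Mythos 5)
Your setup matches the paper's: assume three minimizers $x_1,x_2,x_3$ with $x_2$ in the middle, form the two geodesic triangles on $\{c,x_1,x_2\}$ and $\{c,x_2,x_3\}$ with base sides on $[a,b]$, and use Strebel's classification (Figure \ref{strebel}) plus a length count to force both simple triangles to be of type $I_2$ with corners at the minimizing points. But from that point on there is a genuine gap: the contradiction cannot be extracted where you look for it. Your first route applies Lemma \ref{intersectionDigons} and Proposition \ref{SingleLayer} to ``the cells lying against $[p_1,p_3]$'' and to ``digons straddling $p_2$ from opposite sides''; however, the base sides $[p_1,p_2]$ and $[p_2,p_3]$ of the two triangles meet only at the single point $p_2$, so their cells have no overlap along $[a,b]$ at all, and nothing in the small cancellation condition forbids two unrelated cell chains sitting on either side of $p_2$. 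The interaction that actually produces the contradiction lives on the \emph{shared} side $[c,p_2]$: the paper first shows, by a length computation using that $x_1$ and $x_3$ are also minimizers and $[x_1,x_3]$ is geodesic, that in each type-$I_2$ triangle the side emanating from $x_2$ along the fixed geodesic $[x_2,c]$ is longer than $\ell/8$; hence the two simple triangles overlap along $[x_2,c]$ in a segment longer than $\ell/8$, so by $C'(1/8)$ their division points and cells must coincide, and then one checks the two remaining subcases (last cells meeting in a short segment, or identical last cells) each contradict minimality of $x_1$ or the ordering of $x_1,x_2,x_3$. Your proposal never identifies this overlap along $[c,x_2]$ nor the quantitative $>\ell/8$ bound that makes the small cancellation hypothesis bite, and without it the ``incompatibility near $p_2$'' you invoke is only an assertion.

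Your fallback route through the single triangle $\{c,p_1,p_3\}$ does not repair this. The claims that every minimizer ``must be a division point of that configuration'' and that the base side ``carries at most one turning point, hence at most two candidate division points achieving the minimum'' are exactly the content of the lemma, restated rather than proved: nothing in the Strebel classification by itself prevents an interior point such as $p_2$ from realizing the same distance $r$ to $c$ without being a division point of the triangle on $\{c,p_1,p_3\}$, and Lemma \ref{LongSegments} only bounds from below the lengths of the two long arcs of a cell inside a digon or semidigon; it says nothing about where along the base the distance to the apex is minimized. To make either route work you would need to reintroduce the middle point $x_2$, fix one geodesic $[x_2,c]$, show the corners of both simple triangles are exactly the minimizing points (otherwise minimality already fails, as in figure \ref{Tripods2}), and then run the long-overlap argument along $[x_2,c]$ as above.
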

\begin{proof}
First, note that the claim is trivial if $c$ is a point in $[a,b]$. Thus, we may assume otherwise, i.e. any geodesic triangle defined by $\{a,b,c\}$ and $[a,b]$ is not trivial. 

We assume, for a contradiction, that there exist three distinct points $x_1, x_2, x_3$ on $[a, b]$ that minimize the distance to $c$. We may further assume that $x_2$ is in between $x_1$ and $x_3$ and we consider the possible types, according to Strebel's classification (see figure \ref{strebel}), of geodesic triangles defined by $\{x_1, x_2, c\}$ and $\{x_3, x_2, c\}$ with the segment $[x_1,x_2]$ and $[x_3,x_2]$ lying on $[a,b]$. A simple argument counting lengths, since the lengths of divisors are comparatively small, shows that the only possibility is triangles of type $I_2$. In any other case, there exist points in $[a,b]$ closer to $c$ than $x_1$ and $x_3$ (see figure \ref{Tripods1}). 

\begin{figure}[ht!]
\centering
\includegraphics[width=1.\textwidth]{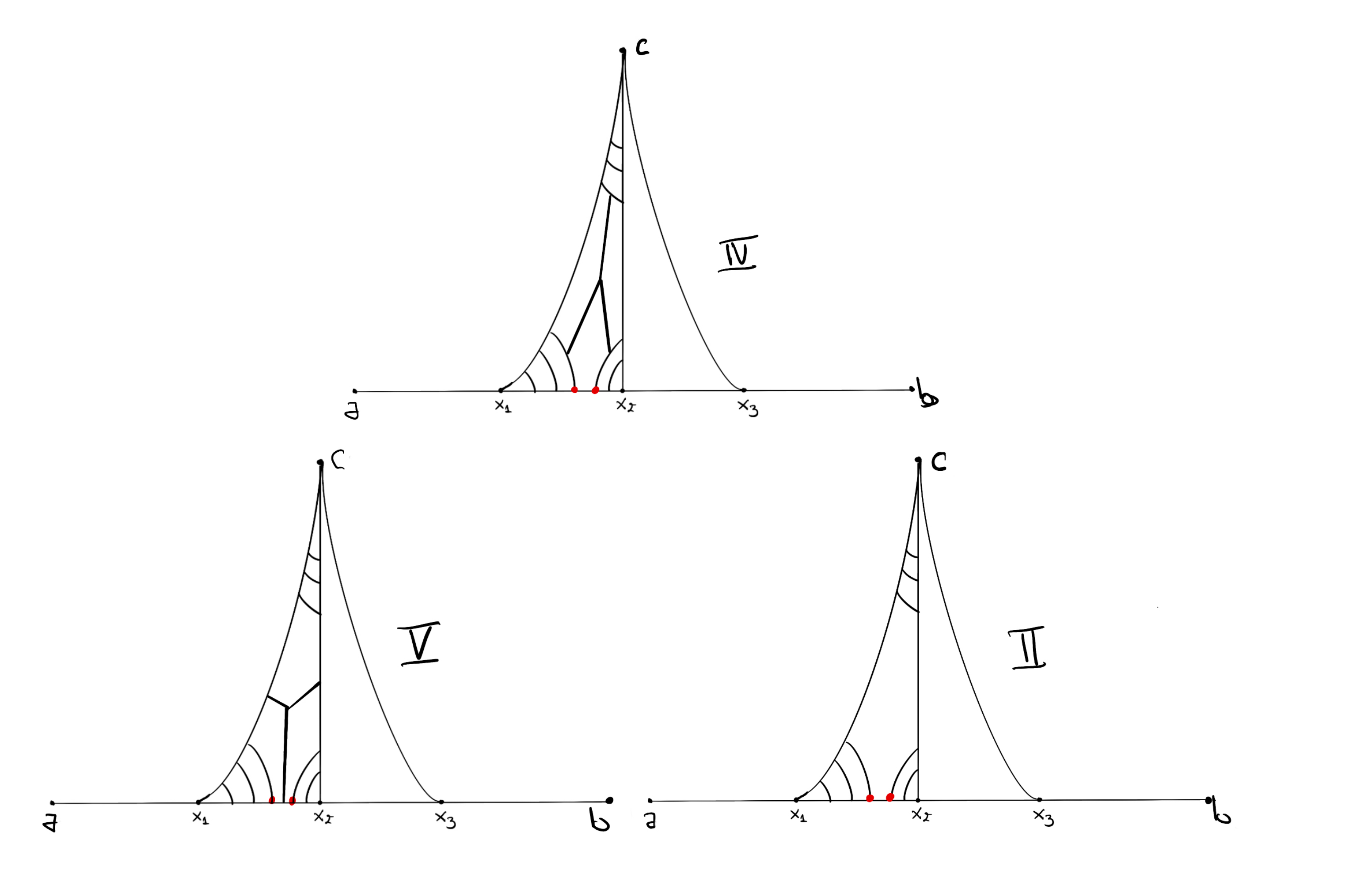}
\caption{Geodesic triangles between $x_1, x_2, c$.}
\label{Tripods1}
\end{figure}

We now fix a geodesic $[x_2, c]$ between $x_2$ and $c$ and consider the geodesic triangles as above that include this geodesic. By our previous argument both simple triangles in these geodesic triangles must be of type $I_2$.  And the endpoints of the simple triangles must lie on $[a, b]$ and in particular be $x_1, x_2$ for the geodesic triangle determined by $\{x_1, x_2, c\}$ and $x_2, x_3$ for the geodesic triangle determined by $\{x_2, x_3, c\}$. If not, the points $x_1,x_2, x_3$ do not minimize the distance to $c$ (see figure \ref{Tripods2}).

\begin{figure}[ht!]
\centering
\includegraphics[width=.8\textwidth]{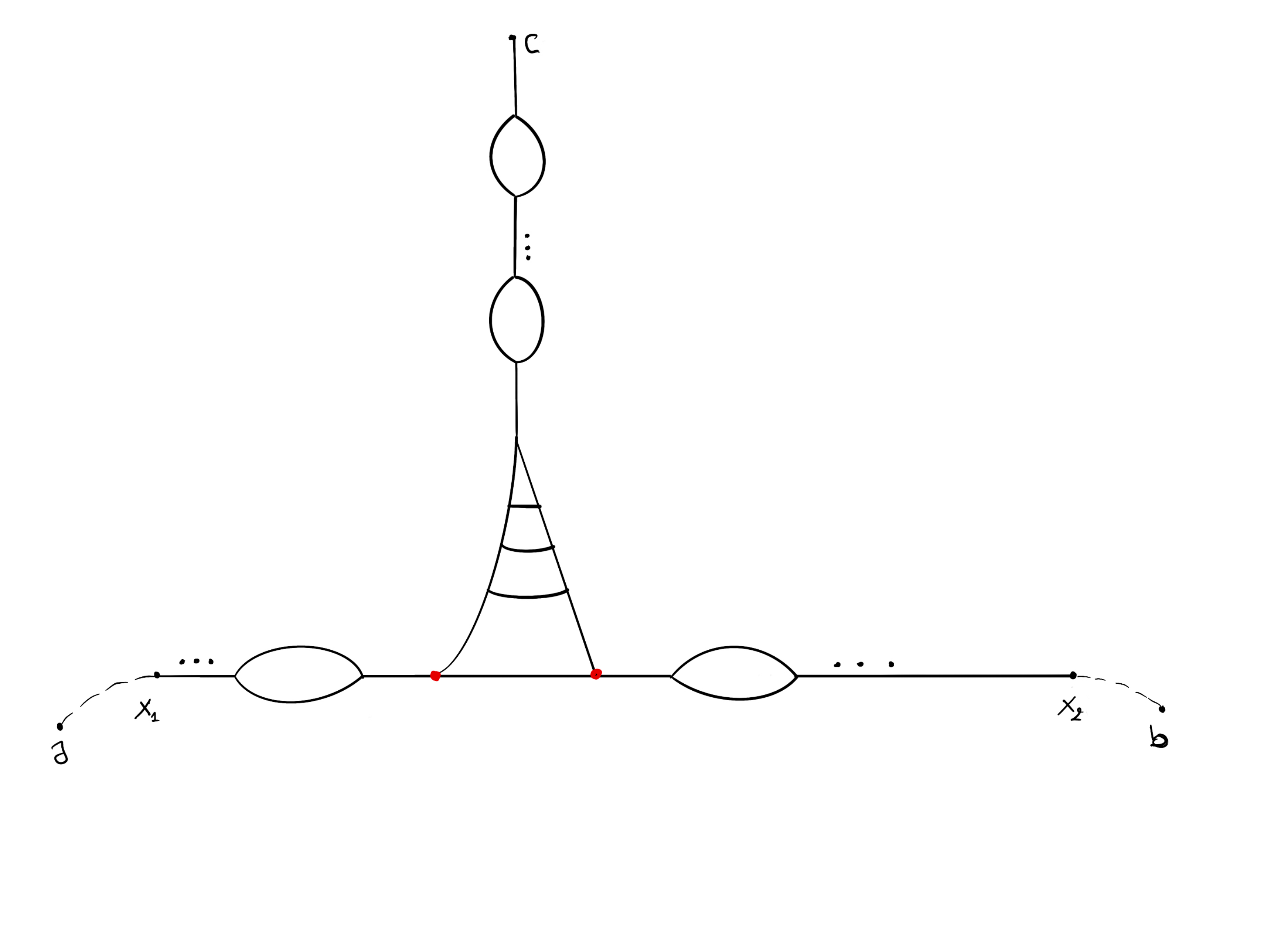}
\caption{Corners of simple triangles coincide with $x_1, x_2, x_3$.}
\label{Tripods2}
\end{figure}

We now take cases, according to the length of $[x_2, c]$. First assume that the length is strictly less than $1/8\ell$. Then, by definition, none of the geodesic triangles contain a digon, in particular they are both cells and have length $\ell$. But then a simple calculation shows that $x_2$ is strictly closer to $c$ than $x_1$ or $x_3$, unless $[x_1, x_3]$ is not a geodesic. Note that this argument shows that the segment from $x_2$ to $A_0$, the endpoint of the simple triangle that lies on $(x_2, c)$, in the geodesic triangle determined by $\{x_1, x_2, c\}$, $[x_1, x_2]$ and $[x_2, c]$, must have length greater than $1/8\ell$. Symmetrically the same holds for the segment $[x_2, A'_0]$, i.e. it has length greater than $1/8\ell$ (see figure \ref{Tripods3}). Hence, the simple triangles must have a long intersection, i.e. greater than $1/8\ell$. In particular the division points of the divisors in the corresponding semidigons must coincide and the last cells of  the simple triangles either intersect each other in a short segment, i.e. $[x_2, A_n]$ has length strictly less than $1/8\ell$, or they must be identical (see figure \ref{Tripods3}). In the former case, the distance from $x_1$ to $A_0$ is strictly greater than the distance from $x_2$ to $A_0$. This is because, $d(x_2, A_0)\leq d(x_2, A_n)+d(A_n, B_n)+d(B_n, A_0)\leq 2/8\ell+d(B_n,A_0)$ and $d(x_1, B_n)>2/8\ell$, otherwise $[x_1,x_2]$ is not a geodesic. This contradicts that $x_1$ minimizes the distance to $c$. In the latter case, this forces $x_3$ (or symmetrically $x_1$) to be a point in the geodesic $[x_1, x_2]$, i.e. in between $x_1$ and $x_2$, which is a contradiction, since we have chosen otherwise.  

\begin{figure}[ht!]
\centering
\includegraphics[width=.8\textwidth]{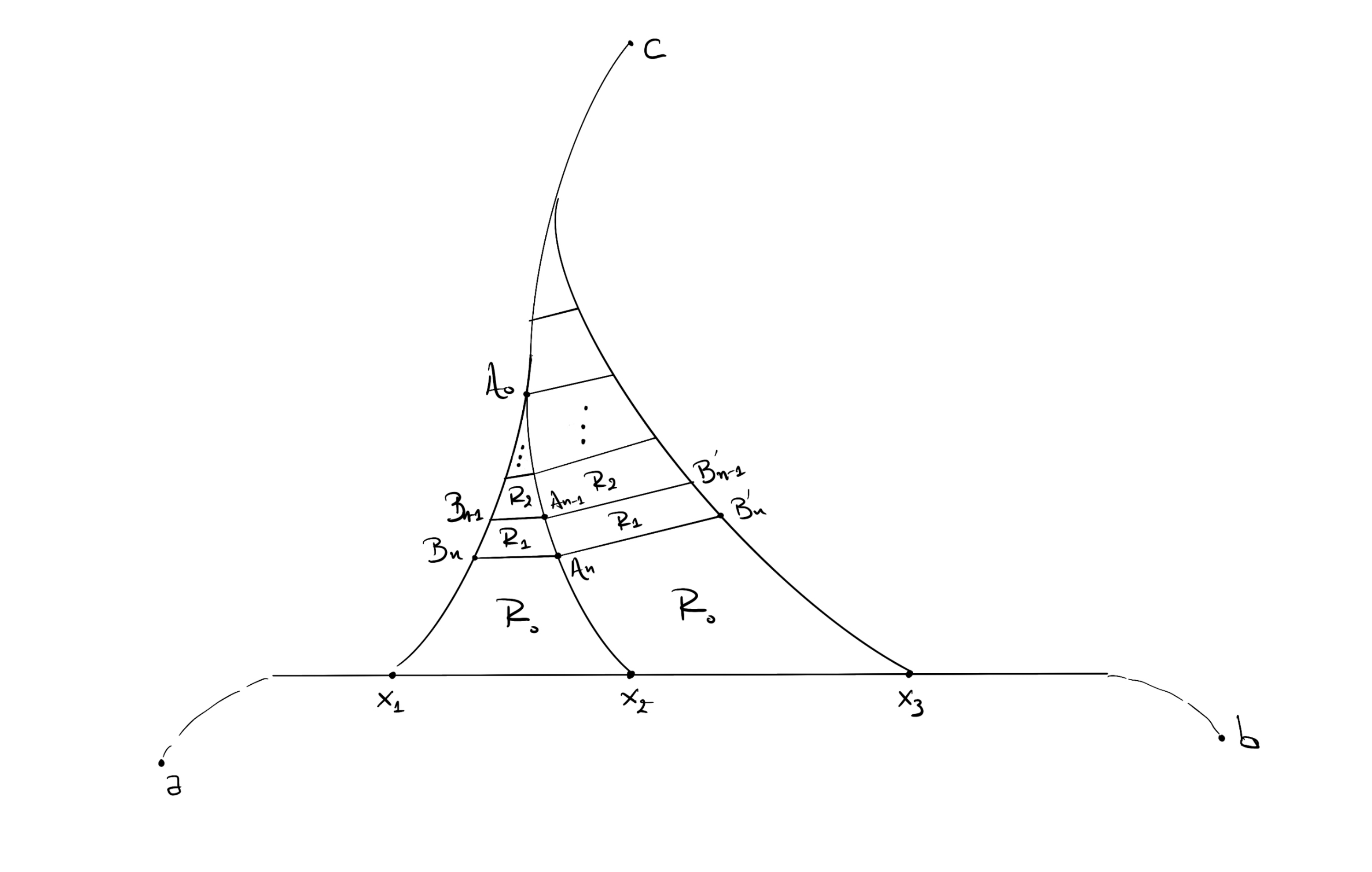}
\caption{Two intersecting simple triangles of type $I_2$.}
\label{Tripods3}
\end{figure}

\end{proof}

%

\subsection{Tripod stabilizers}\label{tripod}
In this subsection we will show that an action obtained by a sequence of morphisms, $(h_i)_{i<\omega}:G\rightarrow \Gamma_i$, from a finitely generated group $G$ to a finitely presented eighth group with fixed length of defining relators $i$, for $i<\omega$, factors through a faithful action with trivial tripod stabilizers. 

We note that since $G$ is countable, up to refining the sequence we may assume that it is {\em stable}, i.e. for every $g$ in $G$, there exists $n(g)<\omega$, such that either $h_i(g)\neq 1$ for all $i>n(g)$ or $h_i(g)=1$ for all $i>n(g)$. The {\em stable kernel}, $\underrightarrow{Ker}(h_i)$, of such sequence consists of the elements that are eventually mapped to $1$.


Some preliminary lemmas are in order. For the rest of the subsection we assume the setting described in the beginning. In particular, by Corollary \ref{LimitActionDominating} and Theorem \ref{CenLoc}, a subsequence of the obtained actions on the (rescaled) Cayley graphs $(X_{\Gamma_i}, 1)$ converges to a minimal action on a real tree $(T, *)$. We will denote the metric of the real tree by $d_T$ and the (rescaled) metric of the Cayley graph $X_{\Gamma_i}$ by $d_i$, for $i<\omega$. 

We first prove a generalization of Proposition \ref{SingleLayer}. We will say that a subsegment $J\subseteq I$ of a geodesic segment $I$ lies in the {\em $1/m$-middle part} of $I$ if it doesn't contain any point within the balls of radius $|I|/m$ centered on the endpoints of $I$. In particular, the subsegment $J$ contained within the ball of radius $\frac{m-2}{m}|I|$ centered at the mid-point of the segment $I$ lies in the $1/m$-middle part of $I$.

To increase the readability, in what follows, the reader may fix the value of $m$ to be some big natural number. We do not do so because the flexibility to increase $m$ will prove useful in some subsequent results. 

\begin{definition}[Single layer configuration]
Let $[a,b]$ be a geodesic segment in $Cay(\Gamma, \Sigma_{\Gamma})$. Let $S$ be a finite subset of $\Gamma$. We say that $[a,b]$ admits an $1/m$ single layer configuration with respect to $S$ if there exist finitely many digons, $D_1, D_2, \ldots, D_k$ a semidigon $sD$ or a double semidigon $sDD$ with lower sides in the $1/m$-middle part of $[a,b]$ that satisfy the following conditions:
\begin{itemize}
\item for every $i,j\leq k$, $D_i\cap D_j<1/8\ell$, and $D_i\cap sD<1/8\ell$; 

\item for every $s\in S$, any $\frac{1}{m-2}$-middle part of  $s\cdot[a,b]$ lies in the union of $[a,b]\cup\{D_1, \ldots, D_k,$ $ sD, sDD\}$.
\end{itemize}
\end{definition}

In the situation of the above definition we say that the union of $[a,b]$ with the digons, the semidigon or the double semidigon covers the $\frac{1}{m-2}$-middle parts of the corresponding geodesics.  

\begin{lemma}\label{ExtendedSingleLayer}
Let $[a,b]$ be a nontrivial segment in the real tree $T$ and $g\in G$ an element that fixes both endpoints. We fix a natural number $m\gg 10$. Then for any $K<\omega$, there exists $n(K,m)$, such that for all $i>n(K,m)$, the segment $[a_i,b_i]$ in $Cay (\Gamma _i,  \Sigma_{\Gamma}),$ where $(a_i)_{i<\omega}$ approximates $a$ and $(b_i)_{i<\omega}$ approximates $b$, admits an $1/m$ single layer configuration with respect to $\{g, g^2, \ldots, g^K\}$. Moreover, exactly one of the following three can  happen:
\begin{itemize}
    \item The union of a double semidigon $sDD$ with $[a_i,b_i]$ covers all $\frac{1}{m-2}$-middle parts of all geodesics;
    \item The union of a semidigon and finitely many digons with $[a_i,b_i]$ covers all $\frac{1}{m-2}$-middle parts of all geodesics;
    \item the union of finitely many digons with $[a_i,b_i]$ covers all $\frac{1}{m-2}$-middle parts of all geodesics.
\end{itemize}
\end{lemma}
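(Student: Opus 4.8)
The plan is to extract this statement from a limiting argument built on the single-layer results already established for a fixed eighth group, transported to the approximating sequence $(X_{\Gamma_i},1)$. First I would set up the approximating data: choose approximating sequences $(a_i)_{i<\omega}$ and $(b_i)_{i<\omega}$ for the endpoints $a,b$ of the segment $[a,b]\subseteq T$, so that by Lemma \ref{ConvSpaces}(4) we have $d_i(a_i,b_i)\to d_T(a,b)>0$; since $g$ fixes $a$ and $b$ in $T$, for each $k\le K$ the sequence $(g^k\cdot a_i)$ approximates $a$ and $(g^k\cdot b_i)$ approximates $b$, so $d_i(a_i, g^k\cdot a_i)\to 0$ and likewise for $b_i$. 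Rescaling so that $d_T(a,b)$ is comparable to $1$ while the hyperbolicity constant $\delta_i/\ell_i\to 0$ (the standing hypothesis of Corollary \ref{LimitActionDominating}), this says that the endpoints of $g^k\cdot[a_i,b_i]$ are within $o(1)$ of the endpoints of $[a_i,b_i]$, whereas $|[a_i,b_i]|$ is bounded below away from $0$. In particular, for $i$ large enough (depending on $K$ and on the chosen $m$), the endpoints of each $g^k\cdot[a_i,b_i]$ lie well inside the ball of radius $|[a_i,b_i]|/m$ about the corresponding endpoint of $[a_i,b_i]$; hence the $\tfrac{1}{m-2}$-middle part of $g^k\cdot[a_i,b_i]$ and the $\tfrac1m$-middle part of $[a_i,b_i]$ are two geodesic segments whose endpoints pairwise lie at distance $o(\ell)$, in fact $<1/8\ell$ once $i$ is large.

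Next I would run, for each such pair of near-cofinal geodesics, the same cell-by-cell Strebel analysis used in Proposition \ref{SingleLayer}, Lemma \ref{intersectionDigons}, Lemma \ref{IntersectionSemiDigons} and Lemma \ref{ShortIntersection}. The point is that two geodesic segments whose four endpoints are pairwise at distance $<1/8\ell$ bound a thin region that decomposes into digons, at most one semidigon, at most one double semidigon, and short connecting pieces — exactly the configurations catalogued by Strebel. Merging overlapping digons via Lemma \ref{intersectionDigons} (and its semidigon analogue Lemma \ref{IntersectionSemiDigons}), and using Lemma \ref{ShortIntersection} to see that a short overlap cannot be re-absorbed by a third geodesic, one gets for each fixed $k$ a single-layer configuration $D_1^{(k)},\dots,sD^{(k)},sDD^{(k)}$ covering the $\tfrac{1}{m-2}$-middle part of $g^k\cdot[a_i,b_i]$. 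One then takes the (finite) union over $k=1,\dots,K$ of all these configurations inside the $\tfrac1m$-middle part of $[a_i,b_i]$ and merges again: by Remark \ref{OrderSingleLayer} the digons in the resulting family can be linearly ordered along $[a_i,b_i]$ with only consecutive ones meeting, and in short segments. Finally, the trichotomy follows from the Strebel classification together with Lemma \ref{IntersectionSemiDigons}: a double semidigon arises precisely when two semidigons of opposite orientation overlap, and in that case (again by the merging lemmas) the double semidigon already absorbs any semidigon, so the three alternatives — one double semidigon; one semidigon plus digons; only digons — are exhaustive and mutually exclusive once the family has been fully merged.

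The main obstacle I anticipate is the bookkeeping that makes the single-layer property survive the union over the $K$ translates while staying inside the $\tfrac1m$-middle part: each individual $g^k\cdot[a_i,b_i]$ only needs its $\tfrac{1}{m-2}$-middle covered, but the covering digons live in the slightly larger $\tfrac1m$-middle part of $[a_i,b_i]$, and one must check that the merging lemmas do not push a merged digon's side out past the $\tfrac1m$ threshold — this is why the hypothesis $m\gg 10$ is needed, giving the slack between $\tfrac1m$ and $\tfrac{1}{m-2}$ to absorb finitely many short ($<1/8\ell$) connecting pieces. A secondary subtlety is uniformity of $n(K,m)$: one must verify that a single threshold works for all $k\le K$ simultaneously, which follows because $g^k\cdot a_i\to a$ for each of the finitely many $k$, so one takes the maximum of the finitely many thresholds. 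Everything else is a routine adaptation of the fixed-group arguments of the previous subsection to the rescaled Cayley graphs $X_{\Gamma_i}$, using that the defining relators all have length exactly $\ell=i$ so that the constants $1/8\ell$, $\lambda\ell$, $\ell/4$ scale uniformly.
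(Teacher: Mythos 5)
Your setup (approximating sequences, displacement of $a_i,b_i$ by $g,\dots,g^K$ bounded by $|[a_i,b_i]|/m$, taking a maximum of finitely many thresholds to get $n(K,m)$) matches the paper, but the proposal has a genuine gap at the step where you pass from that displacement bound to the claim that the $\frac{1}{m-2}$-middle part of $g^k\cdot[a_i,b_i]$ and the $\frac1m$-middle part of $[a_i,b_i]$ have ``endpoints pairwise at distance $o(\ell)$, in fact $<1/8\ell$.'' This is not justified and is in general false. The displacement $d_i(a_i,g^k a_i)$ is only $o(|h_i|)$, i.e.\ small relative to the rescaling length, and may be enormous compared to the relator length $\ell=i$; and what hyperbolicity gives for the middle parts is closeness at scale $2\delta_i\approx 2i=2\ell$, which is the \emph{same} order as $\ell$, not smaller than $\ell/8$. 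Since the whole digon/semidigon machinery of the previous subsection is calibrated to the small-cancellation scale $\ell/8$ (divisors are short, cells sharing arcs longer than $\ell/8$ coincide), you cannot feed the translate and $[a_i,b_i]$ into Lemmas \ref{intersectionDigons}, \ref{IntersectionSemiDigons}, \ref{ShortIntersection} or Proposition \ref{SingleLayer} as ``two geodesics bounding a thin region'': those results are stated for geodesics sharing endpoints or with lower sides on one common geodesic, and the fact that the intersection pattern in the middle is nevertheless a single layer of digons/semidigons with only short ($<\ell/8$) connecting pieces is precisely the content of the lemma, not a consequence of endpoint proximity.

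The paper closes this gap differently: it keeps the genuinely distant endpoints $g^j a_i, g^j b_i$, forms for each $j\le K$ the geodesic quadruple $[a_i,b_i]$, $g^j[a_i,b_i]$, $[a_i,g^j a_i]$, $[b_i,g^j b_i]$, splits it into two geodesic triangles, and applies Strebel's classification to the simple triangles inside them. The trichotomy is then read off the case analysis: if two such simple triangles (from possibly different powers $j_1,j_2$) have sides covering the $1/m$-middle part and intersect in a segment longer than $\ell/8$, their opposite-oriented semidigons merge into a double semidigon; if only one simple triangle contributes a long side to the middle part, one gets a semidigon (possibly merged with a digon) plus finitely many digons; otherwise only digons appear. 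Your proposal attributes the trichotomy to merging of already-existing digon/semidigon configurations, but without the triangle analysis there is no argument producing those configurations in the first place, so the core of the proof is missing rather than merely sketched.
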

\begin{proof}
Let $L$ be the length of $[a,b]$ in $T$. For the given $K$ we choose $n(K)$ so that all powers of $g$ up to $g^K$ move both $a_i$ and $b_i$ in $X_{\Gamma _i}$ at distance not more than $\frac{L}{2m}$ from themselves and $\delta _i< \frac{L}{2(m-1)m}. $  We now consider $Cay (\Gamma _i,  \Sigma_{\Gamma})$ instead of $X_{\Gamma _i}$. Recall that the hyperbolicity constant of  $Cay (\Gamma _i,  \Sigma_{\Gamma})$ is $i$ and $\delta _i=i/\ell _i$, where $\ell _i$ is the rescaling factor. Observe that by the hyperbolicity of the spaces $Cay (\Gamma _i,  \Sigma_{\Gamma})$ the $\frac{1}{m}$-middle parts of $g[a_i,b_i]$ are within the $2i$ ball of the  $1/m$-middle part of $[a_i,b_i]$. Indeed, we can consider, for each $1\leq j\leq K$, the quadruples defined by the geodesics $[a_i,b_i], g^j[a_i,b_i], [a_i,g^ja_i], [b_i,g^jb_i]$. Each such quadruple can be split in two geodesic triangles. Points on  $g^j[a_i,b_i]$ that are at the $\frac{1}{m}$-middle part are at distance greater than  $\frac{L\ell _i}{m}$ from $g^{j}a_i$ and $g^jb_i$.   Since $\frac{L\ell _i}{m}\geq\frac{L\ell _i}{2m}+i$, they are  at distance less than $2i$ from $[a_i,b_i]$.   Since each such quadruple can be split in two geodesic triangles,  we take cases according to the various possibilities for these triangles.
\begin{itemize}
    \item[Case 1] Suppose for some $j_1, j_2\leq K$ the geodesic triangles defined by $\{a_i, g^{j_1}a_i, b_i\}$ and $\{a_i, g^{j_2}b_i,$ $b_i\}$ in $Cay (\Gamma _i,  \Sigma_{\Gamma})$ contain simple geodesic triangles $\Delta_1$ and $\Delta_2$ and the union of their sides that are contained in $[a_i,b_i]$ contain the $1/m$-middle part of $[a_i,b_i]$. Assume, moreover, that their sides intersect in a segment greater than $1/8 i$ (recall that the eighth group $\Gamma_i$ has defining relation length $i$). 
    Triangle $\Delta _1$ can have long semidigons at the corners $a_i, g^{j_1}a_i$. If we take points $\bar a_i$ (resp. $\hat a_i$) on $[a_i,b_i]$ (resp. $[g^{j_1}a_i, b_i]$) at distance at most $L\ell _i/m$ from $a_i$ (resp. $g^{j_1}a_i$), then they are outside of $1/m$-middle part of $[a_i,b_i]$ (resp. $[g^{j_1}a_i, b_i]$) and $[\bar a_i,\hat a_i]<i$. We can in addition take these points to be vertices of the divisors of these corner semidigons.   Since $[\bar a_i,\hat a_i]$ in $\{\bar a_i, \hat a_i, b_i\}$ is shorter than $i$, it cannot be more than four cells  on the path along the side of $\Delta _1$ joining these divisors  by Lemma \ref{LongSegments}. 
    We now take points $\bar c_i$ (resp. $\hat c_i$) on $[a_i,b_i]$ (resp. $[g^{j_1}a_i, b_i]$) at distance $2i$  from $\bar a_i$ (resp. $\hat a_i$), then they are outside of $1/(m-1)$-middle part of $[a_i,b_i]$ (resp. $[g^{j_1}a_i, b_i]$) and on the sides of the semidigon originating from the vertex $b_i$ of $\Delta _1$.
    
    Then  the $1/(m-1)$-middle part of $[a_i,b_i]$ is a side of the double semidigon $sDD$, obtained by merging the two triangles or rather the opposite oriented semidigons (within the $1/(m-1)$-middle part of $[a_i,b_i]$).  
    
    Consider now some geodesic $g^{j_3}[a_i,b_i]$, take points $\hat a_i, \hat b_i$ at distance $L\ell _i/(m-1)$ from its endpoints and points $\bar a_i,\bar b_i$  at distance $L\ell _i/(m-1)$ from endpoints of $[a_i,b_i]$. Consider geodesic triangles defined by $\{\bar a_i, \hat a_i,  b_i\}$ and $\{ a_i, \hat b_i, \bar b_i\}$. Since the side $[\bar a_i,\hat a_i]$ in $\{\bar a_i, \hat a_i, b_i\}$ is shorter than $i$, it cannot be more than three cells all together on the legs from $\bar a_i, \hat a_i$ or touching this side of the simple triangle by Lemma \ref{LongSegments}.  Denote corresponding vertices of the simple triangle by $\bar c_i, \hat c_i$ and the third vertex by $c_i$.  If $|[\bar c_i,\hat c_i]|<i/8$, then by Lemma \ref{short_geod_tr}, $\{\bar c_i, \hat c_i, c_i\}$ is a semidigon or degenerates.  Otherwise, we take a point $\hat d_i$ on $[\hat a_i,b_i]$ at distance $2i$ from $\hat a_i$. This $\hat d_i$ is still outside of $1/(m-2)$-middle part of $[g^{j_3}a_i, b_i]$ because $2i<\frac{L\ell _i}{(m-1)m}<\frac{L\ell _i}{(m-1)(m-2)},$ so $\hat d_i$ is at the distance less than $\frac{L\ell _i}{m-2}$ from $g^{j_3}a_i.$ Similarly we take $\bar d_i$ on the geodesic $[a_i,b_i]$ at distance $2i$ from $\bar a_i$. Then points $\bar d_i,\hat d_i$ are on the sides of the simidigon originating from $c_i$ that is a part of $\{\bar c_i,\hat c_i,c_i\}$.  So we can assume that $\{\bar d_i, \hat d_i, b_i\}$ consists of a semidigon followed by digons and bridges between them.
    
    But the $\frac{1}{m-1}$-middle part of $[a_i,b_i]$ is a  side of the double semidigon $sDD$. Therefore all the cells of $\{\bar d_i, \hat d_i,  b_i\}$ that touch the $\frac{1}{(m-2)}$-middle part of $[g^{j_3}a_i,b_i]$ will merge into $sDD$. And $sDD$  covers the $\frac{1}{m-2}$-middle part of $[g^{j_3}a_i,b_i]$. Similarly  it covers the $\frac{1}{m-2}$-middle part of $[g^{j_3}a_i,g^{j_3}b_i]$ and of all geodesics  $g^{r}[a_i,b_i]$ for $1\leq r\leq K$.
\begin{figure}[ht!]
\centering
\includegraphics[width=.7\textwidth]{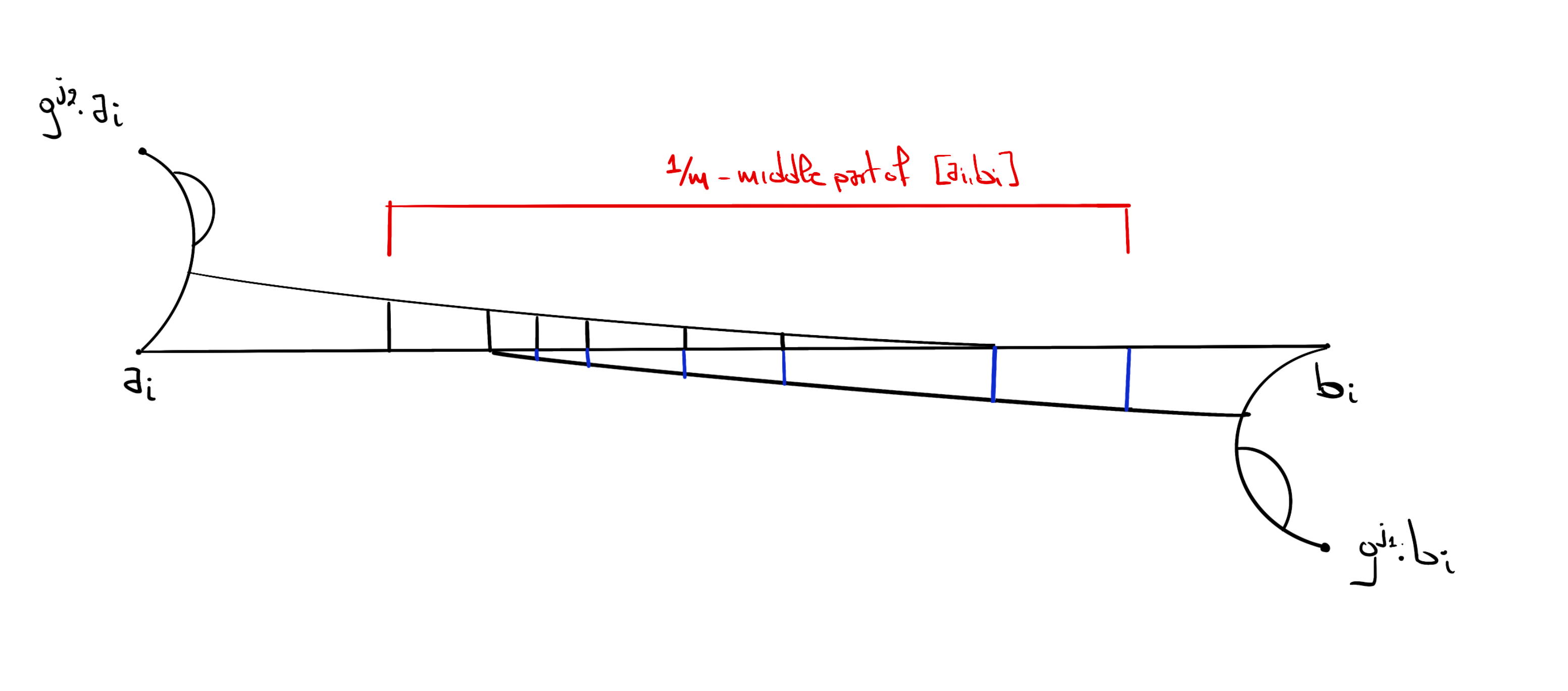}
\caption{Two simple triangles intersecting in the middle part.}
\label{SingleLayerCase1}
\end{figure}

    \item[Case 2] Suppose for no $j_1, j_2\leq K$ the situation in Case 1 happens. Assume moreover, that for some $j\leq K$ the geodesic triangle defined by $\{a_i, g^ja_i, b_i\}$ (or symmetrically $\{a_i, g^jb_i, b_i\}$) has a simple geodesic triangle $\Delta$ and one of its sides contains some long segment, i.e. $>1/8i$, of the $1/m$-middle part of  $[a_i,b_i]$. 
  
  In this case   the $1/m$-middle part of $[a_i,b_i]$ is a union of a side of a semidigon (denote a semidigon with the longest side by $sD$) and a geodesic, and for the geodesic we have the situation as in Proposition \ref{SingleLayer}. Like in the previous case,  we consider some geodesic $g^{j_3}[a_i,b_i]$ and  take similarly points $\hat a_i, \hat b_i,\hat d_i$  and points $\bar a_i,\bar b_i, \bar d_i$. Then  the geodesic triangle defined by $\{\bar d_i, \hat d_i,  b_i\}$ consists of a semidigon followed by digons and bridges between them. Therefore all the cells of $\{\bar d_i, \hat d_i,  b_i\}$ that touch the $\frac{1}{(m-2)}$-middle part of $[g^{j_3}a_i,b_i]$ will merge into $sD$ and finitely many digons. Then the $1/m$-middle part of $[a_i,b_i]$ together with the semidigon $sD$ and finitely many digons, cover the $\frac{1}{m-2}$-middle parts of all geodesics.
\begin{figure}[ht!]
\centering
\includegraphics[width=.7\textwidth]{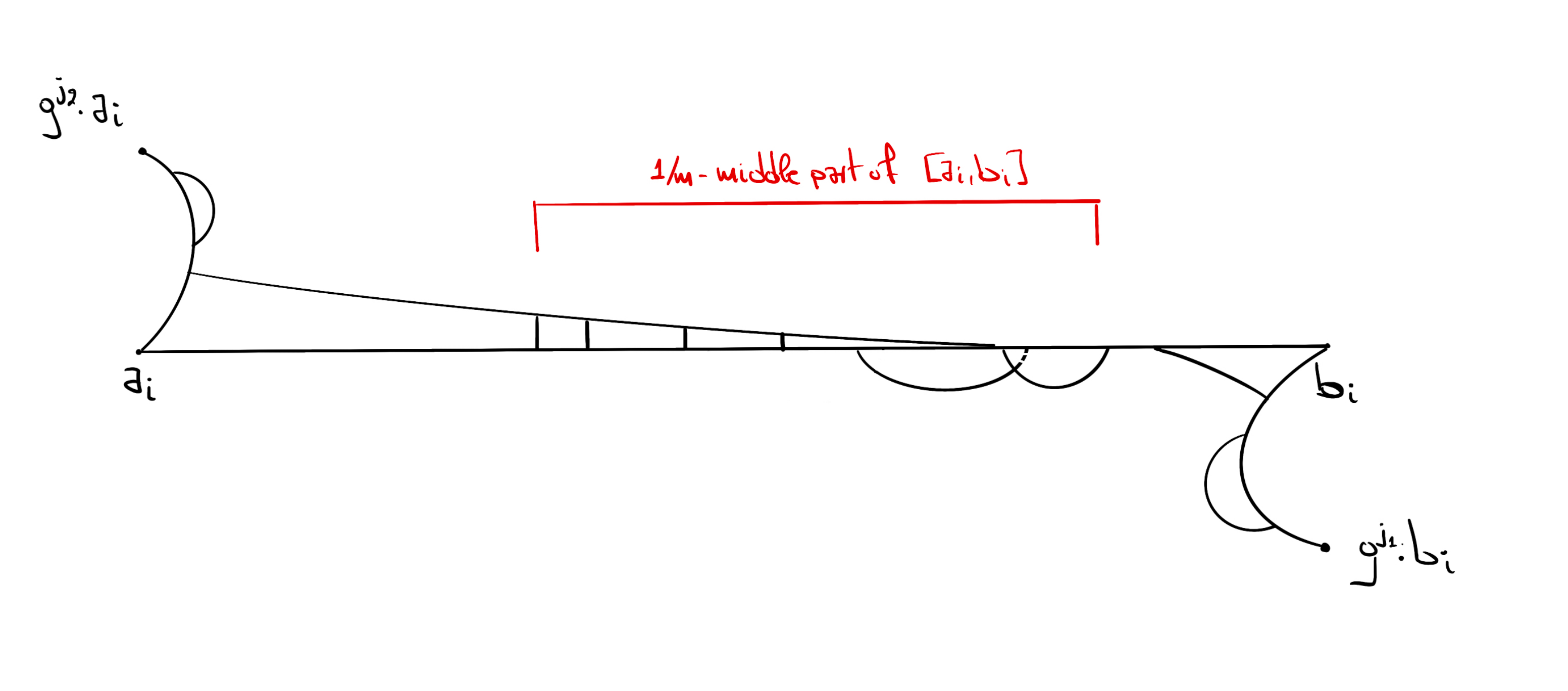}
\caption{A simple triangle intersecting with a digon in the middle part.}
\label{SingleLayerCase2}
\end{figure}
    \item[Case 3] If none of the above cases occur, then, similarly, the $1/m$-middle part of $[a_i,b_i]$ together with some digons cover the $\frac{1}{m-2}$ parts of all geodesics.  
        \begin{figure}[ht!]
\centering
\includegraphics[width=.7\textwidth]{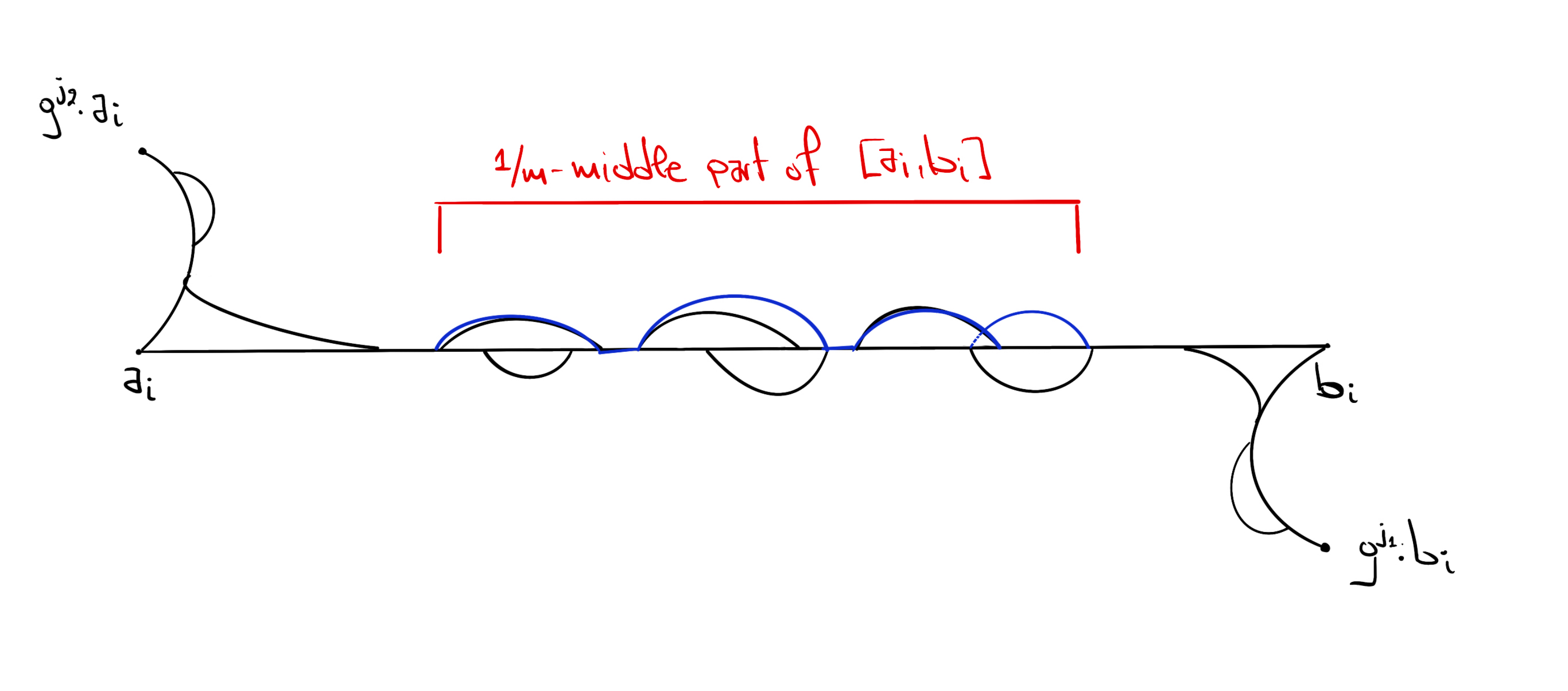}
\caption{Digons intersecting in the middle part.}
\label{SingleLayerCase3}
\end{figure}
\end{itemize}
\end{proof}

\begin{remark}
In the above lemma for the sake of clarity we used powers of the same element $\{g, g^2, \ldots, g^K\}$, but the same proof works for any $K$ elements $\{g_1, g_2, \ldots, g_K\}$. We will use the more general version in Lemma \ref{SegmentStabilizers}.
\end{remark}

We are now in the position to prove the main result of this subsection. 

\begin{lemma}\label{TripodStabilizers}
Let $a,b,c$ be three points in $T$ forming a nontrivial tripod. Let $g\in G$ be an element fixing the tripod. Then $g$ belongs to the stable kernel of the sequence $(h_i)_{i<\omega}:G\rightarrow \Gamma_i$. 
\end{lemma}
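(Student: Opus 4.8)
The plan is to argue by contradiction using the single layer configuration machinery just developed. Suppose $g$ fixes the nontrivial tripod with endpoints $a,b,c$ and medial point $o$, but $g\notin\underrightarrow{Ker}(h_i)$, so that $h_i(g)\neq 1$ for all large $i$. Fix approximating sequences $(a_i),(b_i),(c_i)$ and $(o_i)$ (the latter chosen, say, as a point on $[a_i,b_i]$ minimizing distance to $c_i$, which is essentially unique by Lemma~\ref{MinDistance}); then $d_i(a_i,b_i),d_i(a_i,c_i),d_i(b_i,c_i)$ converge to the tripod pairwise distances, and the Gromov products force $o_i$ to lie, up to bounded error relative to the (rescaled) lengths, in the $1/m$-middle part of each of the three sides for $m$ large. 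Since $g$ fixes $a,b,c$, the points $g\cdot a_i$ etc. drift away from $a_i$ etc. by at most $\varepsilon\cdot L$ for $i$ large; in particular we may apply Lemma~\ref{ExtendedSingleLayer} (in its general-element form, with $K=1$ and $g_1=g$) to each of the three sides $[a_i,b_i]$, $[b_i,c_i]$, $[a_i,c_i]$.

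The key step is then to understand what the single layer configuration tells us near $o_i$. For each side, the $\frac{1}{m-2}$-middle part of $g\cdot(\text{that side})$ is covered by the union of that side with finitely many digons, one semidigon, or one double semidigon. In every one of these three cases the digons/semidigons have lower sides of bounded ``width'' — at most $O(i)$ transverse extent, since semidigons and digons are built out of cells of length $i$ and divisors of length $<i/8$, and consecutive ones overlap only in a short segment. After rescaling by the length $\ell_i\to\infty$ of the action, this width goes to $0$. Hence the action of $g$ on the limit tree, restricted to the image of the middle part of each side, is the identity: more precisely, for each $g^{\pm 1}$ and each side $s$, $\lim_i \frac{1}{\ell_i}\sup\{d_i(p,\,g\cdot p) : p\ \text{in the middle part of}\ s\} = 0$. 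In particular $g$ fixes pointwise a nondegenerate subsegment of each of the three sides of the tripod, including a neighborhood of the medial point $o$ inside all three.

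Now I would exploit the tripod structure: $g$ fixes pointwise three nondegenerate arcs emanating from $o$ in three distinct directions (since $a,b,c$ give three distinct germs at $o$ in the tree), so $g$ fixes a genuine tripod pointwise, not merely three points. The contradiction I am aiming for is that this forces $h_i(g)=1$ eventually. The mechanism: pick a point $p$ in the common fixed arc near $o$, approximated by $p_i$; then $g$ moves $p_i$ a distance $o(\ell_i)$. But now consider two of the three fixed arcs, say those toward $a$ and toward $b$, and points $p_i, q_i$ deep inside them (at distance $\asymp \ell_i$ from $o_i$ but within the covered middle parts); the element $h_i(g)$ moves both $p_i$ and $q_i$ only $o(\ell_i)$, while $p_i$ and $q_i$ themselves are at distance $\asymp\ell_i$ and the geodesics $[o_i,p_i]$, $[o_i,q_i]$ diverge (branch) at $o_i$ because the tripod is nontrivial. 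An isometry of the small cancellation Cayley graph that nearly fixes two points whose connecting geodesic passes through a fixed branch region must, by the rigidity of geodesics in an eighth group (the digon/triangle classification, Corollary~\ref{DigonsFixed}, Proposition~\ref{SingleLayer}), nearly fix the whole branch region, and in particular translate by $o(\ell_i)$; iterating or combining across the three directions pins $h_i(g)$ down to an element of length $o(\ell_i)$ that commutes appropriately, and a counting/centralizer argument in the $C'(1/8)$ group (using that nontrivial elements act with translation length bounded below, or that such a ``near-elliptic with large fixed set'' element must be trivial) yields $h_i(g)=1$.

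\emph{Main obstacle.} The delicate point — and where I expect the real work to lie — is the last paragraph: converting ``$g$ fixes a nondegenerate tripod in the limit tree'' back into ``$h_i(g)=1$ eventually.'' In the classical shortening-argument setting one has uniformly hyperbolic spaces and invokes that tripod stabilizers in the approximating groups are ``uniformly elliptic,'' but here the hyperbolicity constants $\delta_i=O(i)$ blow up, so one genuinely needs the $C'(1/8)$ geodesic rigidity (Lemmas~\ref{intersectionDigons}, \ref{IntersectionSemiDigons}, \ref{ShortIntersection}, Proposition~\ref{SingleLayer}) to show that the transverse $O(i)$ fluctuation cannot ``hide'' a nontrivial isometry: an element fixing, up to $O(i)$ error, a configuration of three long geodesics branching in the middle must actually be trivial in $\Gamma_i$, because a nontrivial element of a torsion-free $C'(1/8)$ group has an axis and acts with positive (indeed, comparable to its translation length) displacement on points far out along any geodesic transverse to a would-be fixed branch point. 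I would isolate this as a separate lemma: in an eighth group with relator length $i$, if an element $w$ satisfies $d(p,w\cdot p), d(q,w\cdot q) < i/8$ for two points $p,q$ such that the geodesics $[o,p]$ and $[o,q]$ from a common vertex $o$ form a genuine tripod with all three legs longer than, say, $i$, then $w=1$ — and this is exactly the place where the hypothesis $d<1/16$ (equivalently $C'(1/8)$) is used in its full strength.
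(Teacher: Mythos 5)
Your proposal does not close the gap it correctly identifies, and the mechanism you sketch for the crucial step both has false ingredients and cannot be fed the hypotheses it needs. First, the part of your argument that uses the single-layer machinery to show that $g$ acts as the identity on the middle parts of the three sides in the limit is not where the content lies: an isometry of a real tree fixing $a,b,c$ automatically fixes the whole tripod pointwise, so nothing is gained there. The real problem, as you say, is converting this into $h_i(g)=1$, and your proposed terminal lemma cannot be invoked: from the limit action one only knows that $h_i(g^j)$ displaces the approximating points $a_i,b_i,c_i$ (and hence any near-median point) by $o(\ell_i)$, where $\ell_i=|h_i|$ is the rescaling length and $i/\ell_i\to 0$; this bound is enormous compared with the threshold $i/8$ in your lemma, so the hypothesis $d(p,w\cdot p)<i/8$ is simply not obtainable from the data. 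Moreover, the lemma as you state it (two points $p,q$ and a branch vertex $o$) is false: a nontrivial $w$ of small translation length whose axis fellow-travels $[p,q]$ — which may pass through the branch region along two of the three legs — displaces both $p$ and $q$ by a small amount. Branching only prevents an axis from following all three legs, and with $\delta_i=O(i)$ the fellow-travelling errors are of the same order as your threshold, so even a three-point version needs exactly the fine $C'(1/8)$ analysis you have deferred. Likewise, ``pins $h_i(g)$ down to an element of length $o(\ell_i)$'' is no constraint at all in $\Gamma_i$, and ``nontrivial elements act with translation length bounded below'' is unavailable (generators translate by $1$).

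What the paper actually does, and what is absent from your proposal, is a pigeonhole over many powers of $g$ made possible by a count that is uniform in $i$. One applies Lemma \ref{ExtendedSingleLayer} to $[a_i,b_i]$ with respect to $\{g,g^2,\dots,g^K\}$ (you take $K=1$), notes via Lemma \ref{MinDistance} that on $[a_i,b_i]$ there are at most two points $e_{i1},e_{i2}$ minimizing the distance to $c_i$, and that each translate $g^j\cdot e_{i1}$ is a minimizer for the translated configuration, hence lies (by standard coarse arguments, since the endpoints move little relative to the leg lengths) in a ball of radius $8i$ about $e_{i1}$ and on one of the boundedly many geodesic segments permitted by the single layer configuration there; each such segment carries at most two minimizers. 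This bounds the number of possible positions of the translates independently of $i$ and $K$. Choosing $K$ larger than this bound forces $h_i(g)^{j_1}\cdot e_{i1}=h_i(g)^{j_2}\cdot e_{i1}$ for some $j_1\neq j_2$, so $h_i(g)^{j_1-j_2}$ fixes a point of the Cayley graph and is trivial, whence $h_i(g)=1$ by torsion-freeness of $\Gamma_i$, for every large $i$. This uniform bound on candidate positions — a consequence of the $C'(1/8)$ geodesic rigidity (Proposition \ref{SingleLayer}, Lemma \ref{MinDistance}), not of coarse hyperbolicity — is precisely the substitute for the classical ``finitely many elements move the center a bounded amount'' argument that breaks when $\delta_i=O(i)$ is unbounded, and your proposal supplies no replacement for it.
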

\begin{proof}
We observe that since $g$ fixes the tripod, any power of $g$ fixes the tripod. We take $K$ large enough in particular strictly greater than the number of potential geodesics in a single layer configuration of length $8i$. Let $d$ be the middle point of the tripod and $L=max\{|[a,b]|,|[b,c]|,|[c,a]|\}$, $l=min\{|[a,d]|,|[b,d]|,|[c,d]|\}$. Let $m$ be such that $\frac{L}{m} \ll l$. We use Lemma \ref{ExtendedSingleLayer} to choose $n(K,m)$ such that for any approximating sequences $(a_i)_{i<\omega}, (b_i)_{i<\omega},$ $(c_i)_{i<\omega}$, if $i>n(K,m)$, then the segments $[a_i,b_i], [b_i,c_i], [a_i,c_i]$ admit $1/m$ single layer configurations with respect to $\{g,g^2,\ldots, g^K\}$. We recall that the action of $G$ on the limiting sequence is obtained through the morphisms $(h_i)_{i<\omega}:G\rightarrow \Gamma_i$. We will denote this action by $g.x_i$ for any $g\in G$ and $x_i$ in $Cay(\Gamma_i, \Sigma_{\Gamma_i})$.  

By Lemma \ref{MinDistance}, for each $i<\omega$, there exist at most two points, say $e_{i1}, e_{i2}$ in $[a_i,b_i]$ minimizing the distance to $c_i$. We fix $i>n(K,m)$ and we consider the geodesics between $g^j.e_{i1}$ and $g^j.c_i$, for $j\leq K$. By Lemma \ref{ExtendedSingleLayer} some middle part (we omit the ratio as it is not important) of these geodesics is covered by the union of $[e_{i1},c_i]$ with finitely many digons, a semidigon or a double semidigon. In particular there exist at most two points in this union, say $m_{i1}, m_{i2}$ such that any geodesic in this family passes through one of them.  Similarly, for the geodesics between $g^j.e_{i2}$ and $g^j.c_i$, for $j\leq K$. 

We focus on the possible translates of $e_{i1}$ by $\{g, g^2, \ldots, g^K\}$ and we will show, using that these translates are distance minimizing for the corresponding $g^j.[a_i,b_i]$ and $g^j.c_i$, that for all $i>n(K,m)$, there are strictly less than $K$ such points. In particular we will show that the number of such points is bounded by four times the number of possible geodesics in a single layer configuration in a ball of radius $8i$ centered at $e_{i1}$. 

\begin{figure}[ht!]
\centering
\includegraphics[width=.7\textwidth]{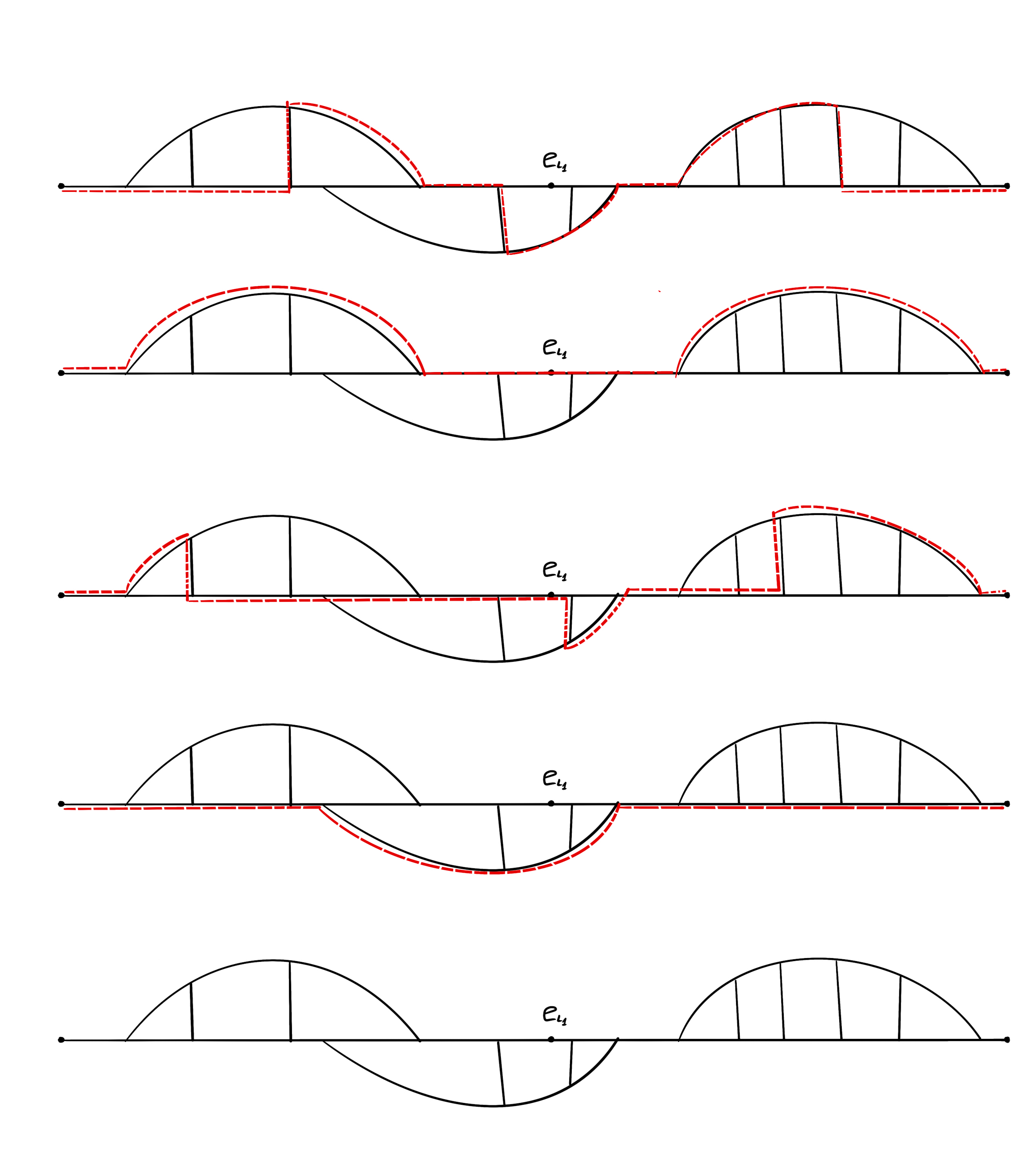}
\caption{A sample of possible geodesics in a single layer configuration within a ball of radius $8i$ centered at $e_{i1}$.}
\label{PossibleGeodesics}
\end{figure}

Indeed, by standard arguments in hyperbolic spaces, since the $g^j$'s do not move the endpoints of the tripod much relative to the length of the edges of the tripod, the translates of $e_{i1}$ lie within a ball of radius $8i$ (recall that the hyperbolicity constant of $\Gamma_i$ is $i$) centered at $e_{i1}$. We consider the part of the single layer configuration for $[a_i,b_i]$ with respect to $\{g, g^2, \ldots, g^K\}$ that lies within this ball of radius $8i$. There are boundedly many digons/semidigons/double semidigons and consequently boundedly many possible geodesic segments. For each such segment there exist at most two points minimizing the distance to $m_{i1}$ and symmetrically two points minimizing the distance to $m_{i2}$. But then these points are among the points that minimize the distance to $g^j.c_i$. Hence four times the number of geodesics (which is bounded independent of $i$) gives an absolute upper bound for the translates of $e_{i1}$ under $\{g, g^2, \ldots, g^K\}$.   Therefore there are two distinct $1\leq j_1,j_2\leq K$ such that $g^{j_1}.e_{i1}=g^{j_2}.e_{i_1}$. Since groups $\Gamma _i$ are torsion free, this implies that $g$ is in the stable kernel.

\end{proof}

\subsection{Segment stabilizers}\label{segment1}  
We continue the study of the limit action under the same assumptions as in the previous subsection. In this subsection we prove the second technical requirement for being able to apply Rips machine. Note that if a subgroup $H$ of $G$ stabilizes (setwise) a segment in the limit tree, then it has an index $2$ subgroup $H_0$ that fixes (pointwise) the segment. As a matter of fact if $g$ belongs to $H$, then $g^2$ belongs to $H_0$. 



\begin{lemma}\label{SegmentStabilizers}
Let $[a,b]$ be a segment in the limit tree $T$. Let $H$ be a subgroup that stabilizes $[a,b]$ (setwise). Then the derived subgroup $[H,H]$, is contained in the stable kernel of the sequence $(h_i)_{i<\omega}:G\rightarrow\Gamma_i$.
\end{lemma}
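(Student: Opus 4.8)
The plan is to mimic the strategy used for tripod stabilizers in Lemma~\ref{TripodStabilizers}, but now applied to a pair of elements $g,h\in H$ whose commutator $[g,h]$ we want to land in the stable kernel. Since the derived subgroup is generated by commutators, it suffices to show $[g,h]\in\underrightarrow{Ker}(h_i)$ for any two $g,h\in H$. Passing to the index-two subgroup $H_0$ that fixes $[a,b]$ pointwise (the observation in the preamble of the subsection), we have $g^2,h^2\in H_0$; and since $[g,h]$ can be rewritten using squares of elements of $H$ together with elements of $H_0$, it is enough to work with finitely many elements of $H$ that fix, or nearly fix, the segment $[a,b]$ relative to its length. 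So we fix a finite set $\{g_1,\dots,g_K\}\subseteq H$ — in the end these will be the relevant powers/products coming from the commutator — with $K$ chosen strictly larger than the number of potential geodesics in a single-layer configuration inside a ball of radius $8i$.

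First I would set up the approximating sequences $(a_i)_{i<\omega}$, $(b_i)_{i<\omega}$ for the endpoints $a,b$ of the segment, pick $m\gg 10$ with $L/m$ small relative to $|[a,b]|$ (where $L$ bounds how far the $g_j$'s move the endpoints), and invoke the general version of Lemma~\ref{ExtendedSingleLayer} (the ``$K$ arbitrary elements'' version flagged in the remark after it) to get $n(K,m)$ so that for $i>n(K,m)$ the segment $[a_i,b_i]$ admits a $1/m$ single-layer configuration with respect to $\{g_1,\dots,g_K\}$. The key point is that the $\frac{1}{m-2}$-middle parts of every $g_j\cdot[a_i,b_i]$ are covered by the union of $[a_i,b_i]$ with boundedly many digons (or a semidigon, or a double semidigon). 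Now consider a point $p_i$ lying in the deep middle of $[a_i,b_i]$ (say its midpoint). Its translates $g_j\cdot p_i$ must lie in this covered region, hence within a ball of radius $\sim 8i$ of $p_i$; and within that ball there are only boundedly many possible geodesic segments in the configuration, so only boundedly many ``slots'' for the $g_j\cdot p_i$. Since $K$ exceeds this bound, two of the translates coincide: $g_j\cdot p_i = g_{j'}\cdot p_i$ for some $j\neq j'$, forcing $g_j^{-1}g_{j'}$ to fix $p_i$.

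Running this pigeonhole argument simultaneously for $[a_i,b_i]$ and, as in the tripod proof, for the auxiliary geodesics that pin down the transverse position, one concludes that $g_j^{-1}g_{j'}$ actually fixes a nondegenerate subsegment of $[a_i,b_i]$ in $Cay(\Gamma_i,\Sigma_{\Gamma_i})$, not merely a point — and here one uses the rigidity of geodesics in an eighth group (Corollary~\ref{DigonsFixed}, Proposition~\ref{SingleLayer}) together with the fixed relator length $i$. An element of a $C'(1/8)$ group that fixes a long enough segment of a geodesic in its Cayley graph must be trivial: translating by a nontrivial element cannot fix a segment longer than roughly $\lambda i$ without producing two distinct edges at a vertex bearing the same generator. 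Choosing the indexing of $\{g_1,\dots,g_K\}$ so that the surviving relation $g_j^{-1}g_{j'}=1$ in $\Gamma_i$ is precisely (a conjugate of) the commutator $[g,h]$ — which is possible because the commutator lies in $H_0$ and is therefore among the elements that fix $[a,b]$ pointwise in the limit, so its translates of $p_i$ are genuinely competing for the same slots — gives $h_i([g,h])=1$ for all large $i$, i.e. $[g,h]\in\underrightarrow{Ker}(h_i)$.

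The main obstacle, I expect, is the bookkeeping that upgrades ``$g_j^{-1}g_{j'}$ fixes a point $p_i$'' to ``$g_j^{-1}g_{j'}$ fixes a segment, hence is trivial in $\Gamma_i$,'' and doing so with the indices arranged so that the element that drops out is the commutator itself rather than some uncontrolled product. This is exactly where the single-layer machinery of Lemma~\ref{ExtendedSingleLayer} and the semidigon/double-semidigon rigidity must be pushed: one needs that a group element fixing the deep middle of $[a_i,b_i]$ and respecting the covering configuration cannot merely rotate through the finitely many geodesic layers but must act trivially on a whole sub-band, so that the derived subgroup — where all the ``rotational'' ambiguity cancels — is killed. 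The hyperbolicity constant being $i$ rather than bounded is handled, as throughout Section~\ref{Short}, by the rescaling in Corollary~\ref{LimitActionDominating} together with the $C'(1/8)$ geometry controlling geodesics at scale $\lambda i$.
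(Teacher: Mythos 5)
Your overall plan (single--layer configuration plus a pigeonhole inside a ball of radius $O(i)$) is the right one, but two steps as written do not close. First, the pigeonhole itself: lying in the covered region on one of boundedly many geodesic segments of the configuration is not a finite constraint --- each such segment contains on the order of $i$ (or more) vertices, so you cannot conclude that two translates $g_j\cdot p_i$ and $g_{j'}\cdot p_i$ actually coincide. The paper extracts a genuinely discrete set: it tracks a \emph{division point} $q_i$ in the middle of $[a_i,b_i]$, notes that its translates under the relevant elements must again be division points, and uses that division points in a ball of radius $16i$ are boundedly many (consecutive division points along a digon are more than $i/4$ apart by Lemma \ref{LongSegments}), giving a bound independent of $i$. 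Second, your claim that the translates of the midpoint lie within $\sim 8i$ of it is unjustified for the elements you use: an individual $g\in H_0$ only satisfies $d_i(p_i,g\cdot p_i)=o(|h_i|)$, which may be far larger than $i$; the $O(\delta_i)=O(i)$ displacement of middle points is the standard hyperbolic fact about \emph{commutators} of two elements that barely move the endpoints. This is exactly why the paper builds the configuration with respect to the auxiliary set $\mathcal{C}(Q)$ but bounds orbits only under the set $Q$ of commutators $[g_1,g_2^{j}]$.

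The endgame is also off. A pigeonhole cannot be ``indexed'' so that the coinciding pair is precisely the commutator $[g,h]$; all you get is that some uncontrolled element $g_j^{-1}g_{j'}$ of your family fixes a point. The paper's remedy is to run the argument for the family $\{[g_1,g_2^{j}]\}_{j\le p}$ with $p$ exceeding the absolute orbit bound, conclude that for large $i$ some $[g_1,g_2^{j_0}]$ is trivial in $\Gamma_i$ (note that fixing a point of the Cayley graph already forces triviality, since left translation by a nontrivial element of a torsion-free group is fixed-point free --- so the ``upgrade to fixing a segment'' you flag as the main obstacle is a non-issue), and then use that the $\Gamma_i$ and the limit are CSA/commutative transitive (Lemma \ref{CSALImit}) to pass from $[g_1,g_2^{j_0}]\in\underrightarrow{Ker}(h_i)$ to $[g_1,g_2]\in\underrightarrow{Ker}(h_i)$. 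The same CSA step, applied to $[g_1^2,g_2^2]$, is what reduces from the setwise stabilizer $H$ to the pointwise stabilizer $H_0$, rather than your unexplained rewriting of $[g,h]$ by squares. Without the powers-plus-CSA mechanism your argument kills only some product $g_j^{-1}g_{j'}$, not the commutator, and the lemma does not follow.
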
 
\begin{proof} 
First observe that it is enough to prove the result for the derived subgroup $[H_0,H_0]$ of $H_0$, the index $2$ subgroup of $H$ that fixes $[a_,b]$ pointwise. Indeed, for any element of $g\in H$, we have $g^2\in H_0$, and, since the $\Gamma_i$ are CSA, if $[g_1^2,g_2^2]$ is in the stable kernel of $(h_i)_{i<\omega}$, then $[g_1,g_2]$ is in the stable kernel as well.

It is enough to show that for any finite subset $Q$ of commutators of $H_0$ of the form $[g_1,g_2^j]$ there exists $n(Q)$ such that if $i>n(Q)$, there exists a point in $Cay(\Gamma_i, \Sigma_{\Gamma_i})$ whose orbit size under $Q$ is bounded (independent of $i$ or $Q$). Indeed, if we choose $j$ large enough (greater than the absolute bound), then, as $i$ goes to infinity, for some $j_0$ we will have that $[g_1,g_2^{j_0}]$ fixes a point. Hence it must belong to the stable kernel, consequently $[g_1,g_2]$ belongs to the stable kernel, by the same argument as in the beginning of the proof.    

In order to prove the claim we fix a finite set of commutators $Q$ of the above form  and we consider for every commutator $[g_1,g_2^j]$ in $Q$, the elements $g_1, g_2^j, g_1g_2^j, g_1g_2^jg_1^{-1}, [g_1,g_2^j]$ for $1\leq j\leq p$ and their inverses. We call this finite subset of elements $\mathcal{C}(Q)$. 
 
We choose $m\gg 10$ so that the $1/m$ middle part of $[a_i,b_i]$ contains the intersection of $[a_i,b_i]$ with a ball of radius $16i$ centered at the middle point of $[a_i,b_i]$. We can use Lemma \ref{ExtendedSingleLayer} to find $n(|\mathcal{C}(Q)|,m)$ so that for all $i>n(|\mathcal{C}(Q)|,m)$ the segment $[a_i,b_i]$ admits an $1/m$ single layer configuration with respect to $\mathcal{C}(Q)$. 

Now consider the structure of the single layer configuration, we may assume that a digon, a semidigon, or a double semidigon exists near  the midpoint $c_i$ of $[a_i,b_i]$, otherwise if all points $g_2^{-1}.c_i, g_1^{-1}g_2^{-1}.c_i, g_2g_1^{-1}g_2^{-1}.c_i$ and $[g_1,g_2].c_i$ are points in $[a_i,b_i]$ we can easily see that the translations $g_1, g_2$ commute, hence $c_i$ is fixed.

Consider a division point $q_i$ in the middle part of $[a_i,b_i]$. Then a translate of $q_i$ by any element of $Q$ must be a division point as well. Since, by standard arguments, any commutator in $Q$  moves any point in the middle part within a ball of radius $16i$ and since we can only have boundedly many division points in such a ball we get the bound we want. 

\end{proof}

We can now prove the superstability of the faithful action $G/\underrightarrow{Ker}(h_i)$ on the limit tree $T$. Recall, that an action on a real tree is {\em superstable} if whenever, $J\subseteq I$ for segments $I, J$, we have $Fix(J)\neq Fix(I)$, where $Fix(I)$ denotes the pointwise stabilizer of $I$, then $Fix(I)$ is trivial.

\begin{corollary}\label{SuperstableAction}
The action of $G/\underrightarrow{Ker}(h_i)$ on $T$ is superstable.
\end{corollary}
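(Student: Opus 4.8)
The plan is to deduce superstability directly from the two technical lemmas just proved, namely Lemma \ref{TripodStabilizers} (trivial tripod stabilizers) and Lemma \ref{SegmentStabilizers} (derived subgroups of segment stabilizers are in the stable kernel). Write $\bar G:=G/\underrightarrow{Ker}(h_i)$, and for a segment $I$ in $T$ let $Fix(I)$ denote its pointwise stabilizer in $\bar G$. Suppose $J\subseteq I$ are nondegenerate segments with $Fix(J)\neq Fix(I)$; we must show $Fix(I)$ is trivial. Since $Fix(I)\leq Fix(J)$ always, the hypothesis gives some $g\in Fix(J)\setminus Fix(I)$.

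The first step is to produce a tripod fixed by a suitable element. Since $g$ fixes $J$ but not $I$, and $J\subseteq I$, the element $g$ moves at least one endpoint of $I$; pick an endpoint $p$ of $I$ with $g\cdot p\neq p$. Now take any $h\in Fix(I)$. Then $h$ fixes all of $I$, in particular it fixes $p$ and it fixes $J$ pointwise. Consider the segment from $p$ to $g\cdot p$: the point $g\cdot p\neq p$ together with $p$ and an interior point of $J$ span a nondegenerate tripod (here one uses that in a real tree the geodesic $[p, g\cdot p]$ branches off from $I$ exactly at the nearest point of $I$ to $g\cdot p$, and since $g$ fixes $J$ this branch point lies on the $p$-side of $J$, so $J$'s far endpoint, $p$, and $g\cdot p$ are three tips of a genuine tripod). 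The element $ghg^{-1}h^{-1}$, or more simply a commutator built from $h$ and a conjugate, fixes this tripod: $h$ fixes $I\supseteq [p,\text{(branch point)}]$ and $J$, while the conjugate $ghg^{-1}$ fixes $g\cdot J$ and $g\cdot p$; tracing through, any element of the form $[g, h]$ fixes the common part and permutes the tripod's legs trivially. Applying Lemma \ref{TripodStabilizers}, this commutator lies in $\underrightarrow{Ker}(h_i)$, i.e. it is trivial in $\bar G$, so $g$ and $h$ commute in $\bar G$.

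The cleanest route, which I would actually carry out, is to argue via the structure already established: $Fix(I)$ stabilizes $I$ setwise (indeed fixes it pointwise), so by Lemma \ref{SegmentStabilizers} its derived subgroup $[Fix(I), Fix(I)]$ lies in $\underrightarrow{Ker}(h_i)$, hence $Fix(I)$ is abelian in $\bar G$. Similarly $Fix(J)$ is abelian. Pick $g\in Fix(J)\setminus Fix(I)$ and any $1\neq h\in Fix(I)$; both lie in $Fix(J)$, which is abelian, so $[g,h]=1$ in $\bar G$. Thus $g$ centralizes the nontrivial element $h$. Now $h$ fixes $I$ pointwise while $g$ moves some endpoint of $I$; then $g$ must fix the branch point where $[p, g\cdot p]$ leaves $I$, and together with a point of $J$ we obtain a nondegenerate tripod stabilized by $h$ — but also, since $g$ commutes with $h$ and $g$ moves things, one can exhibit a nondegenerate tripod fixed by $h$ whose legs witness that $h$ fixes strictly more than a segment. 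Then Lemma \ref{TripodStabilizers} forces $h\in\underrightarrow{Ker}(h_i)$, i.e. $h=1$ in $\bar G$, contradicting the choice of $h$. Hence $Fix(I)$ is trivial.

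The main obstacle is purely a matter of book-keeping in the real tree: one must verify carefully that the three points chosen (an endpoint of $J$ away from the branching, the moved endpoint $g\cdot p$ of $I$, and the relevant branch point or a point of $I$) genuinely form a \emph{nondegenerate} tripod and that the element in hand (a commutator, which by the abelian-ness is actually trivial, or else $h$ itself) fixes all three tips — the degenerate cases, where the putative tripod collapses, are exactly the cases where $g$ already fixes $I$ or $h$ is trivial, so they do not arise. No new geometry beyond Lemmas \ref{MinDistance}, \ref{TripodStabilizers} and \ref{SegmentStabilizers} is needed; it is a short formal argument, and I expect the write-up to be only a few lines once the tripod is named explicitly.
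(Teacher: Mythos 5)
Your ``cleanest route'' is essentially the paper's own proof: Lemma \ref{SegmentStabilizers} makes the stabilizer of $J$ abelian in $G/\underrightarrow{Ker}(h_i)$, so any $h\in Fix(I)$ commutes with a $g\in Fix(J)$ that moves a point $x$ of $I\setminus J$ off $I$, whence $h$ fixes $g\cdot x$ together with $I$, i.e.\ a nondegenerate tripod, and is trivial by Lemma \ref{TripodStabilizers}. The preliminary commutator-via-tripod detour you sketch (and discard) is unnecessary, but the argument you say you would actually carry out is correct and matches the paper's.
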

\begin{proof}
Suppose $J\subseteq I$ be segments in $T$ such that $Fix(J)\neq Fix(I)$. In particular, there exists $g\in Fix(J)$ and a point, $x$ in $I\setminus J$ such that $g.x$ is not in $I$. Now, consider any element $h\in Fix(I)$. Since $g$ and $h$ fix the same segment, by Lemma \ref{SegmentStabilizers}, they commute and we have $h.(g.x)=g.(h.x)=g.x$, hence $h$ fixes a tripod and, by Lemma \ref{TripodStabilizers}, it must be trivial.

\end{proof}

\subsection{JSJ decompositions and approximations}
In this subsection we show that all one-ended groups obtained as limits of sequences satisfying the assumptions of the previous subsections admit an abelian JSJ decomposition. An  abelian JSJ decomposition of a group $L$ is a decomposition (in terms of Bass-Serre theory) that, in some sense, encodes all possible abelian decompositions of $L$. It is a splitting where the vertices are either rigid, abelian or QH. Vertices which are QH bear a vertex group which is the fundamental group of a surface with boundary, in addition each incident edge group is a boundary subgroup, i.e. a subgroup of the fundamental group of a connected component of the boundary of the surface. The interested reader can find more details in the excellent book \cite{GuirJSJ}. The crucial fact of JSJ decompositions, which we will later use, is that one can ``read" in them the automorphisms used in the shortening argument. These automorphisms are called modular automorphisms. We give a formal definition of the modular group of automorphims in this subsection. Originally such results have been obtained in \cite{Sel1} for limit groups and generalized to $\Gamma$-limit groups for $\Gamma$ hyperbolic (possibly with torsion) in \cite{ReWe} (for another  generalization of the result in  \cite{Sel1} see \cite{GH}). For preliminaries on Bass-Serre theory we refer the reader to \cite{SerreTrees}.

Let $\mathcal{G}_8$ be the family of eighth groups with defining relators of the same length. Let $\Gamma_i$ be an eighth group with defining relators of length $i$. We consider a stable sequence of morphisms $(h_i)_{i<\omega}:G\rightarrow \Gamma_i$ from a finitely generated group $G$ such that $|h_i|$ dominates $i$, i.e. $\frac{i}{|h_i|}$ goes to $0$ as $i$ goes to infinity.

We call a group obtained as a quotient, $G/\underrightarrow{Ker}(h_i)$, {\em a $\mathcal{G}_8$-limit group} and the canonical map $\phi:G\rightarrow L:=G/\underrightarrow{Ker}(h_i)$ {\em the  limit map}. We show that one-ended $\mathcal{G}_8$ groups admit canonical JSJ abelian decompositions. In addition, since $\mathcal{G}_8$-limit groups may not be finitely presented (or Equationally Noetherian) we cannot claim that $h_i$ eventually factor through the limit map $\phi$. In particular, we cannot straightforwardly transfer the shortening of the limit action to shorten the morphisms of the limiting sequence. To circumvent this difficulty we will approximate the JSJ decomposition of $L$ by ``similar" decompositions of finitely presented groups that map onto $L$.     

We start with some easy observations. Recall that a group $G$ is CSA if maximal abelian subgroups are malnormal, i.e. $A\cap A^g\neq \{1\}$ only if $g\in A$, for any maximal abelian subgroup $A$ of $G$. 

\begin{lemma}\label{CSALImit}
Let $L$ be $\mathcal{G}_8$-limit group. Then $L$ is torsion-free and CSA.
\end{lemma}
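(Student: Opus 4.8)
The plan is to prove both claims --- torsion-freeness and the CSA property --- by transferring the corresponding properties from the approximating groups $\Gamma_i$ using the stable kernel construction, together with the commutation lemmas established in the previous subsections. Recall that each $\Gamma_i$ is a random group of density $d<1/16$, hence torsion-free (by the Gromov--Ollivier theorem) and $C'(1/8)$, and in particular CSA (small cancellation groups are hyperbolic and torsion-free, hence CSA). The key point is that $L = G/\underrightarrow{Ker}(h_i)$ embeds --- at the level of finite subsets --- into the $\Gamma_i$ via the stable sequence $(h_i)$: if $g\in L$ is nontrivial, then some lift $\tilde g\in G$ satisfies $h_i(\tilde g)\neq 1$ for all large $i$.

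For torsion-freeness: suppose $g\in L$ has finite order $k>1$, i.e. $g^k=1$ in $L$ but $g\neq 1$. Lift $g$ to $\tilde g\in G$. Since $\tilde g\notin \underrightarrow{Ker}(h_i)$, by stability $h_i(\tilde g)\neq 1$ for all $i>n_0$. On the other hand $\tilde g^{\,k}\in\underrightarrow{Ker}(h_i)$, so $h_i(\tilde g)^k = h_i(\tilde g^{\,k}) = 1$ for all $i>n_1$. Thus for $i>\max\{n_0,n_1\}$ the element $h_i(\tilde g)$ is a nontrivial element of order dividing $k$ in $\Gamma_i$, contradicting torsion-freeness of $\Gamma_i$. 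Hence $L$ is torsion-free.

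For CSA: first observe $L$ is commutative-transitive. Let $a,b,c\in L\setminus\{1\}$ with $[a,b]=1$ and $[b,c]=1$; we must show $[a,c]=1$. If not, $[a,c]$ is a nontrivial element of $L$, so lifting $a,b,c$ to $\tilde a,\tilde b,\tilde c\in G$ we get that $[\tilde a,\tilde c]\notin\underrightarrow{Ker}(h_i)$, while $[\tilde a,\tilde b],[\tilde b,\tilde c]\in\underrightarrow{Ker}(h_i)$; also none of $\tilde a,\tilde b,\tilde c$ is in the stable kernel. By stability, for all large $i$ we have $h_i(\tilde a),h_i(\tilde b),h_i(\tilde c)$ all nontrivial, $[h_i(\tilde a),h_i(\tilde b)]=1$, $[h_i(\tilde b),h_i(\tilde c)]=1$, but $[h_i(\tilde a),h_i(\tilde c)]\neq 1$ --- contradicting commutative-transitivity of $\Gamma_i$ (which follows from $\Gamma_i$ being torsion-free hyperbolic, or directly from $C'(1/8)$). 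So $L$ is commutative-transitive; being also torsion-free, to get CSA it remains to rule out that a maximal abelian subgroup $A$ and a conjugate $A^g$ share a nontrivial element with $g\notin A$. If $1\neq x\in A\cap A^g$, then $x$ and $g^{-1}xg$ both lie in $A$, so they commute; transferring to $\Gamma_i$ as above gives nontrivial $h_i(x)$ with $[h_i(x), h_i(g)^{-1}h_i(x)h_i(g)] = 1$, and since $\Gamma_i$ is CSA this forces $h_i(g)$ to centralize $h_i(x)$; but one can then deduce $h_i([g,x])=1$ for all large $i$ and also that $h_i(g)$ lies in the maximal abelian subgroup of $\Gamma_i$ containing $h_i(x)$ --- so $[g,x]$ and indeed any would-be obstruction maps trivially, and one concludes $g$ acts as an element commuting with all of $A$ in the limit, whence $g\in A$ by maximality and commutative-transitivity. (In fact, once $L$ is torsion-free and commutative-transitive, CSA is automatic, so this last step can be cited rather than argued.)

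The main obstacle is bookkeeping around the stable kernel: one must consistently convert algebraic relations in $L$ into eventual relations in the $\Gamma_i$, being careful that finitely many elements are involved so that a single large $i$ works for all of them simultaneously --- this is exactly where stability of the sequence is used. A secondary subtlety is that $L$ need not be finitely presented, so we genuinely cannot factor $h_i$ through $L$; the argument above sidesteps this by only ever working with finitely many elements and their finitely many relations at a time. No use of the real-tree machinery (Lemmas \ref{TripodStabilizers}, \ref{SegmentStabilizers}) is actually needed for this lemma --- it is purely a "limits of CSA torsion-free groups are CSA torsion-free" statement --- so the proof should be short.
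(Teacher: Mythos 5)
Your argument is essentially the paper's own proof: transfer torsion-freeness and commutative transitivity from the $\Gamma_i$ through the stable sequence, then obtain CSA from the fact that in a torsion-free hyperbolic (CSA) group an element commuting with its own conjugate $x^g$, $x\neq 1$, must commute with $g$ --- the paper packages this last step as ``centralizers of nontrivial elements are self-normalizing'' while you argue malnormality of maximal abelian subgroups directly, which is the same computation. One correction: your parenthetical claim that torsion-free plus commutative-transitive already implies CSA is false (e.g.\ $BS(1,2)=\langle a,t \mid tat^{-1}=a^2\rangle$ is torsion-free and commutative transitive but its normal maximal abelian subgroup is not malnormal), so the explicit final step you wrote cannot be replaced by a citation and should be kept.
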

\begin{proof}
Let $\phi:G\twoheadrightarrow L$ be the canonical epimorphism from $G$ to $L$. Let $\gamma$ be an element of order $n$ in $L$. For any pre-image $g$ of $\gamma$ we have $h_i(g^n)$ is eventually trivial, and since $\Gamma_i$ are all torsion-free we get that $h_i(g)$ is eventually trivial, hence $\gamma$ is. 

We next show that $L$ is commutative transitive. Let $[\gamma_1,\gamma_2]=1, [\gamma_2,\gamma_3]=1$ and $\gamma_2\neq 1$. We will show that $[\gamma_1,\gamma_3]=1$. Indeed, let $g_i$ be some pre-image of $\gamma_i$, for $i\leq 3$, in $G$. By the assumptions we have that $h_i([g_1, g_2]), h_i([g_2,g_3])$ are eventually trivial, while $h_i(g_2)$ is not. Then, for all large $i$, by the commutative transitivity of $\Gamma_i$, we get that $h_i([g_1,g_3])$ is eventually trivial. Hence $\gamma_1$ commutes with $\gamma_3$. 

Finally, having commutative transitivity, in order to show that $L$ is CSA it is enough to show that centralizers of non-trivial elements are self-normalizing. Indeed, let $\gamma$ be a non-trivial element and $C_L(\gamma)$ be its centralizer in $L$. Consider an element $\alpha$ that normalizes $C_L(\gamma)$. Then for any pre-images $g$ of $\gamma$ and $a$ of $\alpha$, we have that $h_i([a^{-1}ga, g])$ is eventually trivial, and since in any torsion-free hyperbolic group if $g^a$ commutes with $g$, then $a$ commutes with $g$, we get that $h_i([a,g])$ is eventually trivial. Therefore, $\alpha$ is in the centralizer of $\gamma$ as we wanted. 
\end{proof}
Lemma \ref{CSALImit} can also be seen as a consequence of the following fact: since $L$ is fully residually $\{\Gamma _i\}$, the common universal theory of the $\Gamma _i$ is contained in the universal theory of $L$. Since $\Gamma _i$ are torsion-free and CSA and these properties can be expressed by a universal sentence, so is $L$. 

We next record a lemma whose proof is identical to \cite[Lemma 2.1]{Sel1}. 

\begin{lemma}\label{AbelianElliptic}
Let $L$ be a $\mathcal{G}_8$-limit group. Let $M$ be a maximal non-cyclic abelian subgroup. Then: 
\begin{itemize}
    \item $M$ is elliptic in any abelian amalgamated free product of $L$.
    \item If $L$ is an HNN extension $A*_C$, then either $M$ can be conjugated in $A$ or $L$ admits a splitting $A*_CB$ where $M$ can be conjugated in $B$. 
\end{itemize}  
\end{lemma}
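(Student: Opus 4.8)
The plan is to run the classical Bass--Serre argument for CSA groups, using only that $L$ is torsion-free and CSA (Lemma \ref{CSALImit}); recall such an $L$ is commutative transitive, so maximal abelian subgroups are malnormal and self-normalizing and $M=C_L(x)$ for every $x\in M\setminus\{1\}$. I will prove the amalgam statement and deduce the HNN one from it.

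\emph{Amalgam case.} Assume $L=A*_CB$ with $C$ abelian and, for contradiction, that $M$ is not conjugate into $A$ or into $B$. Let $T$ be the Bass--Serre tree; its edge stabilizers are conjugates of $C$. Since the abelian group $M$ contains no nonabelian free subgroup, its non-elliptic action on $T$ either stabilizes a line $\ell$ or fixes an end $\xi$. If it stabilizes a line, $M$ contains a hyperbolic $g$ with $\ell=\mathrm{Axis}(g)$, and $M$ acts on $\ell$ only through translations ($h\in M$ reversing $\ell$ would force $g=hgh^{-1}=g^{-1}$, impossible in a torsion-free group), so $M/N$ is a nontrivial subgroup of $\mathbb Z$ where $N:=M\cap\mathrm{Fix}_T(\ell)$; hence $[M:N]=\infty$, and since $M$ is non-cyclic, $N\neq1$. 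As $N$ fixes $\ell$ pointwise it lies in every (abelian) edge stabilizer $L_e$, $e\subseteq\ell$; for $x\in N\setminus\{1\}$, commutative transitivity makes $C_L(x)$ abelian and $\supseteq L_e,M$, so $L_e\subseteq C_L(x)=M$ and then $L_e=N$; the same $x$ gives $C_L(x)=M$ for the maximal abelian subgroup containing $N$, so $N_L(N)=N_L(M)=M$. Conjugating $\ell$ to pass through the standard fundamental edge $e_0$, we get $N=L_{e_0}=C$, and $C$ is maximal abelian (hence malnormal) in both $A$ and $B$ and proper in each unless the splitting is trivial; therefore $\mathrm{Fix}_T(C)=e_0$, so $N_L(C)$ stabilizes $e_0$ and $[N_L(C):C]\le[\mathrm{Stab}_T(e_0):A\cap B]\le2$ — contradicting $[N_L(C):C]=[N_L(N):N]=[M:C]=\infty$. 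If instead $M$ fixes an end $\xi$ and has no hyperbolic element, each $m\in M$ (elliptic, fixing $\xi$) fixes a ray to $\xi$; for a ray $\rho=(v_0,v_1,\dots)\to\xi$ and $M_k:=M\cap L_{v_k}$ one has $\bigcup_kM_k=M$ with each $M_k\subseteq L_{e_k}$, the same commutative-transitivity step gives $M_k=L_{e_k}$, these are conjugates of $C$ maximal abelian in the vertex groups, and malnormality collapses the nested chain, so $M=L_{e_0}$ is elliptic — again a contradiction.

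\emph{HNN case.} If $L=A*_C$ and $M$ is not conjugate into $A$, the same line/end analysis yields a conjugate $N$ of the edge group with $N\subseteq M$, $N$ maximal abelian in the ambient vertex group, and $M=N_L(N)$ surjecting onto $\mathbb Z$ with kernel $N$; the standard Bass--Serre surgery then re-presents $L$ as an amalgam $A*_CB$ with (a conjugate of) $M$ contained in $B$, and the amalgam case applied to this splitting forces $M$ conjugate into $B$, which is the asserted dichotomy.

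I expect the main obstacle to be the bookkeeping in the line case: conjugating so that $\ell$ runs through $e_0$, checking that $M$ does not reverse $\ell$ and that $C$ is maximal abelian and proper in both vertex groups, and — for the HNN extension — identifying the precise surgery that converts non-ellipticity of $M$ into the new amalgamated decomposition. All of this is routine and parallels \cite[Lemma 2.1]{Sel1}.
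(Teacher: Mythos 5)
Your amalgam case is correct, and it is exactly the argument the paper intends: the paper gives no independent proof of this lemma, it simply records it and appeals to \cite[Lemma 2.1]{Sel1}, whose Bass--Serre/CSA proof transfers because a $\mathcal{G}_8$-limit group is torsion-free and CSA (Lemma \ref{CSALImit}); your line/end dichotomy, the identification $N=L_e=C$ with $N_L(N)=M$ and $[M:N]=\infty$, and the contradiction with $N_L(C)$ stabilizing $e_0$ is that proof. One wording in the end case should be made precise: ``malnormality collapses the nested chain'' cannot mean malnormality of $C$, which is not given; it must be run through CSA of $L$. Since each $L_{e_k}=h_kCh_k^{-1}$ is a nontrivial abelian subgroup of $M$, the unique maximal abelian subgroup containing it is both $M$ and $h_kM_Ch_k^{-1}$ (where $M_C$ is the maximal abelian subgroup containing $C$), so consecutive conjugators differ by elements of $M_C$, which centralize $C$; hence $L_{e_k}=L_{e_{k+1}}$ for large $k$, the chain stabilizes, and $M$ is elliptic, the desired contradiction. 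This fix is immediate, so part (i) stands as written.

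The HNN half, however, has a genuine gap: the sentence ``the standard Bass--Serre surgery then re-presents $L$ as an amalgam $A*_CB$'' asserts precisely the content of the second alternative and does not follow formally from $N\leq M=N_L(N)$ with $M/N\cong\mathbb{Z}$; an HNN extension is not in general an amalgam, and producing the splitting is the whole point of case (ii). The missing step (the heart of Sela's argument) is this: two consecutive edges of the $M$-invariant axis at a vertex $v$ both have stabilizer $N$, and since $M\cap L_v=N$ (an element of $M$ fixing a point of the axis translates trivially), the configuration in which the two edges lie in the same $L_v$-orbit is impossible --- it would produce an element of $L_v$ outside $N$ normalizing $N$, hence lying in $N_L(N)\cap L_v=M\cap L_v=N$, a contradiction. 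Therefore the two edges represent the two distinct orbits, which says exactly that the associated subgroups $\alpha(C)$ and $\omega(C)$ of the HNN extension are conjugate in $A$. Only after replacing the stable letter $t$ by $s=a^{-1}t$ for a suitable $a\in A$, so that $s$ normalizes $C$, does one get the rewriting $L=A*_C\langle C,s\rangle$, with $\langle C,s\rangle\leq N_L(C)=M$ abelian; part (i) together with the hypothesis that $M$ is not conjugate into $A$ then places $M$ in the new factor, as you say. You flag this as routine bookkeeping, but without the two-edge/orbit argument the HNN dichotomy is restated rather than proved; writing it out as above (it is exactly what \cite[Lemma 2.1]{Sel1} does) closes the gap.
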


The above lemmas allow us to deduce the following theorem exactly as in \cite[Section 4.4]{ReWe}.

\begin{theorem}[JSJ decomposition]\label{JSJLimit}
Let $L$ be a one-ended $\mathcal{G}_8$-limit group. Then $L$ admits a  JSJ decomposition over abelian subgroups.
\end{theorem}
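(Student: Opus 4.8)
The plan is to follow the now-standard machinery for constructing JSJ decompositions of limit-type groups, adapted to the class $\mathcal{G}_8$, exactly in the spirit of \cite[Section 4.4]{ReWe}. The input data we may use freely are: Lemma \ref{CSALImit} (a $\mathcal{G}_8$-limit group $L$ is torsion-free and CSA), Lemma \ref{AbelianElliptic} (maximal non-cyclic abelian subgroups are elliptic in any abelian splitting, with the usual HNN caveat), and the fact that $L$ acts, via the faithful action of $G/\underrightarrow{Ker}(h_i)$, superstably on the limit real tree $T$ with trivial tripod stabilizers (Lemma \ref{TripodStabilizers}, Lemma \ref{SegmentStabilizers}, Corollary \ref{SuperstableAction}). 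Since $L$ is assumed one-ended, it does not split over the trivial group, so any nontrivial action on a real tree it admits is relevant.

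First I would feed the faithful superstable action of $L$ on the limit tree $T$ into the Rips machine (in the Bestvina--Feighn / Guirardel form for stable actions of finitely generated groups). Because tripod stabilizers are trivial and arc stabilizers satisfy the descending chain / superstability condition established in Corollary \ref{SuperstableAction}, and because $L$ is finitely generated, the Rips machine produces a graph-of-groups decomposition of $L$ whose pieces are of surface, axial (abelian), and discrete (simplicial) type; the axial pieces have abelian stabilizers, and since $L$ is CSA and torsion-free these abelian subgroups are precisely (conjugates of) maximal abelian subgroups, which by Lemma \ref{AbelianElliptic} behave well under splittings. This gives one abelian splitting of $L$; iterating over all possible rescaled limit actions (equivalently, running the construction of \cite{Sel1,ReWe}) shows that $L$ cannot be split indefinitely and that there is a canonical splitting encoding all abelian splittings — this is the abelian JSJ decomposition. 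The CSA and torsion-free hypotheses are exactly what is needed to ensure edge groups are abelian and that the accessibility/canonicity arguments of \cite{ReWe} go through verbatim; one-endedness rules out the trivial-edge-group degeneracies. Alternatively, and more cleanly, one invokes the general JSJ existence theorem for finitely generated CSA (more precisely, commutative-transitive torsion-free) groups over the class of abelian subgroups — the Rips-machine analysis above is precisely what verifies the hypotheses of that theorem for $L$.

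I expect the main obstacle to be the same one flagged throughout Section \ref{Short}: $\mathcal{G}_8$-limit groups need not be finitely presented or equationally Noetherian, so the usual shortcut — that the action on $T$ is dual to an action on a simplicial tree coming from a splitting of $L$ itself, obtained by noting $h_i$ factors through $L$ — is unavailable. Concretely, one must run the Rips machine directly on the $\R$-tree action and extract the graph-of-groups decomposition without passing through a finite presentation, and one must check that the resulting edge and vertex groups are genuinely subgroups of $L$ of the required type rather than artifacts. This is handled by the accessibility results for stable actions of finitely generated groups; the point is that \emph{finite generation} of $L$ (which holds, as $L$ is a quotient of the finitely generated $G$) together with superstability suffices, even without finite presentability. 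Once the decomposition is in hand, the canonicity/JSJ statement follows formally from Lemma \ref{AbelianElliptic} and the CSA property exactly as in \cite[Section 4.4]{ReWe}, with no new ideas required; I would simply cite that this deduction is identical.
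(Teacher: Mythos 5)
Your proposal contains a genuine gap in its main route, and the part of it that does work is essentially the paper's (very short) argument. The paper proves Theorem \ref{JSJLimit} by observing that Lemma \ref{CSALImit} (a $\mathcal{G}_8$-limit group is torsion-free and CSA) and Lemma \ref{AbelianElliptic} (maximal non-cyclic abelian subgroups are elliptic in abelian splittings) are exactly the inputs needed to repeat the construction of \cite[Section 4.4]{ReWe} verbatim; no analysis of the limit $\R$-tree action enters at all. Your ``cleaner alternative'' --- invoking the general existence theorem for abelian JSJ decompositions of one-ended, finitely generated, torsion-free CSA groups --- is therefore the paper's argument, except that you claim its hypotheses are verified by ``the Rips-machine analysis above''; they are not. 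What verifies them is precisely Lemmas \ref{CSALImit} and \ref{AbelianElliptic} together with one-endedness and finite generation of $L$; the superstability, trivial tripod stabilizers, and abelian arc stabilizers of Subsections \ref{tripod} and \ref{segment1} are inputs to the shortening argument (Theorem \ref{limit}), not to JSJ existence.

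The gap in your primary route is the conflation of the graph-of-actions decomposition produced by the Rips machine from the single limit action on $T$ with the JSJ decomposition. The JSJ must dominate \emph{all} abelian splittings of $L$, and an arbitrary abelian splitting of $L$ need not be visible in any (rescaled) limit action of the given sequence $(h_i)$, so ``iterating over all possible rescaled limit actions'' does not produce it. Moreover, the assertion that ``$L$ cannot be split indefinitely'' is exactly the delicate point for a finitely generated group that may fail to be finitely presented (or equationally Noetherian): Dunwoody-type accessibility is unavailable, and superstability of one $\R$-tree action does not substitute for it. The way this is actually handled in \cite{ReWe} is through acylindrical accessibility for finitely generated groups, which rests on the CSA property and the ellipticity of non-cyclic maximal abelian subgroups --- i.e., on the two lemmas the paper proves immediately before the theorem --- after which existence and canonicity of the abelian JSJ follow as in \cite[Section 4.4]{ReWe}. (Relatedly, the failure of finite presentability is addressed in the paper not at the level of JSJ existence but later, via the approximation Theorem \ref{ApproxJSJ}, which is what allows the morphisms $h_i$ to be factored and the modular automorphisms to be lifted.) So the correct repair of your proof is to drop the Rips-machine scaffolding entirely and base the citation on Lemmas \ref{CSALImit} and \ref{AbelianElliptic}.
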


We now define the modular automorphism group of an one-ended $\mathcal{G}_8$-limit group. We quickly note that when an automorphism of a vertex group (of a graph of groups) restricts to conjugation on its edge groups then there is a natural way to extend it to an automorphism of the whole group. We call such an extension the {\em standard extension}. 

\begin{definition}\label{ModAutos}
Let $L$ be a one-ended $\mathcal{G}_8$-limit group and $\Lambda$ an abelian graph of groups decomposition of $L$. Then the group of modular automorphisms with respect to $\Lambda$, $Mod_{\Lambda}(L)\leq Aut(L)$, is generated by the following automorphisms:
\begin{itemize}
    \item Inner automorphisms of $L$.
    \item Dehn twists along edges by elements that centralize an edge group.
    \item Standard extensions of automorphisms of QH subgroups - these automorphisms are obtained by surface homeomorphisms fixing all the boundary components.
    \item Standard extensions of automorphisms of abelian vertex groups that are the identity on their peripheral subgroups.
\end{itemize}
\end{definition}

As a corollary of the properties of the abelian JSJ decomposition we get (see \cite[Lemma 4.16]{ReWe}). 

\begin{corollary}\label{ModularAutos}
Let $L$ be a one-ended $\mathcal{G}_8$-limit group and $\Lambda$ an abelian graph of groups decomposition of $L$. Then $Mod_{\Lambda}(L)\leq Mod_{JSJ}(L)$, for any JSJ decomposition of $L$. 
\end{corollary}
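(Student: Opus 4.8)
The plan is to follow the argument of \cite[Lemma 4.16]{ReWe}. First I would note that $Mod_{JSJ}(L)$ does not depend on the choice of abelian JSJ decomposition: any two such decompositions of $L$ are related by a finite sequence of slide, conjugation and connecting moves, none of which changes the generated subgroup of $Aut(L)$. So fix one abelian JSJ decomposition $\Lambda_J$ of $L$, which exists by Theorem \ref{JSJLimit}. Since $Mod_{\Lambda}(L)$ is generated by the four families of automorphisms listed in Definition \ref{ModAutos}, it suffices to check that each generator lies in $Mod_{\Lambda_J}(L)$.

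The key input is the universal property of the abelian JSJ: every one-edge abelian splitting of $L$ is visible in $\Lambda_J$, in the sense that $\Lambda_J$ (after subdivision) can be obtained from any abelian graph-of-groups decomposition of $L$ by a sequence of folds and collapses. In particular: (i) every vertex group of $\Lambda$ is elliptic in $\Lambda_J$; (ii) every non-cyclic abelian subgroup of $L$ is elliptic in $\Lambda_J$ (this is also immediate from Lemma \ref{AbelianElliptic}), hence is conjugate into an abelian vertex group of $\Lambda_J$; and (iii) every QH vertex group $Q$ of $\Lambda$ is conjugate into a QH vertex group $\widehat Q$ of $\Lambda_J$ as a subsurface subgroup, in such a way that the boundary-parallel elements of the surface underlying $Q$ are either boundary-parallel in the surface underlying $\widehat Q$ or lie in its peripheral structure. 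These are exactly the structural facts established for limit groups in \cite{Sel1} and for $\Gamma$-limit groups in \cite{ReWe}, and they go through here because, as noted after Theorem \ref{JSJLimit}, the JSJ of a one-ended $\mathcal{G}_8$-limit group was produced exactly as in \cite[Section 4.4]{ReWe}, using Lemmas \ref{CSALImit} and \ref{AbelianElliptic}.

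Next I would run through the generators. Inner automorphisms of $L$ are generators of $Mod_{\Lambda_J}(L)$ by definition, so nothing is to be shown. For a Dehn twist of $\Lambda$ along an edge with abelian edge group $C$ by an element $\gamma$ centralizing $C$: the corresponding one-edge splitting over $C$ is obtained from $\Lambda_J$ by folding and collapsing, so $\gamma$ centralizes an edge group of $\Lambda_J$ (or lies in an abelian vertex group), and the twist is realized as a composition of Dehn twists along edges of $\Lambda_J$ and inner automorphisms; commutative transitivity and the CSA property (Lemma \ref{CSALImit}) make this bookkeeping clean. For a standard extension of a boundary-fixing homeomorphism $\tau$ of a QH subgroup $Q$ of $\Lambda$: by (iii), $Q$ is a subsurface subgroup of a QH vertex $\widehat Q$ of $\Lambda_J$ with compatible peripheral structure, so $\tau$ extends by the identity on the complementary subsurface to a boundary-fixing homeomorphism $\widehat\tau$ of the surface underlying $\widehat Q$; the standard extension of $\widehat\tau$ is a generator of $Mod_{\Lambda_J}(L)$ and it equals the standard extension of $\tau$. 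For a standard extension of an automorphism $\alpha$ of an abelian vertex group $A$ of $\Lambda$ fixing the peripheral subgroup of $A$: if $A$ is cyclic then $\alpha$ acts as $\pm 1$ on $A$ and its standard extension is inner or trivial; if $A$ is non-cyclic then by (ii) it is conjugate into an abelian vertex group $\widehat A$ of $\Lambda_J$, and since malnormality of maximal abelian subgroups forces the edge groups adjacent to $\widehat A$ that meet $A$ to come from edges adjacent to $A$, the peripheral subgroup of $\widehat A$ meets $A$ inside the peripheral subgroup of $A$; hence $\alpha$ extends to an automorphism $\widehat\alpha$ of $\widehat A$ fixing the peripheral subgroup of $\widehat A$, whose standard extension is a generator of $Mod_{\Lambda_J}(L)$ and agrees with that of $\alpha$.

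The main obstacle will be establishing (iii) with the boundary structure matching up precisely --- that a QH vertex of an arbitrary abelian splitting embeds as a subsurface of a QH vertex of the JSJ respecting peripheral structure. This rests on the maximality and canonicity of the QH vertices of the JSJ and on the fact that edge groups of $\mathcal{G}_8$-limit groups in abelian splittings behave as in the limit-group case; once this is in hand, together with the ellipticity statements (i) and (ii), the verification for each generator is routine.
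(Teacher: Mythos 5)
Your overall route is the same as the paper's: the paper does not write out a proof at all, it simply records the statement as a consequence of the structural properties of the abelian JSJ decomposition and cites \cite[Lemma 4.16]{ReWe}, and your proposal is essentially a reconstruction of that argument (ellipticity of vertex groups and of non-cyclic abelian subgroups, QH vertices of $\Lambda$ sitting as subsurfaces of QH vertices of the JSJ, then a check generator by generator against Definition \ref{ModAutos}, using Theorem \ref{JSJLimit}, Lemma \ref{AbelianElliptic} and Lemma \ref{CSALImit}). So in spirit you are doing exactly what the paper delegates to \cite{ReWe}.

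There is, however, one incomplete step in your case analysis, in the Dehn-twist paragraph. You assert that the one-edge splitting of $L$ over the abelian edge group $C$ of $\Lambda$ ``is obtained from $\Lambda_J$ by folding and collapsing, so $\gamma$ centralizes an edge group of $\Lambda_J$ (or lies in an abelian vertex group)''. This is not true for every abelian one-edge splitting: the universal property of the abelian JSJ says that a one-edge abelian splitting either has edge group elliptic in $\Lambda_J$ (and then your folding/collapsing description and the reduction to edge twists of $\Lambda_J$ works), or it is a splitting along a simple closed curve on a QH vertex of $\Lambda_J$, in which case $C$ is \emph{hyperbolic} with respect to the edges of $\Lambda_J$, is not conjugate into any edge group of $\Lambda_J$, and the twist is not a product of Dehn twists along edges of $\Lambda_J$. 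This second case genuinely occurs in the setting of the paper --- indeed in the proof of Theorem \ref{limit} the splittings of the graph of actions are described as corresponding ``either to the splittings along the edges of $\mathbb A$ or along simple closed curves on QH vertices of $\mathbb A$''. The fix is easy and stays inside your framework: in that case the Dehn twist about $C$ is the standard extension of a Dehn twist of the surface underlying the QH vertex of $\Lambda_J$ about the corresponding simple closed curve, which is a boundary-fixing homeomorphism and hence a generator of type (3) in Definition \ref{ModAutos}; but as written your argument omits this subcase, so you should add it (and correspondingly weaken your claim that every one-edge abelian splitting is obtained from $\Lambda_J$ by folds and collapses, since for curve splittings one must first refine $\Lambda_J$ by cutting the QH surface).
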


Finally, we record that a JSJ decomposition of $L$ can be approximated by a sequence of finitely presented groups whose graph of groups approximate the JSJ decomposition as described below (see Theorem \ref{ApproxJSJ}). For the notion of a morphism between graph of groups (of possibly non-isomorphic groups) we refer the reader to \cite[Section 4.3]{ReWe}. We only give a brief account so notation makes sense. Recall that A morphism between graphs of groups $f:\mathbb{A}\rightarrow\mathbb{B}$, includes a graph morphism between the underlying graphs, vertex group (homo)morphisms $\psi_v^f$ and edge group (homo)morphisms $\psi_e^f$ for every vertex and edge of $\mathbb{A}$. We also note that whenever  $f:\mathbb{A}\rightarrow\mathbb{B}$ is a morphism between graphs of groups, there is an induced (homo)morphim $f_*:\pi_1(\mathbb{A})\rightarrow\pi_1(\mathbb{B})$ and we say $f$ is surjective if $f_*$ is.
Finally, we record that a JSJ decomposition of $L$ can be approximated by a sequence of finitely presented groups whose graph of groups approximate the JSJ decomposition as described below (see Theorem \ref{ApproxJSJ}). For the notion of a morphism between graph of groups (of possibly non-isomorphic groups) we refer the reader to \cite[Section 4.3]{ReWe}. We only give a brief account so notation makes sense. Recall that $\phi$ is the canonical map, $\phi:G\rightarrow L:=G/\underrightarrow{Ker}(h_i)$. A morphism between graphs of groups $f:\mathbb{A}\rightarrow\mathbb{B}$, includes a graph morphism between the underlying graphs, vertex group (homo)morphisms $\psi_v^f$ and edge group (homo)morphisms $\psi_e^f$ for every vertex and edge of $\mathbb{A}$. We also note that whenever  $f:\mathbb{A}\rightarrow\mathbb{B}$ is a morphism between graphs of groups, there is an induced (homo)morphim $f_*:\pi_1(\mathbb{A})\rightarrow\pi_1(\mathbb{B})$ and we say $f$ is surjective if $f_*$ is.

\begin{theorem}[JSJ approximation] \label{ApproxJSJ}
Let $\mathbb{A}$ be an abelian JSJ decomposition of an one-ended $\mathcal{G}_8$-limit group. Then there exists a sequence of decompositions $(\mathbb{A}^i)_{i<\omega}$ with finitely presented fundamental groups $W_i:=\pi_1(\mathbb{A}^i)$. Moreover there exist graph of groups surjective morphisms $f^i:\mathbb{A}^i\twoheadrightarrow \mathbb{A}^{i+1}$,  $\phi^i:\mathbb{A}^i\twoheadrightarrow\mathbb{A}$ and an epimorphism $\gamma:G\twoheadrightarrow W_0$ such that the following hold:
\begin{enumerate}
    \item $\phi^i$ is a graph isomorphism for all $i<\omega$.
    \item $\phi=\phi_*^0\circ\gamma$.
    \item $\phi^i=\phi^{i+1}\circ f^i$, for all $i<\omega$.
    \item $\underrightarrow{Ker}(h_i)=\bigcup_{k=1}^\infty ker(f_*^k\circ f_*^{k-1}\circ\ldots\circ f_*^1\circ f^0_*\circ\gamma)$.
    \item If $G_v$ is QH in $\mathbb{A}$, then $\psi_v^{\phi^i}:G_{v}^i\rightarrow G_v$ is an isomorphism for all $i<\omega$.
    \item If $G_v$ is of abelian type in $\mathbb{A}$, then $\psi_v^{\phi^i}:G_{v}^i\rightarrow G_v$ is injective for all $i<\omega$. 
    \item The map $\psi_e^{\phi^i}:G_e^i\rightarrow G_e$ is injective for all $i<\omega$. 
    \item For any vertex $v\in \mathbb{A}$ we have $\bigcup \psi_v^{\phi^i}(G_v^i)=G_v$. 
    \item For any edge $e\in \mathbb{A}$ we have $\bigcup \psi_e^{\phi^i}(G_e^i)=G_e$.
\end{enumerate}
\end{theorem}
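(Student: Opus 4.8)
The plan is to follow the strategy of the analogous approximation theorems in \cite{ReWe} (and originally \cite{Sel1}), carefully tracking the extra subtlety that $\mathcal{G}_8$-limit groups need not be finitely presented. The starting point is the observation that $L$ is a limit of the finitely presented groups $G_k := G/\ker(f_*^{k-1}\circ\cdots\circ f_*^0\circ\gamma)$ obtained by truncating the stable kernel: write $\underrightarrow{Ker}(h_i) = \bigcup_k N_k$ with $N_k$ finitely generated as a normal subgroup (this is where we use that $G$ is finitely presented, so each finite stage is finitely presented), so that $G \twoheadrightarrow G_0 \twoheadrightarrow G_1 \twoheadrightarrow \cdots$ with $\varinjlim G_k = L$. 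This immediately gives candidates for $W_i$ and the epimorphism $\gamma$, and (4) will be essentially the definition of the $N_k$'s.

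The heart of the argument is transferring the graph-of-groups structure $\mathbb{A}$ back along the epimorphisms $G_k \twoheadrightarrow L$. First I would apply the JSJ decomposition (Theorem \ref{JSJLimit}) to get $\mathbb{A}$, then use that the edge groups are abelian and finitely generated together with the CSA property (Lemma \ref{CSALImit}) and the ellipticity of non-cyclic abelian subgroups (Lemma \ref{AbelianElliptic}) to pull back the splitting. The standard mechanism here is: because each $G_k$ acts on the Bass–Serre tree of $\mathbb{A}$ (via $G_k \twoheadrightarrow L$) and this action has finitely generated edge stabilizers, for $k$ large enough the induced splitting $\mathbb{A}^k$ of $G_k$ is a genuine graph of groups with the same underlying graph as $\mathbb{A}$ — this yields (1) and, after composing, the commuting triangles (2) and (3). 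One then refines: replacing $\mathbb{A}^k$ by a further truncation if necessary, we can arrange that QH vertex groups are preserved isomorphically for all large $k$ (since a surface group, being finitely presented and ``rigid" in the relevant sense, must be isomorphically mapped once the kernel is small enough), giving (5) and the $\psi_e$-injectivity (7); abelian vertex groups, being finitely generated, are eventually mapped injectively, giving (6); and the exhaustion properties (8), (9) follow because $L$ is the union (colimit) of the images of the $G_k$ and each $G_k$-vertex/edge group surjects onto the corresponding $L$-vertex/edge group by construction.

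The step I expect to be the main obstacle is establishing that the underlying graph stabilizes, i.e. that $\phi^i$ is a \emph{graph} isomorphism for all $i$, as claimed in (1), rather than merely eventually. This requires showing that no edge of $\mathbb{A}$ can ``collapse'' or ``fold'' at any finite stage $W_i$ once we have chosen the initial truncation correctly, which in turn rests on a careful use of the fact that the JSJ is a \emph{reduced} decomposition over abelian groups and that acylindricity-type constraints (coming from CSA, Lemma \ref{CSALImit}) force edge stabilizers in the $W_i$ to remain distinct and non-conjugable into vertex groups in unwanted ways. Concretely, one picks the base group $W_0$ (and hence $\gamma$) large enough that every relation in a fixed finite presentation of each vertex and edge group of $\mathbb{A}$ already holds in $W_0$ and every edge generator is already non-trivial and malnormally positioned in $W_0$; then the chain $f^i$ can only add relations that are consistent with this fixed graph, so the graph never changes. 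Once (1) is in hand, the remaining items are bookkeeping: (2)–(4) are formal consequences of the construction of the chain, and (5)–(9) follow from the finite generation/presentation of the pieces together with the colimit description $\varinjlim W_i = L$. I would close by remarking, as in \cite[Section 4.3–4.4]{ReWe}, that the only genuinely new input beyond the hyperbolic case is Lemma \ref{CSALImit} and the diagram analysis of Section \ref{Short}, which supply exactly the CSA and acylindricity properties that the standard proof consumes.
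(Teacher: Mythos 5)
The paper itself does not prove Theorem \ref{ApproxJSJ}: it records it as an adaptation of the approximation machinery of \cite[Section 4.3]{ReWe}, so your argument has to stand on its own, and its central mechanism does not work. You propose to obtain $\mathbb{A}^k$ by letting the truncated quotient $G_k=G/N_k$ act on the Bass--Serre tree of $\mathbb{A}$ through $G_k\twoheadrightarrow L$ and taking the induced decomposition. But the kernel $N'_k$ of $G_k\rightarrow L$ acts trivially on that tree, hence lies in \emph{every} vertex and edge stabilizer of the pulled-back action. So the vertex and edge groups of your $\mathbb{A}^k$ are full preimages: the maps $\psi_v,\psi_e$ to $G_v,G_e$ are surjective with kernel containing $N'_k$, the edge groups are not even abelian, and properties (5), (6), (7) fail at every finite stage; they could only hold if $N'_k=1$, i.e.\ if $L$ itself were finitely presented, which is exactly the situation the theorem is designed to circumvent. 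No ``further truncation'' repairs this, since the kernel never becomes trivial at a finite stage, so your claim that QH vertex groups are ``isomorphically mapped once the kernel is small enough'' is empty here. Relatedly, you identify (1) as the main obstacle, but the underlying graph is not the issue at all: for the pullback the quotient graph agrees with that of $\mathbb{A}$ automatically (the image of $G_k$ in $L$ is all of $L$), and in the correct construction the graph is fixed by fiat; the CSA/ellipticity input (Lemmas \ref{CSALImit}, \ref{AbelianElliptic}) is consumed in producing the JSJ (Theorem \ref{JSJLimit}), not in preventing edge collapses along the chain.

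The construction that actually delivers (5)--(9) goes bottom-up, as in \cite{ReWe}: keep the underlying graph of $\mathbb{A}$; keep each QH vertex group verbatim (giving (5)); replace each abelian vertex group and each edge group by an exhausting chain of finitely generated subgroups (giving the injectivity statements (6), (7) and the exhaustions (8), (9)); replace each rigid vertex group by a chain of finitely presented groups mapping to it with images exhausting it; arrange compatible lifts of the edge inclusions; and set $W_i:=\pi_1(\mathbb{A}^i)$, which is finitely presented because the vertex approximations are finitely presented and the edge approximations finitely generated. The genuinely non-formal points, which your sketch dismisses as bookkeeping, are (2) and (4): one must choose the approximations adapted to the sequence $(h_i)_{i<\omega}$ so that $\phi:G\rightarrow L$ lifts to an epimorphism $\gamma:G\twoheadrightarrow W_0$ (here note that $G$ is only assumed finitely generated, so one works with finitely many relations at a time, or precomposes with a free group, and re-indexes once they die along the chain --- your parenthetical ``this is where we use that $G$ is finitely presented'' imports a hypothesis the theorem does not have), and so that the kernels of the composed maps $f_*^k\circ\cdots\circ f_*^0\circ\gamma$ exhaust exactly the stable kernel rather than merely sitting inside it. In your scheme (4) was true by definition but only because $W_i$ was the wrong group; once $W_i$ is built bottom-up, (4) is a statement that has to be proved.
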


\begin{figure}[ht!]
\centering
\includegraphics[width=1.\textwidth]{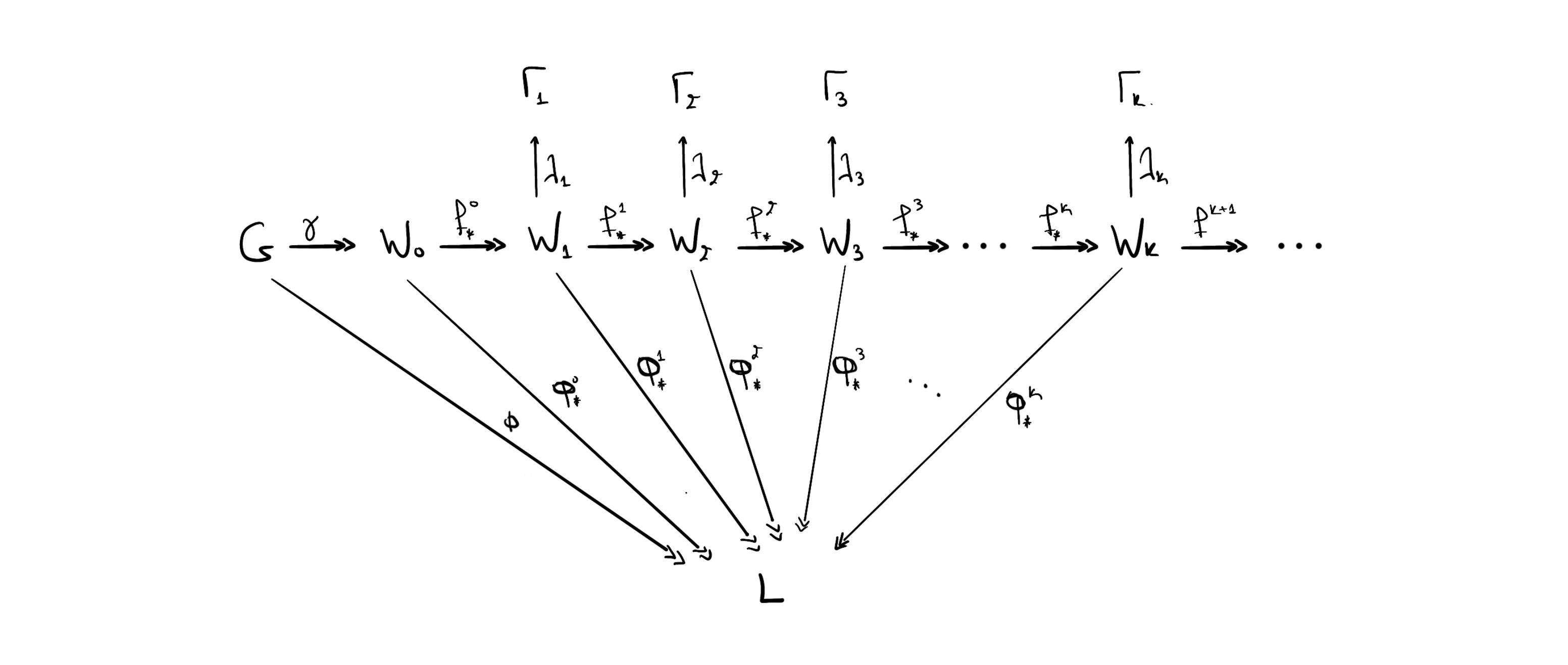}
\caption{An approximation diagram of a JSJ decomposition of $L$.}
\label{ApproximateJSJ}
\end{figure}

\subsection{Short solutions}\label{6.6} 
Let $V(\bar{x})=1$ be a system of equations, $G_V:=\F(\bar{x})/ ncl(V(\bar{x}))$. There is a well-known correspondence between solutions of  $V(\bar{x})=1$ in $\Gamma$ and  morphisms $h:G_V\rightarrow \Gamma$. It will be convenient to adapt to the latter point of view. Towards this we define. 


\begin{definition}\label{nonfree}
Let $G, H$ be finitely generated groups. 
Let $\pi:\mathbb{F}_n\twoheadrightarrow H:=\mathbb{F}_n/\langle\langle R\rangle\rangle$ be the canonical quotient map. We call a morphism $h :G\rightarrow H$ {\em (non-) free} if $h$ does (not) factor through $\pi$, i.e. there does (not) exists $h':G\rightarrow \mathbb F_n$ such that $h=\pi\circ h'.$ 
\end{definition}

A non-free morphim $h:G\rightarrow H$ is {\em short} if $|h|_{S_G}\leq |h'|_{S_G}$, for any non-free morphism  $h':G\rightarrow H$. 

We want to show that for any sequence of short non-free morphisms $h_i:G_V\rightarrow \Gamma_i$, where $\Gamma_i$ is an eighth group with relations of length $i$, there exists a bound $K$ such that $|h_i|\leq Ki$.  


We remark that non-freeness is invariant under post composing by inner automorphisms of $H$. If $h :G\rightarrow H$ is non-free, then for any $b\in H$, $Conj(b)\circ h $ is non-free. Indeed, suppose $Conj(b)\circ h$, factors through $\pi$, i.e. $Conj(b)\circ h=\pi\circ h'$, for some $h':G\rightarrow\mathbb{F}_n$. Then $h=Conj(b^{-1})\circ\pi\circ h'=\pi\circ Conj(\gamma)\circ h'$ for some $\gamma$ in the pre-image of $b^{-1}$ under $\pi$.

If $G_V$ is not a limit group, then 
we have an axiom in the theory of a free group  of the form. 
$$\forall\bar{x}(V(\bar{x})=1\rightarrow \bar w_1(\bar{x})=1\lor \bar w_2(\bar{x})=1\lor\ldots\lor \bar w_k(\bar{x})=1)$$
where the limit quotients of $G_V$ are given  by $\bar w_1(\bar{x}), \ldots, \bar w_k(\bar{x})$. 

Now observe that if a morphim $h:G_V\rightarrow \Gamma$ is free, then it must kill one of the $\bar w_i(\bar{x})$, for $i\leq k$. A map that kills at least one of the $\bar w_i$'s is called {\em free-like}. The advantage of free-like morphisms over free morphisms is that we can define them by the formula $\exists\bar{x}(\bar w_1(\bar{x})=1\lor \bar w_2(\bar{x})=1\lor\ldots\lor \bar w_k(\bar{x})=1)$.   Like in the case of non-free morphisms a non-free-like morphism $h:G\rightarrow H$  is called short if $\abs{h}\leq \abs{h'}$, for any other non-free-like morphism $h':G\rightarrow H$. 


We recall that a random group of density $d<1/16$ at length $i$ satisfies $C'(1/8)$, so the hyperbolicity constant  $\delta_i$ is bounded by $i$ with probability $1$ (see \cite[Corollary 3]{Olli1}). 

\begin{remark}
It would be enough, for our purposes, to prove that for any sequence of non-free like short morphisms $(h_i)_{i<\omega}:G_V\rightarrow \Gamma_i$ there exists a bound $K$ such that $|h_i|\leq Ki$. Nevertheless, we will show, once we have such result, how to generalize it to sequences of non-free morphisms. 
\end{remark}

\begin{theorem} \label{limit} 
We fix $d<1/16$ and a system of equations $V(\bar{x})=1$ such that $G_V$ is not a limit group. Let $(\Gamma_i)_{i<\omega}$ be a sequence of random groups of density $d$ at length $i$ and  $(h_i)_{i<\omega}:G_V\rightarrow \Gamma_i$ a sequence of  short non free-like morphisms. 

Then, there is a constant $K$ (that depends only on $V(\bar{x})=1$ and $d$) such that $\abs{h_i}\leq K i$. 
\end{theorem}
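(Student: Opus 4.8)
The plan is to argue by contradiction via a compactness/limit argument of the standard Bestvina--Paulin type, but carried out carefully in the regime where the hyperbolicity constants $\delta_i$ grow (linearly in $i$) rather than staying bounded. Suppose no such constant $K$ exists; then along a subsequence we have $\abs{h_i}/i \to \infty$, i.e. the lengths $\ell_i := \abs{h_i}$ dominate the hyperbolicity constants $\delta_i \leq i$ in the sense of Corollary \ref{LimitActionDominating}. Applying that corollary together with Theorem \ref{CenLoc} (choosing each base point to minimise the length, which we may do since $h_i$ is short only up to the non-free-like constraint, and precomposition with inner automorphisms preserves non-free-likeness and does not change which maps are short), we obtain, after passing to a further subsequence, a minimal action of $G_V$ on a real tree $(T,*)$, which we may assume stable; write $L := G_V/\underrightarrow{Ker}(h_i)$ for the associated $\mathcal{G}_8$-limit group and $\phi : G_V \twoheadrightarrow L$ the limit map. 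By Lemma \ref{TripodStabilizers} and Lemma \ref{SegmentStabilizers} (via Corollary \ref{SuperstableAction}), this action is superstable with trivial tripod stabilisers and virtually-abelian (in fact, using Lemma \ref{CSALImit}, abelian) arc stabilisers, so the Rips machine applies and the action decomposes into the standard pieces (simplicial, axial/abelian, and surface/Seiberg type).

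Next I would run the shortening argument on the limit action. The first point is that $L$ is not a limit group: since $G_V$ is not a limit group, witnessed by the nontrivial words $w_1,\dots,w_k$, and the morphisms $h_i$ are \emph{non-free-like} (none of them kills any $w_j$), each $w_j$ maps to a nontrivial element of $\Gamma_i$ for all large $i$, hence $\phi(w_j)\neq 1$ in $L$; thus $L$ fails to be fully residually free for the very same reason $G_V$ does. Consequently $L$ is infinite (it surjects nontrivially), and its action on $T$ is nontrivial, so $L$ admits a nontrivial abelian splitting. Decomposing $L$ over its one-ended factors and freely indecomposable pieces, Theorem \ref{JSJLimit} gives an abelian JSJ decomposition of each one-ended factor, and Definition \ref{ModAutos} together with Corollary \ref{ModularAutos} supplies the modular automorphism group $Mod(L)$. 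The classical shortening argument (as in \cite{ReWe}) then produces a modular automorphism $\alpha \in Mod(L)$ that strictly shortens the limit action of the generating tuple, i.e. $\abs{\phi\circ\alpha} < \abs{\phi}$ in the appropriate rescaled sense.

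The crux — and the step I expect to be the main obstacle — is transferring this shortening back to the finite level. Because $L$ need not be finitely presented or equationally Noetherian, the $h_i$ do \emph{not} eventually factor through $\phi$, so one cannot directly pull $\alpha$ back to an automorphism shortening $h_i$. This is exactly why Theorem \ref{ApproxJSJ} is available: I would use the approximating sequence of finitely presented groups $W_i = \pi_1(\mathbb{A}^i)$ with surjections $f^i : \mathbb{A}^i \to \mathbb{A}^{i+1}$, $\phi^i : \mathbb{A}^i \to \mathbb{A}$, and $\gamma : G_V \twoheadrightarrow W_0$, whose composites have stable kernel equal to $\underrightarrow{Ker}(h_i)$. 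Since each $\phi^i$ is a graph isomorphism restricting to isomorphisms on QH vertex groups and injections on abelian vertex/edge groups, the modular automorphism $\alpha$ of $\mathbb{A}$ lifts to a modular automorphism $\alpha_i$ of $\mathbb{A}^i$ for each large $i$; and for $i$ large enough, $h_i$ \emph{does} factor through $W_i$ (by property (4): any given defining relation of $W_i$ is eventually in the kernel of $h_i$), so $h_i = \bar h_i \circ \beta_i$ for some $\beta_i : G_V \to W_i$. Then $\bar h_i \circ \alpha_i \circ \beta_i$ is again a non-free-like morphism $G_V \to \Gamma_i$ — non-free-likeness is preserved because $\alpha_i$, being a product of inner automorphisms, Dehn twists, and standard extensions of QH/abelian automorphisms, fixes the conjugacy classes of the $w_j$'s up to the splitting, so it cannot kill any $w_j$ that was not already killed — and a length estimate (translating the strict decrease at the limit into a decrease at finite stages, using that $\alpha_i$ approximates $\alpha$ and that translation lengths in the rescaled Cayley graphs converge to those in $T$) shows $\abs{\bar h_i \circ \alpha_i \circ \beta_i} < \abs{h_i}$ for all large $i$, contradicting shortness of $h_i$. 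The delicate bookkeeping is precisely in making the geometry of the eighth-group Cayley graphs (Proposition \ref{SingleLayer}, Lemmas \ref{MinDistance}, \ref{ExtendedSingleLayer}) cooperate so that the modular automorphisms act on the finite-level geodesics the way they do on the limit tree; but once the single-layer structure of geodesics is in hand, this is the same mechanism as the classical argument.

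Finally, I would note that this proves the theorem for non-free-like short morphisms; the extension to short \emph{non-free} morphisms, promised in the remark preceding the statement, follows by the same scheme applied to the coarser partition of morphisms, using that free morphisms are free-like and that the axiomatization of Proposition \ref{UniversalAxioms} makes the two notions interchangeable for our purposes.
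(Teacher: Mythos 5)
Your setup (rescaled limit action, superstability via Lemmas \ref{TripodStabilizers} and \ref{SegmentStabilizers}, the Rips decomposition, the JSJ approximation by the finitely presented groups $W_i$ of Theorem \ref{ApproxJSJ}, and factoring $h_i=\lambda_i\circ\xi_i$ through them) is exactly the paper's route. The gap is at the crucial transfer step, where you claim that the shortened morphisms $\lambda_i\circ\alpha_i\circ\xi_i$ are again non-free-like because $\alpha_i$ ``fixes the conjugacy classes of the $w_j$'s up to the splitting''. This is neither true nor provable this way: the words $w_j$ are in general hyperbolic with respect to the splitting (the paper notes this explicitly in the simplicial case), and Dehn twists, QH automorphisms and abelian-vertex automorphisms do change the conjugacy classes of hyperbolic elements. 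Moreover the issue is not whether $\alpha_i$ kills $\xi_i(w_j)$ in $W_i$ (it cannot, being an automorphism), but whether the composition with $\lambda_i:W_i\rightarrow\Gamma_i$ does: $\ker(\lambda_i\circ\alpha_i)=\alpha_i^{-1}(\ker\lambda_i)$ is a twisted kernel, and a priori $\lambda_i(\alpha_i(\xi_i(w_j)))$ may be trivial in $\Gamma_i$ even though $\lambda_i(\xi_i(w_j))\neq 1$. Handling exactly this possibility is the hard content of the paper's proof: there, minimality of $h_i$ forces the shortened sequence to be eventually free-like, and the contradiction comes from showing that the killed $w_j$ would then have to die in the limit, whereas $\alpha(\phi(w_j))\neq 1$; this in turn requires knowing that the twisted sequence still converges to $L$ and not to a proper quotient.

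The second, related omission is the simplicial case. You invoke ``the classical shortening argument'' to produce a single modular automorphism $\alpha\in Mod(L)$ shortening the limit action, but when the graph of actions has only simplicial components there is no such single automorphism; the paper shortens by Dehn twists chosen separately for each $i$, and must then work considerably harder, using the big powers property of the torsion-free hyperbolic groups $\Gamma_i$, the choice of the maximal powers $p_i$ for which the twisted morphism is still free-like, and a ``lengthening argument'' showing that both sequences $\hat\lambda_i\circ\alpha_i^{p_i}\circ\xi_i$ and $\hat\lambda_i\circ\alpha_i^{p_i-1}\circ\xi_i$ converge to $L$, in order to reach a contradiction. None of this is covered by your sketch, and it cannot be, since your unproved non-free-likeness claim is precisely the point at issue there. (The free product case, which the paper treats separately with a finitely presented Grushko approximation, is also left implicit in your write-up, though that part is comparatively routine.)
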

\begin{proof} The proof is by contradiction. Suppose there is no such $K$, then by possibly passing to a subsequence (that we still denote by $(h_i)_{i<\omega}$) we can assume that $\abs{h_i}> k_i i$, with $k_i\rightarrow\infty$ as $i\rightarrow\infty$.

We can now apply the arguments in subsection \ref{ActionsRtrees} for the sequence $(h_i)_{i<\omega}$ rescaled by $|h_i|$. After quotienting out the kernel of the action, that can be identified with $\underrightarrow{Ker}(h_i)$, we get a faithful action and a canonical quotient map $\phi:G_V\twoheadrightarrow L$, where $L:=G_V/\underrightarrow{Ker}(h_i)$.   

Recall that $L$ acts minimally and non-trivially on a based $\mathbb R$-tree 
$T$. Moreover by Corollary \ref{SuperstableAction} the action is superstable, hence we can use the main theorem in \cite{GuirGoA}. The latter implies that either $L$ splits as a free product or the action of $L$ on the limit tree can be decomposed, as a graph of actions, in simpler building blocks of simplicial, axial or surface (Seifert) type. We first show how to obtain a contradiction when $L$ is freely indecomposable. At this point we observe that $L$ might not be finitely presented, hence we cannot assume that $h_i$ factors through $\phi$ for all large enough $i$. A remedy for this is given by Theorem \ref{ApproxJSJ}.  

Let $\mathbb{A}$ be an abelian JSJ decomposition of $L$ guaranteed by Corollary \ref{JSJLimit}. Now, by the previously mentioned theorem we can approximate $\mathbb{A}$ using a sequence of splittings $\mathbb{A}^i$. Since each $W_i$ is finitely presented after possibly refining the sequence $(h_i)_{i<\omega}$ we may assume that for each $i<\omega$, $h_i$ factors through $\xi_i$, where $\xi_i=f_*^{i-1}\circ\ldots\circ f_*^1\circ f_*^0\circ\gamma$. In particular, $h_i=\lambda_i\circ\xi_i$ for some $\lambda_i:W_i\rightarrow\Gamma_i$. 

By standard techniques (see \cite[Section 5.2]{ReWe}) the limiting sequence of actions can be shortened. As a matter of fact if $T$ contains either an axial or a surface type component the limit action of $L$ on $T$ can be shortened. We take cases according to what type of components exist in the graph of actions decomposition for this action.
\ \\ \\
{\bf Surface type component:} In this case, we can read the automorphism, used to shorten the action, of the stabilizer of such component in the JSJ decomposition of $L$. Indeed, the surface type vertex group in the graph of actions corresponds to a subsurface in a QH type vertex in the JSJ decomposition of $L$. Let $\alpha$ be such modular automorphism of a QH group in the JSJ decomposition of $L$. By the properties of the approximating sequence, since every QH group in $\mathbb{A}$ corresponds to an isomorphic QH subgroup of $W_i$ in $\mathbb{A}_i$, we deduce that for each $i$ there exists $\alpha_i\in Mod(W_i)$ such that  $\phi_*^i\circ\alpha_i=\alpha\circ\phi_*^i$, and makes $\lambda_i$ shorter (with respect to the generating set $\xi_i(\bar{x})$). To see the latter, suppose $\rho$ is the limit action of $L$ on $(T, \ast)$ and consider the following inequality 
$$|\phi_*^i\circ\alpha_i\circ\xi_i|_\ast=|\alpha\circ\phi_*^i\circ\xi_i|_\ast=|\alpha\circ\phi|_\ast=|\rho\circ \alpha|_\ast<|\rho|_\ast=|\phi|_\ast=|\phi_*^i\circ\xi_i|_\ast$$ (where by $|\rho|_\ast$ we denote the sum of the dispacements of $\ast$ by the fixed set $\phi(S_G)$ of generators) In particular, since $|\lambda_i\circ\alpha_i\circ\xi_i|$ approximates $|\phi_*^i\circ\alpha_i\circ\xi_i|_\ast$ and 
$|\lambda_i\circ\xi_i|$ approximates $|\phi_*^i\circ\xi_i|_\ast$ as $i$ goes to infinity, we have that 
$\hat{h}_i=\lambda_i\circ\alpha_i\circ\xi_i$ is eventually strictly shorter than $h_i$. Since we have chosen $h_i$ to be the shortest non free-like morphism, the sequence  $(\hat{h}_i)_{i<\omega}$ must be eventually free-like. Thus, there exists $\bar w_j(\bar{x})$ that is eventually killed by $(\hat{h}_i)_{i<\omega}$, but then 
we must have for all large enough $i$ that $\phi_*^i(\alpha_i(\xi_i(\bar w_j)))=1$. By the way the $\alpha_i$'s are defined,  $\phi_*^i(\alpha_i(y))=\alpha(\phi_*^i(y))$ for any $y$ in $W_i$, thus we get $\alpha(\phi^i(\xi_i(\bar w_j)))=1$. The latter is impossible, since $\phi_*^i(\xi_i(\bar w_j))\neq 1$ for all $i<\omega$, a contradiction.

\ \\ \\ 
{\bf Axial type components:}
This case is almost identical to the previous one. First observe that since the action on axial components have dense orbits the abelian group of the axial component cannot be cyclic. Hence it is elliptic in the JSJ decomposition of $L$. We actually may assume that it is a vertex group of the JSJ since otherwise we can further refine the JSJ decomposition. Now consider a modular automorphism $\alpha$ , which is a standard extension of the abelian vertex group and shortens the action. In contrast to the surface type case,  the vertex (homo)morphisms that correspond to abelian vertices given by the graph of groups morphisms from  $\mathbb{A}^i$ to $\mathbb{A}$ are not (in principle) epimorphisms, but for $i$ large enough $G_{v_i}$ (the abelian vertex group in $\mathbb{A}^i$ that corresponds to the abelian group of the axial component) admits an automorphism whose standard extension $\alpha_i$ is such that $\phi_*^i\circ\alpha_i=\alpha\circ\phi_*^i$. The rest of the proof works as in the previous case.  
\ \\ \\
Hence we may assume that no axial or surface type component exists. In particular the graph of actions is a just a simplicial splitting of $L$.
\ \\ \\
{\bf Simplicial type component:}
In this case there is not a single automorphism but a sequence that is used to shorten the sequence of the limiting actions.
Let $\mathbb B$ be the graph of action of $L$. Then splittings of $\mathbb B$ either correspond to the splittings along the edges of $\mathbb A$ or along simple closed curves on $QH$ vertices of $\mathbb A$. We can see them in the splitting of $W_i$ as $\mathbb{B}^i$.  Let $\hat\lambda _i:W_i\rightarrow \Gamma _i$ be a homomorphism obtained from shortening $\lambda _i$   by precomposition with an automorphism in $Mod_{\mathbb{B}^i}(W_i)$ and postcomposition with an inner automorphism, we define $\eta _i=\hat\lambda _i\circ\xi_i$.
Since the sequence $(h_i)_{i<\omega}$ consists of the shortest non free-like morphisms, the morphism $\eta _i$, for all large enough $i$, must be free-like. 
Consider first the case where $\mathbb{B}$ is an amalgamated free product. Then $W_i=A_i*_{C_i}B_i$, where $C_0$ is f.g. abelian, $C_0=\langle d_1,\ldots ,d_r\rangle .$ Let $A_0=\langle a_1,\ldots ,a_s\rangle,$  $B_0=\langle b_1,\ldots ,b_q\rangle.$ 
Let $d_1=u(a_1,\ldots ,a_s)$.
 Notice that all elements in all $\bar w_j$ cannot be elliptic with respect to the splitting  $\mathbb{B}^i$  because the images of rigid subgroups  $\mathbb{B}^i$ under $\lambda _i$ and $\hat\lambda _i$ are the same.
Therefore some of the $w_j$ is hyperbolic with respect to the splitting of $W_0$  and $L$. Let $\lambda _i\circ f_*^{i-1}\circ\ldots\circ f_*^1\circ f_*^0(d_1)=
{c_i}^{k_i}$,  where $c_i$ is a generator of a cyclic subgroup in $\Gamma _i$. Let $\alpha _i$ be the Dehn twist of $W_i$ corresponding to the conjugation of $B_i$ by $f_*^{i-1}\circ\ldots\circ f_*^1\circ f_*^0(d_1)$. Then pre-composition of $\lambda _i$ with $\alpha _i$ conjugates images of $B_i$ under $\lambda _i$ in $\Gamma _i$ by ${c_i}^{k_i}.$ 

Then $\hat \lambda _i\circ{\alpha _i}^{r_i}\circ\xi _i$, $r_i\in \mathbb Z$, may be free-like for some values of $r_i$ and not free-like for some other values. They are  non free-like for all sufficiently large absolute values of $r_i$ (different for each $\Gamma _i$),
because for each $\Gamma _i$ we have "big powers property"  \cite[Lemma 2.3]{Olsh1} that implies for all sufficiently large  $|r_i|,$
$\hat \lambda _i\circ{\alpha _i}^{r_i}\circ\xi _i(\bar w_j)\neq 1.$   Let $\{p_i-1\}$ be the sequence of the largest positive numbers such that
$\hat \lambda _i\circ{\alpha _i}^{p_i-1}\circ\xi _i$ are free-like. Then $\{\hat \lambda _i\circ{\alpha _i}^{p_i-1}\circ\xi _i\}$ converges to some limit group $\bar Q$ that is a quotient of $L$ and a quotient of $F/ncl(\bar w_j)$ for some $j\in \{1,\ldots ,k\}$. 
  We also have that $\hat \lambda _i\circ{\alpha _i}^{p_i}\circ\xi _i$ are almost surely non-free-like.  
\ \\ \\
\noindent {\bf Claim:} Both sequences $\{\hat \lambda _i\circ{\alpha _i}^{p_i}\circ\xi _i\}$ and $\{\hat \lambda _i\circ{\alpha _i}^{p_i-1}\circ\xi _i\}$ almost surely converge to $L$.
\ \\ \\
{\bf Proof of Claim:} If $g\in ker \{h_i\}$, then eventually $g\in ker \xi _i$ and, therefore  $g\in ker \hat \lambda _i\circ{\alpha _i}^{p_i}\circ\xi _i$ for large $i$. Therefore, the sequence $\{\hat \lambda _i\circ{\alpha _i}^{p_i}\circ\xi _i\}$ converges to a quotient of $L$. We have to prove that this is not a proper quotient.

Notice that $\hat \lambda _i\circ{\alpha _i}^{p_i}\circ\xi _i$ is not shorter than $ h_i$, because $h_i$ is minimal non-free. We now apply to $\{h_i\}$ instead of shortening argument to simplicial component \cite[Section 5.2.3]{ReWe} the ``lengthening argument" which goes similarly and shows that the limiting tree for the sequence $\{\hat \lambda _i\circ{\alpha _i}^{p_i}\circ\xi _i\}$ has the same simplicial component as the tree for $L$. Splitting of $T$ as a graph of actions  is described in \cite[Th 3.4]{ReWe}. 

The morphisms of the sequence $\{\hat \lambda _i\circ{\alpha _i}^{p_i}\circ\xi _i\}$ are not shorter than for $\{ h_i\}$,
let $\hat \lambda _i\circ{\alpha _i}^{p_i}=\lambda _i\circ\sigma _i^{m_i}$, where $\sigma _i$ is the corresponding Dehn twist.

Let $g\neq 1$ in $L$. Let like in the proof of \cite[Proposition 5.19]{ReWe} $q=a_0,e_1,a_,e_2,\ldots ,a_n$ be a normal form $\mathbb A$-path such that   $g=[q]$, and $\tilde e_1,\ldots ,\tilde e_n$ be the reduced edge path in the Bass-Serre tree $\tilde {\mathbb A}$ from $\tilde v_0$ to $g\tilde v_0.$ Then
$$d_{\mathcal G}(x_0,gx_0)= d_{\tilde v_0}(\bar x_0,p_{\tilde e_1}^{\alpha})+\sum _{k=1}^{n-1} d_{\omega (\tilde e_k)}(p_{\tilde e_k}^{\omega},p_{\tilde e_{k+1}}^{\alpha})+\sum _{k=1}^{n} d_{\tilde e_k}(p_{\tilde e_k}^{\alpha},p_{\tilde e_k}^{\omega})+d_{g\tilde v_0}(p_{\tilde e_n}^{\omega},g\bar x_0).$$
\begin{figure}[ht!]
\centering
\includegraphics[width=1.1\textwidth]{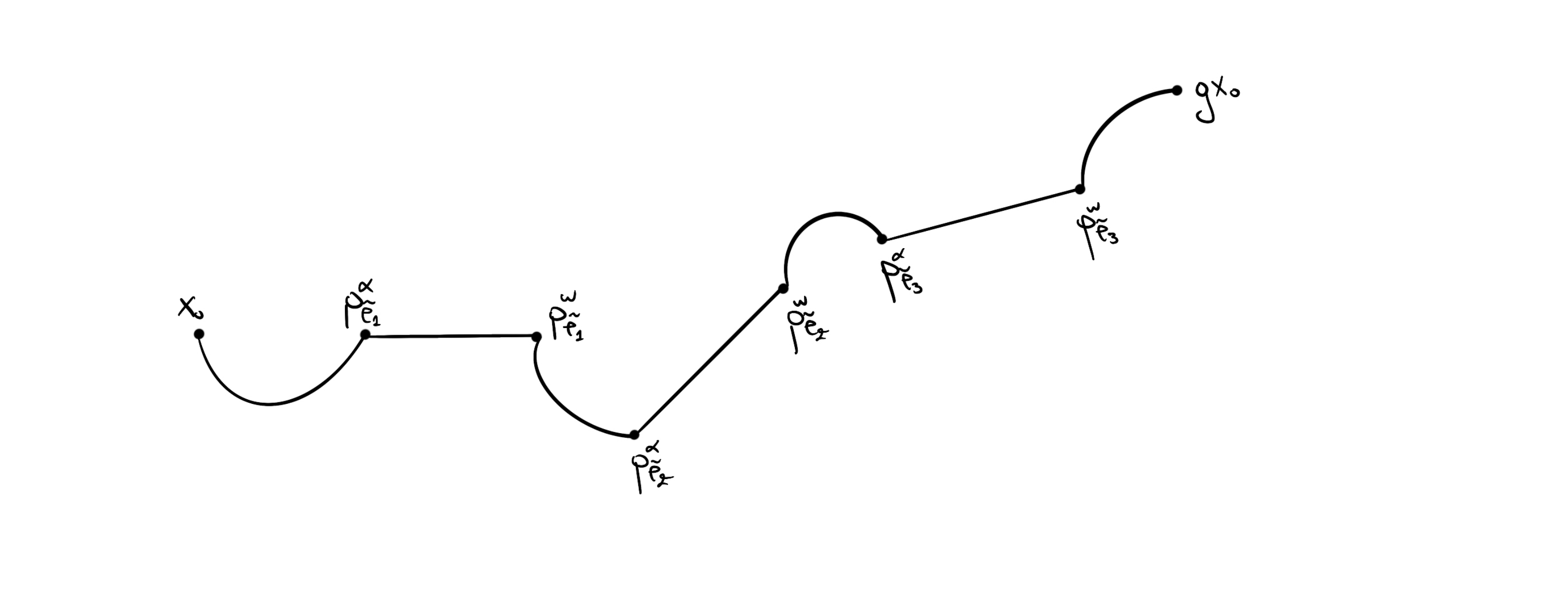}
\caption{The segment $[x_0,gx_0]\subset T$, where $g=[a_0,e_1,\ldots, a_3]$}
\label{path}
\end{figure}

Now for each $k$, let $(p_{k,i}^{\alpha})_{i\in\mathbb N}$ and $(p_{k,i}^{\omega})_{i\in\mathbb N}$ be approximating sequences of $p_{\tilde e_k}^{\alpha}$ and  $p_{\tilde e_k}^{\omega}$  respectively. Recall that $(h_i(1))$ and $( h_i(g))$ are approximating sequences for $x_0$ and $gx_0$ respectively, and $h_i(1)=1.$  

 We have   $$\sum_{g} d_X(1,h_i(g))\leq \sum_{g}d_X(1,h _i\circ\sigma ^{m_i}(g)),$$ where the summation is along all the generators. From the properties of approximation, for any $\epsilon >0$ and large enough $i$, we have
$$d_i(1, h_i(g))\leq d_i(1,p_{1,i}^{\alpha})+\sum _{k=1}^{n-1} d_i(p_{k,i}^{\omega},p_{k+1,i}^{\alpha})+\sum _{k=1}^{n} d_i(p_{k,i}^{\alpha},p_{k,i}^{\omega})+d_i(p_{n,i}^{\omega}, h_i(g))+\epsilon $$ $$ d_i(1,h _i\circ\sigma^{m_i}(g))\geq 
d_i(1,\bar p_{1,i}^{\alpha})+\sum _{k=1}^{n-1} d_i(\bar p_{k,i}^{\omega},\bar p_{k+1,i}^{\alpha})+\sum _{k=1}^{n} d_i(\bar p_{k,i}^{\alpha},\bar p_{k,i}^{\omega})+d_i(\bar p_{n,i}^{\omega}, h_i\circ \sigma ^{m_i}(g))-\epsilon .$$
By 1-4 in the Proof of Proposition 5.19 in \cite{ReWe}, the first, second and fourth terms of the sum in the first line and the sum in the second line are equal.
Moreover, in the third term $d_i(p_{k,i}^{\alpha},p_{k,i}^{\omega})=d_i(\bar p_{k,i}^{\alpha},\bar p_{k,i}^{\omega})$ for all edges $\tilde e_k$ that do not correspond to the amalgamated product edge $e^{\pm 1}$. For $\tilde e_k$ which is a lift of $e^{\pm 1}$, we have
$$d_i(p_{k,i}^{\alpha},p_{k,i}^{\omega})\leq d_i(\bar p_{k,i}^{\alpha},\bar p_{k,i}^{\omega})$$ and both terms do not depend on which lift of $ e^{\pm 1}$ is taken.
Therefore $$d_i(1, h_i(g))\leq d_i(1,h _i\circ\sigma^{m_i}(g)). $$

Therefore when $h$ runs through the set of generators of $G_V$, $\{\hat \lambda _i\circ{\alpha _i}^{p_i}\circ\xi _i (h)\}$ converges to the elements that generate the same fundamental group as the limits of $\{\phi _i(h)\}$ because for sufficiently large $i$ if $\phi _i(g)\neq 1$ then $\hat \lambda _i\circ{\alpha _i}^{p_i}\circ\xi _i(g)\neq 1.$


We again have  $\phi_*^i\circ\alpha_i=\alpha\circ\phi_*^i$ and therefore $\hat \lambda _i{\alpha _i}^{p_i-1}\xi _i$  converge to $\alpha ^{-1} (L)$. This proves the claim. \qed

Then $\bar w_j=1$ is also satisfied in $L$ on the elements that are limits of  sequences 

$\{\hat \lambda _i{\alpha _i}^{p_i}\xi _i(a_p), 
\hat \lambda _i{\alpha _i}^{p_i}\xi _i(b_p)\}$.

This is a contradiction with the assumption that almost all $ h_i$ are  non-free-like, but almost all $\eta _i$ are free-like.

Similarly one considers the case when $L$ is an HNN-extension. 

 Suppose now $\mathbb{B}$ has several edges and all vertex groups are rigid. We can assume this by the previous cases (no abelian and QH subgroups). We begin with $\{h_i\}$ and go along the edges of $\mathbb{B}_i$.  If there is a splitting along some edge of $\mathbb{B}_i$ such that applying Dehn twists along this edge we can get  a sequence of free-like morphisms from $\{h_i\}$, then we will get a contradiction the same way as we got it for the amalgamated product  case. Otherwise $\{h_i\}$ is minimal with respect to Dehn twists along the edges of $\mathbb{B}_i$. But this contradicts to the assumption that $\{h_i\}$ converges to $L$.
\ \\ \\
We now tackle the case where $L$ is a non-trivial free product. Let $L_1*L_2*\ldots *L_k*\F$ be the Grushko decomposition of $L$. We can modify the sequence $(h_i)_{i<\omega}$ by mapping $\F$ to the identity. The modified sequence is not linearly bounded  because $(h_i)_{i<\omega}$ are not free-like. So we can assume that $\F$ is trivial. As a first step we approximate $L$ by a free product 
decomposition of finitely presented groups, $G'=L_1'*L_2'*\ldots*L'_k$ with the following properties: each factor admits a (homo)morphism into some conjugate of a distinct $L_i$ and the free product of them is a quotient of $G$ via an epimorphism $\gamma:G\twoheadrightarrow G'$. Since $G'$ is finitely presented, the sequence $(h_i)_{i<\omega}$ factors through $\gamma$ as $h_i=h_i'\circ\gamma$ and the restriction of $h_i'$ on each factor converges to a Grushko factor of $L$. We can now apply the shortening argument to each factor and obtain a similar contradiction as before, noting that a $w_j$ must be elliptic.  The theorem is proved.  
\end{proof}

\begin{corollary}\label{limit1} For any system of equations $V(\bar{x})=1$
there is a constant $K$ (that depends only on $V(\bar{x})=1$ and $d$) such that for any short non-free solution $\abs{h _i}\leq K i$. 
\end{corollary}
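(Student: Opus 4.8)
The plan is to bootstrap from Theorem \ref{limit} via a comparison of morphism classes. The elementary point, already noted in the paper right after the definition of free-like maps, is that every \emph{free} morphism $h\colon G_V\to\Gamma_i$ is free-like: writing $h=\pi\circ\bar h$ with $\bar h\colon G_V\to\mathbb{F}_n$ and using that $G_V$ is not a limit group witnessed by $w_1,\dots,w_k$ (so that $\bar h$ kills some $w_j$, cf.\ Remark \ref{MRStep1}), $h$ also kills that $w_j$. Contrapositively, a non-free-like morphism is non-free, so the set of non-free-like morphisms $G_V\to\Gamma_i$ is contained in the set of non-free ones. Hence, whenever $G_V$ admits a non-free-like morphism to $\Gamma_i$, the shortest non-free morphism $h_i$ satisfies $\abs{h_i}\le\abs{h_i'}$, where $h_i'$ is the shortest non-free-like morphism, and $\abs{h_i'}\le Ki$ by Theorem \ref{limit}. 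This already proves the corollary for every $\Gamma_i$ that carries a bad solution --- a bad solution being by definition a non-free \emph{and} non-free-like morphism --- which is the only case needed for the main theorem.

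For the statement about arbitrary short non-free morphisms (this is what lets one dispense with Proposition \ref{UniversalAxioms}) I would re-run the proof of Theorem \ref{limit} with the class of non-free morphisms replacing that of non-free-like morphisms. Every geometric ingredient is unaffected: Corollary \ref{SuperstableAction}, Theorem \ref{JSJLimit}, Theorem \ref{ApproxJSJ} and the surface/axial/simplicial shortening steps use only that the $\Gamma_i$ are eighth groups with relators of length $i$. The one place where the non-free-like hypothesis was used is the concluding step: shortening the shortest morphism of the class yields a strictly shorter morphism \emph{outside} the class, from which a contradiction is drawn via the witness it kills. For the non-free class the shortened morphism $\hat h_i$ is \emph{free}, hence still kills some $w_j$ by the observation above; passing to a subsequence on which the killed witness is a fixed $w_{j_0}$ and using $\underrightarrow{Ker}(\hat h_i)=\underrightarrow{Ker}(h_i)$ (the shortened sequence still converges to $L$), one finds that $h_i$ eventually kills $w_{j_0}$. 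If $h_i$ does not already kill $w_{j_0}$ --- automatic when $h_i$ is non-free-like, which recovers Theorem \ref{limit} --- this is the desired contradiction; otherwise $h_i$ factors through $G_V/ncl(w_{j_0})$ as a non-free morphism of the same length, and one repeats, removing one further witness at each step.

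The main obstacle is the terminal case of this process, where after at most $k$ reductions one lands on a finitely generated quotient $M$ of $G_V$ that is a limit group and still supports shortest non-free morphisms into the $\Gamma_i$ of length $\gg i$. Shortening now only produces free morphisms, which leave the class without supplying any witness, so Theorem \ref{limit}'s contradiction is unavailable and a separate argument is required. The approach I would try is a length comparison: a free morphism into $\Gamma_i$ lifts to $\mathbb{F}_n$, and, $\Gamma_i$ being $C'(1/8)$ with relators of length $i$, a word in the lifted generators that is nontrivial in $\mathbb{F}_n$ but trivial in $\Gamma_i$ contains a subword of length exceeding $\frac{5i}{8}$ by Greendlinger's lemma; taking minimal-length lifts, this controls the gap between $\mathbb{F}_n$-length and $\Gamma_i$-length tightly enough that the rescaled limits of the lifts recover the same tree $T$, forcing $L$ to be a limit of free groups. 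One then has to derive a contradiction from $L$ being, simultaneously, a proper freely indecomposable quotient of the limit group $M$ and a limit group, via the JSJ picture --- and pinning this down is, I expect, the delicate point.
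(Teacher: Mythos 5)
Your opening reduction is fine and coincides with the paper's first line: since a free morphism kills some $w_j$ (Remark \ref{MRStep1}), non-free-like implies non-free, so whenever the shortest non-free morphism is itself non-free-like it is also the shortest non-free-like one and Theorem \ref{limit} bounds it; equivalently, in the unbounded case the shortest non-free $h_i$ must be free-like. The genuine gap is in how you treat that remaining case. Your descent uses only the original witnesses $w_1,\dots,w_k$: the shortened $\hat h_i$ are free, their lifts $u_i\colon G_V\to\mathbb{F}_n$ kill some original $w_{j_0}$, and if $w_{j_0}\in\underrightarrow{Ker}(h_i)$ you pass to $G_V/ncl(w_{j_0})$ and ``repeat''. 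But repeating changes nothing: the sequence $(h_i)$, its limit, the approximations and the shortening automorphisms are unchanged, and the lifts $u_i$ are morphisms from $G_V$, not from the quotient --- $\hat h_i(w_{j_0})=1$ in $\Gamma_i$ does not make $u_i(w_{j_0})$ trivial in $\mathbb{F}_n$ --- so they need not kill any witness of $G_V/ncl(w_{j_0})$, and no new original witness is forced either (the killed witness may be $w_{j_0}$ every time). Hence the iteration neither makes progress nor terminates ``after at most $k$ reductions'', and the assertion that the terminal quotient $M$ is a limit group is unjustified (even $G_V/ncl(w_1,\dots,w_k)$ need not be one). Moreover, the terminal case --- which is exactly the content of the corollary beyond the main theorem --- is only addressed by a sketch you yourself flag as not pinned down. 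A smaller point: the stable-kernel equality $\underrightarrow{Ker}(\hat h_i)=\underrightarrow{Ker}(h_i)$ you invoke to transfer the killed witness back to $h_i$ is not automatic; in the paper it is only established, via the lengthening-argument Claim, in the simplicial case, and in the surface/axial cases the transfer goes through the intertwining relation $\phi_*^i\circ\alpha_i=\alpha\circ\phi_*^i$.

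The paper's proof of the corollary is organized differently and contains the two ingredients your proposal lacks. First, if the unbounded free-like sequence converges to the limit quotient $\bar L$ itself, the shortening can be realized by an automorphism $\alpha$ of $\bar L$, and non-freeness is preserved under precomposition by automorphisms and postcomposition by conjugation; so the shortened solutions are shorter \emph{and still non-free}, contradicting minimality. Hence the sequence converges to a proper quotient. Second, one does not keep the old witnesses: one applies Theorem \ref{limit} afresh to a new universal axiom $\forall\bar x(V_1(\bar x)=1\rightarrow \vee\,\bar w_i=1)$ attached to a proper finitely presented quotient $G_{V_1}$, whose witnesses $\bar w_i$ come from the limit quotients of $G_{V_1}$, and iterates; the descent terminates because every chain of proper limit quotients is finite. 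Replacing the witness system at each stage and invoking this descending chain condition is what makes the argument close, and it is precisely what is missing from your terminal case.
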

\begin{proof} If $G_V$ is not a limit group, then we know from  Theorem \ref{limit} that if there is no such a bound for shortest solutions $\{h_i\}$, then they
are free-like, therefore they factor through a limit quotient $\bar L$ of $G_V.$ If $G_V$ is a limit group, then $G_V=\bar L$. If the sequence 
$\{h_i\}$ converges to $\bar L$, then they can be shortened by an automorphism $\alpha$  of $\bar L$ and the shorter solutions $\{\alpha\circ h_i\}$ are also non-free. Since this cannot happen,  $\{h_i\}$ converges to a proper quotient of $\bar L$. Let $G_{V_1}$ be a proper finitely presented quotient of $\bar L$. We now apply Theorem \ref{limit} to the universal axiom 
$$\forall \bar x (V_1(\bar x)\rightarrow \vee \bar w_i=1),$$
where $\bar w_i=1$ are equations corresponding to limit quotients of $G_{V_1}$ and show that either
$\{h _i\}$ are linearly bounded (by $K_1i$) or 
free-like with respect to this sentence. If they are free-like, they factor through a proper limit quotient of $\bar L$. Since $\F$ is equationally Noetherian, every chain of proper limit quotients is finite, all shortest non-free
$h_i$ are linearly bounded.
 \end{proof}

\section{Diagrams for solutions of equations}

We fix the density parameter $d<1/16$. Let $\Gamma_\ell$ be a random group at density $d$ with relator length $\ell$. Let $V(\bar x)=1$ be a system consisting of the following equations $v_i(\bar x)=1$, for $i\leq m$. Suppose $\bar{b}_\ell$ in $\Gamma_\ell$ is a short non-free solution. The latter implies the existence of a family of $m$ van Kampen diagrams over $\Gamma_\ell$ with boundary labels corresponding to  
$v_i(\bar{b}_\ell)$. Note that some of these diagrams have cells because $\bar{b}_\ell$ is non-free. 

Our goal is to show that the probability of existence of such $DVK$ in $\Gamma_\ell$ approaches zero as $\ell \rightarrow\infty$. In the next subsection we give some direct consequences of the bound on short solutions. In particular, we get an absolute bound on the number of faces that can be thought of as a local to global result. 
In subsection \ref{ReducedBoundary} we will obtain a {\em decoration} in the defining relators of a random group. This will be, in turn, used to decrease the independent choices of letters in them. In the last subsection we bring everything together to prove the main result of this paper, namely a universal sentence is almost surely true in a random group (of density $d$) if and only if it is true in the free group.

\subsection{Total number of aDVK for short non-free solutions}\label{loc-gl}

We first observe that any system of equations, $V(\bar{x})=1$, is equivalent to a triangular system in new variables. The advantage of this transformation to a triangular system will be obvious in Lemma \ref{aDVK} and Proposition \ref{xtoy}.
Indeed, any equation $x_1x_2w(\bar x)=1$ is equivalent to the system of shorter equations $x_1x_2z^{-1}=1, zw(\bar x)=1$ where $z$ is a new variable.
Therefore we may assume that:

$$
V(\bar x) := \{x_{\sigma(j,1)}x_{\sigma(j,2)}x_{\sigma(j,3)},\ j=1,\ldots,q\}, \textrm{where} \  \sigma(j,k)\in\{1,\ldots,l\}.$$

 Note, that we can always assume that a variable $x_s$ appears at least twice in $V(\bar{x})=1$, otherwise we can just remove the equation the single variable  appears in. 
 
 In order to bound the number of faces we will use the following theorem (\cite[Theorem 13]{Olli}). 
 
\begin{theorem}
Let $\Gamma_{\ell}$ be a random group of density $d<1/2$ at length $\ell$. For every $\epsilon>0$, every reduced diagram $D$ over $\Gamma_{\ell}$ almost surely satisfies 
$$\vartheta D > f\ell (1-2d-\epsilon)$$
where $\vartheta D$ is the boundary length of $D$ and $f$ is the number of its faces. 
\end{theorem}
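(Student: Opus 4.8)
This is Ollivier's isoperimetric inequality, so my plan is to recall his argument. Fix $f\ge 1$; it is enough to bound, for each $f$, the probability that some reduced diagram with $f$ faces and $\vartheta D\le f\ell(1-2d-\epsilon)$ exists over $\Gamma_\ell$, getting a bound that is summable over $f$ and tends to $0$ as $\ell\to\infty$; a countable intersection over $\epsilon=1/k$ then gives the theorem. Every such diagram has an underlying \emph{decorated abstract van Kampen diagram}: a planar, simply connected $2$-complex with $f$ faces each of perimeter $\ell$, together with the datum of which relator of $\mathcal R$ is glued to each face (distinct faces may receive the same relator, and the edge labels are then determined). The first step is thus to union bound over decorated abstract diagrams and, for each one, to estimate the probability that the random set $\mathcal R$ \emph{fulfils} it, i.e.\ that the prescribed boundary words can be read off relators of $\mathcal R$ consistently.

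For the combinatorial side I would build the abstract diagram face by face, each new $\ell$-gon attached along a sub-path of the current boundary; since the boundary has length $O(f\ell)$, for fixed $f$ the number of combinatorial types with $f$ faces (and boundary $\le f\ell$) is at most polynomial in $\ell$, hence negligible next to the probabilistic gain below. This step uses that the diagram is \emph{reduced}, for otherwise cancelling pairs of faces make the count unbounded. The relevant bookkeeping identity is Euler's: summing perimeters over the $f$ faces counts each internal edge twice and each boundary edge once, so $f\ell=2E_{\mathrm{int}}+\vartheta D$, where $E_{\mathrm{int}}$ denotes the total internal edge length.

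The core is the probabilistic estimate for one fixed decorated type, obtained by exploring its faces in an order $F_1,\dots,F_f$ in which each $F_i$ $(i\ge 2)$ meets some earlier face. When $F_i$ carries a relator not used so far, placing it costs a factor $\le|\mathcal R|=(2n-1)^{d\ell}$ for the choice of that relator and a factor $\approx(2n-1)^{-\pi_i}$ because it must already agree with the length-$\pi_i$ portion of $\partial F_i$ fixed by earlier faces; when $F_i$ repeats an already-chosen relator only the consistency factor $(2n-1)^{-\pi_i}$ appears. Since $\sum_i\pi_i=E_{\mathrm{int}}$ and $m\le f$ relators are used in total,
$$
\mathbb P(\text{a fixed decorated type is fulfilled})\ \lesssim\ (2n-1)^{md\ell}(2n-1)^{-E_{\mathrm{int}}}\ \le\ (2n-1)^{\,fd\ell-(f\ell-\vartheta D)/2}.
$$
Multiplying by the (negligible) number of types and inserting $\vartheta D\le f\ell(1-2d-\epsilon)$, the exponent becomes $\le f\ell\bigl(d-\tfrac{2d+\epsilon}{2}\bigr)+o(f\ell)=-\tfrac{\epsilon}{2}f\ell+o(f\ell)$, so for $\ell$ large the whole probability is at most $(2n-1)^{-\epsilon f\ell/3}$; summing over $f$ and letting $\ell\to\infty$ finishes the argument.

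The part I expect to be genuinely delicate is making the combinatorial count sub-dominant to the probabilistic gain while keeping the constant multiplying $d$ exactly equal to $2$: one must track $E_{\mathrm{int}}$ and the already-determined lengths $\pi_i$ with no double counting, use reducedness together with the a.s.\ small-cancellation property $C'(2d+\epsilon)$ of $\Gamma_\ell$ and the fact that a generic relator has only logarithmically short self-overlaps to exclude degenerate gluings that would inflate the count, and deal with the regime where $f$ is large relative to $|\mathcal R|$. These are precisely the estimates carried out in \cite{Olli}, which for the purposes of this paper we simply invoke.
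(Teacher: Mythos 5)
This statement is not proved in the paper at all --- it is quoted verbatim as \cite[Theorem 13]{Olli} --- and your proposal, which recalls Ollivier's counting argument over decorated abstract van Kampen diagrams and then invokes \cite{Olli} for the delicate estimates, is therefore essentially the same approach as the paper's. One caveat worth keeping in mind: your union bound summed over all $f$ does not by itself handle diagrams with arbitrarily many faces (the number of decorated types grows super-exponentially in $f$, so the bound is only effective for $f$ up to roughly exponential in $\ell$), and this is precisely the regime Ollivier treats via the local-to-global (Gromov--Cartan--Hadamard) principle, so your closing appeal to \cite{Olli} is doing genuine work there rather than merely tidying constants.
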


\begin{proposition}\label{FacesBound}
Let $\mathbb D$ be a family of reduced diagrams that correspond to a short non-free solution of $V(\bar{x})=1$ in $\Gamma_\ell$. Let $K$ be the constant from Corollary \ref{limit1} and $r$ a bound on the number of appearances of every variable $x_i$, $1\leq i\leq t$. 

Then the total number of faces is almost surely strictly less than $N=Kr/(1-2d-\epsilon)$ for every $\epsilon>0$.
\end{proposition}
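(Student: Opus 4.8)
The plan is to combine the linear bound on short non-free solutions from Corollary~\ref{limit1} with the isoperimetric inequality quoted just above. First I would observe that, since $\bar{b}_\ell$ is a short non-free solution, Corollary~\ref{limit1} gives a constant $K$ (depending only on $V$ and $d$) with $|h_\ell| \le K\ell$, i.e.\ each letter $b_{\ell,i}$ in the solution has word length at most $K\ell$ when we sum over the generators appearing. Now consider the family $\mathbb{D}$ of $q$ reduced van Kampen diagrams, one for each triangular equation $x_{\sigma(j,1)}x_{\sigma(j,2)}x_{\sigma(j,3)}=1$, with boundary label the corresponding relation in $\Gamma_\ell$. The boundary length of the $j$-th diagram is at most $|b_{\sigma(j,1)}| + |b_{\sigma(j,2)}| + |b_{\sigma(j,3)}|$ (after cyclic reduction it can only be shorter). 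Summing over all $q$ diagrams, every variable $x_i$ contributes its length at most $r$ times, where $r$ bounds the number of appearances of each variable, so the total boundary length $\sum_j \vartheta D_j$ is at most $r \cdot |h_\ell| \le r K \ell$.

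Next I would apply the isoperimetric theorem to each diagram $D_j$ in the family: almost surely $\vartheta D_j > f_j \ell (1 - 2d - \epsilon)$, where $f_j$ is the number of faces of $D_j$. Summing over $j$ and writing $f = \sum_j f_j$ for the total number of faces, we get $f \ell (1 - 2d - \epsilon) < \sum_j \vartheta D_j \le r K \ell$. Dividing through by $\ell(1-2d-\epsilon)$ (which is positive since $d < 1/16 < 1/2$, choosing $\epsilon$ small enough) yields $f < Kr/(1 - 2d - \epsilon)$, as claimed. The bound is independent of $\ell$, which is the local-to-global content of the statement.

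One point that needs a little care is the reducedness hypothesis: the isoperimetric theorem applies to reduced diagrams, so I would note that any van Kampen diagram for a given boundary word can be replaced by a reduced one with no more faces and the same boundary (standard diagram surgery removing cancelling pairs of faces), so passing to $\mathbb{D}$ reduced loses nothing and the boundary labels are unchanged. A second small point is the quantifier on $\epsilon$: the theorem is ``for every $\epsilon>0$, almost surely'', and we want ``almost surely, for every $\epsilon$ the bound holds'' — but since for a fixed $V$ and $d$ it suffices to fix one small $\epsilon$ (say $\epsilon < 1/2 - 2d$ will do, or even smaller to sharpen the constant), there is no real issue; the statement as phrased just records that the inequality holds for each $\epsilon$. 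The main obstacle, such as it is, is bookkeeping: making sure the ``$r$ appearances'' counting correctly converts the single bound $|h_\ell| \le K\ell$ on the solution tuple into a bound on $\sum_j \vartheta D_j$, and that the triangular form of $V$ (so each diagram is a triangle-boundary diagram) is what lets us bound each $\vartheta D_j$ by three variable-lengths cleanly. Everything else is a one-line application of the two cited results.
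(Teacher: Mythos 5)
Your proposal is correct and follows essentially the same argument as the paper: bound the total boundary length of the family by $rK\ell$ using Corollary \ref{limit1} and the multiplicity bound $r$, then apply Ollivier's isoperimetric inequality $\vartheta \mathbb D > f\ell(1-2d-\epsilon)$ and divide. The extra remarks on reducedness and on the $\epsilon$ quantifier are fine but not needed beyond what the paper already assumes.
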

\begin{proof} By  Corollary \ref{limit1}, the value of each variable $x_i$ in the solution is no longer that  $K\ell$.
Therefore $rK\ell \geq \vartheta \mathbb D$, where  $\vartheta \mathbb D$ is the total boundary length of diagrams in $\mathbb D$. Now by the previous theorem we get  $\vartheta \mathbb D > f\ell (1-2d-\epsilon)$, for every $\epsilon>0$. Combining the two we get $f<Kr/(1-2d-\epsilon)$ for every $\epsilon>0$. 
\end{proof}

\begin{lemma} \label{planar} The number of planar graphs (non necessarily connected) with degrees of vertices not less than $3$  and with $f$ faces 
is less than $2^{10f}$.
\end{lemma}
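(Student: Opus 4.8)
The plan is the following. First, a reading of the hypothesis is needed: a vertex of degree $0$ or $1$ can carry an arbitrarily long pendant filament without changing the number of faces, so for the statement to hold ``without degree $2$ vertices'' must be read as ``of minimum degree at least $3$'' (no vertex of degree $0$, $1$ or $2$) --- exactly the situation obtained by suppressing degree-$2$ vertices in a non-filamentous van Kampen diagram, and connectedness need not be assumed. Now apply Euler's formula: for a plane graph with $V$ vertices, $E$ edges, $f$ faces and $k$ components one has $V-E+f=1+k$, while minimum degree $\ge 3$ gives $2E=\sum_v\deg(v)\ge 3V$; eliminating $E$ yields $V\le 2f-2-2k<2f$ (and, incidentally, $E<3f$). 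Thus such a graph has strictly fewer than $2f$ vertices.

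What remains is the enumerative fact that the number of isomorphism types of planar graphs on $v$ vertices is at most $C^{v}$ for some absolute constant $C<32$. One may quote the enumeration of planar graphs --- the exponential growth constant of labelled planar graphs is $\approx 27.23$, so the unlabelled count on $v$ vertices lies below $27.3^{v}$ for all but finitely many $v$, the rest checked directly; or, for connected graphs, one injects each isomorphism type into the set of rooted planar maps, whose number with a prescribed number of edges obeys Tutte's formula, the absence of low-degree vertices being used again to hold the exponential base down (a map of minimum degree $\ge 3$ is glued out of essentially $2$-connected pieces, whose maps grow only like $\approx 9.5$ per edge, comfortably below the $2^{10/3}$ per edge that $E<3f$ would otherwise require). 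Granting this and summing over $v<2f$,
\[
\#\{\text{planar graphs with no degree-$2$ vertex and $f$ faces}\}\ \le\ \sum_{v<2f}C^{v}\ <\ \frac{C^{2f}}{C-1}\ <\ 32^{2f}\ =\ 2^{10f},
\]
the strictness already coming from the gap $C<32$.

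The step carrying all the weight is the constant $C$. The blunt route --- bounding by rooted planar maps and using the bare estimate of about $12^{E}$ maps with $E$ edges together with $E<3f$ --- only yields roughly $2^{10.75f}$, which overshoots; so the degree hypothesis has to be used twice over, to force $V<2f$ and to depress the growth rate of the maps that actually occur, and making the input on that growth rate self-contained, rather than quoting the planar-graph enumeration, is where the real work lies. In the random-groups application one can in any case absorb a looser constant, since the $2$-cells of the diagrams in question carry the extra rigidity of being labelled by relators of one fixed length.
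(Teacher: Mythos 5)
Your overall route is the same as the paper's: read the hypothesis as minimum degree at least $3$, use Euler's formula together with $2E\geq 3V$ to bound the number of vertices linearly in $f$ (the paper gets $V\leq 2f-4$ and $E\leq 3f-2$; your version with components is the same computation), and then multiply by a per-vertex enumeration bound for planar graphs. The paper does exactly this, quoting \cite{BGHPS} for $p(n)\leq 2^{5n}$ and concluding $2^{5(2f-4)}<2^{10f}$.

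The genuine gap is in how you justify the per-vertex constant $C$. The inference ``the exponential growth constant of labelled planar graphs is $\approx 27.23$, so the unlabelled count on $v$ vertices lies below $27.3^{v}$'' is not valid: the labelled count is asymptotically $\gamma^{v}v!$ with $\gamma\approx 27.23$, which only gives a \emph{lower} bound of order $\gamma^{v}$ for the number of isomorphism types; an upper bound of the form $C^{v}$ does not follow from the labelled growth constant, and the growth rate of unlabelled planar graphs is not known to be that small -- the effective upper bounds come from succinct encodings, e.g.\ \cite{BGHPS}, which gives roughly $2^{4.91v}$, rounded in the paper to $2^{5v}=32^{v}$. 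Your alternative argument via rooted maps and Tutte's formula is only a sketch (``granting this''), so as written the enumeration input carrying all the weight is unproven. The repair is cheap: quote the encoding bound $\leq 2^{5v}$ as the paper does. Note also that you do not actually need $C<32$ strictly: with $C=32$ and $V\leq 2f-4$ (or your $V<2f$ together with the factor $1/(C-1)$ from the geometric sum) the total is still $<2^{10f}$, so the place where you locate ``the real work'' -- depressing the constant below $32$ -- is not needed at all.
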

\begin{proof}  The Euler formula for a planar graph with $n$ connected components is $v-e+f=n+1$, where $v,e,f$ are the numbers of vertices, edges and faces. If degrees of vertices $\geq 3$, then $3v\leq 2e$. Therefore $\frac{2}{3} e-e+f\geq n+1$ and  $e\leq 3f-3(n+1)$, $v\leq 2f-2(n+1)$.

The estimate was given in \cite{BGHPS}  on the number $p(v)$ of planar  graphs with  $v$  vertices, $p(v)\leq 2^{5v}$ (see also \cite{SV}). Therefore the number of planar graphs with vertices of degree $\geq 3$ is  not greater than $2^{10f }.$ 
\end{proof}
From now we assume that our DVK (respectively aDVK) consist of triangular diagrams (respectively abstract triangular diagrams). So each diagram has a form as in figure \ref{TrianglewithVariable} and variables are assigned to the sides of triangles. 

\begin{figure}[ht!]
\centering
\includegraphics[width=1\textwidth]{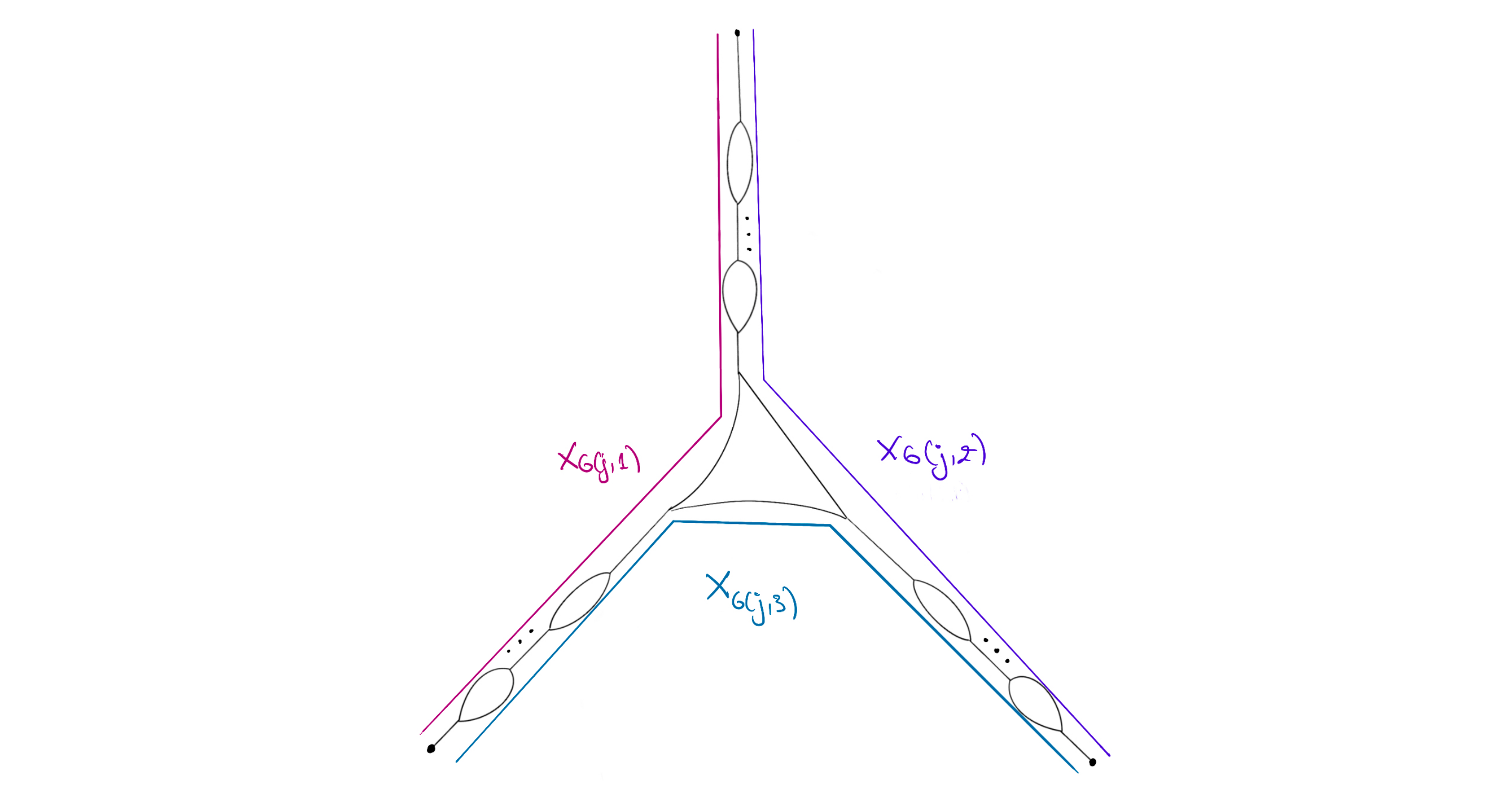}
\caption{A triangular diagram with assigned variables.}
\label{TrianglewithVariable}
\end{figure}

The number of ways to associate $n$ relators to $f$ faces is  $S(f,n)$ - the Stirling number (the number of ways to partition a set of $f$ elements into $n$ non-empty subsets).

\begin{lemma} \label{aDVK} 
Let $\ell$ be the length of each relator. The total number of families aDVK  with $f$ faces labelled by $n\leq f$ different relators is  bounded by $(K2^{13}\ell ^7)^{f+2q}S(f,n)$, where $S(f,n)$ is the Stirling number and $q$ is the number of triangles (which is equal to the number of triangle equations). 
\end{lemma}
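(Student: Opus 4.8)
The plan is to build up a family of abstract van Kampen diagrams (aDVK) by specifying, in order, (1) the underlying planar graph of each diagram, (2) how the $f$ triangular faces are partitioned and labelled by $n$ relators, (3) the combinatorial data gluing everything together, and multiplying the resulting counts. First I would invoke Lemma \ref{planar}: the $1$-skeleton of each non-filamentous component of a triangular van Kampen diagram, after suppressing degree $2$ vertices, is a planar graph with at most $f$ faces, so there are at most $2^{10f}$ choices for its homeomorphism type; accounting for the $q$ triangles (equations), the bridges connecting non-filamentous components, and the placement of the base vertex contributes at most a factor polynomial in $\ell$ and $q$ per face/triangle, which I will absorb into the stated bound $(K 2^{13}\ell^7)^{f+2q}$. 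Then the number of ways to assign the $n$ distinct relators to the $f$ faces (each relator used at least once) is exactly the Stirling number $S(f,n)$, which is the source of that factor in the statement.

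The remaining combinatorial decoration — the starting point and orientation on each $2$-cell, and the identification of which edge of each triangle is which variable — is what eats up the polynomial-in-$\ell$ factors. Since each relator has length $\ell$, choosing a cyclic starting position and an orientation on a $2$-cell costs at most $2\ell$; for each of the $f$ faces this gives $(2\ell)^f$. The triangular structure means each of the $q$ triangle-equations has three sides carrying variables, and matching up side lengths (which are bounded, via Corollary \ref{limit1}, by a linear function of $\ell$ — here the constant $K$ enters) and the way sides are subdivided into edge-paths contributes a further bounded-degree polynomial in $\ell$ per triangle, giving a factor of the form $(cK\ell^{a})^{q}$ for absolute constants $a,c$. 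Collecting the planar-graph factor $2^{10f}$, the decoration factor $(2\ell)^f$, the per-face bridge/base-vertex bookkeeping, and the per-triangle side-matching factor, and being generous with the exponents, everything fits inside $(K 2^{13}\ell^7)^{f+2q}$, and multiplying by $S(f,n)$ for the relator assignment finishes the count.

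I expect the main obstacle to be purely bookkeeping: making the exponent $7$ on $\ell$ and the exponent $f+2q$ genuinely correct (rather than merely "some polynomial of bounded degree"), i.e.\ carefully tracking how the triangular reduction, the bridges between non-filamentous components, the choice of base vertex, and the edge-subdivision data each contribute, and checking that the crude bound $2^{13}$ on the numeric constant really dominates all the small combinatorial factors ($2$'s from orientations, constants from Euler's formula, etc.). None of these steps is conceptually deep — the only external inputs are Lemma \ref{planar} for the graph count and Corollary \ref{limit1} (via $K$) to bound side lengths linearly in $\ell$ — so the proof is a matter of organizing the product of elementary counts and then coarsening to the clean form in the statement.
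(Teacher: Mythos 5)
Your proposal follows essentially the same route as the paper: the bound is obtained as the product of the planar-graph count from Lemma \ref{planar}, polynomial-in-$\ell$ factors for edge lengths, bridge placement and bridge lengths (with Corollary \ref{limit1} supplying the $K\ell$ bound), the $(2\ell)^f$ factor for orientations and starting points of faces, and the Stirling number $S(f,n)$ for assigning relators to faces. The only difference is that you leave the explicit bookkeeping undone where the paper records it as $\ell^{3f}\cdot(2\ell)^{2(f+2q)}\cdot(2\ell)^f\cdot(K\ell)^{f+2q}$ before coarsening to $(K2^{13}\ell^7)^{f+2q}$; since only the fact that the bound is polynomial in $\ell$ (with exponent linear in $f+2q$) is used later, this is the same argument.
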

\begin{proof}   
In order to transform a family of planar graphs into a family of abstract van Kampen diagrams we need to make certain choices. We explain below how these choices affect the total number of abstract van Kampen diagrams.

First, we multiply the estimate from Lemma \ref{planar} by $(\ell)^{3f}$, because each edge of the planar graph without degree 2 vertices  can  be given maximum  length $\ell$ to obtain an aDVK over a group with defining relator length $\ell$. 

Second, since after adding (at most) $\ell$ vertices for each edge we have more options for placing the bridges, we multiply by $(2\ell)^{2(f+2q)}$. Indeed, since in our case an aDVK is triangular, there are not more than $f+2q$ bridges. Moreover, adding (at most) $\ell$ vertices to each edge gives us $2\ell$ new vertices around the original vertex the bridge was attached to in each of the two non-filamentous components (it could be the case that a bridge is attached to only one non-filamentous component but calculating the possibilities as if each bridge is attached to two non-filamentous component only increases the estimate).  Hence, deciding between which of the $2\ell$ new vertices around the original vertex of the non-filamentous components we place the bridge to gives us $(\ell^2)^{f+2q}$ choices. 

Third, to account for orientation and starting point of each face we multiply by  $(2\ell)^f$.

Fourth, we  multiply by $(K\ell )^{f+2q}$  because the total length of  bridges (and, therefore, the length of each of them) is bounded by the length of the solution which is, in turn, bounded by $K\ell$. 

Finally, we multiply by the Stirling number to take into  account the choices that some faces are labelled by the same relator. 
\end{proof}

The important conclusion of the previous lemma is that the bound is polynomial in $\ell$.

\subsection{Non-filamentous aDVK  with decorated boundaries}\label{ReducedBoundary}

From now on all diagrams in aDVKs have triangular form (see figure \ref{TrianglewithVariable}), with $<N$ faces (where $N$ is the bound from Proposition \ref{FacesBound}) and relators  of length $\ell$. Recall that, by the definition of $aDVK$, this implies that we know the graphs, the initial point and the orientation for every face as well as which faces will be assigned the same defining relators. In addition, we know the number, position, and length of all bridges. Since the $aDVK$ is obtained from a (non-free) solution of $V(\bar x)=1$ we know which variable corresponds to which side of a triangle and we note in passing that, we may assume, every variable appears at least twice (and also that the $aDVK$ is non-trivial, i.e. it has at least one non-filamentous component). 

Our next goal is to ``decorate" the boundaries (and then the $2$-cells) of non-filamentous components of $aDVKs$. We will assign labels to subpaths of these boundaries in a way that subpaths given the same label can only be fulfilled by the same letters. This apparently imposes some restrictions in fulfilling an $aDVK$ - a fact that we will exploit in the next subsection.  


Suppose an $aDVK$ has $|\mathcal{D}|$ faces and $1\leq n\leq |\mathcal{D}|$ distinct relators. Then we say that an $n$-tuple $\bar w:=(w_1, \ldots, w_n)$ {\em fulfills the $aDVK$} if for each two faces $f_1, f_2$ bearing relators $i_1, i_2$, such that the $k_1$-th edge of $f_1$ is equal to the $k_2$-th edge of $f_2$, then the $k_1$-th letter of $w_{i_1}$ and the $k_2$-th letter of $w_{i_2}$ are inverse (when the orientations of $f_1$ and $f_2$ agree) or equal (when they disagree).  

\begin{proposition} \label{xtoy} 
Let $V(\bar x)=1$ be a system of equations and $\mathcal{AD}$ an $aDVK$ obtained from a (non-free) solution of $V(\bar x)=1$ in $\Gamma_\ell$. Then there exists a system of equations $\Sigma(\bar y,\bar c)$ over  $\F_n$ with parameters $\bar c$ such that:
\begin{enumerate}
\item each $c_i\in \bar c$ is assigned to either a side of a digon or a side of a simple triangle of some diagram in $\mathcal{AD}$ in a way that there is a correspondence between the elements of $\bar c$ and the set of all sides (of digons and triangles).
\item A solution of $V(\bar x)=1$ in $\Gamma_{\ell}$ that satisfies  $\mathcal{AD}$, i.e. $\mathcal{AD}$ is fulfillable and the (reduced)  words on the boundaries of the triangles coincide with the values of the corresponding variables, gives a solution of $\Sigma(\bar y,\bar c)$ in $\F_n$ (without cancellations), such that values of the parameters $\bar c$ satisfy the corresponding boundaries of non-filamentous components of the $\mathcal{AD}$ (i.e. digons and simple triangles).

\item  A solution of $\Sigma(\bar y,\bar c)$ in $\mathbb F_n$ such that values of the parameters $\bar c$ satisfy 
the corresponding relations in $\Gamma_\ell$ (for example if $c_1,c_2,c_3$ are such so that $c_1c_2c_3$ corresponds to the boundary of a simple triangle in some diagram in $\mathcal{AD}$, they satisfy $c_1c_2c_3=1$ in $\Gamma_\ell$), gives a solution of $V(\bar x)=1$ in $\Gamma_\ell$.
\end{enumerate}
\end{proposition}
\begin{proof}
We will show how to obtain such a parametric system. Suppose, for definiteness, that each diagram in $\mathcal{AD}$ has at most three digons (see figure \ref{reps1}). The general case is similar, but hard to handle notation wise.

Recall that $ \Gamma_\ell=\langle e_1,\ldots ,e_n|\mathcal R\rangle$ is  a quotient of $\mathbb F_n=\langle e_1,\ldots ,e_n\rangle.$ Every solution of $V(\bar x)=1$ can be considered as a homomorphism $\psi: \mathbb{F}(\bar x)\rightarrow \Gamma_\ell$, where $\F(\bar{x})$ is the free group in $\{\bar{x}\}$. We denote by $g_{\sigma(j,k)}$ a geodesic word equivalent to $\psi(x_{\sigma(j,k)})$. Let $\theta (g_{\sigma(j,k)})$
be the same word in $\mathbb F_n$. We call it the representative of a solution.
Then there exist
$h_{k}^{(j)}, \bar h_{k}^{(j)}, c_{k}^{(j)}, \bar c_{k}^{(j)},\hat c_{k}^{(j)}\in \mathbb{F}_n$, for $j\leq q$ and $k\leq 3$,  with the following properties:
\begin{itemize}

\item $c_{1}^{(j)}c_{2}^{(j)}c_{3}^{(j)}  =  1$ in $\Gamma_\ell$, \label{RepsCond1} and there exists a van Kampen diagram over $\Gamma_\ell$ with  reduced boundary $c_{1}^{(j)}c_{2}^{(j)}c_{3}^{(j)}$ or $c_{1}^{(j)}=c_{2}^{(j)}=c_{3}^{(j)}=1$. In addition $\bar c_1^{(j)}\hat c_3^{(j)}=1, \bar c_2^{(j)}\hat c_1^{(j)}=1, \bar c_3^{(j)}\hat c_2^{(j)}=1$ in $\Gamma_{\ell}$. 
\item \label{RepsCond2} The representatives satisfy the following equations in $\mathbb{F}_n$ (see figure \ref{reps1}):
\begin{eqnarray}
\theta (g_{\sigma(j,1)}) & = & h_{1}^{(j)} \bar c_{1}^{(j)}\bar h_{1}^{(j)}c_1^{(j)} \left(\bar h_{2}^{(j)}\right)^{-1}\hat c_1^{(j)}\left(h_{2}^{(j)}\right)^{-1} \label{CanonReps1}\\
\theta (g_{\sigma(j,2)}) & = & h_{2}^{(j)} \bar c_{2}^{(j)}\bar h_{2}^{(j)}c_2^{(j)} \left(\bar h_{3}^{(j)}\right)^{-1}\hat c_2^{(j)}\left(h_{3}^{(j)}\right)^{-1}\\
\theta (g_{\sigma(j,3)}) & = &  h_{3}^{(j)} \bar c_{3}^{(j)}\bar h_{3}^{(j)}c_3^{(j)} \left(\bar h_{1}^{(j)}\right)^{-1}\hat c_3^{(j)}\left(h_{1}^{(j)}\right)^{-1}.\label{CanonReps3}
\end{eqnarray}
\end{itemize}

\begin{figure}[ht!]
\centering
\includegraphics[width=1\textwidth]{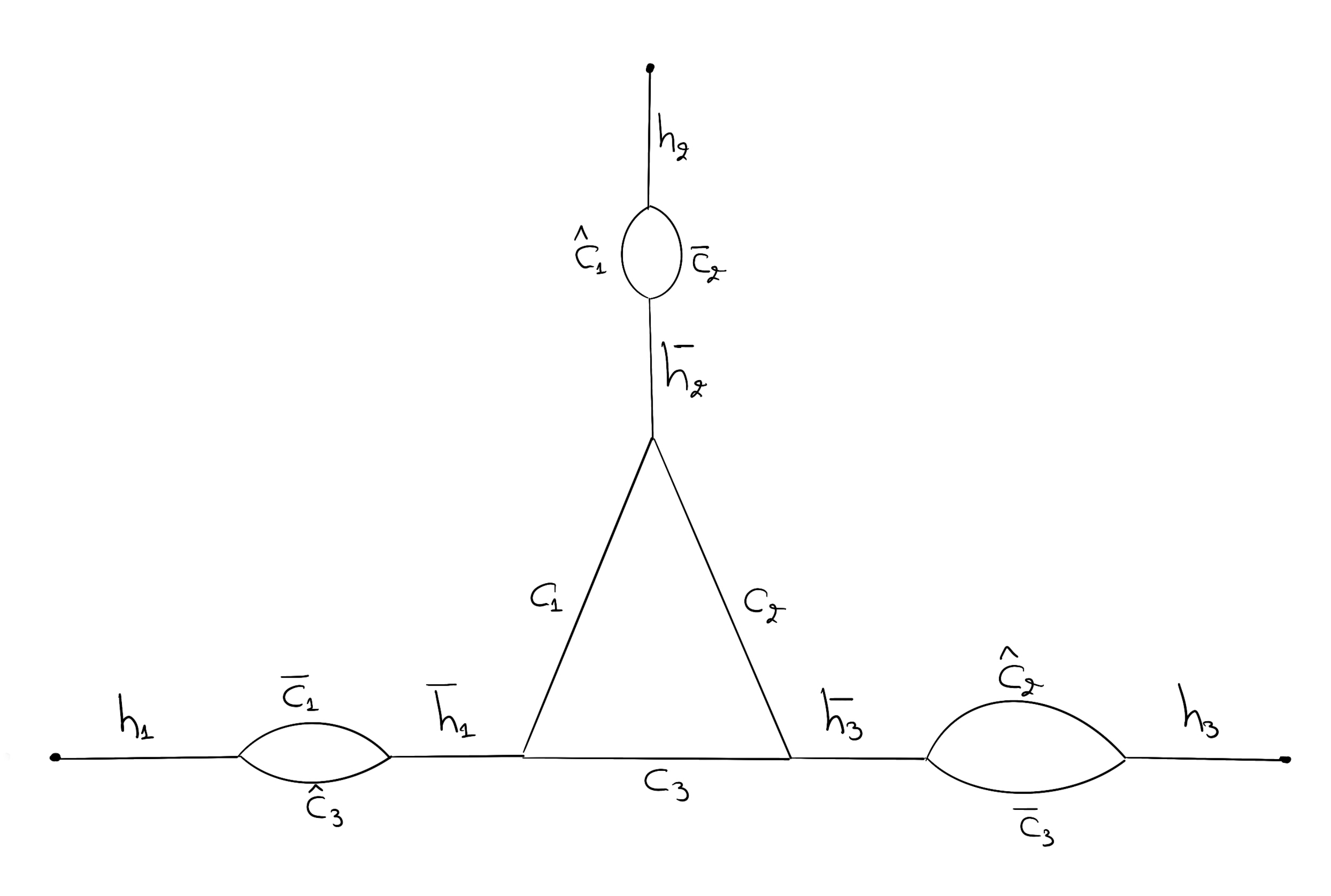}
\caption{A solution split in pieces.}
\label{reps1}
\end{figure}

In particular, when $\sigma(j,k)=\sigma(j',k')$ (which corresponds to two occurrences in $V(\bar{x})=1$ of the variable $x_{\sigma(j,k)}$) we have in $\mathbb{F}_n$:
\begin{equation}
h_{k}^{(j)} \bar c_{k}^{(j)}\bar h_{k}^{(j)}c_k^{(j)} \left(\bar h_{k+1}^{(j)}\right)^{-1}\hat c_k^{(j)}\left(h_{k+1}^{(j)}\right)^{-1}=h_{k'}^{(j')} \bar c_{k'}^{(j')}\bar h_{k'}^{(j')}c_k^{(j)} \left(\bar h_{k'+1}^{(j')}\right)^{-1}\hat c_{k'}^{(j')}\left(h_{k'+1}^{(j')}\right)^{-1}.\label{Hequality}
\end{equation} without cancellation.

We next construct a parametric system of equations, $\Sigma(\bar y,\bar c)$, where $\bar{c}$ are the parameters, such that a solution of $V(\bar{x})=1$ in $\Gamma_{\ell}$ induces a solution of $\Sigma(\bar y,\bar c)$ in $\mathbb{F}_n$, and vice versa a solution of $\Sigma(\bar y,\bar c)$ in $\mathbb{F}_n$ with some extra conditions on the parameters $\bar{c}$ induces a solution of $V(\bar{x})=1$ in $\Gamma_\ell$.  

The system of equations, $\Sigma(\bar y,\bar c)$, with parameters $\bar c=\{c_1,\ldots ,c_{3q}\}$  in $\mathbb{F}_n$ is constructed as follows:

For $9q$ parameters  $c_{1}^{(j)},c_{2}^{(j)},c_{3}^{(j)},\bar c_{1}^{(j)},\bar c_{2}^{(j)},\bar c_{3}^{(j)},\hat c_{1}^{(j)},\hat c_{2}^{(j)},\hat c_{3}^{(j)}$ in $\mathbb{F}_n$, for  $j\leq q$, 
we build a system $\Sigma(\bar y, \bar c)=1$
consisting of the equations:
\begin{equation}
y_{k}^{(j)} \bar c_{k}^{(j)}\bar y_{k}^{(j)}c_{k}^{(j)} \left(\bar y_{k+1}^{(j)}\right)^{-1}\hat c_k^{(j)}\left(y_{k+1}^{(j)}\right)^{-1}=y_{k'}^{(j')} \bar c_{k'}^{(j')}\bar y_{k'}^{(j')}c_k^{(j)} \left(\bar y_{k'+1}^{(j')}\right)^{-1}\hat c_{k'}^{(j')}\left(y_{k'+1}^{(j')}\right)^{-1} \label{Eqn:SC1}
\end{equation}
where an equation of type   (\ref{Eqn:SC1}) is included whenever  $\sigma(j,k)=\sigma(j',k')$. 

We first show that a solution of $V(\bar{x})=1$ in $\Gamma_\ell$ induces a solution of $\Sigma(\bar{y},\bar{c})=1$ in $\mathbb{F}_n$. 
%
Let $\psi:\mathbb{F}(\bar x)\rightarrow \Gamma_\ell$ be a solution of $V(\bar x)=1$ in $\Gamma_\ell$, with $\psi(x_{\sigma(j,k)})=g_{\sigma(j,k)}$.  We can extract elements $h_{k}^{(j)}, \bar h_{k}^{(j)}, c_{k}^{(j)}, \bar c_{k}^{(j)},\hat c_{k}^{(j)}$ in $\mathbb{F}_n$, for $j\leq q$ and $k\leq 3$ so that if we plug them in the corresponding $\bar{y}$ and  $\bar{c}$ they solve $\Sigma(\bar y, \bar c)=1$ in $\mathbb{F}_n$.  
These elements are just pieces of words $\theta (g_{\sigma(j,k)})$ and we even know their lengths.
 
Conversely, for any solution  
 of $\Sigma(\bar y, \bar c)=1$  in $\mathbb F_n$ such that the values of the parameters satisfy $c_{1}^{(j)}c_{2}^{(j)}c_{3}^{(j)}=1$, $\bar c_1^{(j)}\hat c_3^{(j)}=1, \bar c_2^{(j)}\hat c_1^{(j)}=1, \bar c_3^{(j)}\hat c_2^{(j)}=1$ in $\Gamma_\ell$ one can do the following: for $1\leq s\leq l$, we pick any $j,k$ such that $\sigma(j,k)=s$ and we define:  
$$
\rho (x_s) = y_{k}^{(j)} \bar c_{k}^{(j)}\bar y_{k}^{(j)}c_{k}^{(j)} \left(\bar y_{k+1}^{(j)}\right)^{-1}\hat c_k^{(j)}\left(y_{k+1}^{(j)}\right)^{-1}. $$

Now giving to each $x_s$, for $1\leq s\leq l$, the value obtained by plugging in the values of $\bar{y}$ and $\bar{c}$ in $\rho(x_s)$, we get values for the $x_s$'s that satisfy $V(\bar{x})=1$ in $\Gamma_\ell$. Note, that we can always assume that a variable $x_s$ appears at least twice in $V(\bar{x})=1$, otherwise we can just remove the equation the single variable  appears in. Hence, we will necessarily have all corresponding variables represented in $\Sigma(\bar{y},\bar{c})$. 
\end{proof}

We next define the notion of a {\em decoration} on the boundary of an $aDVK$. 

\begin{definition}[van Kampen diagram decoration]
A decoration of an $aDVK$ by a set $P:=\{{p_1}^{\pm 1},\ldots ,{p_k}^{\pm 1}\}$ is an assignment from $P$ to the set of $1$-cells  on the boundary of each $adVK$ (counted with multiplicity as we transverse the boundary) such that: 
\begin{itemize}
    \item each $1$-cell is assigned an element from $P$.
    \item each element $p_i$, for $i\leq k$, is assigned to at least two $1$-cells (as $p_i$ or $p_i^{-1}$).
\end{itemize}

We call the assignment a pre-decoration if each $p_i$ is allowed to appear also once. 
\end{definition}

We may also decorate ``unknown words" of fixed length. Unknown words can be thought of as place holders for letters of an alphabet. The reader should consider $2$-cells in $adVKs$ as such place holders.  

\begin{definition}[Relators decoration]
Let $R$ be a set of unknown words of length $\ell$. A decoration of $R$ by a set $Q:=\{{q_1}^{\pm 1},\ldots,{q_k}^{\pm 1}\}$ is an assignment from $Q$ to the set of letters of each word in $R$ such that: 
\begin{itemize}
    \item each such letter is assigned an element from $Q$.
    \item each element $q_i$, for $i\leq k$, is assigned to at least two letters (as $q_i$ or $q_i^{-1}$).
\end{itemize}
We call the assignment a pre-decoration if each $q_i$ is allowed to appear also once.
\end{definition}

\begin{proposition} \label{sol} 
Let $V(\bar x)=1$ be a system of equations and $\mathcal{AD}$ an $aDVK$ obtained from a (non-free) solution of $V(\bar x)=1$ in $\Gamma_\ell$. Let $\Sigma (\bar y,\bar c)$ be the corresponding system of equations over $\mathbb{F}_n$ (see Proposition \ref{xtoy}). Then $\Sigma (\bar y,\bar c)$ induces a natural pre-decoration on $\mathcal{AD}$. 
\end{proposition}
\begin{proof}

Each variable in $\bar{x}$ corresponds to a side of a geodesic triangle in some triangular diagram in $\mathcal{AD}$. When the same variable covers different sides we get an equation in $\Sigma(\bar y, \bar c)$. It is visually helpful, but not essential, to place each variable $x_1, x_2, \ldots, x_l$ in order, one after the other. Then, since we know the length of each, we split each variable in as many unit intervals as its length. Following the steps of the proof of Proposition \ref{xtoy} we may place above each variable intervals labeled by $y's$ and $c's$ where we additionally record their orientation (see figure \ref{generalized}).   
\begin{figure}[ht!]
\centering
\includegraphics[width=1.3\textwidth]{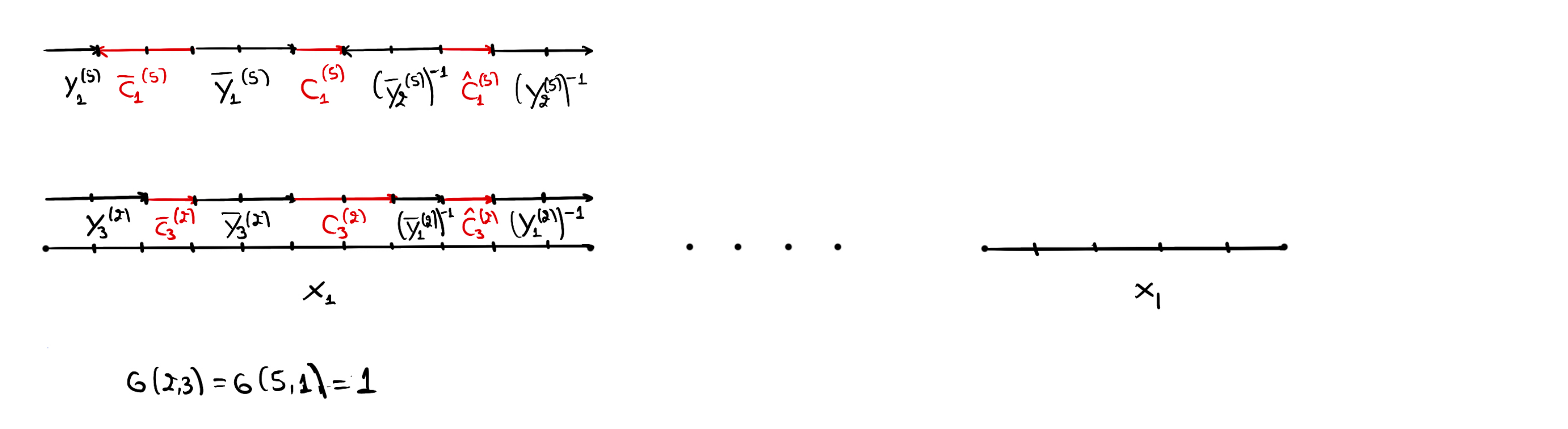}
\caption{System $\Sigma (\bar y,\bar c)=1$, where the parameters $\bar c$ are in red color.}
\label{generalized}
\end{figure}

Notice that each  $y_i^{(j)}$ appears exactly twice. Indeed, by construction, it appears in only one diagram in $\mathcal{AD}$ (the $j$-th diagram) and  either in two variables in $\bar x$ or in two places of the same variable. On the other hand, similar considerations easily show that every $c_i^{(k)}$ appears exactly once (we suppress notation-wise $\bar c$ and $\hat c$ or more generally the use of a double sup-index for $c$). 

From the data at hand, we now construct a finite graph $\mathbb G$ as follows: the vertices of $\mathbb G$ correspond to the unit intervals we split the variables $\bar x$ in, there is an edge between two vertices, say $x_i^{r_1}$, $x_j^{r_2}$ (the $r_1$-th unit interval of $x_i$ and the $r_2$-th unit interval of $x_j$), of $\mathbb G$ if the same unit interval part of some $y_i^{(j)}$ (that, recall, appears exactly twice) lies above both $x_i^{r_1}$ and $x_j^{r_2}$. 

Suppose $\mathbb{G}$ has $k$ connected components. Then $\mathcal{AD}$ is covered by $P:=\{p_1^{\pm},\ldots, p_k^{\pm}\}$ in the following way. 
Each connected component is assigned an element $p_i$. Consequently, every unit interval of $y_i^{(j)}$ or $c_i^{(k)}$ that lies above the vertices of this component is assigned an element $p_i$ or $p_i^{-1}$ according to its orientation. We note in passing that the assignment is consistent, i.e. we cannot assign $p_i$ and $p_i^{-1}$ to the same unit interval of some $y_i^{(j)}$, as then we have that $\Sigma (\bar y,\bar c)$ is inconsistent.



\end{proof}

\begin{remark}
Given a system of $q$ triangular equations $V(\bar x)=1$ and an $aDVK$ with $q$ many triangular diagrams not necessarily obtained from a solution of $V(\bar x)=1$ in $\Gamma_\ell$ we may still try to decorate it in the same fashion as in the proof of the previous proposition. In this case, though, we might run into an inconsistency of orientations in the assignment of the $p_i$'s. This implies that there is no solution of $V(\bar x)=1$ that satisfies the given $aDVK$. If, on the other hand, we can decorate it, then we can actually find a solution of $V(\bar x)=1$ in $\Gamma_\ell$ that fulfills the given $aDVK$.   
\end{remark}

To clarify the construction in the proof of the above proposition we give an example. 

\begin{example}
We consider the following system of (triangular) equations:
\[
\left\{
\begin{array}{l}
x_1x_1x_4=1 \\
x_2x_2x_5=1\\
x_3x_3x_6=1\\
x_4x_5x_6=1
\end{array}
\right.
\]
and the following family of abstract van Kampen diagrams (see figure \ref{aDVKDecor}). Note that we do not assume that they are obtained from a solution of the above system. 
\begin{figure}[ht!]
\centering
\includegraphics[width=1.\textwidth]{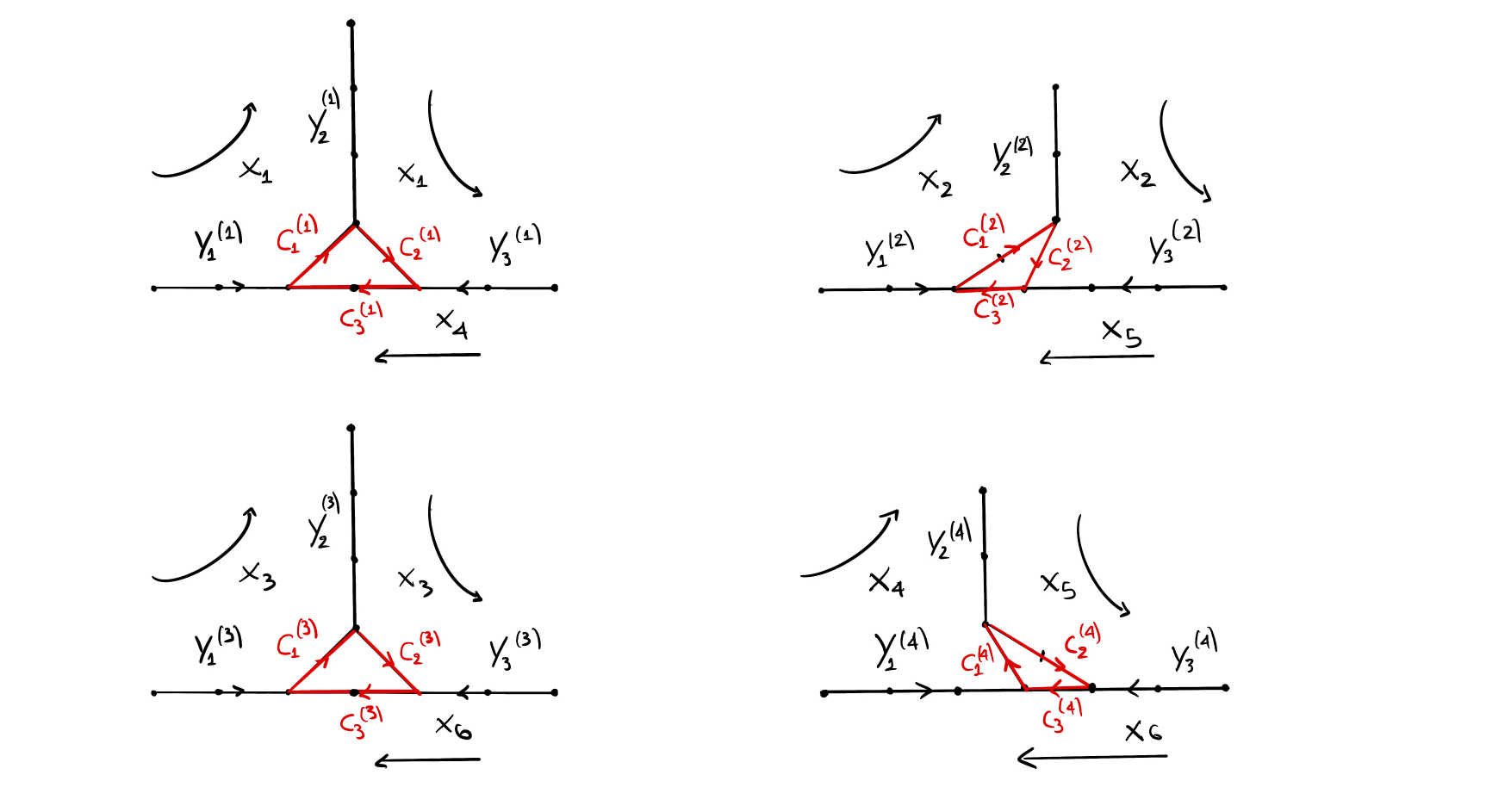}
\caption{A family of abstract dVKs with fixed lengths.}
\label{aDVKDecor}
\end{figure}

In the next figure \ref{Decor} we show how we can obtain the graph that will help us decorate the $aDVK$.
\end{example}

\begin{figure}[ht!]
\centering
\includegraphics[width=1.\textwidth]{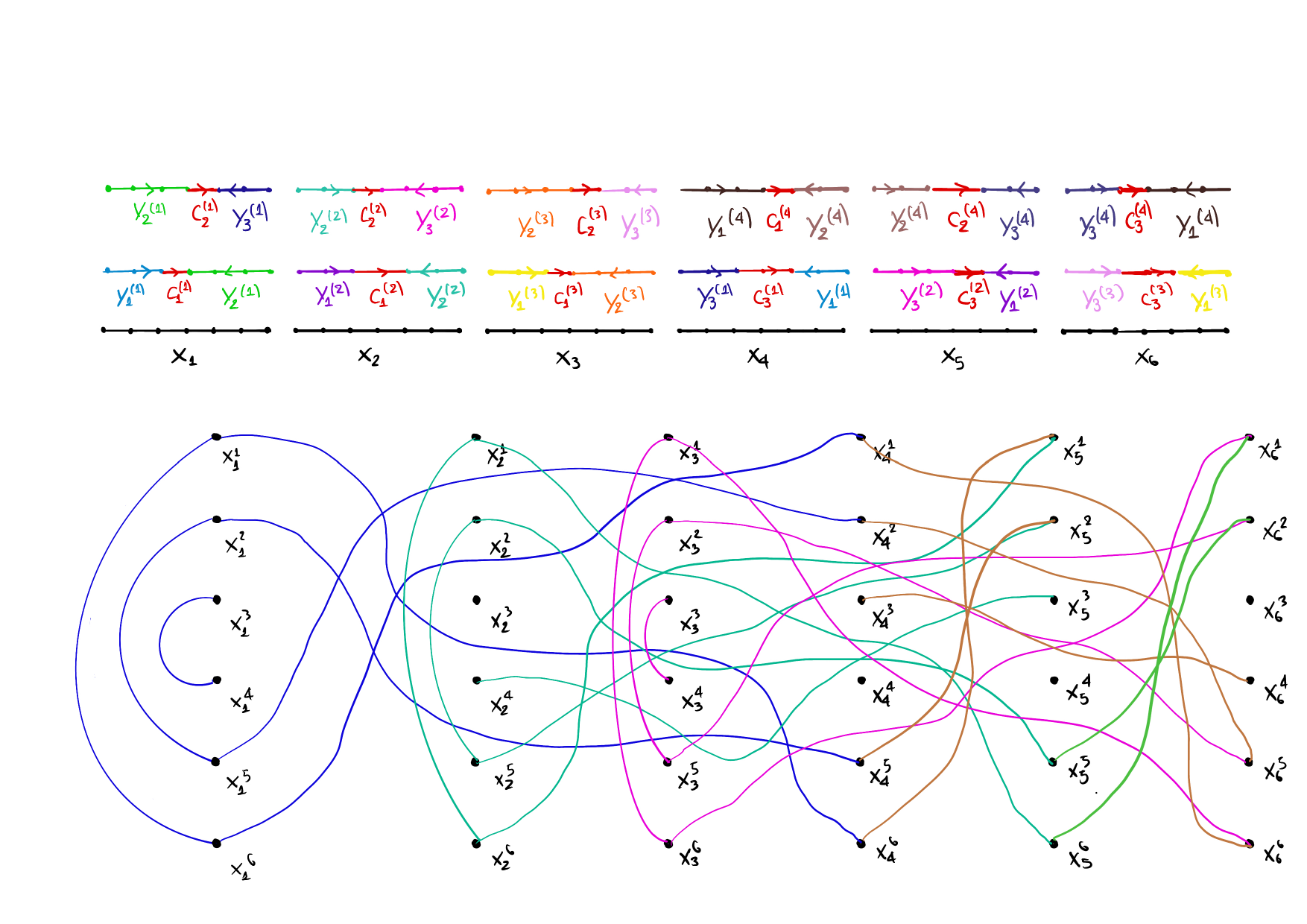}
\caption{The graph obtained from the system of equations and the given $aDVK$.}
\label{Decor}
\end{figure}

\newpage

For the discussion that follows we fix a system of equations $V(\bar x)=1$, a family $aDVK$ constructed from a (non-free) solution of $V(\bar x)=1$ and a pre-decoration $P$ of $aDVK$ obtained as in the proposition above.  
We argue that such a pre-decoration induces a decoration on a subfamily of $aDVK$ after removing some triangular diagrams from it. Indeed, suppose, without loss of generality, that the element $p_k$ appears only once on the union of the boundaries in the family $aDVK$. 
Then we can remove the diagram in which $p_k$ appears. After removing this diagram we might have another element, say $p_{k-1}$, appearing only once. We continue in the same fashion removing diagrams for which an element from the set of decorations $P$ appears only once. In finitely many steps (as $P$ is finite) we would either have no diagrams, or a family of trivial diagrams, i.e. diagrams with no $2$-cells, or a family with at least one non-trivial diagram. We claim that only the latter can happen. Indeed, in the first two cases, one can easily see, that the solution of $V(\bar x)=1$ for which we obtained the $aDVK$ is a free solution. 

We next observe that a decoration or a pre-decoration on an $aDVK$, induces a pre-decoration on the non-filamentous components of $aDVK$. Assume, that a decoration has been obtained for a subfamily of an $aDVK$ as in the previous paragraph. We show how, up to removing some non-filamentous components, we can obtain a decoration of the non-filamentous components of this subfamily. The idea is identical to the idea of the previous paragraph. We inductively remove non-filamentous components in which an element $p_i$ appears only once (in the whole set of non-filamentous components). Eventually, we must have a non-empty set of non-filamentous components which is decorated, otherwise, again, the solution we started with is a free solution.    

The above discussion implies the following.

\begin{corollary}  \label{sol2} 
Let $V(\bar x)=1$ be a system of equations and $\mathcal{AD}$ an $aDVK$ obtained from a (non-free) solution of $V(\bar x)=1$ in $\Gamma_\ell$. Suppose $\mathcal{AD}$ is naturally pre-decorated as in Proposition \ref{sol}. Then there is a non-empty family of non-filamentous components of $\mathcal{AD}$ which inherits a decoration from $\mathcal{AD}$. 
\end{corollary}

\begin{figure}[ht!]
\centering
\includegraphics[width=0.8\textwidth]{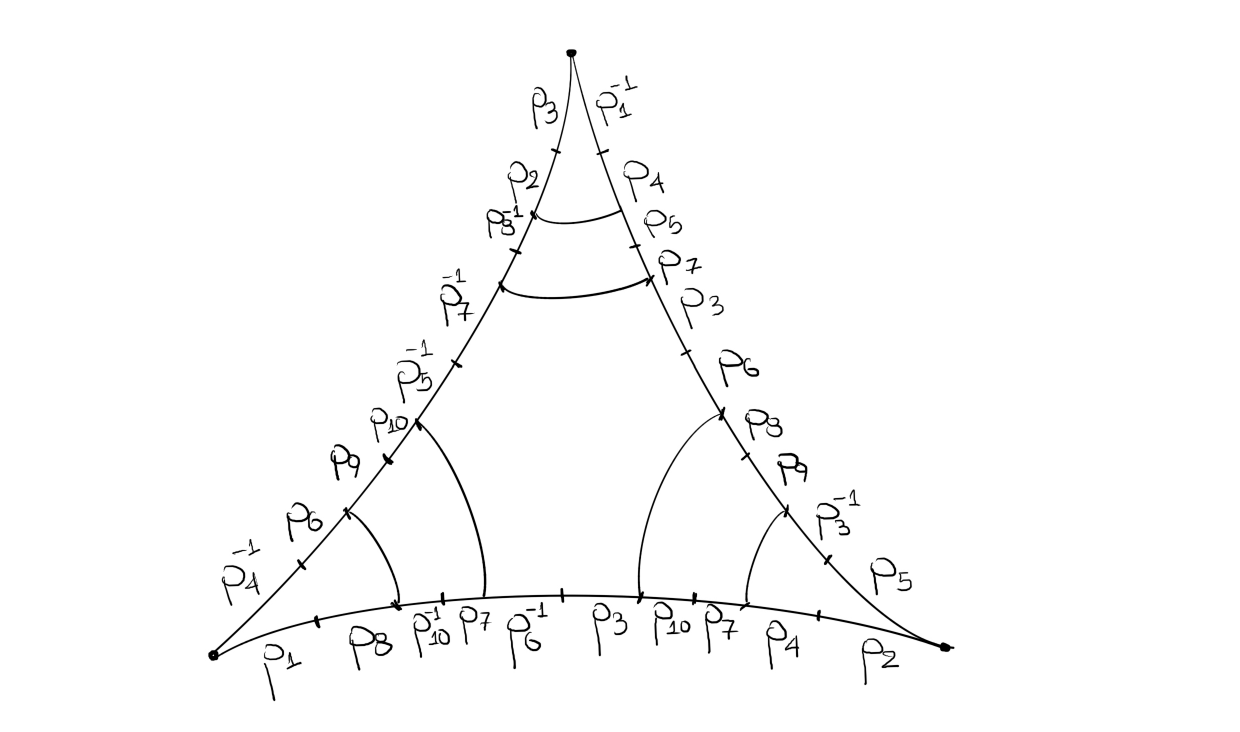}
\caption{A decorated non-filamentous component.}
\label{decoration3}
\end{figure}

We will next show how from the decoration of the subfamily of non-filamentous components we can obtain a pre-decoration on the family of 2-cells (seen as unknown words) that appear in them.

\begin{lemma}\label{red}
Suppose $\mathcal{NFAD}$ is a family of non-filamentous abstract van Kampen diagrams which is decorated by $P$. Then we may obtain a pre-decoration $Q$ on the family of $2$-cells of $\mathcal{NFAD}$, which is inherited from $P$ and it is either a decoration or, if not, there is an element $q_i\in Q$ that appears only in the boundary of the non-filamentous components and in the same position of several $2$-cells that are assigned the same relator.       
\end{lemma}

\begin{proof} 
Suppose $\mathcal{NFAD}$ contains $|\mathcal{D}|$ faces and $1\leq n\leq |\mathcal{D}|$ distinct relators assigned to the faces. The pre-decoration $Q$ on the family of $n$  relators  is obtained in a similar way the pre-decoration was obtained in the proof of Proposition \ref{sol}. 
We position all distinct relators $r_1,\ldots,r_n$ in order. Each relator corresponds to an interval of length $\ell$ and we split each such interval in $\ell$ unit subintervals. 

Above each relator we place the $2$-cells that correspond to it (with their orientation) again split in unit intervals and according to the fixed position that comes with the data of $\mathcal{NFAD}$. We now construct a graph $\mathbb{G}$ in analogy to the construction in Proposition \ref{sol}. The vertices of $\mathbb{G}$ are the unit subintervals we split the $n$ relators in. Moreover, two vertices are connected by an edge if above the unit subintervals that correspond to the vertices there are edges that coincide in the diagram. Suppose the graph $\mathbb{G}$ has $m$ connected components. Then each component will be assigned an element from $Q:=\{q_1^{\pm}, \ldots, q_m^{\pm}\}$. Some components will contain vertices, that have unit intervals above them that are already decorated by $P$, we identify the corresponding $q$'s with the existing $p$'s in a way that $P\subseteq Q$.   



This gives a pre-decoration on the family of $n$  relators that fails to be a decoration if there is a $q_i$ that appears only once. This $q_i$ cannot label a connected component that has an internal part of some $2$-cell above some unit subinterval that corresponds to a vertex of the component. The only possibility is that $q_i$ labels a connected component that has a single vertex and unit intervals above it correspond to the same edge of $2$-cells that only appears in the boundary of some diagrams in $\mathcal{NFAD}$.   


\end{proof}

\subsection{Probability to fulfill a decorated  aDVK}\label{DDec}

\begin{proposition} \label{A} Suppose we have a non-filamentous  aDVK with $f$ faces that should be filled in by $n\leq f$ relators. Let $P$ be a decoration on the set of $n$ relators.  Then  the number of letters that can be chosen arbitrarily in the relators  to fulfill the aDVK  is bounded from above by $\frac{1}{2} n\ell$.     

The probability that this aDVK  can be labelled by $n$ relators of the group with $(2m-1)^{d\ell}$ random defining relators of length $\ell$ such that the labeling satisfies $P$
is not more than $$(2m-1)^{-n\ell(1/2-d)}.$$
\end{proposition}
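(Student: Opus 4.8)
The plan is to prove the two assertions in turn; the first is elementary, the second a union bound whose exponents match the claim exactly. Recall that a decoration $P$ on the $n$ relators subdivides their disjoint boundaries, of total length $n\ell$, into subintervals each carrying a label $q_i^{\pm1}$, with every $q_i$ occurring at least twice. Refining to single letters, the $n\ell$ letter positions are partitioned into orbits --- positions forced to carry the same generator, subject to the orientation convention --- and every orbit has at least two members, because every $q_i$ appears at least twice. Hence there are at most $\tfrac12 n\ell$ orbits, and choosing a generator for one representative of each orbit determines the labels of all $n$ relators (whenever a consistent labelling exists at all); this is the bound $\tfrac12 n\ell$ on the number of freely choosable letters.

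For the probability, write $R=(2m-1)^{d\ell}$ for the number of random relators and $|S_\ell|=2m(2m-1)^{\ell-1}$ for the number of reduced words of length $\ell$, and recall that the $n$ relators labelling the aDVK are taken pairwise distinct. If the random set $\mathcal R$ admits a labelling of the aDVK satisfying $P$, then there is an $n$-tuple $(r_1,\dots,r_n)$ of pairwise distinct reduced words of length $\ell$, consistent with $P$, with $r_1,\dots,r_n\in\mathcal R$; so it suffices to bound the sum, over all such tuples, of the probabilities that $r_1,\dots,r_n$ all lie in $\mathcal R$. I count the $P$-consistent tuples by filling the positions of $r_1$ from left to right, then those of $r_2$, and so on: a position that is the first one encountered in its orbit is free, with $2m$ choices if it is the initial letter of its relator and at most $2m-1$ choices otherwise (so as to keep the word reduced), while every position whose orbit was already visited admits at most one generator. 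Since there are at most $\tfrac12 n\ell$ orbits and at most $n$ relator-initial positions, the number of $P$-consistent tuples is at most $(2m)^{n}(2m-1)^{\frac12 n\ell-n}$. On the other hand, as the $R$ relators are drawn independently and uniformly from $S_\ell$, the probability that $n$ prescribed distinct words all occur among them is at most $(R/|S_\ell|)^{n}$, by a union bound over which of the $R$ draws realises each of the $n$ words. Multiplying, the probability that $\mathcal R$ admits such a labelling is at most
\[
(2m)^{n}(2m-1)^{\frac12 n\ell-n}\left(\frac{(2m-1)^{d\ell}}{2m(2m-1)^{\ell-1}}\right)^{\!n}=(2m-1)^{\frac12 n\ell-n+nd\ell-n\ell+n}=(2m-1)^{-n\ell(1/2-d)},
\]
the powers of $2m$ cancelling and $-n+n=0$; this is the asserted bound.

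The only delicate step is the count of $P$-consistent tuples: one must carry the reducedness constraint so that a non-initial free position contributes a factor $2m-1$ rather than $2m$, because it is precisely the resulting $(2m-1)^{-n}$, together with the $(2m)^{-n}$ coming from $|S_\ell|=2m(2m-1)^{\ell-1}$, that makes the exponents telescope to exactly $-n\ell(1/2-d)$; cruder bookkeeping leaves bounded but nonzero parasitic factors. Everything else is routine, and the geometric content of the aDVK is used only through the fact that a labelling of its $f$ faces amounts to a choice of $n$ pairwise distinct relators consistent with $P$, which can only cut the probability down relative to the estimate above.
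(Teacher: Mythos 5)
Your proposal is correct and follows essentially the same first-moment argument as the paper: the decoration forces every letter-orbit to have size at least two, giving at most $\tfrac12 n\ell$ free letters, and then a union bound over the $(2m-1)^{d\ell}$ random relators (equivalently, over $P$-consistent tuples of words) yields the exponent $-n\ell(1/2-d)$. Your only refinement is the careful bookkeeping of $2m$ versus $2m-1$ showing that the parasitic factors cancel exactly, where the paper simply works with $(2m-1)^{\ell}$ as the count of length-$\ell$ words; this does not change the substance of the proof.
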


\begin{proof} The sum of the lengths of the relators that can label faces in the  aDVK is $n\ell$, one interval of length $\ell$ for each of the $n$ relators. 
The existence of the decoration on  the set of $n$ relators implies that we can choose not more letters than the half of the total length of the relators.  This gives us not more than $\frac{1}{2} n\ell$ degrees of freedom. 

The total number of possibilities 
to choose $n$ different words of length $\ell$   is $(2m-1)^{n\ell}$.  We can only select not more than
$(2m-1)^{n\ell /2}$ letters arbitrarily in the relators that fulfill the aDVK . Therefore the probability  that $n$ given words of length $\ell$ can label such aDVK  is less than
$$\frac{(2m-1)^{n\ell/2}}{(2m-1)^{n\ell}}=(2m-1)^{-n\ell/2}.$$

 This is the probability that $n$ given relators of a random presentation
fulfill the diagram. Now there are $(2m-1)^{d\ell}$ relators in a random presentation. Since the probability of the union of events is not greater than the sum of their probabilities,  the probability that we can find $n$ relators fulfilling the aDVK 
is at most 
$$((2m-1)^{d\ell})^n(2m-1)^{-n\ell/2}=(2m-1)^{- \ell n (1/2-d)}.$$ This proves the proposition.
\end{proof}

It is possible that the  decoration on the boundary of a non-filamentous aDVK  that can be labelled by $n$ relators does not give a decoration, but only a pre-decoration on the set of $n$ relators. In this case we have the following result.

\begin{corollary}\label{A1} Suppose we have a non-filamentous  decorated aDVK with $f$ faces that should be filled in by $n\leq f$ relations and the decoration on the boundary of aDVK was imposed by  a non-free solution of $V(\bar x)=1.$ Then the probability that this aDVK  can be labelled by $n$ relations of the group with $(2m-1)^{d\ell}$ random relations of length $\ell$ such that the labeling satisfies $P$
is not more than $$(2m-1)^{-\ell(1/2-d)}.$$
\end{corollary}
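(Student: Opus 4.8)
The plan is to reduce to Proposition~\ref{A} using the dichotomy of Lemma~\ref{red}. Recall that $P$ is the pre-decoration induced on the $n$ relators by the assumed fulfilling labelling of the aDVK, and $\bar P$ is its refinement to a \emph{genuine} decoration of the boundary coming from the non-free solution of $V(\bar x)=1$. By Lemma~\ref{red} exactly one of two things happens. \textbf{Case 1: $P$ is already a genuine decoration on the set of $n$ relators.} Then Proposition~\ref{A} applies verbatim and bounds the probability in question by $(2m-1)^{-n\ell(1/2-d)}$. Since the diagram contains at least one face, $n\geq 1$, and since $d<1/16<1/2$ gives $1/2-d>0$, we get $(2m-1)^{-n\ell(1/2-d)}\leq (2m-1)^{-\ell(1/2-d)}$, which is the desired estimate.

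\textbf{Case 2: $P$ is merely a pre-decoration.} Here Lemma~\ref{red} produces a variable $p_i$ of $\bar P$ occupying a fixed position of a relator $r$ all of whose copies meet only the boundary, with $p_i$ occurring there several times. The plan is to strip off such boundary-only relators one by one, exactly in the manner of the reduction carried out just before Corollary~\ref{sol2}: a boundary-only relator whose once-occurring piece is matched, along the boundary, only to further copies of itself imposes no new constraint on the remaining data, so one may first fulfil the smaller aDVK $\mathcal{AD}'$ obtained by deleting the faces carrying $r$ (and any bridges that thereby become filamentous), and then solve for $r$ over $\F_n$ via the second part of Proposition~\ref{xtoy}. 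Iterating this, one ends either at the empty diagram -- which would mean every relator can be solved for over $\F_n$, i.e.\ the original solution of $V(\bar x)=1$ was free, contradicting the hypothesis -- or at a non-empty aDVK with $1\leq n'<n$ relators on which the induced pre-decoration is now genuine, because by Lemma~\ref{red} all once-occurring pieces lived on the relators we deleted. A fulfilling labelling of the original aDVK compatible with $P$ restricts to a fulfilling labelling of $\mathcal{AD}'$ compatible with its (now genuine) decoration, so the probability we want is at most that of fulfilling $\mathcal{AD}'$, which by Proposition~\ref{A} is at most $(2m-1)^{-n'\ell(1/2-d)}\leq (2m-1)^{-\ell(1/2-d)}$.

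The step requiring the most care -- and the likely main obstacle -- is the bookkeeping of the peeling-off in Case~2: one must verify that deleting the boundary-only relators does not convert a repeated piece into a once-occurring one on the surviving relators (this is exactly what re-applying Lemma~\ref{red} after each deletion controls, by confining all offending pieces to boundary-only relators), that the diagram remains reduced and non-filamentous after each deletion, and that the ``solve for $r$ over $\F_n$'' step is genuinely available, i.e.\ that the boundary constraints attached to a stripped relator are consistent over $\F_n$ once the rest of the diagram is fulfilled. Granting this, the proof is complete; every probability estimate used is just Proposition~\ref{A} applied to a sub-diagram together with the trivial bound $n'\geq 1$.
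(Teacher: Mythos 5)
Your proposal is correct and follows essentially the same route as the paper: the dichotomy of Lemma \ref{red}, peeling off faces carrying the boundary-only relator with a once-occurring piece (the paper handles the possible loss of reducedness by cancellations, as you flag), invoking non-freeness to guarantee at least one relator survives, and then applying Proposition \ref{A} with $n'\geq 1$ to get $(2m-1)^{-\ell(1/2-d)}$. The extra bookkeeping you worry about in Case 2 is exactly what the paper's iterative ``remove a relator, re-examine the induced (pre-)decoration'' step addresses, so no genuinely different argument is involved.
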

\begin{proof} In the proposition we computed the probability in the case there exists a decoration on the set of $n$ relators. If we only have a pre-decoration, using  Lemma \ref{red} we can remove the face labelled by the relator that has a variable that occurs once,  everywhere. The boundary of the new diagram may become non reduced, but we can make cancellations and obtain a decoration or pre-decoration on the boundary of the remaining diagram with $n-1$ relations.  The problem is eventually reduced to the  decorated aDVK with fewer than $n$ relators.
But at the end we should necessarily have at least one relator left, otherwise the solution has a pre-image in a free group that solves the equation.
\end{proof}

\begin{figure}[ht!]
\centering
\includegraphics[width=.8\textwidth]{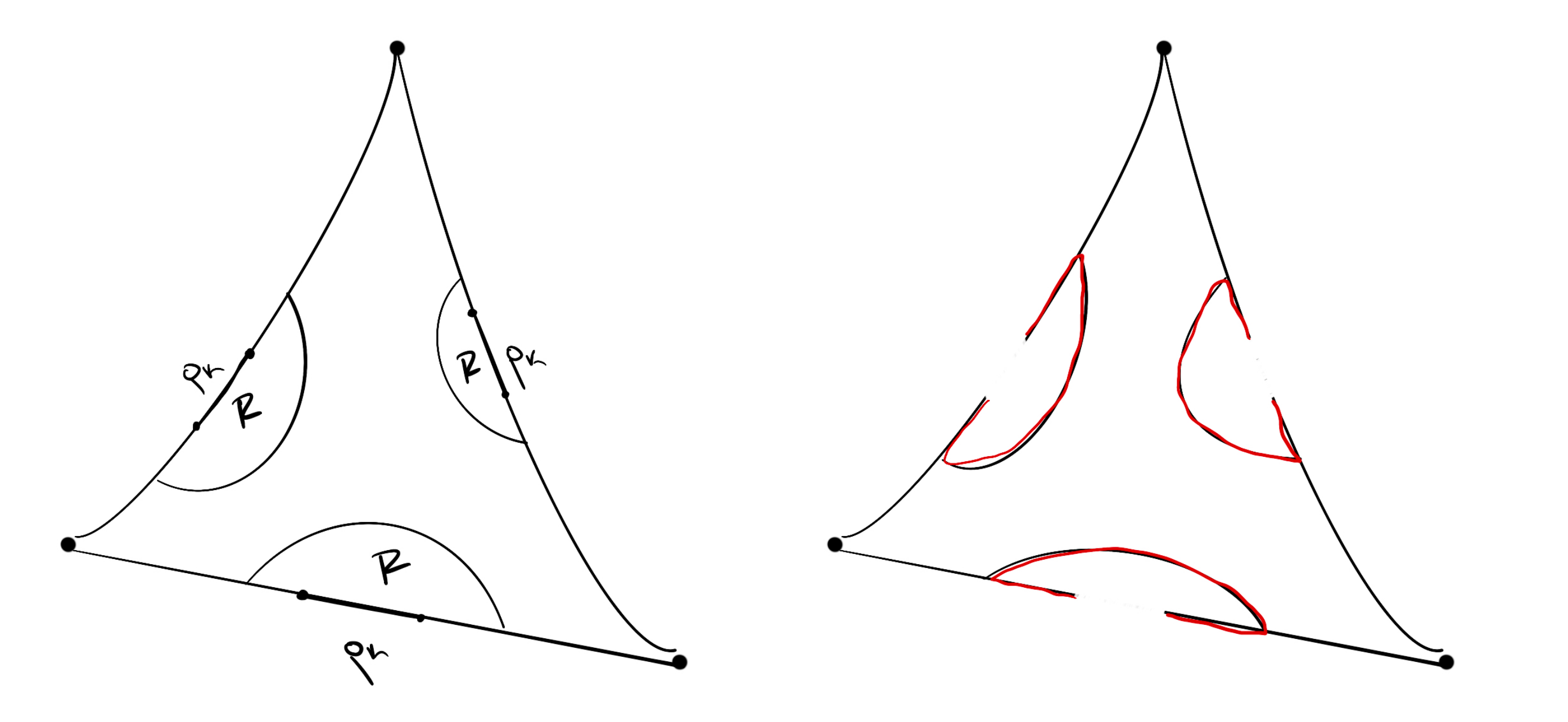}
\caption{Decoration with one face less.}
\label{decorP}
\end{figure}

\begin{proposition} 
Let $d<1/16$. The probability that for a random group  $\Gamma_\ell$ of density $d$ at length $\ell$, there exists a non-trivial DVK  with decoration $P$ corresponding to a solution of $\Sigma (\bar y, \bar c)$  that comes from a short non-free solution of  $V(\bar x)=1$ approaches $0$ as $\ell\rightarrow\infty$.
\end{proposition}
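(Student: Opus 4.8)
The plan is to assemble the pieces developed in this section into a single union bound over all possible abstract diagrams and their decorations. Fix $d<1/16$ and $\epsilon>0$ small, and let $N = Kr/(1-2d-\epsilon)$ be the bound on the number of faces from Proposition \ref{FacesBound}, where $K$ is the constant from Corollary \ref{limit1} and $r$ bounds the number of appearances of each variable in the triangular system $V(\bar x)=1$. First I would record that, almost surely, if a short non-free solution of $V(\bar x)=1$ exists in $\Gamma_\ell$, then by Proposition \ref{FacesBound} there is one whose family of reduced van Kampen diagrams has at most $N$ faces, and hence (by Corollary \ref{sol2}) there is an associated decorated family $\mathcal{SNFAD}$ of non-filamentous diagrams with at most $N$ faces carrying a decoration $P$ coming from a solution of $\Sigma(\bar y,\bar c)=1$ without cancellation. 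So it suffices to bound the probability that \emph{some} such decorated aDVK with $\le N$ faces can be fulfilled by the random relators.

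The key estimate is the competition between the number of candidate decorated aDVKs and the fulfillment probability of each. By Lemma \ref{aDVK}, the number of families of aDVK with $f\le N$ faces labelled by $n\le f$ distinct relators is bounded by $(K2^{13}\ell^7)^{f+2q}S(f,n)$, which is \emph{polynomial in $\ell$} (since $f,q,n$ are bounded by absolute constants). On top of each aDVK one must count decorations; but a decoration is a subdivision of boundary words of total length $\le n\ell$ into subintervals together with a pairing/labelling, so the number of decorations on a fixed aDVK is at most something like $2^{O(n\ell)}$ — unfortunately \emph{not} polynomial. This is the point where one has to be careful: the exponential factor $2^{O(n\ell)}$ coming from the number of decorations must be beaten by the fulfillment probability. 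By Proposition \ref{A} and Corollary \ref{A1}, the probability that a fixed decorated non-filamentous aDVK (with decoration $P$ imposed by a non-free solution) is fulfilled by $n$ of the $(2m-1)^{d\ell}$ random relators is at most $(2m-1)^{-n\ell(1/2-d)}$ when $P$ is a genuine decoration on the set of relators, and at most $(2m-1)^{-\ell(1/2-d)}$ in the pre-decoration case. Since $d<1/16<1/2$, the exponent $-\ell(1/2-d)$ has a strictly negative coefficient bounded away from $0$; provided $1/2-d$ exceeds the (fixed, $\ell$-independent) constant governing the $2^{O(n\ell)}$ decoration count divided by $\ell$, the product of the decoration count and the fulfillment probability still goes to $0$. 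Here the hypothesis $d<1/16$ (equivalently $C'(1/8)$, which is what made Corollary \ref{limit1} and hence $N$ available in the first place) is exactly what gives the needed slack.

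Concretely, the plan is: (1) write the event ``$\Gamma_\ell$ has a short non-free solution of $V(\bar x)=1$ realized by a family with decoration $P$ coming from $\Sigma(\bar y,\bar c)=1$'' as contained in the union, over all abstract triangular aDVKs $\mathcal{AD}$ with $f\le N$ faces, over all admissible labellings by $n\le f$ relators, and over all decorations $P$ on the resulting $\mathcal{SNFAD}$, of the event that the chosen $n$ random relators fulfill that decorated diagram; (2) apply the union bound: the total probability is at most
$$\sum_{f\le N}\sum_{n\le f} (K2^{13}\ell^7)^{f+2q}\, S(f,n)\cdot (\text{number of decorations})\cdot (2m-1)^{-\ell n(1/2-d)},$$
using Corollary \ref{A1} in the pre-decoration case; (3) observe that the first two factors are polynomial in $\ell$, the decoration count is at most $2^{c\ell}$ for an absolute constant $c$, and $(2m-1)^{-\ell(1/2-d)}=e^{-\ell(1/2-d)\ln(2m-1)}$ decays exponentially with a rate that, since $d<1/16$, dominates $c\ell$ (one may, if needed, absorb the $2^{c\ell}$ into the decoration-counting step by noting the decoration is already constrained by $P$ to match positions on the relators, as in Lemma \ref{red}, so effectively only boundedly-many-in-$\ell$ genuinely free choices remain beyond those counted by $n\ell/2$ in Proposition \ref{A}); hence each summand, and thus the finite sum, tends to $0$ as $\ell\to\infty$. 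The main obstacle, and the step deserving the most care, is controlling the number of decorations: one must argue that, once the aDVK and its relator-labelling are fixed, a decoration $P$ compatible with a solution of $\Sigma(\bar y,\bar c)=1$ does not introduce more than a polynomial-in-$\ell$ (or at worst a sub-exponential with small enough rate) number of further choices, so that Proposition \ref{A}/Corollary \ref{A1} wins; this is where the triangular normalization of $V(\bar x)=1$ and the rigidity of geodesics in $C'(1/8)$ groups from Section \ref{Short} are doing the real work.
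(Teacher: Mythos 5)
Your overall architecture (union bound over abstract diagrams counted by Lemma \ref{aDVK}, fulfillment probability from Proposition \ref{A} and Corollary \ref{A1}, polynomial-versus-exponential comparison) matches the paper's proof, but you insert an extra union over ``all decorations $P$'' on a fixed aDVK, estimate it as $2^{O(n\ell)}$, and then try to beat it with the factor $(2m-1)^{-\ell(1/2-d)}$. That step is a genuine gap, and as stated it fails: the hypothesis $d<1/16$ does \emph{not} give the slack you claim. For instance with $n=1$ and two generators, $(2m-1)^{\ell(1/2-d)}=3^{\ell(1/2-d)}\approx(1.62)^{\ell}$, which is dominated by even the crudest count $2^{\ell}$ of subdivisions of a single boundary word of length $\ell$, let alone the additional choices of variable assignments and pairings. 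Your parenthetical remark that the decoration is ``already constrained by $P$ to match positions on the relators'' points toward the fix but is left as the acknowledged ``main obstacle,'' so the argument is not closed.

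The missing idea is that no union over decorations is needed at all, because the decoration is canonically determined by the aDVK. An aDVK $\mathcal{AD}$ already records the lengths of all edges and bridges, the positions of the variables of the triangular system, which sides are identified (each $y_i$ appears exactly twice), and which faces bear the same relator number together with how their internal pieces coincide. The construction in Proposition \ref{sol} (the graph $\mathbb{G}$ on unit subintervals and its connected components) and in Lemma \ref{red} uses only this combinatorial data, so the decorated boundary of $\mathcal{SNFAD}$ and the (pre-)decoration $P$ on the set of $n$ relators are functions of $\mathcal{AD}$, not additional choices; Corollary \ref{sol2} then says any non-free solution realizing $\mathcal{AD}$ must respect this one decoration. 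Consequently the event in the proposition is contained in the union, over the at most $\sum_{f\le N}\sum_{n\le f}(K2^{13}\ell^{7})^{f+2q}S(f,n)$ aDVKs (polynomial in $\ell$ since $f\le N$ and $q$ are fixed), of the events that the random relators fulfill that aDVK compatibly with its determined decoration, each of probability at most $(2m-1)^{-\ell(1/2-d)}$ by Corollary \ref{A1}. The product then tends to $0$, with no exponential decoration count to fight. If you repair your argument by replacing the decoration enumeration with this determinacy observation, you recover exactly the paper's proof.
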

\begin{proof}
Indeed, the probability defined in the statement of the proposition is bounded by the probability in Corollary \ref{A1} multiplied by the sum from 1 to $N$ of the number of aDVK with $f$ faces obtained in Lemma \ref{aDVK}.
Thus, we have the bound:

$$\frac{\Sigma _{f =1}^{N}\Sigma _{n=1}^f (K2^{13}\ell ^7)^{f+2q}S(f,n)}{(2m-1)^{-\ell(1/2-d)}}$$

Finally, it is easy to see that the sum approaches zero when $\ell\rightarrow\infty$, as the numerator is a polynomial in $\ell$ while the denominator is exponential in $\ell$.
\end{proof} 

The above proposition implies the next result. 

\begin{proposition} \label{univ} 
Let $d<1/16$. Let $V(\bar{x})=1$ be a system of equations. Suppose $\Gamma_\ell$ is a random group of density $d$ at length $\ell$ and $\pi:\F_n\rightarrow\Gamma_\ell$ the natural quotient map. 

Then, every solution $\bar b_\ell$ of $V(\bar x)=1$ in $\Gamma_\ell$ is the image of a solution $\bar c_\ell$ of $V(\bar x)=1$ in $\F_n$, under the canonical quotient maps, i.e.  $\pi(\bar{c}_\ell)=\bar{b}_\ell$, with probability approaching $1$ as $\ell$ goes to infinity. 
\end{proposition}

Following the discussion in Section \ref{universal}, our main result follows.

\begin{theorem}\label{univ2}
Let $d<1/16$. Let $\sigma$ be a universal sentence in the language of groups. Then $\sigma$ is almost surely true in a random group (of density $d$) if and only if it is true in a nonabelian free group.
\end{theorem}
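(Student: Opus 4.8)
The plan is to prove the two implications separately; the implication ``$\Gamma_\ell$ almost surely satisfies $\sigma\ \Rightarrow\ \mathbb{F}$ satisfies $\sigma$'' is immediate, while the converse is where Proposition \ref{univ}, and hence the whole of Sections \ref{Short} and \ref{DDec}, is cashed in. For the first implication I will argue contrapositively: if $\mathbb{F}\models\neg\sigma$ then, $\sigma$ being universal, $\neg\sigma$ is logically equivalent to an existential sentence true in $\mathbb{F}$; nonabelian free groups all share the same existential theory, and by \cite{OW} a random group almost surely contains a nonabelian free subgroup, so $\Gamma_\ell$ almost surely satisfies every existential sentence true in $\mathbb{F}$, in particular $\neg\sigma$. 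Hence it is not the case that $\Gamma_\ell$ almost surely satisfies $\sigma$.

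For the converse, assume $\mathbb{F}\models\sigma$. As in Section \ref{universal}, I will first rewrite $\sigma$ as a finite conjunction of sentences
$$\tau\ :=\ \forall\bar{x}\bigl(V(\bar{x})=1\rightarrow w_1(\bar{x})=1\lor\cdots\lor w_k(\bar{x})=1\bigr),$$
where the system $V(\bar{x})=1$ gathers the equalities obtained by moving the negated inequalities of $\sigma$ into the hypothesis; each conjunct $\tau$ is then true in $\mathbb{F}$, and since the conjunction is finite it suffices to prove each $\tau$ almost surely true in $\Gamma_\ell$. Fix one such $\tau$ and set $G_V:=\F(\bar{x})/ncl(V(\bar{x}))$. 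If $G_V$ is a limit group I will argue that $\tau$ is a tautology: $G_V$ is fully residually free, so if no $w_i$ were trivial in $G_V$ there would be a morphism $G_V\to\mathbb{F}$ injective on $\{w_1,\dots,w_k\}$, whose restriction to the generating tuple would be a solution of $V(\bar{x})=1$ in $\mathbb{F}$ at which all the $w_i$ are nontrivial, contradicting $\mathbb{F}\models\tau$; hence some $w_i\equiv 1$ in $G_V$, so $\tau$ holds in every group. Thus I may assume $G_V$ is not a limit group, which is exactly the standing hypothesis of Sections \ref{Short} and \ref{DDec}.

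Under this assumption the proof is completed by Proposition \ref{univ}: with probability tending to $1$ as $\ell\to\infty$, every solution $\bar{b}_\ell$ of $V(\bar{x})=1$ in $\Gamma_\ell$ is the image under $\pi:\mathbb{F}_n\twoheadrightarrow\Gamma_\ell$ of some $\bar{c}_\ell\in\mathbb{F}_n$ with $V(\bar{c}_\ell)=1$. On that event, given any $\bar{b}_\ell$ with $V(\bar{b}_\ell)=1$, I pick such a lift $\bar{c}_\ell$; since $\mathbb{F}_n\models\tau$ and $V(\bar{c}_\ell)=1$, some $w_i(\bar{c}_\ell)=1$ in $\mathbb{F}_n$, and applying $\pi$ gives $w_i(\bar{b}_\ell)=1$ in $\Gamma_\ell$. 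So $\Gamma_\ell\models\tau$ on an event of probability $\to 1$, and intersecting over the finitely many conjuncts gives that $\Gamma_\ell$ almost surely satisfies $\sigma$.

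All the real difficulty lies upstream of this assembly. The hard part is Proposition \ref{univ} itself: it rests on Corollary \ref{limit1}, the linear bound $\abs{h_i}\le Ki$ on the length of a shortest non-free solution obtained through the shortening argument for eighth groups with hyperbolicity constants tending to infinity (Section \ref{Short}), together with the face bound of Proposition \ref{FacesBound} and the counting estimate of Section \ref{DDec} showing that a decorated abstract van Kampen diagram is fulfilled by random relators only with probability at most $(2m-1)^{-\ell(1/2-d)}$ --- an exponentially small quantity that beats the polynomial-in-$\ell$ number of such diagrams. The only point in the assembly above that needs care is that the ``no non-free solution'' event produced by Proposition \ref{univ} is uniform over all solutions of $V(\bar{x})=1$ in $\Gamma_\ell$, and this is precisely what the local-to-global bound on the number of faces (Proposition \ref{FacesBound}) guarantees.
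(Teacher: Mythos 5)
Your proof is correct and follows essentially the paper's own route: the easy direction via the existential theory and the free subgroups of the random group (as in the introduction), and the hard direction by reducing $\sigma$ to conjuncts $\forall\bar{x}(V(\bar{x})=1\rightarrow\bigvee w_i(\bar{x})=1)$, disposing of the case where $G_V$ is a limit group (some $w_i$ is then trivial in $G_V$, making the conjunct valid in every group), and then invoking Proposition \ref{univ} to lift solutions to $\mathbb{F}_n$ and push the resulting identity $w_i=1$ back through $\pi$ --- precisely the assembly the paper leaves implicit after Proposition \ref{univ}, with your case split also ensuring the $w_i$'s witness non-residual-freeness so the machinery of Sections \ref{Short} and \ref{DDec} applies. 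One cosmetic point: to obtain a morphism $G_V\to\mathbb{F}$ killing no $w_i$, take the map injective on $\{1,w_1,\ldots,w_k\}$ rather than on $\{w_1,\ldots,w_k\}$.
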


Finally, we observe that our main result generalizes to universal sentences with constants from a fixed nonabelian free group  $\mathbb{F}_n:=\langle e_1, \ldots, e_n\rangle$. For this we formally work in an expansion of the language of groups $\mathcal{L}$ where we add $n$ constant symbols, $c_1, \ldots, c_n$ that will be interpreted as $e_1, \ldots, e_n$ in $\mathbb{F}_n$ and as the images of those generators under the canonical maps in a random group $\Gamma_\ell$. We first prove a generalization of Proposition \ref{univ}.

\begin{proposition} \label{univ3} 
Let $d<1/16$. Let $V(\bar{x} ,e_1,\ldots ,e_n)=1$ be system of equations over $\mathbb{F}_n$. Suppose $\Gamma_\ell$ is a random group of density $d$ at length $\ell$ and $\pi:\F_n\rightarrow\Gamma_\ell$ the natural quotient map. 

Then, every solution $\bar b_\ell$ of $V(\bar x)=1$ in $\Gamma_\ell$ is the image of a solution $\bar c_\ell$ of $V(\bar x)=1$ in $\F_n$, under the canonical quotient maps, i.e.  $\pi(\bar{c}_\ell)=\bar{b}_\ell$, with probability approaching $1$ as $\ell$ goes to infinity. 
\end{proposition} 
\begin{proof}
We observe that if a system of equations has constants, then we can consider it as a system without constants, substituting constants with variables, with the extra restriction that these variables will be given fixed values. Thus, in this case, the probability to satisfy an $aDVK$ is less than the probability of satisfying it without restrictions, and the latter has been shown that goes to $0$.
\end{proof}


\begin{theorem}
Let $d<1/16$. Let $\sigma$ be a universal sentence in the language $\mathcal{L}\cup \{c_1,\ldots ,c_n\}$. Then $\sigma$ is almost surely true in a random group (of density $d$) with constants interpreted as $\pi (e_1),\ldots ,\pi (e_n)$ if and only if it is true in $\mathbb{F}_n$.
\end{theorem}
\begin{proof}
All arguments in Section \ref{universal} go through word for word in the presence of constants. Therefore, Proposition \ref{univ3} implies that if $\sigma$ is true in $\mathbb{F}_n$, then it is almost surely true in a random group (of density $d$). 

For the other direction, we need to show that if an existential sentence with constants
$$\exists \bar x (V(\bar e, \bar x)=1\wedge w_1(\bar e, \bar x)\not =1\wedge\ldots\wedge w_k(\bar e, \bar x)\not =1)$$ is true in $\mathbb F_n$, then it is almost surely true in a random group. Suppose $V(\bar e, \bar b)=1\wedge w_1(\bar e,\bar b)\not =1\wedge\ldots\wedge w_k(\bar e, \bar b)\not =1$ in $\mathbb F_n$, and $\bar b_R$ is the image of $\bar b$ in a random group. Then $V(\bar e, \bar b_R)=1$ and, by Proposition \ref{univ3}, the probability that $w_1(\bar e, \bar b_R) =1\vee\ldots\vee w_k(\bar e, \bar b_R)=1$ goes to $0$ as $\ell\rightarrow\infty$.
\end{proof}
 
We conclude our paper with a conjecture generalizing J. Knight's original conjecture. 

\begin{conjecture}
Let $d<1/16$. Suppose $\sigma$ is a first-order sentence in the language of groups. Then $\sigma$ is almost surely true in a random group of density $d$ if and only if it is true in a nonabelian free group.   
\end{conjecture}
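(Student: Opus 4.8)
The first thing I would do is use that $T_{fg}$, the theory of the free group, is complete (Sela \cite{SelFree}, Kharlampovich–Myasnikov \cite{KMFree}). Hence for any sentence $\sigma$ either $\mathbb{F}\models\sigma$ or $\mathbb{F}\models\lnot\sigma$, and it is enough to prove the single implication: \emph{if $\mathbb{F}\models\sigma$ then $\Gamma_\ell$ almost surely satisfies $\sigma$}. Indeed, applying this to $\lnot\sigma$ recovers the reverse direction, since for a fixed $\Gamma_\ell$ exactly one of $\sigma,\lnot\sigma$ holds, so the events ``$\Gamma_\ell\models\sigma$'' and ``$\Gamma_\ell\models\lnot\sigma$'' cannot both have probability tending to $1$. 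Next, put $\sigma$ in prenex form $\forall\bar x_1\exists\bar x_2\cdots Q_r\bar x_r\,\varphi(\bar x_1,\ldots,\bar x_r)$ with $\varphi$ quantifier-free, and induct on the number of quantifier blocks, strengthening the statement to \emph{formulas with parameters}: for a formula $\theta(\bar u)$ in the $\forall\exists\cdots$ hierarchy, if $\mathbb{F}_n\models\theta(\bar c)$ for a lift $\bar c$ of a tuple $\bar b_\ell$ in $\Gamma_\ell$, then $\Gamma_\ell\models\theta(\bar b_\ell)$ with probability $\to 1$. The universal case is exactly the present paper (Proposition \ref{univ}); the base case of quantifier-free sentences is trivial since they concern only the constant $1$.

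\textbf{The inductive step.} One $\forall\exists$ layer $\forall\bar x\exists\bar y\,\psi(\bar x,\bar y,\bar u)$ would be handled by three ingredients. First, a \emph{parametric} strengthening of Section \ref{Short}: Theorem \ref{limit} and Proposition \ref{univ} must be extended from a single system $V(\bar x)=1$ to systems $V(\bar x,\bar p)=1$ with parameters, so that short non-free solutions over the family $\mathcal{G}_8$ of eighth groups still have length linear in $\ell$, and so that any tuple of $\Gamma_\ell$ differs from the image of an $\mathbb{F}_n$-tuple by a bounded-length correction; the shortening argument, Rips machine, and the JSJ approximation of Theorem \ref{ApproxJSJ} should go through verbatim for the corresponding relative (parametric) $\mathcal{G}_8$-limit groups. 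Second, over $\mathbb{F}_n$ one invokes the Sela / Kharlampovich–Myasnikov machinery: for a $\forall\exists$ formula true in $\mathbb{F}$, the Makanin–Razborov diagram of the equational part of $\psi$ has finitely many branches, and to each branch there is attached a \emph{formal solution}, i.e. a uniform family of $\bar y$-witnesses that are words of bounded complexity in $\bar x$ and some free parameters, covering every instance; moreover verifying the sentence can be pushed onto a controlled family of test sequences. Third, one transfers to $\Gamma_\ell$: given $\bar b_\ell$, its equational behaviour in $\Gamma_\ell$ determines a (possibly more degenerate) branch than its lift realizes in $\mathbb{F}_n$, so a formal solution is still available; plugging $\bar b_\ell$ in gives a witness $\bar e_\ell$ for which the equalities of $\psi$ descend automatically, while the inequalities must be kept non-trivial. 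For the latter one uses that $\Gamma_\ell$ contains a non-abelian free group \cite{OW} together with a non-collapse estimate in the spirit of Proposition \ref{A} and Corollary \ref{A1}: for a random set of $(2n-1)^{d\ell}$ relators a fixed reduced word of controlled length becomes trivial only with probability $\to 0$, so a generically chosen formal solution survives.

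\textbf{Main obstacle.} The decisive difficulty is \emph{uniformity} and its interaction with the quantifier alternation. On the $\forall$ side one must lift and analyse \emph{every} tuple of $\Gamma_\ell$, so the counting of Lemma \ref{aDVK}–Proposition \ref{A} has to be run simultaneously over all branches of a Makanin–Razborov diagram whose total relator budget must still be governed by the linear bound of the parametric Theorem \ref{limit}. On the $\exists$ side one must choose witnesses that are simultaneously short (so the diagrams have boundedly many faces) and generic (so no inequality of $\psi$ collapses in $\Gamma_\ell$), and do so compatibly across nested quantifiers, where an inner witness is a function of an outer parameter ranging over all of $\Gamma_\ell$ — which is why, following Sela, one reduces the verification to test sequences of controlled length rather than to arbitrary tuples. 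I expect the geometric input to be unchanged: $d<1/16$ still only buys $C'(1/8)$, and Section \ref{Short} uses nothing beyond eighth groups; the hard part will be organizing the probabilistic non-collapse and lifting estimates through the quantifier-elimination tree of $T_{fg}$ while keeping the $\mathcal{G}_8$-limit groups (which need not be finitely presented) under control via a parametric analogue of the JSJ approximation Theorem \ref{ApproxJSJ}.
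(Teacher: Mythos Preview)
The statement you are addressing is explicitly left open in the paper: it is the concluding \emph{conjecture}, not a theorem, and the paper offers no proof or even a proof sketch for it. The paper's main result covers only the universal fragment (Theorem~1 / Proposition~\ref{univ}), and the authors present the full first-order case as a generalization of J.~Knight's conjecture without attempting it. So there is no ``paper's own proof'' to compare your proposal against.

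That said, your outline is a plausible strategy and is roughly what one would expect a proof to look like, but it is a strategy, not a proof, and you yourself flag the real gap. The step where you assert that ``the shortening argument, Rips machine, and the JSJ approximation of Theorem~\ref{ApproxJSJ} should go through verbatim for the corresponding relative (parametric) $\mathcal{G}_8$-limit groups'' is doing all the work and is unjustified; likewise the claim that a formal solution ``is still available'' in $\Gamma_\ell$ once one has located the Makanin--Razborov branch of a lift is precisely the heart of the matter and is not established here. The uniformity issue you identify---controlling witnesses simultaneously over all tuples of $\Gamma_\ell$ through nested quantifiers while keeping diagram counts polynomial in $\ell$---is exactly why this remains a conjecture; your proposal names the ingredients but does not supply the missing estimates or the organization of the quantifier-elimination tree over a varying target group.
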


\end{document}